\theoremstyle{plain}
\newtheorem{theo}{Theorem}[section]
\crefname{theo}{Theorem}{Theorems}
\Crefname{theo}{Theorem}{Theorems}
\newtheorem{prop}[theo]{Proposition}
\crefname{prop}{Proposition}{Propositions}
\Crefname{prop}{Proposition}{Propositions}
\newtheorem{lem}[theo]{Lemma}
\crefname{lem}{Lemma}{Lemmas}
\Crefname{lem}{Lemma}{Lemmas}
\newtheorem{cor}[theo]{Corollary}
\crefname{cor}{Corollary}{Corollaries}
\Crefname{cor}{Corollary}{Corollaries}
\crefname{claim}{Claim}{Claims}
\Crefname{claim}{Claim}{Claims}
\crefname{property}{Property}{Properties}
\Crefname{property}{Property}{Properties}
\newtheorem{problem}[theo]{Problem}
\crefname{problem}{Problem}{Problems}
\Crefname{problem}{Problem}{Problems}
\theoremstyle{definition}
\newtheorem{defi}[theo]{Definition}
\crefname{defi}{Definition}{Definitions}
\Crefname{defi}{Definition}{Definitions}
\crefname{notation}{Notation}{Notations}
\Crefname{notation}{Notation}{Notations}
\crefname{convention}{Convention}{Conventions}
\Crefname{convention}{Convention}{Conventions}
\crefname{cond}{Condition}{Conditions}
\Crefname{cond}{Condition}{Conditions}
\crefname{assum}{Assumption}{Assumptions}
\Crefname{assum}{Assumption}{Assumptions}
\theoremstyle{remark}
\newtheorem{rem}[theo]{Remark}
\crefname{rem}{Remark}{Remarks}
\Crefname{rem}{Remark}{Remarks}
\newtheorem{ex}[theo]{Example}
\crefname{ex}{Example}{Examples}
\Crefname{ex}{Example}{Examples}
\crefname{section}{Section}{Sections}
\Crefname{section}{Section}{Sections}
\crefname{subsection}{Subsection}{Subsections}
\Crefname{subsection}{Subsection}{Subsections}
\crefname{figure}{Figure}{Figures}
\Crefname{figure}{Figure}{Figures}
\newtheorem*{acknowledgement}{Acknowledgement}
\newcommand{\Z}{\mathbb{Z}}
\newcommand{\R}{\mathbb{R}}
\newcommand{\C}{\mathbb{C}}
\newcommand{\quat}{\mathbb{H}}
\newcommand{\Q}{\mathbb{Q}}
\newcommand{\sign}{\mathrm{sign}}
\newcommand{\fraks}{\mathfrak{s}}
\newcommand{\frakt}{\mathfrak{t}}
\newcommand{\id}{\mathrm{id}}
\newcommand{\ind}{\mathop{\mathrm{ind}}\nolimits}
\newcommand{\wt}{\widetilde}
\newcommand{\frkt}{\mathfrak{t}}
\newcommand{\Sol}{\mathrm{Sol}}
\def\C{\mathbb{C}}
\def\wt{\widetilde}
\newcommand{\cal}{\mathcal}
\newcommand{\Int}{\mathrm{Int}}
\newcommand{\Diff}{\mathrm{Diff}}
\newcommand{\Aut}{\mathrm{Aut}}
\newcommand{\Map}{\mathrm{Map}}
\newcommand{\inc}{\hookrightarrow}
\newcommand{\del}{\partial}
\newcommand{\Ker}{\mathop{\mathrm{Ker}}\nolimits}
\newcommand{\Hom}{\mathop{\mathrm{Hom}}\nolimits}
\title[Involutions and Floer K-theory]{Involutions, knots, and Floer K-theory}
\author{Hokuto Konno}
\address{Graduate School of Mathematical Sciences, the University of Tokyo, 3-8-1 Komaba, Meguro, Tokyo 153-8914, Japan}
\email{konno@ms.u-tokyo.ac.jp}
\author{Jin Miyazawa}
\address{Graduate School of Mathematical Sciences, the University of Tokyo, 3-8-1 Komaba, Meguro, Tokyo 153-8914, Japan}
\email{miyazawa@ms.u-tokyo.ac.jp}
\author{Masaki Taniguchi}
\address{2-1 Hirosawa, Wako, Saitama 351-0198, Japan}
\email{masaki.taniguchi@riken.jp}
\begin{document}

\maketitle

\begin{abstract}
We establish a version of Seiberg--Witten Floer $K$-theory for knots, as well as a version of Seiberg--Witten Floer $K$-theory for 3-manifolds with involution.
The main theorems are 10/8-type inequalities for knots and for involutions.
The 10/8-inequality for knots yields numerous applications to knots, such as lower bounds on stabilizing numbers and relative genera.
We also give obstructions to extending involutions on 3-manifolds to 4-manifolds, and detect non-smoothable involutions on 4-manifolds with boundary.
\end{abstract}

\tableofcontents

\begin{comment}

\section*{ToDo}

\begin{enumerate}
    %\item Something like ``$\kappa(Y)+\kappa(-Y) \geq 0$''
    \item \sout{Spell out the equivariance including gauge group action.} 
    \item \sout{Connected sum formula} 
    \item \sout{General Seifert (in terms of $\bar{\mu}$) $\to$ written in \cref{theo:Seifert cal most general}} Done!
    %\item Linear combination torus knots and two bridge knots
    \item \sout{Extension of group action (all Seifert and Lens spaces, and their connected sum) (in particular, if $Y=\Sigma(a_1, \ldots, a_n)$ with one of $a_1, \ldots, a_n$ is even, the $\Z_2$-action which sits inside the standard $S^1$-action cannot extend for most of spin 4-manifolds.) }
    \sout{Also, by the ``inequality from negative-definite cobordism'', we can strength the statement. (See the note of handle seminar.)}
    \item Whether such group action extends as topological involution.
    Treat $\Sigma(2,3,6k\pm1)$ as much as possible.
    %\item Check how strong Kato's constraint for minimal genus problem for $S^2 \times S^2$. 
    \item \sout{ Describe relation with Nakamura's work and Bryan's work (see Slack "question"-channel)}.
    \item \sout{Calculations of kappa of linear combinations of two bridge knots, its proof based on connected sums of equivariant 3-manifold }
    \item \sout{Homology cobordism group with involution} 
\end{enumerate}
\end{comment}

\section{Introduction}

\subsection{Overview}
The purpose of this paper is to establish a version of Seiberg--Witten Floer $K$-theory for knots, as well as a version of Seiberg--Witten Floer $K$-theory for 3-manifolds with involutions.
Our construction is based on Manolescu's Seiberg--Witten Floer stable homotopy type \cite{Ma03} and an involutive symmetry on the Seiberg--Witten equations introduced by Kato~\cite{Ka17}.
Kato's involution on the Seiberg--Witten equations takes effects of given involutions on 4-manifolds into account,
but his framework is {\it not} an equivariant Seiberg--Witten theory in the usual sense.
%As an analogy, Kato's situation is similar to $KR$-theory started by Atiyah~\cite{At66}, rather than equivariant $K$-theory.
(See \cref{subsection Equivariant Floer homotopy type and Floer Ktheory} for detail.)
%For example, Kato's involution acts on the spinors as an anti-complex linear involution, not like the usual equivariant theory.
%The study of anti-linear involutions was started by Atiyah~\cite{At66} and developed as $KR$-theory, 
%Kato's work~\cite{Ka17} may be regarded as a non-linear version of Atiyah's argument in Seiberg--Witten theory for closed 4-manifolds.
While Kato~\cite{Ka17} considered closed spin 4-manifolds, 
in this paper, we develop a 3-dimensional version of \cite{Ka17} and extend Kato's work to 4-manifolds with boundary.

While various researchers have studied effects of group actions in several types of Floer homology and their applications to knots, such as \cite{H12,KL15,HLS16,HM17,LM182,LRS18,Kang18a,Kang18b,HL19,Large19,HLS20,AKS20,LRS20,HLS20a,HLL20,DHM21,BH21,CS99, KM11, DS19}, to the best of our knowledge, this paper is the first study of Floer $K$-theory defined for knots with a 10/8-type inequality. 
Like the original 10/8-inequality by Furuta~\cite{Fu01} and its generalization to 4-manifolds with boundary by Manolescu~\cite{Ma14},
constraints on knots obtained from our 10/8-type inequality are much different from constraints obtained from usual (i.e. ordinary cohomological) Floer theory for 3-manifolds with involutions.

Our main results are 10/8-type inequalities for spin 4-manifolds with boundary and with involution, and for surfaces bounded by knots.
The 10/8-inequality for knots yields numerous interesting applications to bounds on stablizing numbers and relative genera.
Representative applications to knots from our framework shall be described in \cref{knot main app,stabilizing app}.
\cref{stabilizing app} detects a difference of the topological category and the smooth category from a new point of view: a comparison between topological and smooth stablizing numbers.
\cref{knot main app} detects a big difference of topological and smooth minimal genera for knots embedded into the boundary of a punctured 4-manifold.
%Surprisingly, we found that there is such a difference for every punctured smooth 4-manifold with $H_1=0$.
%we do not have to impose any restriction (such as negative-definite) on the intersection form of the punctured 4-manifold to extract this difference.
Our 10/8-type inequality also obstructs extending involutions on 3-manifolds to spin 4-manifolds, stated in \cref{theo intro nonextendable general}.
This obstruction is strong enough to detect non-smoothable group actions: this is summarized in \cref{theo: non-smoothable actions main thm 6nminus1}.

\subsection{Relative 10/8-inequality for involutions} 

Let $Y$ be an oriented rational homology 3-sphere, $\frakt$ be a spin structure on $Y$, and $\iota$ be an orientation-preserving smooth involution whose fixed-point set is non-empty and of codimension-2.
Suppose that $\iota$ preserves the spin structure $\frakt$.
(If $Y$ is a $\Z_2$-homology 3-sphere, this is the case for all $\iota$.)
We shall define a numerical invariant
\[
\kappa(Y, \frakt, \iota) \in \frac{1}{16}\Z,
\]
which we call the {\it $K$-theoretic Fr{\o}yshov invariant} of the triple $(Y, \frakt, \iota)$.
This is a version (taking effects of $\iota$ into account) of the $K$-theoretic invariant $\kappa(Y,\frakt)$ introduced by Manolescu~\cite{Ma14}, which was used in \cite{Ma14} to establish a relative version of Furuta's 10/8-inequality \cite{Fu01}.

Most of results of this paper follow from a relative 10/8-type inequality for spin 4-manifolds with involution described below.
For triples $(Y_0, \frakt_0,\iota_0)$ and $(Y_1, \frakt_1,\iota_1)$, we call a triple $(W,\fraks,\iota)$ a {\it smooth spin cobordism with involution} from $(Y_0, \frakt_0,\iota_0)$ to $(Y_1, \frakt_1,\iota_1)$ if $(W,\fraks)$ is a smooth compact connected oriented spin cobordism from $(Y_0, \frakt_0)$ to $(Y_1, \frakt_1)$ and $\iota$ is a smooth involution of $W$ which preserves the orientation and the spin structure $\fraks$ and whose restriction to the boundary is given by $\iota_0 \sqcup \iota_1$.
Let $b^+_\iota(W)$ denote the maximal dimension of $\iota$-invariant positive-definite subspaces of $H^2(W;\R)$.
The focus of the following theorem is the property (iv), which is our relative 10/8-inequality for involutions:

\begin{theo}
\label{main theo}
Let $(Y, \frakt)$ be an oriented spin rational homology 3-sphere and $\iota$ be an orientation-preserving smooth involution on $Y$ whose fixed-point set is non-empty and of codimension-2 and which preserves the spin structure $\frakt$.
We can associate an invariant
$\kappa(Y, \frakt, \iota) \in \frac{1}{16}\Z$ to every such triple $(Y, \frakt, \iota)$, with the following properties:
\begin{itemize}
\item [(i)] The mod 2 reduction of $-2\kappa(Y, \frakt, \iota)$ coincides with the Rokhlin invariant $\mu(Y,\frakt)$:
\[
-2 \kappa (Y, \mathfrak{t}, \iota) = \mu (Y , \mathfrak{t}) \text{ in } \left( \frac{1}{8} \Z  \right) / 2\Z \cong \Z/16\Z. 
\]
\item [(ii)] The quantity $\kappa (Y, \mathfrak{t}, \iota)$ is invariant under conjugation: for every diffeomorphism $f$ on $Y$ preserving the orientation and the spin structure $\frakt$, we have
\[
\kappa (Y, \mathfrak{t}, \iota) = \kappa (Y, \mathfrak{t}, f^{-1} \circ \iota \circ f).
\]
\item [(iii)] For $-Y$, the same manifold with the reversed orientation, we have
\[
\kappa(Y,\frakt,\iota) + \kappa(-Y,\frakt,\iota)
\geq 0.
\]
\item [(iv)] Let $(W,\fraks,\iota)$ be a smooth spin cobordism with involution from $(Y_0, \frakt_0,\iota_0)$ to $(Y_1, \frakt_1,\iota_1)$ with $b_1(W)=0$.
Then we have
\begin{align}
\label{eq: main rel 108}
-\frac{\sigma(W)}{16} + \kappa (Y_0, \frakt_0, \iota_0 )
\leq b^+(W)-b^+_{\iota}(W) + \kappa (Y_1, \frakt_1, \iota_1 ).
\end{align}
\end{itemize}
\end{theo}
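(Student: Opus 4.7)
The plan is to construct a Seiberg–Witten Floer stable homotopy type enriched with Kato's extra $\Z/2$-symmetry, and then define $\kappa(Y,\frakt,\iota)$ as an equivariant KO-theoretic degree of this object, with the bulk of the work going into the relative inequality (iv).

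First I would set up the 3-dimensional analogue of Kato's framework. Starting from Manolescu's finite-dimensional approximation of the Seiberg–Witten flow on the Coulomb slice $\coul$, I would show that an orientation-preserving, spin-structure-preserving involution $\iota$ with non-empty codimension-$2$ fixed set lifts to the spinor bundle in a canonical way (the codimension-$2$ hypothesis is exactly what allows this lift to be compatible with the Clifford module structure), and that the induced $\Z/2$-action on $\coul$ commutes up to a prescribed twist with the $\Pin(2)$-symmetry used in the standard construction. Packaging the two symmetries into a single group $\wt G$ (a $\Z/2$-extension of $\Pin(2)$ in Kato's convention) and running the approximation scheme equivariantly produces a $\wt G$-equivariant pointed space $\swf(Y,\frakt,\iota)$, well defined up to $\wt G$-equivariant stable equivalence and up to suspension by linear $\wt G$-representations.

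I would then define $\kappa(Y,\frakt,\iota)$ by taking an appropriate $\wt G$-equivariant KO-theoretic degree of $\swf(Y,\frakt,\iota)$, with the grading shift by the $\wt G$-equivariant index of the Dirac operator. Property (i) would follow by forgetting the $\iota$-action and reducing to Manolescu's Pin$(2)$-equivariant $\kappa(Y,\frakt)$, whose mod-$2$ reduction is $\mu(Y,\frakt)/8$. Property (ii) is immediate from the naturality of the construction under spin-structure-preserving diffeomorphisms that intertwine $\iota$ and $f^{-1}\circ\iota\circ f$. Property (iii) would follow from an equivariant Spanier–Whitehead duality between $\swf(Y,\frakt,\iota)$ and $\swf(-Y,\frakt,\iota)$, parallel to Manolescu's $S^1$-case but carried out in $\wt G$-equivariant stable homotopy; once duality is established, the sum $\kappa(Y)+\kappa(-Y)\ge 0$ drops out of the existence of a non-trivial duality pairing.

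The heart of the argument is (iv). Given a spin cobordism with involution $(W,\fraks,\iota)$ with $b_1(W)=0$, I would construct a $\wt G$-equivariant relative Bauer–Furuta map from $\swf(Y_0,\frakt_0,\iota_0)$ to $\swf(Y_1,\frakt_1,\iota_1)$ suitably suspended by linear $\wt G$-representations. The domain suspension encodes the index of the Dirac operator on $W$; the codomain suspension encodes the positive part $H^+(W;\R)$ of the intersection form. Under $\iota$, $H^+(W;\R)$ splits into an $\iota$-invariant subspace of dimension $b^+_\iota(W)$ and an anti-invariant subspace of dimension $b^+(W)-b^+_\iota(W)$; the former contributes trivially to the relevant equivariant KO-degree, while the latter contributes a factor whose dimension is exactly $b^+(W)-b^+_\iota(W)$. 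Combining this with the equivariant Atiyah–Singer $G$-index computation of the Dirac index on $W$, which yields $-\sigma(W)/16$ under the codimension-$2$ fixed-set hypothesis, the $K$-theoretic version of the relative Bauer–Furuta inequality \cite{Ma14} applied to this $\wt G$-equivariant map produces exactly \eqref{eq: main rel 108}.

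The main obstacle is the construction of the $\wt G$-equivariant relative Bauer–Furuta map on a 4-manifold with boundary, namely arranging gauge fixing, finite-dimensional approximation, boundary decay, and the compactness estimates so that all constructions are simultaneously $\wt G$-equivariant and descend to the stabilized Floer homotopy types on the boundary. Once this is in place, the KO-theoretic extraction of (iv) is formally analogous to Manolescu's argument for the Pin$(2)$-equivariant case, with the additional $\Z/2$-representation theory keeping track of the splitting of $H^+(W;\R)$.
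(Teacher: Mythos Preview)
Your proposal diverges from the paper's approach in a fundamental way, and the mechanism you describe for obtaining $b^+(W)-b^+_\iota(W)$ does not match what actually happens.

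The paper does \emph{not} package $\iota$ and $\Pin(2)$ into a larger group $\wt G$ and work $\wt G$-equivariantly. Instead, following Kato, it defines an involution $I(a,\phi)=(-\iota^\ast a,\,\tilde\iota(\phi)\cdot j)$ which \emph{mixes} the lift of $\iota$ with the element $j\in\Pin(2)$, then restricts everything to the $I$-fixed part. On this fixed part only $G=\Z_4=\langle j\rangle\subset\Pin(2)$ survives, and the paper works in $G$-equivariant complex $K$-theory (not $KO$), with a ``doubling'' construction $D(X)=X\wedge X^\dagger$ to make the suspension calculus in $R(\Z_4)$ behave. The appearance of $b^+(W)-b^+_\iota(W)$ is then automatic: since $I$ acts on forms by $-\iota^\ast$, the $I$-invariant self-dual $2$-forms are exactly the $\iota$-\emph{anti}-invariant ones. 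Your claim that the $\iota$-invariant part of $H^+$ ``contributes trivially to the equivariant KO-degree'' is not the mechanism at work and would require its own justification; the paper in fact remarks that ordinary equivariant Seiberg--Witten theory for involutions (the direction your $\wt G$-packaging suggests) produces $b^+_\iota(W)$, not $b^+(W)-b^+_\iota(W)$. Likewise, the factor $-\sigma(W)/16$ arises because the $I$-fixed part of spinors is half-dimensional (so the APS index halves), not from an equivariant index theorem localized on the codimension-$2$ fixed set.

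Your derivation of (i) is also incorrect: one cannot simply forget the $\iota$-action to recover Manolescu's $\kappa(Y,\frakt)$, since $\kappa(Y,\frakt,\iota)$ is built from the $I$-invariant half of the configuration space, not the whole thing. The paper instead reads (i) directly off the formula $\kappa(Y,\frakt,\iota)=k(D(I^\mu_\lambda))-\dim_\C V(\C_\pm)^0_\lambda - n(Y,\frakt,g)/2$ together with $n(Y,\frakt,g)\equiv\mu(Y,\frakt)\pmod 2$. A $\wt G$-equivariant approach along Bryan's lines might ultimately be made to work, but you have not identified the correct representation-theoretic reason the anti-invariant part is singled out, and without that the inequality in (iv) does not follow.
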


\begin{rem}
If $Y=S^3$, regarded as a subset of $\C^2$, and $\iota : Y \to Y$ is the complex conjugation, we will see in \cref{ex: S3 case} that $\kappa(Y,\frakt,\iota)=0$ for the unique spin structure $\frakt$ on $S^3$.
Thus we can deduce from \eqref{eq: main rel 108} a similar statement for 
a one-boundary component spin 4-manifold with involution.
Also, taking $Y_0=Y_1=S^3$ with complex conjugation, \cref{main theo} recovers Kato's 10/8-inequality \cite[Theorem 2.3]{Ka17} for closed spin 4-manifolds.
  \end{rem}

Moreover, we can generalize \cref{main theo} to fixed-point free odd involutions.
See \cref{most general main theo} and \cref{most general main theo one boundary}.

\subsection{$K$-theoretic knot concordance invariant}
\label{K theoretic knot concordance invariant}

Applying Floer theory with involutive symmetry to the double branched covers, several knot (concordance) invariants are defined in Heegaard Floer homology, such as \cite{H12,KL15,HLS16,HLL20,HLS20,AKS20,DHM21}, and Seiberg--Witten Floer theory \cite{BH21}.
Via orbifold gauge theory, several versions of knot instanton Floer homology are developed \cite{CS99, KM11, DS19}. We also provide a knot concordance invariant from our Floer $K$-theory for 3-manifolds with involution.

For simplicity, we consider oriented knots throughout the paper, while our invariant of knots itself is independent of the choice of orientation.
For a given knot $K$ in $S^3$, one can associate an oriented rational homology 3-sphere $\Sigma(K)$ called the branched covering space along $K$, which is equipped with the covering involution $\iota_K$.
Define the {\it $K$-theoretic Fr{\o}yshov invariant for the knot $K$} by
\[
\kappa (K) := \kappa (\Sigma(K) , \mathfrak{t},  \iota_K) \in \frac{1}{16}\Z,
\]
where $\frakt$ is a spin structure on $\Sigma(K)$, which is unique since  it is known that $H^1(\Sigma(K);\Z/2)=0$.
As a consequence of \cref{main theo}, we prove the following properties of $\kappa(K)$, which includes a 10/8-type inequality for knots: the property (v) below.

\begin{theo}\label{main knot}
The invariant $\kappa (K)$ satisfies the following properties:
\begin{itemize}
\item[(i)] The invariant $\kappa (K)$ is a knot concordance invariant.

\item[(ii)]
For every knot $K$, we have 
$
\kappa (K) =  \kappa (-K)$, where $-K$ is the knot with the opposite orientation.
\item[(iii)] For every knot $K$ in $S^3$, we have 
\[
2 \kappa (K) =- \frac{1}{8} \sigma (K) \text{ in } \left( \frac{1}{8}  \Z \right) / 2\Z \cong \Z/16\Z.
\]
\item[(iv)]
For every knot $K$ in $S^3$, we have 
$
\kappa (K) + \kappa (K^* ) \geq 0$, 
where $K^*$ denotes the mirror image of $K$.

\item[(v)] Let $K$ and $K'$ be knots in $S^3$, $W$ be an oriented smooth compact connected cobordism from $S^3$ to $S^3$ with $H_1(W; \Z)=0$, and $S$ be an oriented compact connected properly and smoothly embedded cobordism in $W$ from $K$ to $K'$ such that the homology class $[S]$ of $S$ is divisible by $2$ and $PD(w_2(W)) = [S]/2 \operatorname{mod} 2$. Then, we have
    \begin{align} \label{ineq1}
  -\frac{\sigma(W)}{8} + \frac{9}{32}[S]^2-\frac{9}{16}\sigma(K') + \frac{9}{16}\sigma(K) \leq b^+(W) + g(S) + \kappa(K')-\kappa(K), 
    \end{align}
    where $\sigma(K)$ denotes the signature of $K$ (with the sign convention $\sigma(T(2,3))=-2$) and $g(S)$ is the genus of $S$.
    %\item[(iii)] Let $(W,S)$ be a cobordism from $K $ to $K'$ as in (ii). Suppose \[
    %b^+(W) - \frac{1}{4} [S]^2 + \frac{1}{2}\sigma (K) -\frac{1}{2}\sigma (K') =0. 
    %\]
    %Then we have 
    %\[
   % \kappa (K) \leq \kappa (K'). 
  %  \]
  \end{itemize}
    \end{theo}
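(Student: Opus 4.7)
The unifying strategy is to promote each statement about $K$ to a statement about $\Sigma(K)$ with the covering involution $\iota_K$, and then to feed an appropriately constructed double branched cover into \cref{main theo}. I would handle the properties in the following order.

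For (ii) and (iv), the observations are essentially formal: the double branched cover $\Sigma(K)$ together with the covering involution $\iota_K$ depends only on the unoriented knot, so $(\Sigma(-K),\iota_{-K})=(\Sigma(K),\iota_K)$ canonically, giving (ii); and $\Sigma(K^*)=-\Sigma(K)$ with the same $\iota$, so (iv) is an immediate consequence of (iii) of \cref{main theo}. For (iii) I would invoke property (i) of \cref{main theo} together with the classical identification $\mu(\Sigma(K))=\sigma(K)/8\pmod 2$ (matching the sign convention $\sigma(T(2,3))=-2$) to convert the mod $2$ statement for $\mu$ into the stated mod $2$ statement for $\kappa(K)$.

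The main work is (v), with (i) a special case. Given $(W,S)$ as in the hypothesis, I would form the double branched cover $\widetilde W\to W$ along $S$. The assumption $\operatorname{PD}(w_2(W))=[S]/2\pmod 2$ is precisely the characteristic condition which guarantees that $\widetilde W$ carries a spin structure, and the deck transformation $\iota$ is an orientation- and spin-structure-preserving involution whose fixed set is the codimension-$2$ lift of $S$. The boundary of $(\widetilde W,\iota)$ is $(\Sigma(K),\iota_K)\sqcup(\Sigma(K'),\iota_{K'})$. Using $H_1(W;\Z)=0$ and a transfer/Smith argument on the branched cover, I would check $b_1(\widetilde W)=0$, so that \cref{main theo}\,(iv) applies. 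I then need three numerical identities for the cover: the Viro-type signature formula
\[
\sigma(\widetilde W)=2\sigma(W)-\tfrac12[S]^2-\sigma(K)+\sigma(K'),
\]
the multiplicative Euler characteristic formula $\chi(\widetilde W)=2\chi(W)-\chi(S)$, which together with the boundary bookkeeping yields $b_2(\widetilde W)=2b_2(W)+2g(S)$, and finally the eigenspace decomposition under $\iota^*$ which gives $b^+_\iota(\widetilde W)=b^+(W)$ and hence
\[
b^+(\widetilde W)-b^+_\iota(\widetilde W)=b^+(W)+g(S)-\tfrac14[S]^2-\tfrac12(\sigma(K)-\sigma(K')).
\]
Plugging these two identities into \eqref{eq: main rel 108} and collecting the $[S]^2$ and signature terms via $\tfrac{1}{32}+\tfrac14=\tfrac{9}{32}$ and $\tfrac{1}{16}+\tfrac12=\tfrac{9}{16}$ produces \eqref{ineq1}.

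Finally, (i) follows from (v) by taking $W=S^3\times[0,1]$ and $S$ a smooth concordance annulus: then $\sigma(W)=0$, $b^+(W)=0$, $g(S)=0$, $[S]^2=0$, and $\sigma(K)=\sigma(K')$, so \eqref{ineq1} reduces to $\kappa(K)\le\kappa(K')$, and swapping $K,K'$ gives equality. I expect the technically most delicate step to be the Viro-type signature formula with the correct signs and the verification of $b_1(\widetilde W)=0$; the remaining ingredients are essentially algebraic manipulations with $\chi$, $\sigma$, and $\iota^*$-eigenspaces of the intersection form.
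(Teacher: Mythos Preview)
Your proposal is correct and follows essentially the same route as the paper: form the double branched cover $\Sigma(S)$ of $W$ along $S$, check it is spin with $b_1=0$ and $b^+_\iota(\Sigma(S))=b^+(W)$, apply \cref{main theo}\,(iv), and substitute the Viro-type formulas for $\sigma(\Sigma(S))$ and $b^+(\Sigma(S))$ to obtain \eqref{ineq1}; parts (ii)--(iv) are deduced exactly as you say from the corresponding parts of \cref{main theo}. The one minor difference is that the paper establishes (i) independently, via the local equivalence class of $DSWF_G(\Sigma(K),\iota_K)$ (\cref{concordance}), rather than as a corollary of (v); your deduction of (i) from (v) is perfectly valid and slightly more economical, while the paper's route yields the stronger fact that the entire spectrum class, not just $\kappa$, is a concordance invariant.
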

\begin{rem}
If $W$ is spin, $K, K'$ are the unknots, and $S$ is a null-homologous smoothly embedded annulus, then the inequality~\eqref{ineq1} implies $-\sigma(W)/8   \leq b^+(W)$, which recovers Furuta's original 10/8-inequality~\cite{Fu01} except for adding 1 on the left-hand side.
Note that one can deduce a constraint on surfaces bounded by knots from Manolescu's relative 10/8-inequality \cite{Ma14} applied to the branched covering spaces. In \cref{Comparison}, we shall summarize such constraints following from known results.
Also, the invariant $\kappa$ can be extended to an invariant of a pair $(Y, K)$ of an oriented homology 3-sphere $Y$ and a knot $K$ in $Y$. For more details, see \cref{generalY}.
\end{rem}

\begin{rem}
It could be interesting to consider an approach to the 11/8-conjecture due to Matsumoto~\cite{M82} from \cref{main knot}.
In short, if one can find a suitable embedded surface $S$ in a spin 4-manifold $W$, one can refine the 10/8-inequality for $W$.
See \cref{11/8-conjecture}.
\end{rem}

\begin{comment}
Although $\kappa $ is invariant under knot concordance, by (ii) in \cref{main knot}, we have more invariance. 

\begin{cor}
Suppose there is an oriented smooth knot cobordism $S$ from $K$ to $K'$ with $\sigma (K)=\sigma(K')$ in $(\#_m\C P^2  \#_n-\C P^2) \setminus (\operatorname{int} D^4_1 \cup \operatorname{int} D^4_2 )$ for a positive  integer $n$ such that 
\[
[S]= (2, \cdots, 2) \in H_2(\#_n\C P^2  \#_n-\C P^2).
\]
Then we have 
\[
\kappa (K) = \kappa (K')
\]
\end{cor}    
\end{comment}    

Next, we shall exhibit calculations of $\kappa(K)$ for 2-bridge knots and torus knots:

\begin{theo}
 \label{two bridge kappa12}
For coprime integers $p,q$ with $p$ odd, let $K(p,q)$ be the two bridge knot of type $(p,q)$, whose branched cover is the lens space $L(p,q)$.
Then we have 
\begin{align}\label{two bridge kappa1}
\kappa (K(p,q)) =  -\frac{1}{16} \sigma (K(p,q)). 
\end{align}
The equation \eqref{two bridge kappa1} also holds for every connected sum of two bridge knots.
\end{theo}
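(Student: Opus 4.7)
The plan is to bracket $\kappa(K(p,q))$ from both sides by $-\sigma(K(p,q))/16$. The lower bound comes from applying \cref{main theo}(iv) to an equivariant filling of $L(p,q)$, and the matching upper bound is obtained by combining this lower bound for the mirror knot $K^\ast$ with the smooth sliceness of $K\#K^\ast$ and the connected sum formula for $\kappa$ established earlier in the paper.

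For the lower bound, fix a genus-minimising Seifert surface $F$ for $K=K(p,q)$ coming from an alternating diagram (so that $F$ is a plumbing of Hopf bands). Push $F$ into the interior of $B^4$ to a properly embedded surface $F'$ with $\partial F'=K$, and let $X$ be the double branched cover of $B^4$ along $F'$. Then $\partial X=\Sigma(K)=L(p,q)$, and the deck transformation $\iota$ is a smooth orientation-preserving involution of $X$ with fixed set $F'$ restricting on the boundary to $\iota_K$. Since $X$ is a plumbing it is simply connected and spin, and the Kauffman--Taylor formula gives $\sigma(X)=\sigma(K)$. The transfer isomorphism $H^\ast(X;\Q)^\iota\cong H^\ast(B^4;\Q)$ is zero in degree $2$, so $\iota^\ast$ acts as $-1$ on $H^2(X;\R)$; consequently every linear subspace is $\iota$-invariant, and $b^+_\iota(X)=b^+(X)$.

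Puncturing $X$ at a smooth fixed point of $\iota$ produces a spin cobordism with involution $W$ from $(S^3,\frakt_0,\iota_{\mathrm{conj}})$ to $(L(p,q),\frakt,\iota_K)$, where $\iota_{\mathrm{conj}}$ is complex conjugation on $S^3\subset\C^2$. Using $\kappa(S^3,\frakt_0,\iota_{\mathrm{conj}})=0$ together with $b^+_\iota(W)=b^+(W)$ and $\sigma(W)=\sigma(K)$, \cref{main theo}(iv) specialises to
\[
-\frac{\sigma(K)}{16}\leq\kappa(K(p,q)).
\]
Applying the same construction to the mirror $K^\ast=K(p,p-q)$, which is itself a two-bridge knot satisfying $\sigma(K^\ast)=-\sigma(K)$, yields $\kappa(K^\ast)\geq\sigma(K)/16$.

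To close the gap, note that $K\#K^\ast$ is a ribbon knot and hence smoothly slice, so the concordance invariance from \cref{main knot}(i) together with $\kappa(\mathrm{unknot})=0$ gives $\kappa(K\#K^\ast)=0$. Combining with the connected sum formula for $\kappa$ produces $\kappa(K)+\kappa(K^\ast)=\kappa(K\#K^\ast)=0$, and pairing this identity with the two lower bounds above forces the equalities $\kappa(K)=-\sigma(K)/16$ and $\kappa(K^\ast)=\sigma(K)/16$. The claim for an arbitrary connected sum of two-bridge knots then follows at once from the additivity of $\sigma$ and $\kappa$ under connected sum.

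The main obstacle is the reliance on the connected sum formula for $\kappa$: the $10/8$-type inequality alone produces only a one-sided bound on $\kappa(K(p,q))$, and without additivity there is no mechanism to upgrade it to an equality. The transfer computation of $\iota^\ast$ on $H^2(X;\R)$, and the verification that $\iota$ preserves the unique spin structure on $X$, are comparatively routine once the Seifert surface is chosen so that the branched cover is a plumbing, which also ensures $b_1(X)=0$ as required by \cref{main theo}.
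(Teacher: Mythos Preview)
Your computation of $b^+_\iota(X)$ rests on a misreading of the definition. Although the sentence in the introduction could be parsed as ``set-wise $\iota^*$-invariant'', throughout the paper $b^+_\iota(W)$ means $b^+$ of the $+1$-eigenspace of $\iota^*$ on $H^2(W;\R)$: see the proof of \eqref{eq:m difference}, where the form-part index is computed on the $(-\iota^*)$-invariant harmonic forms, and \cref{b+iotarem}, which gives $b^+_\iota(\Sigma(S))=b^+(\text{base})$. Indeed, under the set-wise reading one would always have $b^+_\iota=b^+$ for an orientation-preserving involution (decompose $H^2$ orthogonally into $\pm1$-eigenspaces and take a maximal positive-definite subspace in each), so the right-hand side of \eqref{eq: main rel 108} would collapse. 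Since $\iota^*=-1$ on $H^2(X;\R)$ and the base of your branched cover is $B^4$, the correct value is $b^+_\iota(X)=0$. Then \cref{main theo}(iv) only yields $\kappa(K)\ge -\sigma(K)/16 - b^+(X)$ with $b^+(X)=g(F)+\sigma(K)/2$, and combining this with the mirror bound and $\kappa(K)+\kappa(K^*)=0$ brackets $\kappa(K)$ in an interval of width $2g(F)$, so no equality is forced.

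There is also a circularity issue: the connected-sum formula you invoke is \cref{cor: connected sum formula kappa}, which requires one summand to be locally DSWF-spherical. For $(L(p,q),\frakt,\iota_K)$ this is established via the $\iota$-invariant positive-scalar-curvature metric (\cref{ex: kappa in PSC case}), but that same proposition already outputs $\kappa(K(p,q))=-n(L(p,q),\frakt,g)/2$; the paper (\cref{single two bridge}) then identifies this with $-\sigma(K(p,q))/16$ using $\delta(L(p,q),\frakt)=-n(L(p,q),\frakt,g)$ and the Manolescu--Owens relation $\tfrac12 d(\Sigma(K),\frakt)=-\tfrac18\sigma(K)$. So the input you need to run your sandwich argument already contains the answer.
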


\begin{theo}\label{torus knot calculation} 
Let $p,q$ be coprime odd integers and $T(p,q)$ be the torus knot of type $(p,q)$. 
Then the equality 
\[
\kappa (  T(p,q) ) = - \frac{1}{2}  \bar{\mu} (\Sigma (2,p,q) ) 
\]
holds, 
where $\bar{\mu}$ is the Neumann--Siebenmann invariant. Moreover, the equality 
$
\kappa (K) = 
- \frac{1}{2}\bar{\mu} (\Sigma (K))
$
holds for every connected sum of torus knots $T(p,q)$'s for coprime odd integers $p$ and $q$.  \end{theo}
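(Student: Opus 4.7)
The plan is to reduce the computation to the Brieskorn geometry of the branched cover, bound $\Sigma(2,p,q)$ by the Milnor fiber equivariantly, and invoke \cref{main theo}~(iv) together with (iii) to pin down $\kappa$ exactly. To begin, I identify $\Sigma(T(p,q))$ with the Brieskorn homology sphere $\Sigma(2,p,q) = \{z_1^2 + z_2^p + z_3^q = 0\} \cap S^5$; under this identification the covering involution $\iota_K$ is the restriction of $\tilde\iota \colon (z_1,z_2,z_3) \mapsto (-z_1,z_2,z_3)$, whose fixed set on $\Sigma(2,p,q)$ is exactly the embedded torus knot $T(p,q)$, so that $\kappa(T(p,q)) = \kappa(\Sigma(2,p,q), \mathfrak{t}, \iota_K)$.

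Next, let $W = \{z_1^2 + z_2^p + z_3^q = 1\} \cap B^6$ be the Milnor fiber; this is a smooth spin $4$-manifold bounding $\Sigma(2,p,q)$ on which $\tilde\iota$ extends to a smooth orientation- and spin-structure-preserving involution whose fixed set is a Seifert surface $F$ of $T(p,q)$ (the Milnor fiber of $z_2^p + z_3^q = 1$), thereby presenting $W$ as a $2$-fold cyclic branched cover of $B^4$ along $F$. Two key inputs follow. First, by the classical Neumann--Siebenmann identification of $\bar\mu$ with the Milnor-fiber signature, one has $\sigma(W) = \pm 8\,\bar\mu(\Sigma(2,p,q))$ with sign matching the convention of the statement. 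Second, since $W/\tilde\iota \simeq B^4$, the rational transfer isomorphism $H^2(W;\R)^{\tilde\iota} \cong H^2(B^4;\R) = 0$ forces $b^+_{\tilde\iota}(W) = 0$.

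I now apply \cref{main theo}~(iv) to $W$ (viewed after puncturing at a fixed point as a spin cobordism with involution from $(S^3, \iota_{\mathrm{conj}})$ to $(\Sigma(2,p,q), \iota_K)$, using $\kappa(S^3, \iota_{\mathrm{conj}}) = 0$ from the remark following \cref{main theo}):
\[
-\frac{\sigma(W)}{16} \leq b^+(W) - b^+_{\tilde\iota}(W) + \kappa(T(p,q)).
\]
Substituting the two computations of the previous paragraph gives $\kappa(T(p,q)) \geq -\tfrac{1}{2}\bar\mu(\Sigma(2,p,q)) - b^+(W)$. The matching reverse inequality follows by applying (iv) to $-W$ (which bounds $-\Sigma(2,p,q)$) and combining with property (iii), $\kappa(Y, \mathfrak{t}, \iota) + \kappa(-Y, \mathfrak{t}, \iota) \geq 0$. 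The principal obstacle is that $b^+(W) > 0$ for most $(p,q)$; the needed refinement is a careful analysis of the $\tilde\iota^*$-eigenspace decomposition of $H^2(W;\R)$, where the positive-definite part of the $(-1)$-eigenspace must be identified via the Seifert form of $T(p,q)$ so that the slack term $b^+(W) - b^+_{\tilde\iota}(W) = b^+(W)$ is exactly absorbed by the excess in the signature, producing the claimed equality.

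Finally, for a connected sum $K = K_1 \# \cdots \# K_n$ of torus knots $T(p_i, q_i)$, the branched cover is $\Sigma(K) = \#_i\,\Sigma(2, p_i, q_i)$, and the boundary-connected-sum of the individual equivariant Milnor fibers supplies an equivariant spin bounding of $\Sigma(K)$. Combining the additivity of $\bar\mu$ under connected sum of Seifert homology spheres with the equivariant connected-sum formula for $\kappa$ established earlier in the paper, the summand-by-summand arguments above aggregate to the claimed equality in the connected-sum case.
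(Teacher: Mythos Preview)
Your approach has a genuine gap that cannot be repaired along the lines you sketch. The inequality in \cref{main theo}~(iv) applied to the Milnor fiber $W$ gives only
\[
\kappa(T(p,q)) \;\geq\; -\frac{\sigma(W)}{16} - b^+(W),
\]
and applying it to $-W$ together with~(iii) gives a bound in the other direction that is weakened by $b^-(W)$. For most $(p,q)$ these slack terms are strictly positive (e.g.\ for $\Sigma(2,3,7)$ the Milnor fiber has $b^+(W)=2$, and the first inequality reads $\kappa\geq -3/2$ while the true value is $-1/2$), so the two bounds do not meet. Your proposed ``refinement'' via the eigenspace decomposition cannot help: the quantity $b^+(W)-b^+_{\tilde\iota}(W)$ already \emph{is} that eigenspace analysis, and the inequality~(iv) genuinely has slack here. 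There is no hidden cancellation with the signature, because $-\sigma(W)/16$ equals $-\bar\mu/2$ on the nose for the Milnor fiber; the slack is entirely in $b^+(W)$.

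The paper's proof is completely different and does not use any cobordism. It computes the doubled Floer homotopy type of $(\Sigma(2,p,q),\iota)$ directly (\cref{theo:Seifert cal most general}): for a Seifert metric, the irreducible Seiberg--Witten solutions split into two sets interchanged by the $j$-action, while the lift $\tilde\iota$ is isotopic through spin automorphisms to $\pm\mathrm{id}$ and hence does not interchange them. Consequently the involution $I=\tilde\iota\cdot j$ has \emph{no} $I$-fixed irreducibles, so the Conley index of the $I$-invariant flow is a sphere determined solely by the reducible. This forces $\kappa=-n(Y,g)/2=-\bar\mu(Y)/2$ by Ruberman--Saveliev. The connected-sum statement then follows from the connected-sum formula for $\kappa$ (\cref{cor: connected sum formula kappa}) once each summand is known to be locally DSWF-spherical.
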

%The Neumann--Siebenmann invariant $\bar{\mu}$ is known to be easy to compute, and so is the $\kappa$-invariant for connected sums of $T(p,q)$.
To compute $\kappa (K)$ more, we provide the following connected sum formula: 
 \begin{theo}\label{connected sum} 
  Let $K$ be a knot in $S^3$ and $K'$ be a connected sum of knots appearing in \cref{two bridge kappa12,torus knot calculation}. Then we have 
  \[
  \kappa (K \# K') = \kappa (K) + \kappa (K') .
  \]
 \end{theo}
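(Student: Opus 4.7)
The plan is to combine a general subadditivity of $\kappa$ with the mirror antisymmetry $\kappa(K_1^*) = -\kappa(K_1)$ enjoyed by each individual knot in the special class of \cref{two bridge kappa12,torus knot calculation}, and then to bootstrap via the smooth sliceness of $K_1 \# K_1^*$. An elementary induction on the number of factors of $K'$ reduces the theorem to the base case in which $K' = K_1$ is a single two-bridge knot or a single torus knot $T(p,q)$ with $p,q$ coprime odd, so I shall focus on that case.

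The first step is to establish the general subadditivity
\[
\kappa(K_a \# K_b) \leq \kappa(K_a) + \kappa(K_b)
\]
for arbitrary knots $K_a, K_b$. Since $\Sigma(K_a \# K_b) = \Sigma(K_a) \# \Sigma(K_b)$ and the covering involution splits as the equivariant connected sum of the two covering involutions, this reduces to a subadditivity statement for $\kappa(Y_1 \# Y_2, \frakt_1 \# \frakt_2, \iota_1 \# \iota_2)$ of equivariantly connect-summed spin rational homology three-spheres. Following Manolescu's argument for the non-equivariant $\kappa$ in \cite{Ma14}, the equivariant Seiberg--Witten Floer spectrum of the connected sum should decompose (stably and involution-equivariantly) as the smash product of the two factor spectra, and subadditivity of $\kappa$ then follows from the multiplicative behaviour of relative $K$-theoretic degrees under smash product.

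For a single special knot $K_1$, the mirror antisymmetry is immediate from the explicit formulas: for a two-bridge knot, $\kappa = -\sigma/16$ together with $\sigma(K^*) = -\sigma(K)$; for a torus knot, $\kappa = -\bar{\mu}(\Sigma(K))/2$ together with $\Sigma(K^*) = -\Sigma(K)$ and $\bar{\mu}(-Y) = -\bar{\mu}(Y)$. Since $K_1 \# K_1^*$ is smoothly slice, concordance invariance (\cref{main knot}(i)) gives $\kappa(K \# K_1 \# K_1^*) = \kappa(K)$. Applying subadditivity to the regrouping $(K \# K_1) \# K_1^*$:
\[
\kappa(K) = \kappa((K \# K_1) \# K_1^*) \leq \kappa(K \# K_1) + \kappa(K_1^*) = \kappa(K \# K_1) - \kappa(K_1),
\]
whence $\kappa(K) + \kappa(K_1) \leq \kappa(K \# K_1)$. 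Combined with the direct subadditivity $\kappa(K \# K_1) \leq \kappa(K) + \kappa(K_1)$, this gives the desired equality in the base case, and the induction outlined above completes the proof.

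The main obstacle will be the first step: establishing the general subadditivity in the involution-equivariant framework of Kato. Adapting Manolescu's smash-product decomposition of Seiberg--Witten Floer spectra to this setting requires carefully tracking the involutive structure across a connected sum performed at a pair of corresponding equivariant fixed-point neighbourhoods, and verifying that the resulting decomposition is compatible with the $K$-theoretic degree used to define $\kappa(Y, \frakt, \iota)$.
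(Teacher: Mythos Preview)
Your approach is correct but follows a different path from the paper's. The paper proves additivity directly: it shows that each special knot $K_1$ has its doubled Floer spectrum $DSWF(K_1)$ locally equivalent to a shifted sphere $(S^0,0,n)$ (this is the content of \cref{ex: lens} and \cref{theo:Seifert cal most general}, packaged as the notion of \emph{locally DSWF-spherical}), and then invokes a connected sum formula for local equivalence classes (\cref{conn sum of local eq}) to conclude that $DSWF(K\#K_1)$ is locally equivalent to the same shift of $DSWF(K)$, whence $\kappa$ shifts exactly by $n=\kappa(K_1)$ (\cref{cor: connected sum formula kappa}). No subadditivity argument or slice trick is needed.

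Your route is more axiomatic: you extract from the same smash-product formula only the weaker consequence $k(X\wedge X')\leq k(X)+k(X')$ (which the paper in fact records inside the proof of \cref{lem: duality origin}), and then close the gap via the mirror antisymmetry $\kappa(K_1^*)=-\kappa(K_1)$ and sliceness of $K_1\#K_1^*$. Both approaches rest on the same core input, the local equivalence $[DSWF(Y_0\#Y_1)]_{\mathrm{loc}}=[DSWF(Y_0)\wedge DSWF(Y_1)]_{\mathrm{loc}}$ obtained from the relative Bauer--Furuta invariants of the connected-sum cobordism and its reverse; you correctly flag this as the main obstacle. The paper's approach buys a stronger conclusion (the connected sum remains locally DSWF-spherical), while yours has the virtue of applying whenever one knows only the values $\kappa(K_1)$ and $\kappa(K_1^*)=-\kappa(K_1)$, without needing the full spectrum computation.
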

We also give a crossing change formula of $\kappa$ using \cref{main knot}: 
\begin{theo} \label{crossing change}
Let $K$ and $K'$ be knots in $S^3$. 
\begin{itemize}
    \item[(i)] Suppose $K'$ is obtained from $K$ by a sequence of $n$ crossing changes. Then the inequality  
    \[
    \left|\kappa (K')- \kappa (K) + \frac{9}{16}\sigma (K') - \frac{9}{16}\sigma (K) \right|  \leq n . 
    \]
    \item[(ii)] 
    Suppose $K'$ is obtained from $K$ by a sequence of $n$ positive crossing changes. Then the inequality 
\[
 \kappa (K')- \kappa (K) \leq - \frac{9}{16}\sigma (K') +  \frac{9}{16}\sigma(K)
\]
holds, where our convention on the positive crossing change is described in Figure~\ref{Positive crossing change}. 
\end{itemize}
\end{theo}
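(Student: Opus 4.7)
The plan is to realize a sequence of crossing changes as an embedded annular cobordism inside a connected sum with copies of $\pm\CP^2$, and then to apply the 10/8-inequality for knots, \cref{main knot}(v), in both directions.

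First I would construct the cobordism. A single crossing change from $K$ to $K'$ can be realized as a genus-zero embedded cobordism $S$ in $(S^3\times[0,1])\,\#\,\varepsilon\CP^2$, where $\varepsilon\in\{\pm 1\}$ is dictated by the sign of the crossing change: one fixes a small unknot $c$ encircling the two strands at the crossing and attaches an $\varepsilon$-framed $2$-handle to $S^3\times[0,1]$ along $c\times\{1\}$; this changes the chosen crossing in the slice at $t=1$ into its opposite to produce $K'$, and the trace is an annulus $S$ whose homology class is twice the generator of $H_2(\varepsilon\CP^2)$. With the paper's convention, a positive crossing change contributes $\varepsilon=-1$. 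Iterating over $n$ crossing changes with $p$ positive and $q=n-p$ negative ones produces an oriented smoothly embedded annulus $S$ from $K$ to $K'$ inside the $4$-manifold $W$ obtained by connect-summing $S^3\times[0,1]$ with $p$ copies of $\overline{\CP^2}$ and $q$ copies of $\CP^2$, with $[S]$ equal to twice the sum of the natural generators of these $\pm\CP^2$ summands.

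Next I would verify the hypotheses of \cref{main knot}(v). Since each summand has trivial first integral homology, $H_1(W;\Z)=0$; the class $[S]$ is divisible by $2$ by construction; and $[S]/2$ reduces modulo $2$ to the sum of the generators of all $\pm\CP^2$ summands, which is precisely $\PD(w_2(W))$. Using the orthogonality of distinct summands in $H_2$, the relevant topological invariants are
\[
\sigma(W)=q-p,\qquad b^+(W)=q,\qquad [S]^2=4(q-p),\qquad g(S)=0.
\]
Substituting these into \eqref{ineq1} and simplifying the left-hand side via $-\tfrac{q-p}{8}+\tfrac{9}{32}\cdot 4(q-p)=q-p$ yields
\[
-p \;\leq\; \kappa(K')-\kappa(K)+\tfrac{9}{16}\sigma(K')-\tfrac{9}{16}\sigma(K).
\]
Applying \cref{main knot}(v) to the orientation-reversed cobordism $\overline{W}$, viewed as a cobordism from $K'$ to $K$ (so that $\sigma$, $b^+$ and $[S]^2$ change in the expected manner), gives the matching upper bound
\[
\kappa(K')-\kappa(K)+\tfrac{9}{16}\sigma(K')-\tfrac{9}{16}\sigma(K) \;\leq\; q.
\]

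Combining the two bounds with $p,q\leq n$ proves (i). For (ii), the hypothesis that every crossing change is positive forces $q=0$, whereupon the upper bound collapses to the stated inequality. The one delicate point I expect is the bookkeeping of signs, namely matching the paper's figure-based convention for a positive crossing change with the sign $\varepsilon$ of the $\CP^2$ summand in the cobordism; this is what decides which of the two inequalities becomes the content of (ii). Beyond that, the cobordism-with-surface realization of a crossing change is classical, and the remainder is a direct substitution into \cref{main knot}(v).
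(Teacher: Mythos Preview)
Your substitution into \cref{main knot}(v) is correct, and the two-sided bound $-p\le \kappa(K')-\kappa(K)+\tfrac{9}{16}(\sigma(K')-\sigma(K))\le q$ that you extract is exactly what is needed. One point of caution on the construction: attaching an $\varepsilon$-framed $2$-handle along an unknot $c$ encircling the two strands gives the product annulus the class $lk(K,c)\cdot e$, and when the two strands at the crossing run anti-parallel through the spanning disk this linking number is $0$, so $[S]/2\not\equiv \PD(w_2(W))\bmod 2$ and (v) does not apply. The robust construction---the one the paper uses for (ii), citing \cite{Kr97,DS20}---is to realize the crossing change as a transverse double point of an immersed annulus in $S^3\times I$ and blow up that point; the proper transform then always has class $\pm 2e$, regardless of how the strands are oriented, and your computation goes through verbatim.

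For part (i) your route differs from the paper's. The paper does not blow up at all there: it resolves the $n$ double points of the immersed annulus by local surgery (increasing the genus) to obtain an embedded genus-$n$ surface in $S^3\times I$ itself, and applies (v) to that null-homologous surface and to its reverse. Your blow-up approach handles (i) and (ii) uniformly and in fact yields the sharper bound above, separating the contributions of positive and negative crossing changes; the paper's genus-increase argument only sees the total $n$. For (ii) the two proofs coincide.
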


It is not difficult to prove a similar full twist formula under a certain assumption. See \cref{full-twist}. In \cref{Kappa invariant for knots with 8 and 9-crossings}, we give calculations of $\kappa (K)$ for prime knots with 8- or 9-crossings using \cref{crossing change} combined with \cref{main knot}~(iii). The result is summarized in \cref{more computation}.
 
\subsection{Applications to stabilizing numbers} 
%We focus on such a class of 4-manifolds, in particular the $n$-fold connected sums $\#_n S^2\times S^2 $, $\#_n K3 $, and $\#_n\CP^2 \#_n (-\CP^2) $ of $S^2\times S^2$, $K3$ and $\CP^2 \# (-\CP^2) $ respectively.
%Note that the $S^1$-equivariant Bauer-Furuta invariant of $\#_n K3 $ vanishes when $n \geq 5$.

%It is proven in \cite{Sch10} that for any knot $K$ in $S^3$ whose Arf invariant $\operatorname{Arf}(K)$ is zero, there is a positive integer $N$ such that $K$ is smoothly H-slice in $\#_N S^2\times S^2$.  This result enables us to define a knot concordance invariant 
%\[
%s_H (K):= \min \{ N | \text{ $K$ is smoothly H-slice in $\#_N S^2\times S^2$} \}  
%\]
%when $\operatorname{Arf}(K)$ is zero. 
%The topological version $s^{\operatorname{Top}}_H (K)$ of $s_H (K)$ is also defined by a similar way. 

The results explained in \cref{K theoretic knot concordance invariant} have various applications to 4-dimensional aspects of knot theory.
We exhibit some of representative applications in this introduction.
The first one is to detect the difference of topological and smooth stablizing numbers.
A given knot $K$ in $S^3$ is said to be \textit{smoothly (resp. topologically) H-slice} in $X$ if there exists a proper and smooth (resp. locally flat) null-homologus embedding of a 2-dimensional disk bounded by $K$ in $X \setminus \operatorname{int}D^4$.
It is proven in \cite{Sch10} that, for every knot $K$ in $S^3$ whose Arf invariant $\operatorname{Arf}(K)$ is zero, there is a positive integer $N$ such that $K$ is smoothly H-slice in $\#_N S^2\times S^2$, the $N$-fold connected sum of copies of $S^2 \times S^2$. 
This result enables us to define invariants 
\[
\mathrm{sn} (K):= \min \Set{ N | \text{ $K$ is smoothly H-slice in $\#_N S^2\times S^2$} } 
\]
\text{ and } 
\[
\mathrm{sn}^{\mathrm{Top}} (K):= \min \Set{ N | \text{ $K$ is topologically H-slice in $\#_N S^2\times S^2$} } 
\]
when $\operatorname{Arf}(K)$ is zero. 
These quantities have been studied in the literature, and for example in \cite{CN20}, the invariant $\mathrm{sn}(K)$ (resp. $\mathrm{sn}^{\mathrm{Top}}(K)$) is called the {\it smooth (resp. topological) stabilizing number} of $K$. 
%and 
%\[
%\mathrm{sn}^{\mathrm{Top}} (K):= \min \{ N | \text{ $K$ is topologically H-slice in $\#_N S^2\times S^2$} \} 
%\]

%As it is asked in \cite[Question 1.4]{CN20}, the problem whether there exists a knot $K$ such that 
%\[
%\mathrm{sn}^{\mathrm{Top}} (K) < s_n (K)
%\]
%or not is still open. 
%The topological version $s^{\operatorname{Top}}_H (K)$ of $s_n (K)$ is also defined by a similar way. 
Our invariant $\kappa (K)$ can be used to give a lower bound on $\mathrm{sn}(K)$:

\begin{theo}
\label{H-sliceing number1}
For every knot $K$ in $S^3$ with $\operatorname{Arf}(K) =0$, we have 
    \[
        -\frac{9 }{16}  \sigma (K)  - \kappa (K) \leq  \mathrm{sn}(K).
        \]
\end{theo}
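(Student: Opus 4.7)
The plan is to specialize the relative 10/8-type inequality for knots, part~(v) of \cref{main knot}, to the \emph{H-slicing cobordism} with the unknot as one of the two boundary knots.

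First, set $N := \mathrm{sn}(K)$, so by definition there exists a properly and smoothly embedded null-homologous disk $D$ in $\#_N S^2 \times S^2 \setminus \operatorname{int} D^4$ bounded by $K \subset S^3$. Pick an interior point $p$ of $D$ and a small round 4-ball $B$ centered at $p$ so that $B \cap D$ is a standard disk. Remove $\operatorname{int} B$ together with the original $\operatorname{int} D^4$ to form
\[
W := \#_N S^2 \times S^2 \setminus (\operatorname{int} D^4 \sqcup \operatorname{int} B),
\]
which is a smooth compact connected oriented cobordism from $S^3$ to $S^3$. Then $S := D \setminus \operatorname{int}(B \cap D)$ is a properly and smoothly embedded annular cobordism in $W$ from the unknot $U \subset \partial B$ to $K$.

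Next, I would apply \cref{main knot}~(v) with $K$ replaced by $U$ and $K'$ replaced by $K$. The hypotheses are immediate: $H_1(W;\Z) = 0$ and $W$ is spin so $w_2(W) = 0$; the class $[S] \in H_2(W,\partial W;\Z)$ vanishes because $D$ is null-homologous, hence $[S]$ is divisible by $2$ and $\mathrm{P.D.}(w_2(W)) = [S]/2 \bmod 2$. The numerical invariants are
\[
\sigma(W) = 0, \quad b^+(W) = N, \quad [S]^2 = 0, \quad g(S) = 0, \quad \sigma(U) = 0,
\]
and $\kappa(U) = 0$ by the remark following \cref{main theo}, since $\iota_U$ is the complex conjugation on $S^3 \subset \C^2$. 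Substituting everything into the inequality~\eqref{ineq1} collapses it to
\[
-\frac{9}{16}\sigma(K) \leq N + \kappa(K),
\]
which rearranges to the desired bound $-\frac{9}{16}\sigma(K) - \kappa(K) \leq \mathrm{sn}(K)$.

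There is no substantive obstacle once \cref{main knot}~(v) is in hand; the only subtlety is choosing the orientation convention so that $U$ is the incoming end and $K$ the outgoing one, which is what produces the signs $-\frac{9}{16}\sigma(K)$ and $-\kappa(K)$ rather than their opposites. The Arf invariant hypothesis enters only to guarantee that $\mathrm{sn}(K)$ is defined via the stabilization theorem of \cite{Sch10}; it plays no further role in the estimate.
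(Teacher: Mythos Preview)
Your proof is correct and follows essentially the same approach as the paper: both apply \cref{main knot}~(v) with the unknot as the incoming knot and $K$ as the outgoing knot, taking $W$ to be the twice-punctured $\#_N S^2\times S^2$ and $S$ the null-homologous slice disk (minus a small subdisk), and then read off the inequality from the resulting substitution. Your version is slightly more explicit about constructing the cobordism by removing a second ball, but the content is the same.
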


It is a natural and basic question whether the topological stabilizing number and smooth stabilizing number have an essential difference.
More concretely, it is asked in \cite[Question 1.4]{CN20} whether there exists a knot $K$ such that 
\[
 0< \mathrm{sn}^{\mathrm{Top}}(K) < \mathrm{sn}(K).
\]
We give the affirmative answer to this question:
\begin{theo}
\label{stabilizing app}
 There exists a knot $K$ in $S^3$ with $\operatorname{Arf}(K) = 0$ such that 
  $0< \mathrm{sn}^{\mathrm{Top}}(K) < \mathrm{sn}(K)$ holds. Moreover, we have $0<\mathrm{sn}(\#_n K )$ for any positive integer $n$ and 
  \[
  \lim_{n\to \infty }\left( \mathrm{sn}(\#_n K )- \mathrm{sn}^{\mathrm{Top}}(\#_n K) \right) = \infty . 
  \]
\end{theo}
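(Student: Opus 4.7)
The plan is to take $K = T(3,5)$ as the witness and to pass to connected sums for the asymptotic statement. Since the branched double cover of $T(3,5)$ is the Poincar\'e homology sphere $\Sigma(2,3,5)$, one has $\det(T(3,5))=1$ and hence $\operatorname{Arf}(T(3,5)) = 0$, so $\mathrm{sn}(T(3,5))$ is defined. With $\sigma(T(3,5)) = -8$, and invoking \cref{torus knot calculation} together with the standard value $\bar{\mu}(\Sigma(2,3,5)) = 1$ of the Neumann--Siebenmann invariant, I obtain $\kappa(T(3,5)) = -\frac{1}{2}$. Substituting into \cref{H-sliceing number1} yields
\begin{align*}
\mathrm{sn}(T(3,5)) \;\geq\; -\frac{9}{16}\sigma(T(3,5)) - \kappa(T(3,5)) \;=\; \frac{9}{2}+\frac{1}{2} \;=\; 5.
\end{align*}

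For the topological side, I would invoke the classical inequality $\mathrm{sn}^{\mathrm{Top}}(K) \leq g_4^{\mathrm{Top}}(K)$, obtained by tubing a locally flat genus-$g$ surface bounding $K$ in $B^4$ through handles of $\#_g(S^2\times S^2)$ to a null-homologous locally flat disc. Since $g_4(T(3,5)) = 4$ by Kronheimer--Mrowka and $|\sigma(T(3,5))|/2 = 4 \leq g_4^{\mathrm{Top}}(T(3,5)) \leq g_4(T(3,5)) = 4$, one concludes $g_4^{\mathrm{Top}}(T(3,5))=4$ and thus $\mathrm{sn}^{\mathrm{Top}}(T(3,5)) \leq 4$. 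Combining with the trivial signature lower bound $\mathrm{sn}^{\mathrm{Top}}(T(3,5)) \geq |\sigma(T(3,5))|/2 = 4$, I get $0 < 4 = \mathrm{sn}^{\mathrm{Top}}(T(3,5)) < 5 \leq \mathrm{sn}(T(3,5))$, which is the first assertion.

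For the asymptotic statement, put $K_n = \#_n T(3,5)$. Additivity of the Arf invariant and the signature, together with the connected-sum formula \cref{connected sum} for $\kappa$, give $\operatorname{Arf}(K_n) = 0$, $\sigma(K_n) = -8n$, and $\kappa(K_n) = -n/2$. Applying \cref{H-sliceing number1} yields $\mathrm{sn}(K_n) \geq 5n$, which in particular is positive for every $n\geq 1$. On the other hand, subadditivity of the topological 4-genus under connected sum gives $\mathrm{sn}^{\mathrm{Top}}(K_n) \leq g_4^{\mathrm{Top}}(K_n) \leq 4n$. Subtracting, $\mathrm{sn}(K_n) - \mathrm{sn}^{\mathrm{Top}}(K_n) \geq n \to \infty$.

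The main delicate step is pinning down the value (and especially the sign) of $\bar{\mu}(\Sigma(2,3,5))$ in the convention of \cref{torus knot calculation}, so that $\kappa(T(3,5)) = -\frac{1}{2}$ and the lower bound $-\frac{9}{16}\sigma - \kappa = 5$ strictly exceeds the signature bound $|\sigma|/2 = 4$. This one-point gap between the $K$-theoretic Fr{\o}yshov bound and the classical signature bound is precisely what forces both the strict inequality for a single knot and the linear divergence after taking connected sums; the remaining steps are direct applications of \cref{H-sliceing number1}, \cref{connected sum}, and the classical topological inequality $\mathrm{sn}^{\mathrm{Top}} \leq g_4^{\mathrm{Top}}$.
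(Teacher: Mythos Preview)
Your approach is the same as the paper's --- bound $\mathrm{sn}$ from below via \cref{H-sliceing number1} and $\mathrm{sn}^{\mathrm{Top}}$ from above via $g_4^{\mathrm{Top}}$ --- but your choice of witness $K=T(3,5)$ does not work because you have the sign of $\kappa(T(3,5))$ wrong. In the paper's conventions one has $\bar\mu(\Sigma(2,3,5))=-1$, hence by \cref{torus knot calculation}
\[
\kappa(T(3,5))=-\tfrac{1}{2}\,\bar\mu(\Sigma(2,3,5))=+\tfrac{1}{2},
\]
not $-\tfrac12$; this is also recorded explicitly in \cref{cor: kappa for torus knot 6npm1} (the line $\kappa(\#_m T(3,12n-7))=m/2$ with $n=m=1$). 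Plugging the correct value into \cref{H-sliceing number1} gives only
\[
\mathrm{sn}(T(3,5))\ \ge\ -\tfrac{9}{16}(-8)-\tfrac12\ =\ 4,
\]
which exactly matches your upper bound $\mathrm{sn}^{\mathrm{Top}}(T(3,5))\le g_4^{\mathrm{Top}}(T(3,5))=4$, so no strict inequality follows. The same cancellation persists under connected sums: with $\kappa(\#_n T(3,5))=n/2$ you obtain $\mathrm{sn}(\#_n T(3,5))\ge 4n$ while $\mathrm{sn}^{\mathrm{Top}}(\#_n T(3,5))\le 4n$, so the divergence statement is not established either.

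The repair is simply to pick a knot for which the $\kappa$--correction does not eat the gap. The paper takes $K=T(3,11)$: there $\sigma=-16$, $\kappa=0$, so \cref{H-sliceing number1} gives $\mathrm{sn}\ge 9$, whereas $g_4^{\mathrm{Top}}(T(3,11))=\lceil 22/3\rceil=8$ yields $\mathrm{sn}^{\mathrm{Top}}\le 8$; the rest of your argument (additivity of $\sigma$ and of $\kappa$ via \cref{connected sum}, and the inequality $\mathrm{sn}^{\mathrm{Top}}\le g_4^{\mathrm{Top}}$) then goes through verbatim. You correctly flagged the sign of $\bar\mu$ as the delicate point --- it is, and it is exactly where your argument breaks.
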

For example, we can take $K$ in \cref{stabilizing app} to be $T(3,11)$.
Moreover, in fact, we may detect many examples of $K$ which satisfy the statement of \cref{stabilizing app}.
See \cref{slicing number different} for more examples.

\subsection{Applications to relative genera} 

It is one of the most classical problems in low dimensional topology to study genus bounds of embedded surfaces in 4-manifolds.
For a given knot in $S^3$, one may consider bounds on genera of surfaces bounded by the knot in a punctured 4-manifold.
We shall apply the results explained in \cref{K theoretic knot concordance invariant} to this problem.
%Recently, Manolescu, Marengon, Sarkar and Willis \cite{MMSW19} defined the notion of H-slice.
%A given knot $K$ in $S^3$ is said to be \textit{smoothly (resp. topologically) H-slice} in an oriented closed smooth 4-manifold $X$ if $K$ bounds a properly embedded smooth (resp. locally flat)  null-homologous disk in $ X\setminus \operatorname{Int}{D^4}$.

Let $X$ be an oriented smooth closed 4-manifold with a second homology class $x \in H_2(X;\Z)$.
For a knot $K$ in $S^3$, let $g_{X,x}(K)$ be the minimum of genera of surfaces $S$ which are properly and smoothly embedded oriented connected compact surfaces in $X \setminus \operatorname{int} D^4$ such that $\partial S =K$ and $[S] =x \in H_2(X;\Z)$.
Here $[S]$ denotes the relative fundamental class of $S$, and $H_2(X;\Z)$ is naturally identified with $H_2(X \setminus \operatorname{int} D^4;\Z)$.
This quantity $g_{X,x}(K)$ is called the {\it smooth relative $(X,x)$-genus} of $K$, and
the topological version $g^{\mathrm{Top}}_{X,x}(K)$ is defined by considering locally flat embeddings instead of smooth embeddings.
The relative genera $g^{\mathrm{Top}}_{X,x}(K)$ and  $g_{X,x}(K)$ are natural generalizations of classical invariants: the topological and smooth 4-genera (or slice genera), which are defined as $g^{\mathrm{Top}}_{S^4,0}(K)$ and  $g_{S^4,0}(K)$.
There are various known lower bounds on $g^{\mathrm{Top}}_{X,x}(K)$, such as \cite{Gi81, manolescu, R65, FK78, KR20, CN20}.
Also there are known lower bounds on $g_{X,x}(K)$ for both definite 4-manifolds (for examples, see \cite{OS03, KM13, MMSW19}) and indefinite 4-manifolds \cite{MR06, HR20, IMT21}.
On the other hand, for indefinite 4-manifolds with vanishing gauge-theoretic invariant, there are few ways to obtain lower bounds on $g_{X,x}(K)$, except for an approach based on 10/8-inequality \cite{manolescu}. 
%and topological obstructions coming from the Arf invariant \cite{R65, FK78} and the Tristram--Levine signature function \cite{CN20}.

There is a big difference between the topological and smooth 4-genera: it follows from the affirmative answer to the Milnor conjecture by Kronehimer and Mrowka \cite{KM93} and a recent result \cite{BBL20} by Baader, Banfield and Lewark on topological slice genus that 
\[
\lim_{n\to \infty} \left(g_{S^4,0}(K_n )  - g^{\mathrm{Top}}_{S^4,0}(  K_n )\right) = \infty
\]
for $K_n=T(3,12n-1)$, the torus knot of type $(3,12n-1)$.
A similar observation can be applied to definite 4-manifolds $X$ and to every $x \in H_2(X;\Z)$ using the Ozv\'ath--Szab\'o $\tau$-invariant \cite{OS03}, and one obtains $\lim_{n\to \infty} (g_{X,x}(K_n )  - g^{\mathrm{Top}}_{X,x}(K_n )) = \infty$. 

We prove that, for every smooth closed 4-manifold $X$ with vanishing first homology, there is a big difference between $g_{X,x}(K)$ and $g^{\mathrm{Top}}_{X,x}(K)$:

\begin{theo} \label{knot main app} There exists a knot $K'$ such that the following result holds.  Let $X$ be a smooth closed 4-manifold with $H_1(X;\Z) =0$ and $x \in H_2(X;\Z)$ be a second homology class  which is divisible by $2$ and satisfies $x/2 = PD(w_2(X)) \operatorname{mod} 2$.  
Then, for every knot $K$ in $ S^3$, we have 
     \[
 \lim_{n\to \infty} \left(g_{X,x}( K \# (\#_n K')  )  - g^{\mathrm{Top}}_{X,x}(K \# (\#_n K') ) \right) = \infty.
 \]
\end{theo}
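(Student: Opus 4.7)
My plan is to fix $K'$ to be a specific knot --- a natural candidate being a torus knot such as $T(3,11)$, mirroring the choice used in \cref{stabilizing app} --- and then to derive, for $L_n := K \# (\#_n K')$, a smooth lower bound on $g_{X,x}(L_n)$ and a topological upper bound on $g^{\mathrm{Top}}_{X,x}(L_n)$ whose growths in $n$ differ.

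For the smooth lower bound, view $W := X \setminus (\operatorname{int} D^4 \sqcup \operatorname{int} D^4)$ as a smooth cobordism from $S^3$ to $S^3$, so that $H_1(W;\Z)=0$ follows from $H_1(X;\Z)=0$. Any surface $S_0$ in $X \setminus \operatorname{int} D^4$ realising $g_{X,x}(L_n)$ can be turned into a cobordism $S$ in $W$ from the unknot $U$ to $L_n$ of the same genus (remove a small second ball disjoint from $S_0$ and attach a trivial annulus joining $S_0$ to a small disk in the new boundary sphere), and the divisibility and $w_2$-hypotheses on $x$ pass to $[S]$ verbatim. Applying \cref{main knot}(v) with the theorem's $K$ set to $U$ and its $K'$ set to $L_n$, and using $\sigma(U)=\kappa(U)=0$, gives
\[
g_{X,x}(L_n) \geq -\frac{\sigma(X)}{8} + \frac{9}{32}x^2 - \frac{9}{16}\sigma(L_n) - b^+(X) - \kappa(L_n).
\]
The additivity $\sigma(L_n) = \sigma(K)+n\sigma(K')$ together with the connected sum formula \cref{connected sum} (applicable since $K'$ is a torus knot of odd coprime type) transforms this into
\[
g_{X,x}(L_n) \geq A_{X,x,K} + n\bigl(-\tfrac{9}{16}\sigma(K') - \kappa(K')\bigr),
\]
where $A_{X,x,K}$ is independent of $n$.

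For the topological upper bound, I would use the standard band-sum inequality $g^{\mathrm{Top}}_{X,x}(J_1 \# J_2) \leq g^{\mathrm{Top}}_{X,x}(J_1) + g^{\mathrm{Top}}_4(J_2)$, obtained by gluing a locally flat slice surface of genus $g^{\mathrm{Top}}_4(J_2)$ inside a $D^4$ onto any locally flat $(X,x)$-cobounding surface for $J_1$. Iterating,
\[
g^{\mathrm{Top}}_{X,x}(L_n) \leq g^{\mathrm{Top}}_{X,x}(K) + n \cdot g^{\mathrm{Top}}_4(K').
\]

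Subtracting the two bounds, $g_{X,x}(L_n) - g^{\mathrm{Top}}_{X,x}(L_n)$ is bounded below by a linear function in $n$ whose slope is $-\tfrac{9}{16}\sigma(K') - \kappa(K') - g^{\mathrm{Top}}_4(K')$. The main obstacle is to verify that this slope is strictly positive for a concrete choice of $K'$: for $K' = T(3,11)$ one computes $\kappa(K')$ via \cref{torus knot calculation} in terms of $\bar{\mu}(\Sigma(2,3,11))$, combines this with the classical value of $\sigma(T(3,11))$ and the Baader--Banfield--Lewark estimate \cite{BBL20} on $g^{\mathrm{Top}}_4(T(3,11))$, and performs a finite arithmetic check. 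Once positivity of the slope is established, the difference tends to $+\infty$ as $n \to \infty$, which is the desired conclusion.
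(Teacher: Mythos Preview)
Your proposal is correct and follows essentially the same route as the paper: choose $K'=T(3,11)$, obtain the smooth lower bound from \cref{main knot}(v) (via the connected sum formula $\kappa(L_n)=\kappa(K)+n\kappa(K')$ and additivity of signature), obtain the topological upper bound from subadditivity and the Baader--Banfield--Lewark value $g_4^{\mathrm{Top}}(T(3,11))=8$, and check the slope $-\tfrac{9}{16}(-16)-0-8=1>0$. The only cosmetic difference is that the paper removes the second ball through a point of $S_0$ rather than disjoint from it, which avoids your annulus attachment but yields the same surface; your arithmetic check, left implicit, is exactly the computation $9-8=1$ above.
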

Concretely, we can take $K'$ in \cref{knot main app} to be, for example, $K'=T(3,11)$. 
%The existence of a family of knots $\{K_n\}_{n\in \Z_{>0}} $ satisfying $\lim_{n\to \infty} (g_{X,x}(K_n )  - g^{\mathrm{Top}}_{X,x}(K_n ) = \infty$ under the same assumption on $(X, x)$ in \cref{knot main app} also follows from  \cref{manolescu1} obtained from Manolescu's relative 10/8-inequality \cite{Ma14}. 
%\cref{knot main app}(i) also follows from \cref{manolescu1} obtained from Manolescu's relative 10/8-inequality \cite{Ma14}. 
%\todo{Mention that the statement ``there exists a sequence ${K_n}$...'' follows from a considaration based on Manolescu's relative 10/8}.
A remarkable feature of \cref{knot main app} is that we do not have to impose any restriction on the intersection form of $X$.

In \cref{ Genus bounds from }, we give
several lower bounds on relative genera stronger than known ones are obtained for a class of 4-manifolds including $\#_n K3$ and $\#_n\C P^2\#_m\overline{\C P}^2$, summarized in \cref{H-sliceing numberK3,cp-genus} respectively. 

\begin{comment}
\begin{rem} 
Using the Tristram--Levine signature function, another inequality (\cref{3 methods}) holds for the topological (locally flat) embeddings. Also, Manolescu's relative 10/8-inequality (\cite{Ma14}) gives a similar inequality. 
 We compare our result with such known results in \cref{3 methods,ex1,CP2topological}.     
\end{rem}
\end{comment}

%By a similar method, we also have the Fr\o yshov type invariant. 

%\begin{theo}  For any spin rational homology 3-sphere $Y$ equipped with $\Z_2$-action $\iota$, we can associate the invariant $\delta (Y, \iota ) \in \Q $ such that 
%\[
%-\frac{\sigma(X)}{16} \leq \delta  (Y, \iota )
%\]
%holds for any spin 4-manifold satisfying $b^+_{\iota}=0$ bounded by $Y$ equipped with an extension of $\iota$ as a $\Z_2$-action.  In particular, for a spin rational homology 3-sphere obtained as the double branched cover of a knot $K$ in $ S^3$, we have 
%\[
%\delta  (Y, \iota )= -\frac{\sigma(X)}{16}. 
%\]
%\end{theo}

\subsection{Applications to non-extendable and non-smoothable actions}
\label{subsec:Applications to non-extendable and non-smoothable actions}

In \cref{section Applications to non-extendable actions}, we use \cref{main theo} to obstruct an extension of involutions on 3-manifolds to spin 4-manifolds.
Regard Brieskorn homology spheres $\Sigma(p,q,r)$ as a subset of $\C^3$ followings the standard definition.
Consider the involution $\iota$ on $\Sigma(2,q,r)$ defined by $\iota(z_1,z_2,z_3)=(-z_1,z_2,z_3)$.
A recent result by Anvari and Hambleton~\cite[Theorem~A]{AH21} showed that the standard finite cyclic group actions on $\Sigma(p,q,r)$ does not extend to any contractible smooth 4-manifold bounded by $\Sigma(p,q,r)$ (if exists), as a smooth involution.
In particular, $\iota$ does not extend to any contractible smooth 4-manifold bounded by $\Sigma(2,q,r)$ as a smooth involution.
(See also recent work by Baraglia--Hekmati~\cite[Example~7.7]{BH21}.)
As a complementary result, we obstruct to extending such $\iota$ to non-contractible 4-manifolds for some class of Brieskorn homology spheres:

\begin{theo}
\label{theo intro nonextendable general}
Let $a_{1}, \ldots, a_{n}$ be pairwise coprime natural numbers.
Suppose that $a_{1}$ is an even number.
Set $Y=\Sigma(a_{1}, \ldots, a_{n})$, and define an involution $\iota : Y \to Y$ by
\[
\iota(z_1,z_2, \ldots,z_n)=(-z_1,z_2,\ldots,z_n).
\]
Let $W$ be a compact connected smooth oriented spin 4-manifold bounded by $Y$ with $b_1(W)=0$.
Then we have the following:
\begin{itemize}
\item [(i)] The involution $\iota$ cannot extend to $W$ as a smooth involution so that
\begin{align*}
-\frac{\sigma(W)}{16} 
> b^+(W)-b^+_{\iota}(W) - \frac{\bar{\mu}(Y)}{2}.
\end{align*}
\item[(ii)] Suppose that $\sigma(W) \neq 8\bar{\mu}(Y)$.
Then $\iota$ cannot extend to $W$ as a homologically trivial smooth involution, while $\iota$ can extend to $W$ as a homologically trivial diffeomorphism.
\end{itemize}
\end{theo}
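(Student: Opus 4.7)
The plan is to derive \cref{theo intro nonextendable general} from the relative $10/8$-inequality \cref{main theo}(iv), after establishing the key computation
\[
\kappa(Y,\mathfrak{t},\iota)=-\frac{\bar{\mu}(Y)}{2}
\]
for $Y=\Sigma(a_{1},\dots,a_{n})$ with $a_{1}$ even, equipped with the standard involution. This generalizes \cref{torus knot calculation}, which treats the special case $n=3,\ a_{1}=2$. My approach to the computation is to exploit the Seifert fibered structure on $Y$: the parity hypothesis on $a_{1}$ ensures that $\iota\colon z_{1}\mapsto -z_{1}$ lies inside the natural $S^{1}$-action on $Y$, so Kato's involution becomes a $\mathbb{Z}/2$-subgroup of an $S^{1}$-symmetry already compatible with the Seiberg--Witten equations. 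A $\Pin(2)\times\mathbb{Z}/2$-equivariant refinement of the Mrowka--Ozsv\'ath--Yu/Manolescu description of the Floer spectrum of Seifert fibered spaces should then let one read off the $K$-theoretic Fr{\o}yshov level as $-\bar{\mu}(Y)/2$. The same argument applied to the orientation reversal gives $\kappa(-Y,\mathfrak{t},\iota)=\bar{\mu}(Y)/2$, consistent with $\bar{\mu}(-Y)=-\bar{\mu}(Y)$.

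Granting these $\kappa$-values, part~(i) is immediate. Puncture $W$ at an $\iota$-fixed interior point (which can be arranged after extending $\iota$ smoothly near the fixed locus of the boundary involution) to view it as a spin cobordism with involution from $(S^{3},\mathfrak{t}_{0},\mathrm{conj})$ to $(Y,\mathfrak{t},\iota)$; by \cref{ex: S3 case}, $\kappa(S^{3},\mathfrak{t}_{0},\mathrm{conj})=0$, so \cref{main theo}(iv) yields
\[
-\frac{\sigma(W)}{16}\leq b^{+}(W)-b^{+}_{\iota}(W)-\frac{\bar{\mu}(Y)}{2},
\]
whose contrapositive is the statement of~(i). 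For the first half of~(ii), assume for contradiction that $\iota$ extends as a homologically trivial smooth involution. Then $\iota^{\ast}=\mathrm{id}$ on $H^{2}(W;\mathbb{R})$, so $b^{+}_{\iota}(W)=b^{+}(W)$ and~(i) forces $\sigma(W)\geq 8\bar{\mu}(Y)$. Running the same argument on $(-W,-Y,\iota)$ yields the reverse bound $\sigma(W)\leq 8\bar{\mu}(Y)$, hence $\sigma(W)=8\bar{\mu}(Y)$, contradicting $\sigma(W)\neq 8\bar{\mu}(Y)$.

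The complementary existence assertion in~(ii)---that $\iota$ nevertheless extends over $W$ as a homologically trivial diffeomorphism---follows from a direct smooth-topological construction: since $Y$ is a $\mathbb{Z}$-homology sphere and $b_{1}(W)=0$, any orientation-preserving self-diffeomorphism of $\partial W$ can be extended handlewise (or via a mapping-cylinder trick in a collar of $\partial W$) to a self-diffeomorphism of $W$ whose action on $H_{\ast}(W;\mathbb{Z})$ is automatically trivial. I expect the main obstacle to be the $\kappa$-computation of Step~1: identifying Kato's involution with a subgroup of the Seifert $S^{1}$-action and verifying that the $\mathbb{Z}/2$-equivariant refinement of the Floer-spectrum calculation still returns $-\bar{\mu}(Y)/2$ is where the technical content concentrates. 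The remaining ingredients---\cref{main theo}(iv), orientation reversal, and the handle-theoretic extension---are comparatively routine.
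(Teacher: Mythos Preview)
Your overall strategy matches the paper's proof closely: compute $\kappa(Y,\mathfrak{t},\iota)=-\bar{\mu}(Y)/2$ via the Seifert structure (the paper's \cref{theo:Seifert cal most general}), feed this into \cref{main theo}(iv) to obtain~(i), and then apply the inequality to both $W$ and $-W$ to force $\sigma(W)=8\bar{\mu}(Y)$ in the homologically trivial case. Your identification of the $\kappa$-computation as the technical core, and of the $S^{1}$-action containing $\iota$ as the key mechanism, is exactly right.

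There is, however, a genuine gap in your argument for the existence half of~(ii). The claim that ``any orientation-preserving self-diffeomorphism of $\partial W$ can be extended handlewise \ldots\ to a self-diffeomorphism of $W$ whose action on $H_{\ast}(W;\mathbb{Z})$ is automatically trivial'' is false in general: an arbitrary diffeomorphism of the boundary of a $4$-manifold need not extend to the interior, and even when it does, there is no reason the extension is homologically trivial. The paper's argument is much simpler and uses precisely the observation you already made for the $\kappa$-computation: since $\iota$ lies in the standard $S^{1}$-action on the Seifert homology sphere $Y$, it is smoothly isotopic to the identity of $Y$. One then extends $\iota$ over a collar of $\partial W$ by this isotopy and by the identity elsewhere; the resulting diffeomorphism of $W$ is isotopic to $\mathrm{id}_{W}$ and hence homologically trivial. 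You had the right ingredient in hand but did not apply it here.
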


It is well-known that, if a Seifert homology sphere bounds a smooth homology 4-ball, then its Neumann--Siebenmann invariant vanishes.
This shows that \cref{theo intro nonextendable general} is complementary to the aforementioned results by Anvari and Hambleton~\cite{AH21} and by Baraglia and Hekmati~\cite{BH21} about contracible or homology ball bounds.
The results of \cref{theo intro nonextendable general} shall be extended to connected sums of Seifert homology spheres in \cref{theo: nonext connecedt sum Seifert cal most general}.

The constraint on smooth involutions given in \cref{theo intro nonextendable general} is strong enough to detect non-smoothable group actions as follows.
Let $\iota_r : S^2\times S^2 \to S^2\times S^2$ be an orientation-preserving smooth involution defined as the product of the $\pi$-rotation of $S^2$ along an axis and the identity of $S^2$.
%Also, define fix a smooth involution $\iota_t : \#_2S^2\times S^2 \to \#_2 S^2\times S^2$ such that $\#_2S^2\times S^2/\iota_t =S^2\times S^2 $ and fixed-point set is a null-homologous trivial $S^2$-knot in $S^2\times S^2 $. (See \cref{a new action on s2s2} for details of such an action. )
Let $M(p,q,r)$ denote the Milnor fiber associated to the
sigularity $z_1^p+z_2^q+z_3^r=0$.

\begin{theo}
\label{theo: intro non-smoothable actions main thm 6nminus1}
Let $W$ be the 4-manifold defined by
\[
W = M(2,3,6n-1)\#_{2n+1} S^2 \times S^2
\]
for $n\geq 2$, with boundary $\Sigma(2,3,6n-1)$.
Then there exists a locally linear topological involution $\iota_W : W \to W$ with non-empty fixed-point set that satisfies the following properties:
\begin{itemize}
\item[(I)] The involution $\iota_W$ is not smooth with respect to every smooth structure on $W$.

\item[(II)] The restriction of $\iota_W$ to the boundary $\del{W}$ is the involution $\iota : \Sigma(2,3,6n-1) \to \Sigma(2,3,6n-1)$ defined by $\iota(z_1, z_2, z_3) = (-z_1, z_2, z_3)$.
In particular, $\iota_W|_{\del W}$ extends as a diffeomorphism of $W$ for every smooth structure on $W$.

\item[(III)] For any $N>1$, the equivariant connected sum 
\[
\iota_W \#_N \iota_r : W \#_N S^2 \times S^2 \to W \#_N S^2 \times S^2
\]
along fixed points is also a non-smoothable involution with respect to every smooth structure on $W \#_N S^2 \times S^2$.

\item[(IV)] The quotient orbifold $W/\iota_W$ is indefinite.
More precisely, $b^+(W/\iota_W) = b^-(W/\iota_W) = 4n$.

%\item[(V)] There exists $N>1$, a smooth structure on $W \#_{2N} S^2 \times S^2$ and a smooth involution 
%\[
%\iota' : W \#_{2N} S^2 \times S^2 \to W \#_{2N} S^2 \times S^2
%\]
%such that $\iota'|_{\partial W \#_{2N} S^2 \times S^2} = \iota|_{\partial W \#_{2N} S^2 \times S^2}$ and $W \#_{2N} S^2 \times S^2/ \iota' = W \#_{2N} S^2 \times S^2/ \iota $ as smooth manifolds. 
\end{itemize}
\end{theo}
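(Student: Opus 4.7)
The plan is to construct $\iota_W$ as a genuinely topological (locally linear) involution on $W$---not one transported from a smooth involution via a Freedman homeomorphism---whose cohomological action violates the $10/8$-type inequality of Theorem~\ref{theo intro nonextendable general} with respect to every smooth structure on $W$.

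I would first assemble $\iota_W$ by equivariantly combining the natural smooth involution on the Milnor fiber $M(2,3,6n-1)$, given by $(z_1,z_2,z_3)\mapsto(-z_1,z_2,z_3)$, with suitable locally linear involutions on the $2n+1$ copies of $S^2\times S^2$. The Milnor-fiber piece realizes $M(2,3,6n-1)$ as a double cover of the topological $4$-ball branched along a pushed-in Seifert surface of the torus knot $T(3,6n-1)$, from which one reads off the eigenspace decomposition on $H^2$. The involutions on the $S^2\times S^2$ summands are chosen (using topological realization results for locally linear $\mathbb{Z}/2$-actions on simply connected $4$-manifolds, in the spirit of Freedman--Quinn and Hambleton--Kreck, together with equivariant connected sum along fixed strata) so that the combined $\iota_W$ has $b^\pm_{\iota_W}(W)=4n$ and restricts to $\iota$ on the boundary. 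The Cartan--Leray identification $H^2(W/\iota_W;\mathbb{Q})\cong H^2(W;\mathbb{Q})^{\iota_W}$ then yields~(IV), and the compatibility with the boundary gives~(II).

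For~(I), suppose $\iota_W$ is smooth with respect to some smooth structure on $W$. Because the induced action of $\iota_W$ on $H^*(W;\mathbb{R})$ depends only on the topology of $W$ and the topological conjugacy class of $\iota_W$, the quantities $\sigma(W)$, $b^+(W)$, $b^+_{\iota_W}(W)$, and $\bar{\mu}(\Sigma(2,3,6n-1))$ are independent of the chosen smooth structure. Applying Theorem~\ref{theo intro nonextendable general}(i) with this smooth structure would force
\[
-\frac{\sigma(W)}{16} \le b^+(W)-b^+_{\iota_W}(W)-\frac{\bar{\mu}(\Sigma(2,3,6n-1))}{2},
\]
but substituting the known values of $\sigma(M(2,3,6n-1))$, $\bar{\mu}(\Sigma(2,3,6n-1))$, and the constructed $b^+_{\iota_W}(W)$ violates this inequality strictly for $n\ge 2$, yielding the required contradiction. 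Property~(III) is now immediate: since $\iota_r$ acts trivially on $H^2(S^2\times S^2;\mathbb{R})$, equivariant connected sum with $(S^2\times S^2,\iota_r)$ preserves the quantity $b^+(W)-b^+_{\iota_W}(W)$ and leaves $\sigma$ and $\bar{\mu}$ unchanged, so the same obstruction applies to $\iota_W\#_N\iota_r$ on $W\#_N S^2\times S^2$.

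The principal difficulty is the construction in the first step: one must realize $\iota_W$ topologically rather than by transporting a smooth involution through a homeomorphism (the latter would supply a smooth structure in which $\iota_W$ is smooth, contradicting~(I)). This forces one to combine the natural smooth involution on $M(2,3,6n-1)$ with locally linear \emph{topological} involutions on the $S^2\times S^2$ summands whose eigenspace structure is tuned to give the prescribed $b^\pm_{\iota_W}(W)=4n$, while simultaneously preserving the topological type of $W$ and the prescribed boundary restriction; verifying that such a realization is compatible with the relevant surgery-theoretic obstructions is the most delicate part of the argument. The remainder of the proof is a bookkeeping exercise using the Brieskorn signature formula for $M(2,3,6n-1)$ and the Neumann--Siebenmann invariant of $\Sigma(2,3,6n-1)$.
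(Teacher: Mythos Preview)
Your proposed construction has a fundamental arithmetic obstruction that cannot be overcome. The natural involution $\iota(z_1,z_2,z_3)=(-z_1,z_2,z_3)$ on $M=M(2,3,6n-1)$ exhibits $M$ as the double branched cover of $D^4$ along the pushed-in Seifert surface of $T(3,6n-1)$; hence $b^+_\iota(M)=b^+(D^4)=0$ (cf.\ \cref{b+iotarem}). Since equivariant connected sum respects the direct-sum decomposition of $H^2$ and the eigenspaces of the involution, for any choice of involutions $\iota_i$ on the $2n+1$ copies of $S^2\times S^2$ one has
\[
b^+_{\iota_W}(W)=b^+_\iota(M)+\sum_{i=1}^{2n+1} b^+_{\iota_i}(S^2\times S^2)\le 0+(2n+1)=2n+1.
\]
But you need $b^+_{\iota_W}(W)=4n$, and $4n>2n+1$ for all $n\ge 2$. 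So no involution built as you describe can ever satisfy~(IV), and in fact the resulting $b^+(W)-b^+_{\iota_W}(W)\ge 2n-1$ is too large to violate the inequality in \cref{theo intro nonextendable general}, so~(I) fails as well. The surgery-theoretic realization issue you flag as ``the principal difficulty'' is moot: the eigenspace bookkeeping already rules out your construction.

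The paper takes an entirely different route. It uses the bound $\mathrm{sn}^{\mathrm{Top}}(T(3,6n-1))\le g_4^{\mathrm{Top}}(T(3,6n-1))=4n$ to find a \emph{locally flat topological} null-homologous slice disk $S$ for $T(3,6n-1)$ in $(\#_{4n}S^2\times S^2)\setminus\operatorname{int}D^4$. The double branched cover $\Sigma(S)$ is then a simply-connected topological spin $4$-manifold bounded by $\Sigma(2,3,6n-1)$ whose intersection form one checks (via \cref{branched spin}) to agree with that of $W$; Freedman's classification (\cref{theo: Freedman boundary}) gives a homeomorphism $\Sigma(S)\cong W$, and $\iota_W$ is the transported covering involution. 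Your worry that ``transporting through a Freedman homeomorphism would supply a smooth structure in which $\iota_W$ is smooth'' is misplaced: the covering involution on $\Sigma(S)$ is only locally linear to begin with, because $S$ is only locally flat. With this construction one has $W/\iota_W\cong\#_{4n}S^2\times S^2$ immediately, giving~(IV), and $b^+(W)-b^+_{\iota_W}(W)=0$, so \cref{main theo} obstructs smoothability in every smooth structure.
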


The property (III) means that the non-smoothability of $\iota_W$ is stable under a suitable equivariant connected sum.
\cref{theo: intro non-smoothable actions main thm 6nminus1} shall be generalized in
\cref{theo: non-smoothable actions main thm 6nminus1} for connected sums of $W$ in \cref{theo: intro non-smoothable actions main thm 6nminus1}, and also $M(2,3,6n+1)$ shall be treated instead of $M(2,3,6n-1)$.

\subsection{Floer homotopy type and Floer $K$-theory}
\label{subsection Equivariant Floer homotopy type and Floer Ktheory}

The all results explained until the previous \lcnamecref{subsec:Applications to non-extendable and non-smoothable actions} are derived from \cref{main theo}.
The main ingredients to establish \cref{main theo} are versions of Floer homotopy type and of Floer $K$-theory.
Let us clarify the nature of our construction of these ingredients in this \lcnamecref{subsection Equivariant Floer homotopy type and Floer Ktheory}.

Based upon Manolescu's construction of Seiberg--Witten Floer stable homotopy type \cite{Ma03},
we shall construct versions of {\it Seiberg--Witten Floer stable homotopy type for involutions}
\[
DSWF_G(Y, \frakt,\iota)
\]
and of {\it Seiberg--Witten Floer $K$-theory for involutions}
\[
DSWFK_G(Y, \frakt,\iota),
\]
which are defined for a spin rational homology 3-sphere $(Y, \frakt)$ with an involution $\iota$ which preserves $\frakt$ and whose fixed-point set is of codimension-2.
Here $G$ stands for $G=\Z_4$, which is the subgroup of $Pin(2)$ generated by $j \in Pin(2)$, and $D$ stands for a ``doubling'' construction introduced in \cref{subsection Doubling}.

Via double branched covers, we obtain the {\it Seiberg--Witten Floer stable homotopy type of a knot $K$ in $S^3$}
\[
DSWF(K) 
\]
and the {\it Seiberg--Witten Floer $K$-theory of a knot $K$}
\[
DSWFK(K).
\]
These are closely related to the recent work by Baraglia and Hekmati~\cite{BH21}, where they established an equivariant Seiberg--Witten Floer stable homotopy type for finite group actions on spin$^c$ 3-manifolds, and an equivariant Seiberg--Witten Floer cohomology, and defined knot invariants in a similar way.
(Note also a recent combinatorial construction of knot Floer homotopy type by Manolescu and Sarkar~\cite{MS21}.)

 However, even restricting our attention to involutions and the case of spin 3-manifolds, there are significant differences between the equivariant Floer homotopy type by Baraglia--Hekmati~\cite{BH21} and our Floer homotopy type for involutions.
The first major difference is that
our construction is based on an involutive symmetry on the Seiberg--Witten equations introduced by Kato~\cite{Ka17}.
Equivariant gauge theory such as \cite{Ba19,BH21} for involutions tends to replace $b^+(W)$ with $b^+_{\iota}(W)$, the $\iota$-invariant part of the involution $\iota$ of a 4-manifold $W$.
On the other hand, complementarily, in the setup of Kato, $b^+(W)$ is replaced with $b^+(W) - b^+_{\iota}(W)$, which appears in \eqref{eq: main rel 108} in \cref{main theo} also in our setting.

To simplify complications in the construction of Seiberg--Witten Floer $K$-theory in our setup, we shall consider the ``complexification''  or  ``double" of the whole construction of the Floer homotopy type.
See \cref{subsection Doubling} for more details on this point.
%Not only for this problem, the doubling construction allows us to avoid many technical complications in the construction of the Floer homotopy type for involutions.

\subsection{Structure of the paper}
We finish off this introduction with an outline of the contents of this paper. In \Cref{subsection: An involution on the configuration space}, following the Kato's work \cite{Ka17}, we introduce the involution $I$ on the configuration space for Seiberg--Witten equations on 3-manifolds, which we mainly use in the construction of our invariant.
We apply equivariant Conley index theory to the fixed-point part of the symmetry $I$, and this will be a main ingredient of our Seiberg--Witten Floer stable homotopy type.
%to the formal gradient vector field of the Chern--Simons--Dirac functional.
%Also, we give  computations of indexes of invariant parts for the Atiyah--Hitchin--Singer operator and the Dirac operator with respect to $I$.
In \Cref{section Gequivariant Seiberg--Witten Floer homotopy theory}, for a rational homology sphere with an involution, we construct the (doubled) Seiberg--Witten Floer stable homotopy type, Floer $K$-theory, $K$-theoretic Fr{\o}yshov invariant, and corresponding invariants for knots.
The proof of \cref{main theo} is given in  \cref{subsection Proof of main theo}.
Also, we give several computations of our Seiberg--Witten Floer homotopy type and $K$-theoretic Fr{\o}yshov invariant in \cref{subsection: Calculations}.
In \cref{Applications to knot theory}, we give applications of \cref{main theo} to 4-dimensional aspects of knot theory.
We first review several fundamental calculations related to double branched covering spaces for properly embedded surfaces in punctured 4-manifolds. Using such calculations and \cref{main theo}, we prove \cref{main knot}. We discuss several examples including $\#_n S^2\times S^2$, $\#_nK3$, and $\#_n\C P^2\#_m(- \C P^2)$.
In \Cref{section Applications to non-extendable actions}, we obtain results on non-extendable actions of Seifert homology 3-spheres and non-smoothable actions on 4-manifolds with boundary using \cref{main theo}. In particular, we prove \cref{theo intro nonextendable general}.
\Cref{Comparison} is devoted to summarizing other methods to obtain relative genus bounds such as Manolescu's relative 10/8-inequality and the Tristram--Levine signature, which are related to our inequality \eqref{ineq1} in \cref{main knot}.
In \cref{Kappa invariant for knots with 8 and 9-crossings}, we provide several computations of the kappa invariant for prime knots with $8$- or $9$-crossings. 

\begin{acknowledgement}
The authors would like to express their gratitude to David Baraglia, Mikio Furuta, Nobuhiro Nakamura, O{\u{g}}uz \c{S}avk, Motoo Tange, and Yuichi Yamada for helpful comments.

The first author was partially supported by JSPS KAKENHI Grant Numbers 17H06461, 19K23412, and 21K13785.
The second author was supported by JSPS KAKENHI Grant Number 21J22979 and WINGS-FMSP program at the Graduate school of Mathematical Science, the University of Tokyo.
The third author was supported by JSPS KAKENHI Grant Number 20K22319 and RIKEN iTHEMS Program.
\end{acknowledgement}

\section{An involution on the configuration space} 
\label{subsection: An involution on the configuration space}

Following Kato's work~\cite{Ka17} in dimensional $4$, we shall define an involution on the configuration space for the Seiberg--Witten equations in the 3-dimensional setting.
First let us recall a term on involutions \cite{AB68,Bry98}.
We call a smooth involution $\iota$ on a smooth spin manifold a {\it spin involution of odd type} if $\iota$ lifts to an automorphism of the spin structure as a $\Z_4$-action.
If the fixed-point set is non-empty, this is equivalent to that $\iota$ lifts to a spin automorphism and the fixed-point set is of codimension-2, as far as the spin manifold is of $\dim \leq 4$ \cite[Proposition~8.46]{AB68}.
{\it Throughout this paper, all involutions on spin 3- or 4-manifolds we consider are of odd type}.

Let $(Y, \frakt)$ be a spin rational homology 3-sphere and $\iota$ be an involution on $Y$ preserving the isomorphism class of $\frakt$. Take an $\iota$-invariant Riemannian metric $g$ on $Y$.
Let us denote by $\mathbb{S}$ the spinor bundle of $Y$.
Suppose that $\iota$ is of odd type.
Then we can take a lift $\wt{\iota}$ of $\iota$ to an automorphism of $\mathbb{S}$ satisfying  %\todo{There are two choices of lifts. We need to see our invariant does not depend on the choices of lifts(10/13 I write it in remark2.1 (Miyazawa))} 
\[
\wt{\iota}^2 =-1 . 
\]
This lift $\tilde{\iota}$ yields a $\Z_4$-action on $\mathbb{S}$.
There exists exactly one more lift of $\iota$, which is given by $-\tilde{\iota}$.

On the quaternionic structure of the spinor bundle,
we adopt the convention that the quaternion scalars act on the right.
Following Kato~\cite[Subsection~4.2]{Ka17}, we define an involution
\begin{align}
\label{eq: def of I}
I : T^*Y \oplus \mathbb{S} \to T^*Y \oplus \mathbb{S}
\end{align}
by
\[
I  ( a, \phi) = ( -\iota^*a, \wt{\iota}( \phi)\cdot j ). 
\]
Here $a \in T_y^\ast Y$ and $\phi \in \mathbb{S}_y$ for $y \in Y$.
This involution $I$ is a direct sum of involutions $I : T^*Y \to T^*Y$ and $I : \mathbb{S} \to \mathbb{S}$, and the involution $I$ on the spinors commutes with the right action of $j \in Pin(2)$, but anti-commutes with the right action of $i$. 

We define the action of $I$ to the gauge group $\mathcal G(Y)=\{ u \colon Y \to U(1) \}$ as follows. Let $u \colon Y \to U(1)$ be an element in $\mathcal G(Y)$. We set
\[
I(u)(x):=u(\iota(x))^{-1}. 
\]
It is easy to see that this action is compatible with the action of $I$ to the spinor bundle. 
\begin{rem}
This involution $I$ is the composition of the action of $(\iota^{\ast}, \tilde{\iota})$ and the right action of $(-1, j)$. The second action was considered by Furuta~\cite{Fu01} in his proof of the 10/8-inequality. The action of $-1$ to the form part is necessary to get the equivariance for the Seiberg--Witten equations.
This is because, for any spinor $\phi$, we have $(\phi j (\phi j)^{\ast})_0 = -(\phi \phi^{\ast})_0$.
\end{rem}
The above $I$ induces an involution on the space of sections of $TY \oplus \mathbb{S}$, and
restricting this, we obtain an involution
\[
I : \operatorname{Ker}(d^* : \Omega^1(Y) \to \Omega^0(Y)) \times \Gamma(\mathbb{S}) \to  \operatorname{Ker}(d^* : \Omega^1(Y) \to \Omega^0(Y))  \times \Gamma(\mathbb{S}). 
\]
Here we consider the Sobolev norms $L^2_{k-\frac{1}{2}}$ for the spaces $\operatorname{Ker}(d^* : \Omega^1(Y) \to \Omega^0(Y))$ and $\Gamma(\mathbb{S})$ defined using the $\iota$-invariant metric and $\iota$-invariant connections for a fixed integer $ k \geq 3$.
Henceforth, $\operatorname{Ker}(d^* : \Omega^1(Y) \to \Omega^0(Y))$ will be simply denoted by $\operatorname{Ker}d^*$. 

\begin{rem}
There are two choices of lift of $\iota$, namely, $\tilde{\iota}$ and $-\tilde{\iota}$. Thus we have two involutions $I$, $I'$ from the lift $\tilde{\iota}$ and $-\tilde{\iota}$ respectively. We have an isomorphism from the $I$ fixed part to the $I'$ fixed part which is given by $(a, \phi) \mapsto (a, \phi \cdot \sqrt{-1})$. This isomorphism preserves the formal gradient flow of the Chern--Simons--Dirac functional. Therefore, the Conley index % we will consider in 
\cref{subsection $G$-equivariant Seiberg--Witten Floer homotopy type}, 
made from the $I$-fixed part and the doubled Conley index made from the $I'$-fixed part are same. 
\end{rem}

\begin{lem}\label{ooo1}
%\todo{write proof (10/13 almost O.K. (Miyazawa))}
The formal gradient of $CSD$
\[
\sigma   : \operatorname{Ker}d^* \times \Gamma(\mathbb{S})  \to \operatorname{Ker}d^* \times \Gamma(\mathbb{S}) 
\]
is equivariant with respect to the action $I$. 
\end{lem}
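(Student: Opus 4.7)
\medskip

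\noindent\textbf{Proof plan.} The plan is to exploit the decomposition of $I$ already recorded in the remark just before the statement: writing $I_{1}(a,\phi)=(-\iota^{\ast}a,\tilde{\iota}(\phi))$ for the $(\iota^{\ast},\tilde{\iota})$-part and $I_{2}(a,\phi)=(-a,\phi\cdot j)$ for the right-$(-1,j)$-part, we have $I=I_{1}\circ I_{2}=I_{2}\circ I_{1}$, where the commutation holds because the spin lift $\tilde{\iota}$ is quaternion-linear for the right action and therefore satisfies $\tilde{\iota}(\phi\cdot j)=\tilde{\iota}(\phi)\cdot j$. Hence it suffices to verify $\sigma\circ I_{k}=I_{k}\circ\sigma$ separately for $k=1,2$.

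For $I_{1}$, equivariance is a naturality statement. Since $\iota$ is an orientation-preserving isometry of $(Y,g)$ preserving the spin structure $\frakt$ (this is how we chose $g$ and $\tilde{\iota}$), the pair $(\iota^{\ast},\tilde{\iota})$ commutes with $d$, $d^{\ast}$, the Hodge star $\ast$, Clifford multiplication $\rho:T^{\ast}Y\otimes\mathbb{S}\to\mathbb{S}$, and the base Dirac operator $\partial_{A_{0}}$ provided $A_{0}$ is chosen $\iota$-invariant, which we may do since $\iota^{\ast}\frakt\cong\frakt$. Writing the formal gradient of $CSD$ in the standard form
\begin{equation*}
\sigma(a,\phi)=\bigl(\ast da+\tau(\phi),\ \partial_{A_{0}}\phi+\rho(a)\phi\bigr),
\end{equation*}
where $\tau(\phi)$ is the imaginary-valued $1$-form corresponding to $(\phi\phi^{\ast})_{0}$ through $\rho$, each ingredient is intertwined by $(\iota^{\ast},\tilde{\iota})$, so $\sigma\circ I_{1}=I_{1}\circ\sigma$ is immediate.

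For $I_{2}$, the identity is the classical $Pin(2)$-equivariance restricted to the generator $j$, which is how Furuta exploited this symmetry in \cite{Fu01}. Concretely: $\ast da$ is odd in $a$; the Dirac operator $\partial_{A_{0}}$ commutes with right multiplication by $j$ (it is right-quaternion linear); Clifford multiplication by a real $1$-form $a$ anti-commutes with right $j$, so $\rho(-a)(\phi\cdot j)=(\rho(a)\phi)\cdot j$; and the quadratic spinorial term satisfies $(\phi j\,(\phi j)^{\ast})_{0}=-(\phi\phi^{\ast})_{0}$, as noted in the remark, giving $\tau(\phi\cdot j)=-\tau(\phi)$. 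Combining these identities component-wise shows $\sigma\circ I_{2}=I_{2}\circ\sigma$.

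Putting the two pieces together yields $\sigma\circ I=\sigma\circ I_{1}\circ I_{2}=I_{1}\circ\sigma\circ I_{2}=I_{1}\circ I_{2}\circ\sigma=I\circ\sigma$, as desired. The only step requiring genuine care is the book-keeping of conventions in $I_{2}$, since the signs produced by anti-commuting $\rho(a)$ with right-$j$ and by passing through $(\,\cdot\,)_{0}$ must line up; however, once these two identities are established, the verification is purely algebraic, and this is the same content as in the standard proof of $Pin(2)$-equivariance of the three-dimensional Seiberg--Witten gradient.
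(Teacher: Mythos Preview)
Your overall strategy matches the paper's exactly: decompose $I$ into the pull-back by the spin isometry and the $Pin(2)$-element $(-1,j)$, and verify equivariance for each factor separately. The paper argues the first factor via the associated-bundle naturality principle and the second by noting it is literally an element of the standard $Pin(2)$-action; your more explicit component-wise check of the second factor is fine.

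There is, however, a genuine sign slip in your decomposition. You set $I_{1}(a,\phi)=(-\iota^{\ast}a,\tilde{\iota}(\phi))$, but then $I_{1}\circ I_{2}(a,\phi)=(\iota^{\ast}a,\tilde{\iota}(\phi)\cdot j)\neq I(a,\phi)$: the minus signs on the form part cancel. Worse, your $I_{1}$ as written does \emph{not} commute with $\sigma$. Naturality under the spin isometry $(\iota^{\ast},\tilde{\iota})$ gives $\tau(\tilde{\iota}\phi)=\iota^{\ast}\tau(\phi)$ and $\rho(\iota^{\ast}a)\tilde{\iota}\phi=\tilde{\iota}(\rho(a)\phi)$, so the map that commutes with $\sigma$ is $(a,\phi)\mapsto(\iota^{\ast}a,\tilde{\iota}(\phi))$ \emph{without} the minus sign. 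With your definition one computes, for instance, that the form component of $\sigma(I_{1}(a,\phi))$ is $-\iota^{\ast}(\ast da)+\iota^{\ast}\tau(\phi)$ while that of $I_{1}(\sigma(a,\phi))$ is $-\iota^{\ast}(\ast da)-\iota^{\ast}\tau(\phi)$; the quadratic term has the wrong sign, and the coupling term $\rho(a)\phi$ picks up a discrepancy on the spinor side as well.

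The fix is simply to drop the extraneous minus sign: take $I_{1}(a,\phi)=(\iota^{\ast}a,\tilde{\iota}(\phi))$. Then $I_{2}\circ I_{1}=I$, naturality gives $\sigma\circ I_{1}=I_{1}\circ\sigma$, and the rest of your argument goes through unchanged.
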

\begin{proof}
The action $I$ is the composition of the $(\iota^{\ast}, \tilde{\iota})$ and the right $(-1, j)$ action. It is obvious that the action of $(-1, j)$ preserves the formal gradient flow of CSD because this is an element of the ordinary $\text{Pin}^-(2)$ action to $\operatorname{Ker}d^* \times \Gamma(\mathbb{S})$. 
Let $P$ is the principal $\text{Spin}$ bundle which gives the spin structure $\mathfrak{t}$. 
The automorphism $\tilde{\iota}$ on $P$ is a lift of the involution on the $\text{SO}(TY)$ given by $\iota$. The vector bundles $TY$ and $\mathbb{S}$ are all associated bundle of $P$ and the Clifford multiplication $TY \otimes \mathbb{S} \to \mathbb{S}$ and the map $\mathbb{S} \ni \phi \mapsto (\phi \phi^{\ast})_0 \in \sqrt{-1}\Lambda^2 T^{\ast}Y$ are given by a multiplication of elements $\text{Spin}(4)$ representation space. Thus we have that the formal gradient of CSD is equivariant under the action of $(\iota^{\ast}, \tilde{\iota})$.  
\end{proof}
Define 
\[
V(Y, \frakt,  \iota) := ( \operatorname{Ker}d^* \times \Gamma(\mathbb{S}))^I . 
\]
The vector field $\sigma$ induces a vector field 
\begin{align}
\label{eq: equiv flow}
 \sigma' : V(Y, \frakt,  \iota) \to V(Y, \frakt,  \iota),     
\end{align}
The vector field $\sigma'$ decomposes into the linear part $l$ and quadratic part $c$:
$\sigma' = l+ c$.
Here $l$ is given as $(*d, D)$,
where $D$ is the spin Dirac operator.
For $\lambda < 0 \leq \mu$,
define the subspace $V^\lambda_\mu$ of $V(Y, \frakt,  \iota)$ as the direct sum of eigenspaces for $l$ whose eigenvalues lie in $(\lambda, \mu]$.
Note that $V^\lambda_\mu$ is finite-dimenional for any $\lambda, \mu$, and we think of $V^\lambda_\mu$ as a finite-dimensional approximation of $V(Y, \frakt,  \iota)$.

Also, we may define an involution $I$ on $Pin(2)$
by
\[
I \cdot g := j g j^{-1},
\]
and it is easy to see that
 \[
  Pin(2)^I = \{ 1, j, -1, -j\} \cong \Z_4. 
 \]
 Set 
 \[
 G := \{ 1, j, -1, -j\} \subset Pin(2).
 \]
It follows from a direct calculation that
\[
I g^* (a,\phi) = (I \cdot g)^* (a,\phi). 
\]
In summary, the flow \eqref{eq: equiv flow} inherits $G$-equivariance from the original $Pin(2)$-symmetry of the Seiberg--Witten flow.

For this $G$-equivariant flow $\sigma'$, we may repeat the construction by Manolescu~\cite{Ma03} of the equivariant Conley index associated with the Seiberg--Witten flow.
Thus we have a $G$-equivariant Conley index 
 \[
 I^\mu_\lambda (Y, \frakt, \iota, \tilde{\iota}, g) 
 \]
 for sufficiently large $-\lambda$, $\mu$.

\begin{comment}
\begin{rem}
There are two choices of lift of $\iota$, namely, $\tilde{\iota}$ and $-\tilde{\iota}$. Thus we have two involutions $I$, $I'$ from the lifts $\tilde{\iota}$ and $-\tilde{\iota}$ respectively. We have an isomorphism from the $I$ fixed part to the $I'$ fixed part which is given by $(a, \phi) \mapsto (a, \phi \cdot i)$. This isomorphism preserves the formal gradient flow of the Chern--Simons--Dirac functional. Therefore, the Conley index % we will consider in 
\cref{subsection $G$-equivariant Seiberg--Witten Floer homotopy type}, 
made from the $I$-fixed part and the doubled Conley index made from the $I'$-fixed part are same. 
\end{rem}
\end{comment}

\begin{rem}
While $I^\mu_\lambda (Y, \frakt, \iota, \tilde{\iota}, g)$ depends also on the choice of lift $\tilde{\iota}$ of $\iota$, this does not affect our construction of stable Floer homotopy type in an essential way.
See \cref{lem: indep tilde iota}.
\end{rem}

\begin{rem}
Kato's \cite{Ka17} and our setup is related to the Spin$^{c-}$ structure, which is introduced by Nakamura in~\cite{Na13}. 
When the involution $\iota$ is free, our setting coincides with the Spin$^{c-}$ structure on $X/\iota$ which comes from a spin structure. 
In that situation, our involution $I$ coinsides with the involution $I$ in~\cite[Section 3, (iii)]{Na13} in the case of the Spin$^c$ structure of $\tilde X$. 

Our situation is also related to the situation dealt with by  Bryan~\cite{Bry98}. Our involution $I$ corresponds to $[j, x] \in G_{\text{odd}}=(Pin(2)\times \Z_4)/\Z_2$ in Bryan's notation, where $x$ is a generator of $\Z_4$.
\end{rem}

\section{Seiberg--Witten Floer homotopy theory for involutions}
\label{section Gequivariant Seiberg--Witten Floer homotopy theory}

\subsection{Representations}

Recall that we defined the group $G$ to be the cyclic group of order $4$ generated by $j \in Pin(2)$, i.e. 
\[
G = \{1, j, -1, -j\}.
\]
Define a subgroup $H$ of $G$ by
\[
H = \{1,-1\} \subset G.
\]
Let $\tilde{\R}$ be the 1-dimensional real representation space of $G$ defined by the surjection $G \to \Z_2=\{1,-1\}$ and the scalar multiplication of $\Z_2$ on $\R$.

Let $\tilde{\C}$ be a 1-dimensional complex representation of $G$ defined via the surjection $G \to \Z_2$ and the scalar multiplication of $\Z_2$ on $\C$. 
Note that, if $s$ is even, say $2t$, then there is an isomorphism of real representations $\tilde{\R}^{2s} \cong \tilde{\C}^t$.

We introduce also $G$-representations 
 \begin{align}
 \label{rep four}
 \R,\  \tilde{\R},\ \C_+,\ \C_-,
 \end{align}
 where $\R, \tilde{\R}$ are real 1-dimensional representations and $\C_+, \C_-$ are complex 1-dimensional representations, defined by assigning to $j \in G$ multiplication of $1, -1, i, -i$ respectively.
Note that we have the relation
\[
\C_- = \tilde{\C} \cdot \C_+
\]
in $R(G)$.

We can think of these four spaces \eqref{rep four} as $H$-representations through the inclusion $H \subset G$, and they correspond to
 \[
 \R,\  \R,\ \tilde{\C},\ \tilde{\C}
 \]
 as $H$-representations, respectively.
 
It is straightforward to check that the representation ring $R(G)$ is given by
\[
R(G) = \Z[w,z]/(w^2-2w, w-2z+z^2).
\]
Here the generators $w,z$ are given as the $K$-theoretic Euler classes of $\tilde{\C}, \C_+$, namely, 
\[
w=1-\tilde{\C},\quad z=1-\C_+.
\]
The augumentation map $R(G) \to \Z$ is given by
\[
w,z \mapsto 0,
\]
so the augmentation ideal is given by $(w,z) \subset R(G)$.

Compared with the standard expression of $R(G)$, given by $R(G)=\Z[t]/(t^4-1)$,
an isomorphism $\Z[w,z]/(w^2-2w, w-2z+z^2) \to \Z[t]/(t^4-1)$ is given by $w \mapsto 1-t^2$ and $z \mapsto 1-t$.

It is also straightforward to check that
\begin{align}
\label{eq: ker rep g h}
\mathrm{Ker}(R(G) \to R(H)) = \Set{cw \in R(G) | c \in \Z}.
\end{align}

In a finite-dimensional approximation of the configuration space on which the (finite-dimensional approximation of) $G$-equivariant Seiberg--Witten flow acts, only three of the representations \eqref{rep four} may appear:
$\tilde{\R},\ \C_+,\ \C_-$.
Here $\C_+, \C_-$ appear in the following way.
Originally a finite-dimensional approximation of the configuration space is of the form $\tilde{\R}^N \oplus \quat^{N'}$ as $Pin(2)$-representation for large $N, N'$.
Restricting attention to the $I$-invariant part, the remaining symmetry on the configuration space is given by $G$, and as $G$-representation space, $\quat$ splits into $\C_+ \oplus \C_-$.
More concretely, let us write the set of quaternions by 
\begin{align}
\label{eq: split quat}
\quat 
= \C \oplus j\C
= \R \oplus i\R \oplus j\R \oplus (-k)\R
\cong (\R \oplus j\R) \oplus i(\R\oplus(-j)\R).    
\end{align}
Let us equip $\R \oplus j\R$ with the complex structure by $j$, and $\R \oplus (-j)\R$ with the complex structure by $-j$.
Then the $(\R \oplus j\R)$-component of \eqref{eq: split quat} corresponds to $\C_+$, and the $(\R \oplus (-j)\R)$-component corresponds to $\C_-$.

\subsection{Space of type $G$-SWF}

\begin{defi}
Let $X$ be a pointed finite $G$-CW complex.
We call $X$ a {\it space of type $G$-SWF} if 
\begin{itemize}
    \item $X^H$ is $G$-homotopy equivalent to $(\tilde{\R}^s)^+$ for some $s \geq 0$.
    \item $G$ acts freely on $X \setminus X^H$.
\end{itemize}
The natural number $s$ is called the {\it level} of $X$.
\end{defi}

Let $X$ be a space of type $G$-SWF at even level $s=2t$.
Let $\iota : X^H \to X$ denote the inclusion.
The image of $\iota^\ast : \tilde{K}_G(X) \to \tilde{K}_G(X^H)$ is expressed of the form $\mathfrak{J}(X) \cdot b_{t\tilde{\C}}$, where $\mathfrak{J}(X)$ is an ideal of $R(G)$ and $b_{t\tilde{\C}}$ is the Bott element.

Repeating the proof of \cite[Lemma~1]{Ma14} under replacing $Pin(2)$ and $S^1$ with $G$ and $H$ respectively,
the $G$-equivariant localization theorem deduces the following \lcnamecref{lem: localization}:

\begin{lem}
\label{lem: localization}
Let $X$ be a a space of type $G$-SWF at even level $s=2t$.
Then there exists $k \geq 0$ such that
$w^k, z^k \in \mathfrak{J}(X)$.
\end{lem}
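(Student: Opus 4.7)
The plan is to exploit the cofibration $X^H \hookrightarrow X \to X/X^H$ together with the hypothesis that $G$ acts freely on $X \setminus X^H$, so that $X/X^H$ is a finite pointed $G$-CW complex whose $G$-action is free away from the basepoint. Applying reduced $G$-equivariant complex $K$-theory yields a long exact sequence
\[
\cdots \to \tilde{K}_G(X/X^H) \to \tilde{K}_G(X) \xrightarrow{\iota^\ast} \tilde{K}_G(X^H) \xrightarrow{\partial} \tilde{K}_G^1(X/X^H) \to \cdots,
\]
and exactness identifies $\mathfrak{J}(X)\cdot b_{t\tilde{\C}}=\operatorname{im}(\iota^\ast)=\ker(\partial)$. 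Using $R(G)$-linearity of $\partial$, the statement $w^k,z^k \in \mathfrak{J}(X)$ is therefore equivalent to $w^k \cdot \partial(b_{t\tilde{\C}})=0$ and $z^k \cdot \partial(b_{t\tilde{\C}})=0$ for some $k$. A sufficient condition is that $w^k$ and $z^k$ annihilate all of $\tilde{K}_G^\ast(X/X^H)$.

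The technical heart is the following key lemma, which I would prove by induction on the number of equivariant cells: if $Y$ is a finite pointed $G$-CW complex on which $G$ acts freely away from the basepoint, and $\alpha$ belongs to the augmentation ideal $I=\ker(R(G)\to\Z)$, then $\alpha^k \cdot \tilde{K}_G^\ast(Y)=0$ for some $k$. In the inductive step, attaching a free $G$-cell of dimension $n$ to a subcomplex $Y'$ gives a cofiber sequence $Y' \to Y \to G_+\wedge S^n$; since $G$ acts freely on $G_+\wedge S^n$ away from the basepoint, one has
\[
\tilde{K}_G^\ast(G_+\wedge S^n) \cong \tilde{K}^\ast(S^n),
\]
with $R(G)$-module structure factoring through the augmentation $R(G)\to\Z$, so $\alpha$ acts by zero there. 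A standard diagram chase in the associated long exact sequence then raises the annihilating exponent by one as we pass from $Y'$ to $Y$.

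To conclude, I would observe that $w=1-\tilde{\C}$ and $z=1-\C_+$ both lie in the augmentation ideal $I$, since $\tilde{\C}$ and $\C_+$ are one-dimensional complex $G$-representations (so their $K$-theoretic augmentations are $1-1=0$). Applying the key lemma to $Y=X/X^H$ with $\alpha=w$ and then $\alpha=z$ produces a common exponent $k$ with $w^k\cdot \tilde{K}_G^\ast(X/X^H)=z^k\cdot \tilde{K}_G^\ast(X/X^H)=0$, and the reformulation in the first paragraph then yields $w^k,z^k\in\mathfrak{J}(X)$, as desired.

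I expect no serious obstacle: the one subtle point is the correct identification of the $R(G)$-module structure on $\tilde{K}_G^\ast(G_+\wedge S^n)$ as factoring through the augmentation, which follows because $G$ acts freely on $G_+\wedge S^n$ (so $K_G$ reduces to ordinary $K$-theory of the orbit space) and the module structure is induced by the constant map to a point. Once this point is nailed down, the induction is routine, and the evenness assumption $s=2t$ enters only to ensure that the Bott element $b_{t\tilde{\C}}$ is available and the definition of $\mathfrak{J}(X)$ makes sense.
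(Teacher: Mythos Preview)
Your proof is correct. The paper's own proof is a one-line appeal to the $G$-equivariant localization theorem, following the argument of \cite[Lemma~1]{Ma14} with $Pin(2)$ and $S^1$ replaced by $G$ and $H$. What you have written is essentially a direct, self-contained proof of the relevant special case of that localization theorem: the cell-by-cell induction showing that the augmentation ideal acts nilpotently on $\tilde{K}_G^\ast$ of a finite free $G$-CW complex is exactly how the localization theorem is typically proved. So the two approaches are the same in spirit; yours unpacks the black box, while the paper simply cites it. Your version has the advantage of being elementary and not requiring the reader to look up the general statement, at the cost of a few extra lines.
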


\begin{defi}
For a space of type $G$-SWF, define
\[
k(X) := \min\Set{k \geq 0 | \exists x \in \mathfrak{J}(X), wx = 2^k w}.
\]
By \cref{lem: localization},
there exists $k \geq 0$ such that $wx = 2^k w$ for some $x \in \mathfrak{J}(X)$,
and thus $k(X)$ is a well-defined natural number.
\end{defi}

The rest of this subsection is devoted to establishing basic properties of the quantity $k(X)$.

\begin{lem}
\label{lem: origin of local eq}
 Let $X$ and $X'$ be spaces of type $G$-SWF at the same even level.
Suppose that there exists a $G$-equivariant map $f : X \to X'$ whose $H$-fixed-point set map is a $G$-homotopy equivalence.
Then we have
\[
k(X) \leq k(X').
\]
\end{lem}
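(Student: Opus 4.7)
The strategy is to compare the ideals $\mathfrak{J}(X)$ and $\mathfrak{J}(X')$ by pulling back along $f$ and exploiting the fact that $f^H$ is a $G$-homotopy equivalence on the $H$-fixed parts.

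First I would set up the basic $K$-theoretic naturality. Writing $\iota : X^H \hookrightarrow X$ and $\iota' : (X')^H \hookrightarrow X'$, the map $f$ restricts to $f^H : X^H \to (X')^H$, giving the commutative square
\[
\begin{CD}
\tilde{K}_G(X') @>{f^*}>> \tilde{K}_G(X) \\
@V{(\iota')^*}VV @VV{\iota^*}V \\
\tilde{K}_G((X')^H) @>{(f^H)^*}>> \tilde{K}_G(X^H).
\end{CD}
\]
Let $b$ and $b'$ denote the Bott elements of $X^H$ and $(X')^H$, so that $\tilde{K}_G(X^H) = R(G)\cdot b$ and $\tilde{K}_G((X')^H) = R(G)\cdot b'$ as free rank-$1$ $R(G)$-modules, and so that $\mathrm{Im}\,\iota^* = \mathfrak{J}(X)\cdot b$, $\mathrm{Im}\,(\iota')^* = \mathfrak{J}(X')\cdot b'$ by definition.

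The key observation is that, because $f^H$ is a $G$-homotopy equivalence between two pointed $G$-spaces each $G$-homotopy equivalent to $(\tilde{\R}^{2t})^+$, the induced map $(f^H)^*$ is an $R(G)$-module isomorphism of free rank-$1$ $R(G)$-modules, and therefore sends the distinguished generator $b'$ to $u\cdot b$ for some unit $u\in R(G)^\times$. Chasing the diagram, the image of the composition $\iota^*\circ f^* = (f^H)^*\circ (\iota')^*$ equals $(f^H)^*\!\left(\mathfrak{J}(X')\cdot b'\right) = u\,\mathfrak{J}(X')\cdot b$, and this image is contained in $\mathrm{Im}\,\iota^* = \mathfrak{J}(X)\cdot b$. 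Since $u$ is a unit, $u\,\mathfrak{J}(X') = \mathfrak{J}(X')$ as ideals, so
\[
\mathfrak{J}(X') \subset \mathfrak{J}(X).
\]

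From this containment the conclusion is immediate: if $x' \in \mathfrak{J}(X')$ realizes $wx' = 2^{k(X')}w$, then $x'$ also lies in $\mathfrak{J}(X)$ and exhibits the same relation, whence $k(X)\leq k(X')$ by the defining minimality of $k(X)$.

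The one delicate point — the step I expect to be the main obstacle — is the claim that $(f^H)^*$ carries the Bott class $b'$ to a unit multiple of $b$. Once both $H$-fixed sets are identified with $(\tilde{\R}^{2t})^+$ via fixed $G$-homotopy equivalences, $(f^H)^*$ becomes an $R(G)$-linear self-isomorphism of a free rank-$1$ module, and such maps are necessarily given by multiplication by a unit; I would spell this out carefully (or invoke the analogue of the corresponding step in \cite{Ma14}) to ensure the argument is rigorous. The remainder of the argument is purely formal manipulation of ideals in $R(G)$.
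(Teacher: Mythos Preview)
Your proposal is correct and follows essentially the same approach as the paper: set up the commutative square in $\tilde{K}_G$, use that $(f^H)^*$ is an isomorphism to deduce $\mathfrak{J}(X') \subset \mathfrak{J}(X)$, and conclude $k(X)\le k(X')$. Your explicit discussion of the unit $u$ is more careful than the paper's one-line implication, but the underlying argument is identical; in fact the ``delicate point'' you flag is exactly the content hidden in the paper's terse sentence ``$(f^H)^*$ is an isomorphism, this implies $\mathfrak{J}(X')\subset\mathfrak{J}(X)$,'' and your justification (an $R(G)$-linear automorphism of a free rank-$1$ module is multiplication by a unit) is the right one.
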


\begin{proof}
We have a commutative diagram
\begin{align*}
  \begin{CD}
     \tilde{K}_G(X')   @>{f^\ast}>> \tilde{K}_G(X) \\
  @V{}VV    @VVV \\
\tilde{K}_G((X')^H)   @>{(f^{H})^\ast}>> \tilde{K}_G(X^H),   \\
  \end{CD}
\end{align*}
where $(f^H)^\ast$ is an isomorphism.
This implies $\mathfrak{J}(X') \subset \mathfrak{J}(X)$, and hence $k(X) \leq k(X')$.
\end{proof}

We need to know the behavior of the ideal $\mathfrak{J}(X)$ under suspension to define the  $K$-theoretic Fr{\o}yshov invariant $\kappa(Y,\iota)$:

\begin{lem}
\label{lem: shift by suspension}
 Let $X$ be a space of type $G$-SWF at an even level.
 Then we have 
 \begin{align}
     \label{ideal rel}
 \mathfrak{J}(\Sigma^{\tilde{\C}}X) = \mathfrak{J}(X),\quad
 \mathfrak{J}(\Sigma^{\C_+}X) = z\cdot \mathfrak{J}(X),\quad
 \mathfrak{J}(\Sigma^{\C_-}X) = (w+z-wz)\cdot \mathfrak{J}(X),
 \end{align}
 and 
  \begin{align}
  \label{k shift by suspension}
    k(\Sigma^{\tilde{\C}}X) = k(X),\quad
    k(\Sigma^{\C_+ \oplus \C_-}X) = k(X)+1.
  \end{align}
 \end{lem}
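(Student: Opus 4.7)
The plan is to compute $(\Sigma^V X)^H$ for each of the three representations $V = \tilde{\C}, \C_+, \C_-$, to transport the image of the restriction $\iota^\ast$ under Bott periodicity, and then to perform a short calculation in $R(G)$ to deduce the statement about $k$.

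First I would compute $V^H$ in each case. Since $j\in G$ acts on $\tilde{\C}$ by $-1$, the element $-1 = j^{2}\in H$ acts trivially, so $\tilde{\C}^H = \tilde{\C}$. On $\C_{\pm}$, $j$ acts by $\pm i$, hence $-1\in H$ acts by $-1$ and $(\C_{\pm})^H = 0$. Using $(A\wedge B)^H = A^H \wedge B^H$, this gives $(\Sigma^{\tilde{\C}}X)^H \simeq_G \Sigma^{\tilde{\C}}X^H$ (level $s+2$) and $(\Sigma^{\C_{\pm}}X)^H \simeq_G X^H$ (still level $s$).

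Next I would invoke Bott periodicity: for any complex $G$-representation $V$, smashing with the Bott class $b_V$ gives a natural isomorphism $\tilde{K}_G(Z)\xrightarrow{\cong} \tilde{K}_G(\Sigma^V Z)$. Applying naturality to the inclusion $X^H\hookrightarrow X$ produces a commuting square. For $V = \tilde{\C}$, both source and target are suspended by the same representation, the Bott factors on both sides cancel, and $\mathfrak{J}(\Sigma^{\tilde{\C}}X) = \mathfrak{J}(X)$. For $V = \C_{\pm}$, the restriction on $\Sigma^V X$ factors as
\[
\tilde{K}_G(\Sigma^V X)\;\longrightarrow\; \tilde{K}_G(\Sigma^V X^H)\;\longrightarrow\; \tilde{K}_G(X^H),
\]
where the second arrow comes from the inclusion $X^H = S^{0}\wedge X^H\hookrightarrow S^V\wedge X^H$ and corresponds, under Bott, to multiplication by the restriction of $b_V$ to $S^0\subset S^V$, that is, by the $K$-theoretic Euler class $1 - [V]\in R(G)$. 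Using $1 - [\tilde{\C}] = w$, $1 - [\C_+] = z$, and $\C_- = \tilde{\C}\otimes\C_+$ so that $1 - [\C_-] = 1 - (1-w)(1-z) = w + z - wz$, one obtains the three equalities in \eqref{ideal rel}.

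Finally I would derive \eqref{k shift by suspension}. The equation $k(\Sigma^{\tilde{\C}}X) = k(X)$ is immediate from the first equality in \eqref{ideal rel}. For the suspension by $\C_+ \oplus \C_-$, the key identity in $R(G) = \Z[w,z]/(w^2-2w,\,w-2z+z^2)$ is
\[
w\cdot z(w+z-wz) \;=\; 2w,
\]
which I would verify directly by substituting $w^2 = 2w$ and $z^2 = 2z-w$. Consequently, if $y\in\mathfrak{J}(X)$ witnesses $wy = 2^{k(X)}w$, then $z(w+z-wz)\cdot y\in \mathfrak{J}(\Sigma^{\C_+\oplus\C_-}X)$ witnesses $k(\Sigma^{\C_+\oplus\C_-}X)\leq k(X)+1$. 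For the reverse inequality, any $x'\in \mathfrak{J}(\Sigma^{\C_+\oplus\C_-}X)$ has the form $x' = z(w+z-wz)y'$ with $y'\in\mathfrak{J}(X)$, and the equation $wx' = 2wy' = 2^m w$ combined with the fact that $R(G)\cong \Z[t]/(t^4-1)$ is torsion free over $\Z$ gives $wy' = 2^{m-1}w$, forcing $m\geq k(X)+1$.

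The step requiring most care is not conceptual but bookkeeping: tracking Bott periodicity correctly in the case that $V^H$ differs between source and target, and honestly identifying $1 - [\C_-] = w + z - wz$ via the product structure on $R(G)$. Once those identifications are in hand, the explicit relation $w\cdot z(w+z-wz) = 2w$ and the absence of $2$-torsion in $R(G)$ handle the $k$-invariant cleanly.
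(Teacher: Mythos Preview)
Your proof is correct and follows essentially the same approach as the paper: both use Bott periodicity and naturality to reduce \eqref{ideal rel} to identifying the Euler classes $1-[\C_\pm]$, and both derive \eqref{k shift by suspension} from the identity $w\cdot z(w+z-wz)=2w$ in $R(G)$. Your explicit invocation of torsion-freeness of $R(G)$ for the reverse inequality makes precise what the paper states as an ``if and only if'' in one line.
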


\begin{proof}
The proof is similar to the proof of \cite[Lemma~2]{Ma14}.
First we shall prove \eqref{ideal rel}.
The statement about $\tilde{\C}$ follows from
$(\Sigma^{\tilde{\C}}X)^H =  \Sigma^{\tilde{\C}}(X^H)$
 and the naturality of the Bott element.
 
 Next, to prove the statement about $\C_+$, consider the diagram induced from the inclusion $i : X \to \Sigma^{\C_+}X$ and the inclusions from the $H$-invariant part:
 \begin{align}
\label{diagram k sus}
  \begin{CD}
     \tilde{K}_G(\Sigma^{\C_+}X)   @>{i^\ast}>> \tilde{K}_G(X) \\
  @V{}VV    @VVV \\
\tilde{K}_G((\Sigma^{\C_+}X)^H)   @>{(i^{H})^\ast}>> \tilde{K}_G(X^H).   \\
  \end{CD}
\end{align}
Since $(\Sigma^{\C_+}X)^H = X^H$, the bottom map $(i^H)^\ast$ is the identity.
By the Bott isomorphism, the top map $i^\ast$ is given by multiplication by the Euler class $z$ associated to the representation $\C_+$.
This implies that
\[
\mathfrak{J}(\Sigma^{\C_+} X) = z \cdot \mathfrak{J}(X)
\]
in $R(G)$.

Similarly, about the statement on $\C_-$, 
we have
 \begin{align*}
  \begin{CD}
     \tilde{K}_G(\Sigma^{\C_-}X)   @>{i^\ast}>> \tilde{K}_G(X) \\
  @V{}VV    @VVV \\
\tilde{K}_G((\Sigma^{\C_-}X)^H)   @>{=}>> \tilde{K}_G(X^H).   \\
  \end{CD}
\end{align*}
The top map $i^\ast$ is given by multiplication by the Euler class $1-\C_-$, and by the relation $\C_-=\tilde{\C} \cdot \C_-$, 
this Euler class is given by
\[
1-\C_-
= 1-(1-w)(1-z)
= w+z-wz.
\]

Next we prove \eqref{k shift by suspension}.
The statement about $\tilde{\C}$ is clear by \eqref{ideal rel}.
To show the statement about $\C_+ \oplus \C_-$, 
first note that we have
\[
\mathfrak{J}(\Sigma^{\C_+ \oplus \C_-}X) = z(w+z-wz) \cdot \mathfrak{J}(X)
\]
by \eqref{ideal rel}.
Using the relations $w^2-2w=0, w-2z+z^2=0$,
we deduce that
\begin{align}
\label{rel wz...}
    w(z(w+z-wz))
    =zw^2+wz^2-w^2z^2
    =2zw-wz^2
    =2zw-2zw+w^2
    = 2w.
\end{align}
For $x \in \mathfrak{J}(X)$ and $k \geq 0$,
it follows from \eqref{rel wz...} that $wx = 2^{k}w$ holds if and only if
$w(z(w+z-wz))x = 2^{k+1}w$ holds.
Thus we have $k(\Sigma^{\C_+ \oplus \C_-}X) = k(X)+1$.
\end{proof} 
 
\begin{rem}
\label{rem: why not C_+ C_-}
We do not state the behavior of $k(X)$ under suspension by $\C_+$ and by $\C_-$ in
\cref{lem: shift by suspension}.
In fact, $k(X)$ does not behave well under these suspensions, not as in Manolescu's original argument.
(Compare this with \cite[Lemma~2]{Ma14}.)
For $\C_+$, the reason why $k(X)$ does not behave well is that the relation $zw-2w=0$ does not hold in $R(G)$, not as in $R(Pin(2))$.
For $\C_-$, the reason is that the behavior of $\mathfrak{J}(X)$ is already complicated as seen in  \eqref{ideal rel}.
\end{rem}

\begin{ex}
By definition, it is easy to see that $k(S^0)=0$. Combining this with \cref{lem: shift by suspension}, we have that
\begin{align}
\label{eq: easiest cal of k}
k\left((\tilde{\C}^s \oplus (\C_+\oplus \C_-)^l)^+\right)=l    
\end{align}
for $s, l \geq 0$.
\end{ex}

\begin{lem}
\label{lem: suspension how much}
 Let $X$ and $X'$ be spaces of type $G$-SWF at levels $2t$ and $2t'$, respectively, such
that $t < t'$.
Suppose that there exists a $G$-equivariant map $f : X \to X'$ whose $G$-fixed-point set map is a homotopy equivalence.
Then we have
\[
k(X) + t \leq k(X') + t'.
\]
\end{lem}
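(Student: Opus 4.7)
The plan is to pull back the defining relation for $k(X')$ through $f^{\ast}$ on $G$-equivariant $K$-theory. Set $r := t'-t > 0$ and fix $G$-equivariant identifications $X^H \simeq (\tilde{\C}^t)^+$ and $(X')^H \simeq (\tilde{\C}^{t'})^+$, so that by Bott periodicity the groups $\tilde{K}_G(X^H)$ and $\tilde{K}_G((X')^H)$ are free rank-one $R(G)$-modules generated by the Bott classes $b_{t\tilde{\C}}$ and $b_{t'\tilde{\C}}$. Writing $i \colon X^H \hookrightarrow X$ and $i' \colon (X')^H \hookrightarrow X'$ for the inclusions, naturality produces the commutative diagram
\[
\begin{CD}
\tilde{K}_G(X') @>{f^{\ast}}>> \tilde{K}_G(X) \\
@V{i'^{\ast}}VV @VV{i^{\ast}}V \\
\tilde{K}_G((X')^H) @>{(f^H)^{\ast}}>> \tilde{K}_G(X^H),
\end{CD}
\]
which determines a unique $\alpha \in R(G)$ by $(f^H)^{\ast}(b_{t'\tilde{\C}}) = \alpha \cdot b_{t\tilde{\C}}$. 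Since the images of $i^{\ast}$ and $i'^{\ast}$ are $\mathfrak{J}(X)\cdot b_{t\tilde{\C}}$ and $\mathfrak{J}(X')\cdot b_{t'\tilde{\C}}$ respectively, commutativity forces the containment $\alpha \cdot \mathfrak{J}(X') \subseteq \mathfrak{J}(X)$.

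The next step is to pin down $\alpha$ using the hypothesis on $f^G$. Restricting the bottom row further to the $G$-fixed sets $X^G = (X')^G = S^0$ gives a commutative square whose vertical maps send the Bott classes to their $K$-theoretic Euler classes $w^t$ and $w^{t'}$ respectively, and whose bottom map $(f^G)^{\ast} \colon \tilde{K}_G(S^0) = R(G) \to R(G)$ is the identity, since $G$ acts trivially on $S^0$ and any pointed self-homotopy-equivalence of $S^0$ is the identity. Chasing the square yields
\[
w^t \cdot \alpha = w^{t'} \quad \text{in } R(G).
\]
Now I invoke the relations in $R(G)$ already used in \cref{lem: shift by suspension}: the identity $w^2 = 2w$ gives $w^k = 2^{k-1}w$ for $k \geq 1$. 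Substituting this, the equation becomes $2^{t-1}(w\alpha - 2^r w) = 0$ when $t \geq 1$ (and directly $\alpha = w^r$, whence $w\alpha = w^{r+1} = 2^r w$, when $t=0$). Since $R(G) \cong \Z[t]/(t^4-1)$ is torsion-free as a $\Z$-module, I may cancel $2^{t-1}$ to conclude
\[
w \cdot \alpha = 2^r \cdot w \quad \text{in } R(G).
\]

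To finish, \cref{lem: localization} ensures $k(X')$ is well-defined, so I pick $y \in \mathfrak{J}(X')$ with $wy = 2^{k(X')} w$. Then $\alpha y \in \mathfrak{J}(X)$ by the containment established above, and
\[
w(\alpha y) = (w\alpha)\, y = 2^r \cdot wy = 2^{k(X') + r} \cdot w,
\]
which by definition gives $k(X) \leq k(X') + r = k(X') + (t' - t)$, equivalently the desired inequality $k(X) + t \leq k(X') + t'$. The one step requiring care will be the passage from $w^t \alpha = w^{t'}$ to $w\alpha = 2^r w$: this crucially uses both the relation $w^2 = 2w$ and the torsion-freeness of $R(G)$ as a $\Z$-module. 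The remaining ingredients are standard diagram chases together with the $R(G)$-module computations already carried out in \cref{lem: shift by suspension}.
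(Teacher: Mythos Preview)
Your proof is correct and follows essentially the same strategy as the paper: set up the commutative diagram in $G$-equivariant $K$-theory, use the hypothesis on $f^G$ to get the relation $w^t\alpha = w^{t'}$ for the multiplier $\alpha$ of $(f^H)^\ast$, and then feed this into the definition of $k$.

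The one noteworthy difference is in how you extract $w\alpha = 2^{r}w$ from $w^t\alpha = w^{t'}$. The paper first restricts to $H$-equivariant $K$-theory to see that $\alpha$ lies in $\ker(R(G)\to R(H)) = \Z\cdot w$, and then solves explicitly for $\alpha = 2^{r-1}w$. You instead observe that $R(G)\cong \Z[t]/(t^4-1)$ is $\Z$-torsion-free, rewrite $w^t\alpha = w^{t'}$ as $2^{t-1}(w\alpha - 2^r w)=0$, and cancel. This is a clean shortcut: it bypasses the auxiliary computation of $\ker(R(G)\to R(H))$ and never needs $\alpha$ itself, only $w\alpha$. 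Both routes land on the same final line $w(\alpha y) = 2^{k(X')+r}w$, so the arguments are equivalent in substance.
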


\begin{proof}
The proof is similar to that of \cite[Lemma~5]{Ma14}, but for readers' convenience, we spell out the proof.
Let us start with the diagram
\begin{align}
\label{diagram k}
  \begin{CD}
     \tilde{K}_G(X')   @>{f^\ast}>> \tilde{K}_G(X) \\
  @V{}VV    @VVV \\
\tilde{K}_G((X')^H)   @>{(f^{H})^\ast}>> \tilde{K}_G(X^H)   \\
 @V{}VV    @VVV \\
\tilde{K}_G((X')^G)   @>{(f^{G})^\ast}>> \tilde{K}_G(X^G).   \\ 
  \end{CD}
\end{align}
Since $\tilde{K}_G((X')^H), \tilde{K}_G(X^H), \tilde{K}_G((X')^G), \tilde{K}_G(X^G)$ are free $R(G)$-modules of rank $1$,
we may regard the four maps among them as multiplications by elements of $R(G)$.
Note that $(f^G)^\ast$ is just the identity map, since $f$ is supposed so that $f^G$ is a homotopy equivalence.
The vertical maps $\tilde{K}_G((X')^H) \to \tilde{K}_G((X')^G)$ and $\tilde{K}_G(X^H) \to \tilde{K}_G(X^G)$ are given by multiplication with the $K$-theoretic Euler classes $w^{t'}, w^{t}$ respectively.
Thus we have that $(f^H)^\ast$ is given as multiplication by an element $y \in R(G)$ satisfying that
\begin{align}
\label{relation wy}
    w^{t} \cdot y = w^{t'}.
\end{align}

We claim that $(f^H)^\ast$ in the diagram \eqref{diagram k} is given by multiplication by $2^{t'-t-1}w$, in other words, we show that $y=2^{t'-t-1}w$.
First, since $t<t'$, the map 
\[
(f^H)^\ast : \tilde{K}_H((X')^H) \to \tilde{K}_H(X^H)
\]
is the zero map.
Hence the image of $y$ under the restriction $R(G) \to R(H)$ is zero.
From this and \eqref{eq: ker rep g h} we deduce that $y = cw$ for some $c \in \Z$.
Then it follows from \eqref{relation wy} and the relation $w^2=2w$ that
\[
2^tcw = cw^{t+1} = w^{t'}
= 2^{t'-1}w,
\]
and thus we have $c=2^{t'-t-1}$, and $y=2^{t'-t-1}w$, as claimed.

Take $x \in \mathfrak{J}(X')$ such that $wx=2^{k'}w$, where $k'=k(X')$.
By the previous paragraph, we have 
\[
  (f^H)^\ast(x) = 2^{t'-t-1}wx=2^{k'+t'-t-1}w.
\]
It follows from this that
\[
w\cdot(f^H)^\ast(x) = 2^{k'+t'-t}w.
\]
On the other hand, by the commutativity of the diagram \eqref{diagram k}, $(f^H)^\ast(x)$ belongs to $\mathfrak{J}(X)$.
Thus we obtain $k'+t'-t \geq k(X)$.
\end{proof}

Next, following \cite{Ma14}, we consider duality in our setup.
See also \cite[Subsection 2.2]{Ma16}.

\begin{defi}[{\cite[Section XVI. 8]{May96}}]
Let $V$ be a finite dimensional representation of $G$ and $X, X'$ be pointed finite $G$-CW complexes.
We say that $X$ and $X'$ are {\it equivariantly $V$-dual} if there exist continuous pointed $G$-maps $\epsilon : X' \wedge X \to V^+$ and $\eta : V^+ \to X' \wedge X$ that make the following diagrams stably homotopy commutative, where $r : V^+ \to V^+$ denotes the map defined by $r(v)=-v$, and $\gamma$ denotes the transpositions:
\[
   \xymatrix{
    V^+ \wedge X \ar[r]^{\eta \wedge {\rm id}} \ar[rd]_\gamma & X\wedge X' \wedge X \ar[d]^{{\rm id} \wedge \epsilon}
    & X' \wedge V^+ \ar[r]^{{\rm id} \wedge \eta} \ar[d]_{\gamma} & X' \wedge X \wedge X \ar[d]^{\epsilon \wedge {\rm id}}\\
     & X\wedge V^+ & V^+ \wedge X' \ar[r]_{r \wedge {\rm id}} & V^+ \wedge X'
   }.
\]
\end{defi}

The following \lcnamecref{lem: duality origin} is is modeled on \cite[Lemma~7]{Ma14}.

\begin{lem}
\label{lem: duality origin}
 Let $X$ and $X'$ be spaces of type $G$-SWF at levels $2t$ and $2t'$, respectively.
Suppose that $X$ and $X'$ are equivariantly $V$-dual for some $G$-representation $V \cong \tilde{\C}^s \oplus (\C_+\oplus \C_-)^{l}$ with $s, l\geq 0$.
Then we have
\[
k(X) + k(X') \geq l.
\]
\end{lem}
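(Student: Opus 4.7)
The plan is to adapt Manolescu's proof of \cite[Lemma~7]{Ma14} to our group $G$ and the new pair of representations $\C_{\pm}$.

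First, I would restrict the $V$-duality to $H$-fixed parts. Since $H = \{1,-1\}$ acts trivially on $\tilde{\C}$ but non-trivially on each of $\C_{\pm}$, one has $V^H = \tilde{\C}^s$ and $(X')^H \wedge X^H \simeq_G (\tilde{\C}^{t+t'})^+$. Standard equivariant duality theory (see \cite[Ch.~XVI.8]{May96}) implies that $(\epsilon^H, \eta^H)$ is a $V^H$-duality between $X^H$ and $(X')^H$ as $G/H$-spaces; matching the sphere levels on both sides forces $s = t + t'$, and $\eta^H \colon (V^H)^+ \to (X')^H \wedge X^H$ is a stable $G$-equivalence of identical spheres, inducing $\pm \mathrm{id}$ on $\tilde{K}_G$.

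Next, I would pick $x \in \mathfrak{J}(X)$ and $x' \in \mathfrak{J}(X')$ realizing $k(X)$ and $k(X')$, and lift them to $\alpha \in \tilde{K}_G(X)$, $\alpha' \in \tilde{K}_G(X')$ with $\iota^{\ast}\alpha = x \cdot b_{t\tilde{\C}}$ and $\iota^{\ast}\alpha' = x' \cdot b_{t'\tilde{\C}}$. Writing $\eta^*(\alpha' \wedge \alpha) = y \cdot b_V$ for the unique $y \in R(G)$, and computing its further restriction to $(V^H)^+$ in two ways --- first restricting to $H$-fixed parts and using the previous paragraph, versus first applying $\eta^{\ast}$ and then the standard pullback formula
\[
b_V|_{(V^H)^+} = (1 - \C_+)^l (1 - \C_-)^l \cdot b_{V^H} = z^l (w + z - wz)^l \cdot b_{V^H}
\]
for the Bott class along the inclusion $(V^H)^+ \hookrightarrow V^+$ --- yields
\[
y \cdot z^l (w + z - wz)^l = \pm xx' \quad \text{in } R(G).
\]

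Finally, I would evaluate via the ring homomorphism $R(G) \to \Z[i]$ given by $t \mapsto i$, under the presentation $R(G) \cong \Z[t]/(t^4 - 1)$ with $w = 1 - t^2$ and $z = 1 - t$. Under this map $w \mapsto 2$ and $z(w + z - wz) \mapsto (1 - i)(1 + i) = 2$, and the relations $wx = 2^{k(X)} w$, $wx' = 2^{k(X')} w$ descend to $\bar{x} = 2^{k(X)}$, $\bar{x}' = 2^{k(X')}$ in the integral domain $\Z[i]$. The identity above then reads $\bar{y} \cdot 2^l = \pm 2^{k(X) + k(X')}$ in $\Z[i]$; since $\Z[i]$ is a UFD in which $2 = -i(1 + i)^2$ is a prime power, this forces $l \leq k(X) + k(X')$. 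The main obstacle should be the bookkeeping in the first two paragraphs --- particularly the descent of $V$-duality to $H$-fixed parts and the correct form of the Bott-class pullback formula --- whereas the sign ambiguity produced by $\eta^H$ is harmless for the final divisibility argument.
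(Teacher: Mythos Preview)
Your proof is correct and reaches the conclusion by a genuinely different route from the paper's.

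The paper argues more modularly: after establishing (via the degree argument you allude to) that $\epsilon^H$ and $\eta^H$ are $G$-homotopy equivalences, it applies \cref{lem: origin of local eq} to both $\epsilon$ and $\eta$ to obtain the equality $k(X \wedge X') = k(V^+) = l$. Separately, lifting $x,x'$ to $\tilde{K}_G(X),\tilde{K}_G(X')$ and multiplying shows $xx' \in \mathfrak{J}(X\wedge X')$ with $w(xx')=2^{k(X)+k(X')}w$, hence $k(X\wedge X')\le k(X)+k(X')$; combining gives $l\le k(X)+k(X')$.

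You bypass the intermediate quantity $k(X\wedge X')$ altogether: you pull $\alpha'\wedge\alpha$ back through $\eta$ and through the inclusion $(V^H)^+\hookrightarrow V^+$, use the Euler-class formula from \cref{lem: shift by suspension} to obtain an identity in $R(G)$, and then evaluate at the character $t\mapsto i$ into $\Z[i]$. Since $w\mapsto 2$ and $z(w+z-wz)\mapsto 2$ under this map, the inequality drops out as a $2$-adic divisibility statement in a UFD. One small imprecision: $(\eta^H)^*$ need not be literally $\pm\id$ on $\tilde{K}_G$---a priori it is multiplication by a unit lying in the image of $R(G/H)\to R(G)$, namely in $\{\pm 1,\pm t^2\}$---but every such unit maps to $\pm 1$ in $\Z[i]$, so your final equation $\bar{y}\cdot 2^l=\pm 2^{k(X)+k(X')}$ and the divisibility conclusion are unaffected. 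The paper's approach is more portable (it uses only the formal properties of $k$ packaged in \cref{lem: origin of local eq}), while yours makes the arithmetic in $R(G)$ explicit and exploits the specific structure of $R(\Z_4)$ in an elegant way.
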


\begin{proof}
Let $\epsilon$ and $\eta$ are the maps associated to the $V$-duality between $X$ and $X'$.
The restrictions of $\epsilon$ and $\eta$ induce a $V^{H}$-duality between $X^H$ and $(X')^{H}$, and this implies that $t+t'=s$.
Think of $\epsilon^H$ and $\eta^H$ as $G/H \cong \Z_2$-equivariant self-maps of $(\tilde{\C}^s)^+$.
The restriction of $\epsilon^H$ and $\eta^H$ on $G/H$-fixed-point sets gives rise to a self-bijection of $S^0$, which is a self-duality of $S^0$.
Thus we see that $\epsilon^H$ and $\eta^H$ are unreduced suspensions of self-maps of the unit sphere $S(\tilde{\C}^s)$, up to $G/H$-equivalence.
Since self-maps of $S(\tilde{\C}^s)$ are determined by their degree, so are $\epsilon^H$ and $\eta^H$.
The duality diagrams imply that the product of the degree of $\epsilon^H$ and that of $\eta^H$ is $\pm1 \in \Z$, and hence each degree is also $\pm1$.
This implies that $\epsilon^H$ and $\eta^H$ are $G/H$-homotopy equivalence, which is equivalent to that they are $G$-homotopy equivalence.
By \cref{lem: origin of local eq} applied to $\epsilon$ and $\eta$ and \eqref{eq: easiest cal of k}, we have that
\begin{align}
\label{eq: l dual ingredient 1}
k(X \wedge X') = k(V^+) = l.
\end{align}

On the other hand, for $x \in \mathfrak{J}(X)$ and $x' \in \mathfrak{J}(X')$ with $wx=2^{k(X)}w$
and $wx'=2^{k(X')}w$, we have that $xx' \in J(X \wedge X')$, where $xx'$ is the image of $x \otimes x'$ under a natural map $\tilde{K}_G(X) \otimes \tilde{K}_G(X') \to \tilde{K}_G(X \wedge X')$.
We also have $w(xx') = 2^{k(X)}wx'=2^{k(X)+k(X')}w$, and thus obtain $k(X \wedge X') \leq k(X)+k(X')$.
This combined with \eqref{eq: l dual ingredient 1} implies the desired conclusion.
\end{proof}

\subsection{Doubling construction and spectrum classes}
\label{subsection Doubling}

Given a spin rational homology 3-sphere $Y$ with a smooth odd involution $\iota$, in \cref{subsection $G$-equivariant Seiberg--Witten Floer $K$-theory},
we shall define the  Seiberg--Witten Floer $K$-theory and the $K$-theoretic Fr{\o}yshov invariant.
This is based on a space-valued 3-manifold invariant constructed in \cref{subsection $G$-equivariant Seiberg--Witten Floer homotopy type}, which is an analogue of Manolescu's stable homotopy type.
In this \lcnamecref{subsection Doubling}, we prepare a set in which the space-valued invariant takes value.

Fixing an $\iota$-invariant metric $g$ on $Y$, we obtain a $G$-equivariant Conley index $I^\mu_\lambda (Y, \iota, g)$ as explained.
One option of the definition of the Seiberg--Witten Floer $K$-theory is just the  $K$-theory of (a certain degree shift of) $I^\mu_\lambda (Y, \iota, g)$,
but it is not convenient to define the $K$-theoretic Fr{\o}yshov invariant.
The reason is that both of representations $\C_+$ and $\C_-$ may appear in the $I$-invariant part of the space of spinors,
and as we have seen in \cref{lem: shift by suspension}, the ideal $\mathfrak{J}(X)$ of $R(G)$ associated to a space of type $G$-SWF $X$ does not behave neatly under the suspension by $\C_-$,
and $k(X)$ does not behave well for both $\C_+$ and $\C_-$: see \cref{rem: why not C_+ C_-}.

However, under the suspension by the direct sum $\C_+ \oplus \C_-$, the quantity $k(X)$ {\it does} behave neatly, as seen in \cref{lem: shift by suspension}.
Moreover, $\C_+ \oplus \C_-$ appears as the complexification of $\C_+$, and of $\C_-$ as well.
This observation leads us to consider
the `complexification' or `double' of $I^\mu_\lambda (Y, \iota, g)$ and of the relative Bauer--Furuta invariant among them.

To do this, let us define the double of a general space of type $G$-SWF.
Define a group automorphism $\alpha : G \to G$ by $\alpha(j)=-j$.

\begin{defi}
Let $X$ be a space of type $G$-SWF at level $t$.
Denote by $X^\dagger$ the space of type $G$-SWF at level $t$ defined as the same topological space with $X$, but the $G$-action on $X$ is given by composing the original $G$ action on $X$ with $\alpha$.
Then $X \wedge X^\dagger$ is also a space of type $G$-SWF, at level $2t$.
Define the space of type $G$-SWF $D(X)$ by 
\[
D(X) = X \wedge X^\dagger,
\]
which we call the {\it double} of $X$.

Similarly, for a real or complex representation $V$ of $G$, define a representation $V^\dagger$ by the same vector space with $V$, but with $G$-action obtained by composing the original $G$ action on $V$ with $\alpha$.
Define a representation $D(V)$ of $G$ by 
\[
D(V) = V \oplus V^\dagger.
\]
\end{defi}

\begin{ex}
\label{ex double of rep}
Since the automorphism $\alpha : G \to G$ does not affect the real representation $\tilde{\R}$, we have
\[
D(\tilde{\R}^t) = \tilde{\R}^{2t} \cong \tilde{\C}^t
\]
for $t \geq 0$.

On the other hand, $\alpha$ swaps $\C_+$ for $\C_-$:
\begin{align*}
&(\C_+)^\dagger = \C_-,\\
&(\C_-)^\dagger = \C_+.
\end{align*}
Thus we have $G$-equivariant homeomorphisms
\begin{align*}
&D(\C_+) \cong D(\C_-) \cong \C_+ \oplus \C_- \cong (\C_+)_\C \cong (\C_-)_\C,
\end{align*}
where $(\C_\pm)_\C$ denotes the complexification of $\C_\pm$.
More generally, we have
\[
D(\C_+^m \oplus \C_-^n) \cong (\C_+\oplus \C_-)^{m+n}
\]
for $m,n \geq 0$.
\end{ex}

Following \cite[Section~4]{Ma14}, 
consider a triple $(D,m,n)$, where 
$D$ is a space of type $G$-SWF at an even level, and $m \in \Z$ and $n \in \Q$.
We have in mind the case that $D$ is given as $D=D(X)$ for some $X$, not necessarily at an even level.

\begin{defi}
For such triples $(D,m,n), (D',m',n')$,
we say that they are {\it $G$-stably equivalent} to each other if $n-n' \in \Z$ and there exist $M,N \geq 0$ and a $G$-homotopy equivalence
\[
\Sigma^{(M-m)\tilde{\C}} \Sigma^{(N-n)(\C_+ \oplus \C_-)}D
\to \Sigma^{(M-m')\tilde{\C}} \Sigma^{(N-n')(\C_+ \oplus \C_-)}D'.
\]
Define $\mathfrak{C}_{\iota}$ as the set of $G$-stable equivalence classes of triples $(D,m,n)$.
An element of $\mathfrak{C}_{\iota}$ is called  a {\it spectrum class}.
\end{defi}

Informally, we may think of the triple $(D,m,n)$ as the formal desuspension of $X$ by $m \tilde{\C}$ and by $n(\C_+ \oplus \C_-)$, so symbolically one may write
\[
(D,m,n) = \Sigma^{-m \tilde{\C}}\Sigma^{-n(\C_+ \oplus \C_-)}D,
\]
while we need to keep in mind that $n$ may not be an integer.

As well as the non-equivariant case, we can define the notion of local equivalence, which was introduced by Stoffregen~\cite{Sto20}, in our $G$-equivariant setting:

\begin{defi}
Let $(D,m,n), (D',m',n')$ be triples as above.
A {\it $G$-stable map} $(D,m,n) \to (D',m',n')$ is a based $G$-map
\[
\Sigma^{(M-m)\tilde{\C}} \Sigma^{(N-n)(\C_+ \oplus \C_-)}D
\to \Sigma^{(M-m')\tilde{\C}} \Sigma^{(N-n')(\C_+ \oplus \C_-)}D'
\]
for some $M,N \geq 0$.
A $G$-stable map $(D,m,n) \to (D',m',n')$ is called a {\it $G$-local map} if it induces a $G$-homotopy equivalence on the $H$-fixed-point sets.
We say that $(D,m,n)$ and $(D',m',n')$ are {\it $G$-locally equivalent} if there exist $G$-local maps $(D,m,n) \to (D',m',n')$ and $(D',m',n') \to (D,m,n)$.

The $G$-local equivalence is evidently an equivalence relation, and we call an equivalence class for this relation a {\it $G$-local equivalence class}.
The set of $G$-local equivalence classes is denoted by $\mathcal{LE}_G$. We write an element of $\mathcal{LE}_G$ by $[(D,m,n)]_{\text{loc}}$.  
Evidently the $G$-stable equivalence implies the $G$-local equivalence,
and we have a natural surjection $\mathfrak{C}_{\iota} \to \mathcal{LE}_G$.
\end{defi}

Once we have a $G$-stable map $f: (D, m,n) \to (D', m',n')$ written as  
\[
f: \Sigma^{(M-m)\tilde{\C}} \Sigma^{(N-n)(\C_+ \oplus \C_-)}D
\to \Sigma^{(M-m')\tilde{\C}} \Sigma^{(N-n')(\C_+ \oplus \C_-)}D', 
\]
by suspensions, we can enlarge $M$ and $N$ as arbitrary large positive integers. 
So, we can define the formal (de)suspension $\Sigma^{ l (\mathbb{C}_+ \oplus \mathbb{C}_-) } f$ of $f: (D, m,n-l) \to (D', m',n'-l)$ by 
 \[
\Sigma^{ l (\mathbb{C}_+ \oplus \mathbb{C}_-) } f : \Sigma^{(M-m)\tilde{\C}} \Sigma^{(N-n+l)(\C_+ \oplus \C_-)}D
\to \Sigma^{(M-m')\tilde{\C}} \Sigma^{(N-n'+ l )(\C_+ \oplus \C_-)}D'.
\]
\begin{rem}\label{local map desuspension}
It is easy to check that if $f$ is a $G$-local map, then $\Sigma^{ l (\mathbb{C}_+ \oplus \mathbb{C}_-) } f$ is a $G$-local map again. 
\end{rem}

For a triple $(D,m,n)$ above, we define
\[
\tilde{K}_G^\ast(D,m,n) := \tilde{K}^{\ast+m+2n}(D)
\]
and 
\[
k(D,m,n) := k(D) - n.
\]

The second statement of the following \lcnamecref{lem: inv k kappa} is
the main motivation that we work with $\mathfrak{C}_{\iota}$, allowing only suspension by $\tilde{\C}$ and $\C_+ \oplus \C_-$:

\begin{lem}
\label{lem: inv k kappa}
 Let $(D, m, n)$ be a triple as above. Then the followings are invariants of
the equivalence class $\mathcal{D} = [(D, m, n)] \in \mathfrak{C}_{\iota}$:
\begin{enumerate}
    \item The isomorphism class of equivariant $K$-cohomology,
    \[
    \tilde{K}_G^\ast(\mathcal{D}) = [\tilde{K}_G^\ast(D,m,n)],
    \]
    as a graded $R(G)$-module.
    \item The rational number $k(\mathcal{D}) = k(D,m,n) \in \Q$.
\end{enumerate}
\end{lem}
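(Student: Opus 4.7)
My plan is to derive both invariance statements from two already-established facts: the equivariant Thom isomorphism for complex $G$-representations, and the calculation of the effect of suspension on $k$ given in \cref{lem: shift by suspension}. Fix representatives $(D,m,n)$ and $(D',m',n')$ of the equivalence class, together with a $G$-homotopy equivalence
\[
\phi : \Sigma^{(M-m)\tilde{\C}}\,\Sigma^{(N-n)(\C_+\oplus\C_-)}D \;\xrightarrow{\;\sim\;}\; \Sigma^{(M-m')\tilde{\C}}\,\Sigma^{(N-n')(\C_+\oplus\C_-)}D'
\]
supplied by the definition of $G$-stable equivalence. The preliminary step I would carry out is to record the dimension constraint forced by $\phi$. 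Since $\tilde{\C}^H=\tilde{\C}$ while $(\C_+\oplus\C_-)^H=0$, passing to $H$-fixed points gives
\[
\bigl(\Sigma^{(M-m)\tilde{\C}}\Sigma^{(N-n)(\C_+\oplus\C_-)}D\bigr)^H \;\simeq_G\; \Sigma^{(M-m)\tilde{\C}}D^H,
\]
so the level of the domain of $\phi$ equals $2t+2(M-m)$, where $2t$ denotes the level of $D$, and similarly for the codomain. Equating levels yields $t-m=t'-m'$, and equating total real dimensions of the suspending representations yields $m+2n=m'+2n'$.

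For part (1), I would apply $\tilde{K}_G^\ast$ to $\phi$. Because $\tilde{\C}$ and $\C_+\oplus\C_-$ are complex $G$-representations, the equivariant Thom isomorphism furnishes canonical $R(G)$-linear isomorphisms
\[
\tilde{K}_G^\ast(\Sigma^V X)\;\cong\;\tilde{K}_G^{\,\ast-\dim_{\R} V}(X)
\]
for $V=\tilde{\C}$ and $V=\C_+\oplus\C_-$, given by multiplication by the (unit) Bott class. Applying this on both sides of $\phi$ and using the identity $m+2n=m'+2n'$ together with Bott $2$-periodicity produces an $R(G)$-linear graded isomorphism
\[
\tilde{K}_G^{\,\ast+m+2n}(D)\;\cong\;\tilde{K}_G^{\,\ast+m'+2n'}(D'),
\]
which is exactly the assertion that the isomorphism class $\tilde{K}_G^\ast(\mathcal{D})$ is well-defined.

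For part (2), I would invoke \cref{lem: shift by suspension} iteratively to compute
\[
k\!\left(\Sigma^{(M-m)\tilde{\C}}\Sigma^{(N-n)(\C_+\oplus\C_-)}D\right)=k(D)+(N-n),
\]
and the analogous identity for $D'$. The equivalence $\phi$ preserves $k$: one may apply \cref{lem: origin of local eq} both to $\phi$ and to its $G$-homotopy inverse, or equivalently observe that $\phi$ induces an $R(G)$-module isomorphism on $\tilde{K}_G$ carrying the ideal $\mathfrak{J}$ onto itself. Equating the two expressions and cancelling $N$ gives $k(D)-n=k(D')-n'$, as required. The only point really demanding care is the preliminary dimension-matching step, since the identity $m+2n=m'+2n'$ simultaneously powers the degree calculation in (1) and licenses the cancellation of $N$ in (2); beyond that the argument is a direct application of results already established.
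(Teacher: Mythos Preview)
Your approach matches the paper's: Bott periodicity for (1), \cref{lem: shift by suspension} for (2). The only questionable step is the claimed identity $m+2n=m'+2n'$ obtained by ``equating total real dimensions of the suspending representations'': a $G$-homotopy equivalence $\Sigma^{V}D\simeq_G\Sigma^{V'}D'$ does not by itself force $\dim V=\dim V'$, since $D$ and $D'$ are arbitrary spaces of type $G$-SWF and need not have the same dimension. Fortunately the identity is not actually needed. For (2), the $N$'s cancel directly in $k(D)+(N-n)=k(D')+(N-n')$, so nothing beyond \cref{lem: shift by suspension} and the invariance of $k$ under $G$-homotopy equivalence (via \cref{lem: origin of local eq} applied both ways, as you note) is required. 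For (1), the Thom isomorphism for a complex $G$-representation is already degree-preserving in equivariant $K$-theory---this is precisely the Bott periodicity the paper invokes---so $\phi$ yields $\tilde K_G^{\ast}(D)\cong\tilde K_G^{\ast}(D')$ without any auxiliary dimension count.
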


\begin{proof}
The first statement immediately follows from the Bott periodicity about suspensions by complex representations.
The second statement follows from \cref{lem: shift by suspension}.
\end{proof}

\begin{rem}
Instead of considering elements of $\mathfrak{C}_{\iota}$,
one may define a `$G$-equivariant suspension spectrum' by allowing suspension only by $\C_+$ and only by $\C_-$, not necessarily by the pair $\C_+\oplus \C_-$.
However, then the statement corresponding to (2) in \cref{lem: inv k kappa} cannot be obtained.
See \cref{rem: why not C_+ C_-}.
\end{rem}

The statement for (2) in \cref{lem: inv k kappa} can be improved as follows:

\begin{lem}\label{lem:loc eq}
The rational number $k(\mathcal{D})$ is a $G$-local equivalence invariant, i.e. $k(\mathcal{D})$ depends only on the $G$-local equivalence class of $\mathcal{D} \in \mathfrak{C}_{\iota}$.
\end{lem}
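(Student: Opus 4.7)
The plan is to deduce the statement directly from \cref{lem: origin of local eq} combined with the suspension-shift bookkeeping recorded in \cref{lem: shift by suspension}.

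Suppose $(D,m,n)$ and $(D',m',n')$ represent $G$-locally equivalent classes, via $G$-local maps $f : (D,m,n) \to (D',m',n')$ and $g : (D',m',n') \to (D,m,n)$. First I would choose integers $M,N \geq 0$ large enough that both $f$ and $g$ can simultaneously be represented as honest based $G$-maps
\[
\tilde f : X \to X', \qquad \tilde g : X' \to X,
\]
where $X := \Sigma^{(M-m)\tilde{\C}} \Sigma^{(N-n)(\C_+ \oplus \C_-)} D$ and $X' := \Sigma^{(M-m')\tilde{\C}} \Sigma^{(N-n')(\C_+ \oplus \C_-)} D'$. Such a choice is possible since $m,m' \in \Z$ and $n - n' \in \Z$ by the definition of stable equivalence underlying the local equivalence. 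By \cref{local map desuspension}, after these extra suspensions $\tilde f$ and $\tilde g$ remain $G$-local.

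Next I would verify that $X$ and $X'$ are spaces of type $G$-SWF at the same even level. Since $(\C_+ \oplus \C_-)^H = 0$ while $\tilde{\C}^H = \tilde{\C}$ restricts to the sign representation of $G/H \cong \Z_2$, one computes
\[
X^H \simeq_G (\tilde\R^{2t + 2(M-m)})^+, \qquad (X')^H \simeq_G (\tilde\R^{2t' + 2(M-m')})^+,
\]
where $2t$ and $2t'$ denote the levels of $D$ and $D'$; both levels are thus even. The requirement that $\tilde f^H$ be a $G$-homotopy equivalence forces these $H$-fixed-point spheres to have the same dimension, so the levels of $X$ and $X'$ coincide. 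Hence \cref{lem: origin of local eq} applies and yields $k(X) \leq k(X')$.

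Finally I would translate the inequality back. Iterating the formulas $k(\Sigma^{\tilde{\C}} Y) = k(Y)$ and $k(\Sigma^{\C_+ \oplus \C_-} Y) = k(Y) + 1$ from \cref{lem: shift by suspension} gives $k(X) = k(D) + (N-n)$ and $k(X') = k(D') + (N-n')$, so $k(X) \leq k(X')$ becomes $k(D) - n \leq k(D') - n'$, i.e., $k(\mathcal{D}) \leq k(\mathcal{D}')$. Running the symmetric argument with $\tilde g$ in place of $\tilde f$ yields the reverse inequality, hence $k(\mathcal{D}) = k(\mathcal{D}')$. The argument is essentially formal given the earlier lemmas; no single step is a serious obstacle, and the only real care needed is the bookkeeping that ensures a common pair $(M,N)$ can be chosen to represent both halves of the local equivalence.
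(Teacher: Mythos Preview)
Your proof is correct and follows essentially the same route as the paper: apply \cref{lem: origin of local eq} to the suspended spaces representing the $G$-local maps, then unwind via \cref{lem: shift by suspension} to obtain $k(D)-n \leq k(D')-n'$ and the reverse. The paper treats the two $G$-local maps with separate choices of $(M,N)$ rather than a common one, and is terser about why the levels match, but the argument is the same.
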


\begin{proof}
Given two triples $(D, m, n), (D', m', n')$, suppose that
they are $G$-local equivariant.
Let
\[
\Sigma^{(M-m)\tilde{\C}} \Sigma^{(N-n)(\C_+ \oplus \C_-)}D
\to \Sigma^{(M-m')\tilde{\C}} \Sigma^{(N-n')(\C_+ \oplus \C_-)}D'
\]
be a $G$-local map from $(D,m,n)$ to $(D',m',n')$, where $M,N \geq 0$.
Applying \cref{lem: origin of local eq} to this,
we obtain
\[
k(\Sigma^{(M-m)\tilde{\C}} \Sigma^{(N-n)(\C_+ \oplus \C_-)}D)
\leq k(\Sigma^{(M-m')\tilde{\C}} \Sigma^{(N-n')(\C_+ \oplus \C_-)}D').
\]
By \cref{lem: shift by suspension}, this is equivalent to that $k(D,m,n) \leq k(D',m',n')$.
Similarly we obtain $k(D',m',n') \leq k(D,m,n)$ from a $G$-local map $(D', m', n') \to (D', m', n')$, and thus have $k(D,m,n) = k(D',m',n')$.
\end{proof}

\subsection{Doubled  Seiberg--Witten Floer stable homotopy type for involutions}
\label{subsection $G$-equivariant Seiberg--Witten Floer homotopy type}

Now we are ready to construct an invariant of $3$-manifolds with involution.
Let $(Y,\frakt)$ be a spin rational homology 3-sphere and $\iota$ be a smooth orientation-preserving involution on $Y$.
Suppose that $\iota$ also preserves the given spin structure $\frakt$ on $Y$ and is of odd type.
Fix an $\iota$-invariant metric $g$ on $Y$, and
choose a lift $\tilde{\iota}$ of $\iota$ of order 4 on the spin structure.
Then we obtain a $G$-equivariant Conley index $I^\mu_\lambda (Y, \iota, \tilde{\iota}, g)$, once we fix $\lambda \ll 0 \ll \mu$.
Recall that, 
the finite-dimensional approximation of
the configuration space is decomposed into 
\begin{align}
\label{eq: fin. dim. approx V}
V_\lambda^\mu = 
V(\tilde{\R})^\mu_\lambda \oplus V(\C_+)^\mu_\lambda\oplus
V(\C_-)^\mu_\lambda.
\end{align}
Here each direct summand is isomorphic to the direct sum of some copies of $\tilde{\R}, \C_+, \C_-$ respectively.
The $H$-invariant part of $I^\mu_\lambda (Y, \iota, g)$ is given by $V(\tilde{\R})^\mu_\lambda$.

First, we see that the dependence on the choice of lift $\tilde{\iota}$.
Recall that $\iota$ has exactly two lifts, and once we pick a lift $\tilde{\iota}$, the other lift is $-\tilde{\iota}$.

\begin{lem}
\label{lem: indep tilde iota}
The double $D(I^\mu_\lambda (Y,\frakt, \iota, \tilde{\iota}, g))$ is independent of the choice of $\tilde{\iota}$.
Namely, there is a canonical $G$-equivariant homeomorphism
\begin{align}
\label{eq: indep tilde iota}
D(I^\mu_\lambda (Y,\frakt, \iota, \tilde{\iota}, g))
\cong D(I^\mu_\lambda (Y,\frakt, \iota, -\tilde{\iota}, g)).    
\end{align}
\end{lem}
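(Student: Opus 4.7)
The plan is to construct the map $\Phi\colon(a,\phi)\mapsto(a,\phi\cdot i)$ alluded to in the remark preceding \cref{ooo1}, and to show that it identifies the $I$-fixed flow (for the lift $\tilde{\iota}$) with the $I'$-fixed flow (for the lift $-\tilde{\iota}$) after twisting the $G$-action by the automorphism $\alpha\colon G\to G$ used in the $\dagger$-construction. The double construction will then absorb this twist via the commutativity of the smash product together with the involutive identity $X^{\dagger\dagger}=X$.

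First I will verify that right multiplication by $i\in\quat$ on spinors commutes with $\tilde{\iota}$ (spin lifts are quaternion-linear) and with the formal gradient $\sigma$ (the Dirac operator is quaternion-linear and $*d$ acts only on the form factor), and that it preserves each eigenspace of the linear part $l$. It therefore descends to an $\R$-linear automorphism $\Phi$ of the finite-dimensional approximation $V^\mu_\lambda$ which intertwines the two approximated flows. Using $\tilde{\iota}(\phi\cdot i)=\tilde{\iota}(\phi)\cdot i$, a short check shows that $\Phi$ carries $(V^\mu_\lambda)^I$ (for $\tilde{\iota}$) isomorphically onto $(V^\mu_\lambda)^{I'}$ (for $-\tilde{\iota}$), because the $I$-fixed condition $\tilde{\iota}(\phi)=-\phi\cdot j$ transforms into the $I'$-fixed condition $\tilde{\iota}(\phi)=\phi\cdot j$ under $\phi\mapsto\phi\cdot i$.

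Next I will analyze the interaction with the $G$-action. Using $ji=-ij$ in $\quat$ one computes
\[
\Phi(j\cdot(a,\phi))=(a,\phi\cdot j\cdot i)=(a,-\phi\cdot i\cdot j)=-j\cdot\Phi(a,\phi)=\alpha(j)\cdot\Phi(a,\phi).
\]
Thus $\Phi$ is not $G$-equivariant but $\alpha$-equivariant, which is precisely the statement that $\Phi$ becomes genuinely $G$-equivariant when viewed as a map from $(V^\mu_\lambda)^I$ to the $\dagger$-twist of $(V^\mu_\lambda)^{I'}$. Functoriality of the equivariant Conley index will then produce a canonical $G$-equivariant homeomorphism
\[
I^\mu_\lambda(Y,\frakt,\iota,\tilde{\iota},g)\cong I^\mu_\lambda(Y,\frakt,\iota,-\tilde{\iota},g)^\dagger.
\]

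Finally, smashing both sides with their $\dagger$-twists and invoking $X^{\dagger\dagger}=X$ together with the commutativity of the smash product will yield \eqref{eq: indep tilde iota}. The main obstacle is purely one of bookkeeping: keeping careful track of the left/right conventions for the quaternion action, the sign in the lift $\pm\tilde{\iota}$, and the anti-commutation $ji=-ij$. Once this bookkeeping is fixed, no genuine analytic or topological difficulty remains beyond the standard functoriality of the equivariant Conley index already used in \cref{section Gequivariant Seiberg--Witten Floer homotopy theory}.
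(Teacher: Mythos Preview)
Your proposal is correct and follows essentially the same approach as the paper: both arguments use right multiplication by $i$ on spinors as the key map, verify that it carries the $I$-fixed locus for $\tilde{\iota}$ to that for $-\tilde{\iota}$, and observe that it is $\alpha$-twisted $G$-equivariant (the paper phrases this last point concretely as the swap of $\C_+$ and $\C_-$ under right $i$-multiplication, while you phrase it via the identity $\Phi(j\cdot x)=\alpha(j)\cdot\Phi(x)$), yielding $I^\mu_\lambda(\tilde{\iota})\cong I^\mu_\lambda(-\tilde{\iota})^\dagger$ and hence the doubled statement. The only cosmetic difference is that you invoke $\Phi$'s compatibility with the flow via $Pin(2)$-equivariance of $\sigma$ (since $i\in S^1\subset Pin(2)$), which is the cleanest justification and exactly what underlies the paper's implicit use of the same fact.
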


\begin{proof}
To record the dependence,
denote the involution $I$ introduced in \eqref{eq: def of I} by $I^+$ if it is defined using $\tilde{\iota}$, and by $I^-$ if it is defined using $-\tilde{\iota}$.
Then it is straightforward to check that 
\[
I^+(a,\phi i) = I^-(a,\phi)i
\]
for all $(a,\phi) \in \Omega^1(Y) \oplus \Gamma(\mathbb{S})$.

Recall also how the representations $\C_+, \C_-$ appear in 
the configuration space for the Seiberg--Witten Floer theory.
Decompose $\quat$ so that
\begin{align*}
\quat 
= \C \oplus j\C
= \R \oplus i\R \oplus j\R \oplus (-k)\R
\cong (\R \oplus j\R) \oplus i(\R\oplus(-j)\R).    
\end{align*}
Then $(\R \oplus j\R, j)$ and $(i(\R\oplus(-j)\R), -j)$ are isomorphic to $\C_+$ and $\C_-$ respectively.
Note that the right multiplication of $i$ gives complex linear isomorphisms
\[
(\R \oplus j\R, j) \to (i(\R\oplus(-j)\R), -j),\quad (i(\R\oplus(-j)\R), -j) \to (\R \oplus j\R, j).
\]

Denote a finite-dimensional approximation of the $I^\pm$-invariant part of the configuration space by
\[
V(\tilde{\R})^\mu_\lambda \oplus V^\pm(\C_+)^\mu_\lambda\oplus
V^\pm(\C_-)^\mu_\lambda.
\]
(Note that the choice of lift of $\iota$ does not affect the $H$-invariant part $V(\tilde{\R})^\mu_\lambda$.)
By the above two paragraphs, we see that the right multiplication of $i$ gives complex linear isomorphisms
\[
V^+(\C_+)^\mu_\lambda \to V^-(\C_-)^\mu_\lambda,\quad
V^+(\C_-)^\mu_\lambda \to V^-(\C_+)^\mu_\lambda.
\]
Therefore the right multiplication of $i$ induces a $G$-equivariant homeomorphism
\[
I^\mu_\lambda (Y,\frakt, \iota, \tilde{\iota}, g)
\to I^\mu_\lambda (Y, \frakt,\iota, -\tilde{\iota}, g)^\dagger,
\]
and this gives rise to the desired $G$-equivariant homeomorphism \eqref{eq: indep tilde iota}.
\end{proof}

In view of \cref{lem: indep tilde iota}, henceforth we drop from our notation the choice of lift of $\iota$.

We also need a kind of correction term to absorb the dependence of the invariant metric.
Recall a correction term introduced in \cite{Ma03}:
\[
n(Y, \frakt,g) := \frac{1}{2}(\eta_{dir}(Y, \frakt,g)-\dim_{\C}\Ker D_{(Y, \frakt,g)}-\frac{1}{4}\eta_{sign}(Y,g)).
\]
Here $\eta_{dir}(Y, \frakt,g)$ and $\eta_{sign}(Y,g)$ are the eta invariants of the Dirac operator and the signature operator respectively, and $D_{(Y, \frakt,g)}$ is the Dirac operator on $(Y, \frakt,g)$.
Alternatively, we can write $n(Y, \frakt,g)$ as 
\begin{align}
\label{eq: expression of n for Rohlin}
n(Y, \frakt,g) = \ind_\C D_W + \frac{\sigma (W)}{8},    
\end{align}
where $W$ is a compact spin Riemann 4-manifold bounded by $(Y, g)$. 
About orientation reversing, we have 
\begin{align}
\label{eq: nminusn}
n(Y, \frakt,g) + n(-Y, \frakt,g) = \dim_{\C}\Ker D_{(Y, \frakt,g)},    
\end{align}
where $D_{(Y, \frakt,g)}$ is the 3-dimensional Dirac operator on $(Y, \frakt,g)$ (see \cite[Page 167]{Ma16}).

\begin{comment}
\begin{defi}
Let us define a rational number
\[
Dn(Y, \frakt,g)^I := \frac{\eta_{dir}(Y, \frakt,g)-k(Y, \frakt,g)-\eta_{sign}(Y, \frakt,g)}{4}.
\]
Here $\eta_{dir}(Y, \frakt,g)$ and $\eta_{sign}(Y, \frakt,g)$ are the eta invariants of the Dirac operator and the signature operator respectively.
\end{defi}

This correction term is a variant of Manolescu's correction term $n(Y, \frakt,g)$ introduced in \cite{Ma03}, such a correction term $n(Y, \frakt,g)$ was introduced.
The relation between $n(Y, \frakt,g)$ and $Dn(Y, \frakt,g)^I$ is simply
\[
Dn(Y, \frakt,g)^I = \frac{1}{2}n(Y, \frakt,g).
\]
We give comments on $Dn(Y, \frakt,g)^I$ to make its feature clear.
The notation $I$ indicates $Dn(Y, \frakt,g)^I$ is the correction term in the $I$-invariant part of the spinor bundle.
The notation $D$ indicates we shall consider a doubling construction as in \cref{subsection Doubling}.
\end{comment}

In our $G$-equivariant setting,
a direct analogue of Manolescu's Fleor stable homotopy type is a triple
\begin{align}
\label{eq: candidate}
(\Sigma^{-V(\tilde{\R})^0_\lambda}
\Sigma^{-V(\C_+)^0_\lambda}
\Sigma^{-V(\C_-)^0_\lambda}I^\mu_\lambda (Y, \frakt, \iota, g),0,n(Y, \frakt,g)/4), 
\end{align}
where the division by $4$ for the last factor will be explained in \cref{rem: divide by 4}.
However, the triple \eqref{eq: candidate} does not lie in $\mathfrak{C}_{\iota}$.
So, instead, we consider the `double' of this triple:
\begin{align}
\label{triple before formal def}
(\Sigma^{-D(V(\tilde{\R})^0_\lambda)}
\Sigma^{-D(V(\C_+)^0_\lambda)}
\Sigma^{-D(V(\C_-)^0_\lambda)}D(I^\mu_\lambda (Y, \frakt, \iota, g)),0,n(Y, \frakt,g)/2).
\end{align}
Rewriting desuspensions to make \eqref{triple before formal def} precise, in view of \cref{ex double of rep}, we arrive at:

\begin{defi}\label{def: equivariant Seiberg--Witten Floer stable homotopy type}
Given $Y, \frakt, \iota, g$ as above, define an element $DSWF_G(Y, \frakt,\iota) \in \mathfrak{C}_{\iota}$ by 
\begin{align*}
&DSWF_G(Y, \frakt,\iota)\\
&:= 
[(D(I^\mu_\lambda (Y, \frakt, \iota, g)),
\dim_{\R}V(\tilde{\R})^0_\lambda,
\dim_{\C}V(\C_+)^0_\lambda
+\dim_{\C}V(\C_-)^0_\lambda
+n(Y, \frakt,g)/2)].
\end{align*}
We call $DSWF_G(Y, \frakt,\iota)$
the {\it doubled $G$-equivariant Seiberg--Witten Floer stable homotopy type} or {\it doubled Seiberg--Witten Floer $G$-spectrum class}.
\end{defi}

Recall that, symbolically, $DSWF_G(Y, \frakt,\iota)$ can be thought of as 
\[
\Sigma^{-(\dim_{\R}V(\tilde{\R})^0_\lambda)\tilde{\C}}
\Sigma^{-(\dim_{\C}V(\C_+)^0_\lambda
+\dim_{\C}V(\C_-)^0_\lambda
+n(Y, \frakt,g)/2)(\C_+ \oplus \C_-)}
D(I^\mu_\lambda (Y, \frakt, \iota, g)).
\]

\begin{rem}
\label{rem: divide by 4}
The number $n(Y, \frakt,g)/4$ in \eqref{eq: candidate} was chosen by the following observation.
First, since $I$ and $i$ anti-commute, the dimension of $I$-invariant part of the kernel/cokernel of the Dirac operator is half of the dimension of the original kernel/cokernel.
Second, passing to the double, the dimension turns into the double of the original one.
Lastly, for suspensions corresponding to the third factor of triples, we use the direct sum $\C_+ \oplus \C_-$, rather than a $1$-dimensional complex vector space.
So in total
\[
\frac{1}{2}\cdot2\cdot \frac{1}{2}n(Y, \frakt,g) = \frac{1}{2}n(Y, \frakt,g)
\]
should be put at the third factor of the triple \eqref{triple before formal def}.
Adopting this number is necessary to prove the invariance of the doubled $G$-Floer homotopy type, shown in the following \lcnamecref{lem inv lamda mu}.
\end{rem}

We shall show the invariance of $DSWF_G(Y, \frakt,\iota)$.
Before that, we note a technicality about trivializations of representations.
Since $\tilde{\C}$ is a complex representation of $G$ and $GL_(N,\C)$ is connected for all $N$, for any triple $(D,m,n)$ and
for any complex $N$-dimensional representation $V$ of $G$ which is isomorphic to the direct sum of copies of $\tilde{\C}$, we have 
a $G$-stable  equivalence
\[
(V^+ \wedge D, N + m, n) \simeq (D, m, n),
\]
and this $G$-stable  equivalence is canonical up to homotopy.
A similar remark applies also to the desuspension by $\C_+\oplus \C_-$ since $\C_+\oplus \C_-$ is also a complex representation.

\begin{prop}
\label{lem inv lamda mu}
The spectrum class $DSWF_G(Y, \frakt,\iota) \in \mathfrak{C}_{\iota}$ is an invariant of $(Y, \frakt,\iota)$, independent of $\lambda, \mu$, and $g$.
\end{prop}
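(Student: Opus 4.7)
The plan is to adapt Manolescu's proof \cite{Ma03} of invariance of his Seiberg--Witten Floer stable homotopy type to the present $G$-equivariant doubled setting. Independence of the choice of lift $\tilde{\iota}$ was already established in \cref{lem: indep tilde iota}, so it remains to verify that the class in $\mathfrak{C}_{\iota}$ does not depend on $\lambda, \mu$ or on $g$. I would split the argument into two steps: first independence of the cutoffs $\lambda, \mu$ for fixed $g$, then independence of $g$ via a path of $\iota$-invariant metrics.

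For the first step, fix $g$ and compare two choices $(\lambda, \mu)$ and $(\lambda', \mu')$ with $\lambda' \leq \lambda < 0 \leq \mu \leq \mu'$. The finite-dimensional approximation decomposes as
\[
V^{\mu'}_{\lambda'} = V^{\mu}_{\lambda} \oplus E_{+} \oplus E_{-},
\]
where $E_{+}$ and $E_{-}$ are the eigenspaces of the linearization $l$ with eigenvalues in $(\mu, \mu']$ and $(\lambda', \lambda]$, respectively. Each splits as a $G$-representation into a direct sum of copies of $\tilde{\R}, \C_{+}$ and $\C_{-}$. The continuation property of the equivariant Conley index, which carries over verbatim in the $G$-equivariant setting, yields a $G$-homotopy equivalence between $I^{\mu'}_{\lambda'}(Y, \frakt, \iota, g)$ and a suspension of $I^{\mu}_{\lambda}(Y, \frakt, \iota, g)$ by $E_{+}$, while $E_{-}$ only enlarges the isolating neighborhood. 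Applying the doubling functor converts the added $\tilde{\R}$'s into $\tilde{\C}$'s and each of $\C_{+}, \C_{-}$ into $\C_{+}\oplus \C_{-}$, by \cref{ex double of rep}. Under the stable equivalence defining $\mathfrak{C}_{\iota}$, these suspensions are exactly absorbed by the shifts of $\dim_{\R} V(\tilde{\R})^{0}_{\lambda}$ and of $\dim_{\C} V(\C_{+})^{0}_{\lambda} + \dim_{\C} V(\C_{-})^{0}_{\lambda}$ in the second and third slots of the triple. Since $n(Y, \frakt, g)$ is independent of $\lambda, \mu$, the class is unchanged.

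For the second step, choose a smooth path $\{g_{t}\}_{t \in [0,1]}$ of $\iota$-invariant metrics joining two given metrics. Using a standard generic perturbation, away from finitely many times $t_{0} \in (0,1)$ the cutoffs $\lambda, \mu$ can be chosen so that no eigenvalue of $l_{t}$ equals $\lambda$ or $\mu$, and on each such interval the $G$-equivariant Conley index class is canonically unchanged by the continuation property. At an exceptional $t_{0}$, either an eigenvalue of $\ast d$ crosses zero, altering $\dim_{\R} V(\tilde{\R})^{0}_{\lambda}$ by $\pm 1$ while producing after doubling a $\Sigma^{\tilde{\C}}$ factor that matches the jump of the second slot, or an eigenvalue of the Dirac operator crosses zero on the $I$-invariant part. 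In the latter case the doubling converts what would be a single $\C_{+}$ or $\C_{-}$ into a $\C_{+}\oplus \C_{-}$, and by \eqref{eq: expression of n for Rohlin} together with the accounting of \cref{rem: divide by 4} the resulting jump of the third slot is exactly the corresponding jump of $n(Y, \frakt, g_{t})/2$. Hence the triple represents the same class across the crossing.

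The main obstacle will be the bookkeeping for Dirac crossings in the $G$-equivariant doubled setup. A naive (undoubled) spectral crossing would contribute a suspension by $\C_{+}$ or $\C_{-}$ alone, which is not an allowed operation in $\mathfrak{C}_{\iota}$ (see \cref{rem: why not C_+ C_-}), and indeed $k$ would not behave well under such a suspension. The whole point of working with the doubled theory and of restricting suspensions in $\mathfrak{C}_{\iota}$ to $\tilde{\C}$ and $\C_{+}\oplus \C_{-}$ is that these crossings, after doubling, produce exactly $\C_{+}\oplus \C_{-}$ suspensions, which are admissible and precisely compensated by $n(Y, \frakt, g)/2$ as arranged in \cref{def: equivariant Seiberg--Witten Floer stable homotopy type}.
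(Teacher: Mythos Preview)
Your overall strategy matches the paper's: both steps follow Manolescu's original argument, adapted to the doubled $G$-setting. There are, however, two points where your account slips.

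In Step~1 you have the roles of $E_{+}$ and $E_{-}$ reversed. With the convention used here (and in \cite{Ma03}), enlarging the spectral window on the \emph{negative} side produces the suspension: for $\lambda' < \lambda$ one has $I^{\mu}_{\lambda'} \cong \Sigma^{V^{\lambda}_{\lambda'}} I^{\mu}_{\lambda}$, while enlarging on the positive side leaves the Conley index unchanged, $I^{\mu'}_{\lambda} = I^{\mu}_{\lambda}$. If you follow your version through literally, the suspension you claim (by $E_{+}$) does not match the shift in the second slot of the triple, which records $\dim_{\R} V(\tilde{\R})^{0}_{\lambda}$, a count of \emph{negative} eigenvalues. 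Swapping $E_{+}$ and $E_{-}$ repairs this, and then the doubling via \cref{ex double of rep} works exactly as you describe.

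In Step~2, the case of an eigenvalue of $\ast d$ crossing zero cannot actually occur: since $b_{1}(Y)=0$, the kernel of $\ast d$ on $\ker d^{\ast}$ vanishes for every metric, so no eigenvalue ever touches zero along the path. The paper records this by noting that ``the de Rham operator does not involve here'' and passes directly to the Dirac part. Your treatment of that case is thus vacuous (though not wrong). For the Dirac part, the paper packages all crossings at once as the $G$-equivariant spectral flow $\mathrm{sf}_{G}(\{D_{t}^{I}\})$ and observes that its double is $(\C_{+}\oplus\C_{-})^{\mathrm{sf}(\{D_{t}^{I}\})}$ with $\mathrm{sf}(\{D_{t}^{I}\}) = n(Y,\frakt,g_{1})/2 - n(Y,\frakt,g_{0})/2$; this is cleaner than your crossing-by-crossing description but amounts to the same computation.
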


\begin{proof}
We basically follow the original argument by Manolescu~\cite[Proof of Theorem~1]{Ma14}.
First we fix $g$ and show the independence on $\lambda, \mu$. 
Recall the behavior of the Conley index under suspension.
That is, for $\lambda<\lambda'\ll0\ll \mu'<\mu$,
\begin{align*}
   &I^{\mu}_{\lambda} = I^{\mu'}_{\lambda},\\
   &I^{\mu}_{\lambda} \cong I^{\mu}_{\lambda'} \wedge (V^{\lambda'}_{\lambda})^+
   = I^{\mu}_{\lambda'} \wedge (V(\tilde{\R})^{\lambda'}_{\lambda} \oplus V(\C_+)^{\lambda'}_{\lambda}\oplus
   V(\C_-)^{\lambda'}_{\lambda})^+.
\end{align*}
This combined with \cref{ex double of rep} implies that
\begin{align*}
   D(I^{\mu}_{\lambda}) 
   &= D(I^{\mu'}_{\lambda}),\\
   D(I^{\mu}_{\lambda}) 
   &\cong D(I^{\mu}_{\lambda'}) \wedge (D(V^{\lambda'}_{\lambda}))^+\\
   &\cong D(I^{\mu}_{\lambda'}) \wedge \left(\wt{\C}^{\dim_{\R}V(\tilde{\R})^{\lambda'}_{\lambda}} \oplus 
   (\C_+ \oplus \C_-)^{\dim_{\C}V(\C_+)^{\lambda'}_{\lambda}+
   \dim_{\C}V(\C_-)^{\lambda'}_{\lambda}}\right)^+,
\end{align*}
and from this the independence on $\lambda, \mu$ follows.

Next, we show the independece on $g$.
Fix $\lambda, \mu$, and to record the choice of $g$ used in the construction, we temporarily denote by $DSWF_G(Y, \frakt,\iota,g)$ the doubled $G$-spectrum class constructed being used $\lambda, \mu$.
Take two $\iota$-invariant metrics $g_0$ and $g_1$.
Since the space of $G$-invariant metrics is contractible, we may take a path of $\iota$-invariant metrics $\{g_t\}_{t \in [0,1]}$ between $g_0$ and $g_1$.
The assumption that $b_1(Y)=0$ tells us that the de Rham operator does not involve here, as in the non-equivariant case, and we focus on the family of Dirac operators $D_{t}$ associated with this path of metrics.

Recall that the $I$-invariant part of each Dirac operator $D_t^I$ is $G$-equivariant, and we obtain $G$-equivariant spectral flow
\[
{\rm sf}_G(\{D_{t}^I\}) \in R(G),
\]
which can be written as a linear combination only of $\C_+, \C_-$.
%Since $D_t$ involves only the spinor part, we can write ${\rm sf}_G(\{D_{t}\})$ as the formal difference of the form
%\[
%{\rm sf}_G(\{D_{t}\})
%= (\C_+^m \oplus \C_-^n) - (\C_+^{m'} \oplus \C_-^{n'})
%\]
%for some $m,n,m',n' \geq 0$.
%The spectrum classes $DSWF_G(Y, \frakt,\iota,g_0)$ and $DSWF_G(Y, \frakt,\iota,g_1)$ are related by the suspension with the `double' of ${\rm sf}_G(\{D_{t}\})$, defined as
%\[
%D({\rm sf}_G(\{D_{t}\}))
%= D(\C_+^m \oplus \C_-^n) - D(\C_+^{m'} \oplus \C_-^{n'})
%= (\C_+ \oplus \C_-)^{m-m+n-n'}.
%\]
Under the natural map $R(G) \to \Z$ induced from taking the dimension of the representation, the image of the equivariant spectral flow ${\rm sf}_G(\{D_{t}^I\})$ is given by the usual (non-equivariant) spectral flow ${\rm sf}(\{D_{t}^I\}) \in \Z$.
The difference between the doubled $G$-spectrum classes $DSWF_G(Y, \frakt,\iota,g_0)$ and $DSWF_G(Y, \frakt,\iota,g_1)$ is given by the `doubled' equivariant spectral flow
\[
D({\rm sf}_G(\{D_{t}^I\})) \in R(G),
\]
which is given by
\begin{align}
\label{doubled equiv spec}
 (\C_+ \oplus \C_-)^{{\rm sf}(\{D_{t}^I\})}.   
\end{align}
Recall that we have a formula
\begin{align}
\label{eq: split spec}
{\rm sf}(\{D_{t}\}) = n(Y, \frakt,g_1) - n(Y, \frakt,g_0),
\end{align}
and thus have
\[
{\rm sf}(\{D_{t}^I\}) = n(Y, \frakt,g_1)/2 - n(Y, \frakt,g_0)/2.
\]
Note that ${\rm sf}(\{D_{t}\})$ is an even integer because of the existence of the action $I$.
Thus we finally get
\[
D({\rm sf}_G(\{D_{t}^I\})) 
= (\C_+ \oplus \C_-)^{n(Y, \frakt,g_1)/2 - n(Y, \frakt,g_0)/2}
\in R(G),
\]
and this completes the proof.
\end{proof}

\begin{rem}
As noted in \cite{BH21}, it is subtle to split the equivariant spectral flow ${\rm sf}_G(\{D_{t}\})$ as in the non-equivariant case \eqref{eq: split spec}.
But here we consider the doubled homotopy type, and the doubling construction makes the subtlety from the equivariance disappear:
the difference between two doubled homotopy types corresponding to two choices of metrics is determined only by the non-equivariant spectral flow, as seen in \eqref{doubled equiv spec}.
\end{rem}

\subsection{Seiberg--Witten Floer $K$-theory for involutions}
\label{subsection $G$-equivariant Seiberg--Witten Floer $K$-theory}

As in the last subsection, let $(Y, \frakt)$ be a spin rational homology 3-sphere and $\iota$ be a smooth orientation-preserving involution $\iota$.
Suppose that $\iota$ also preserves the given spin structure $\frakt$ and is of odd type.

\begin{defi}
Define the {\it doubled  Seiberg--Witten Floer $K$-cohomology} by
\[
DSWFK_G(Y, \frakt,\iota) := \tilde{K}_G(DSWF_G(Y, \frakt,\iota)),
\]
defined as the isomorphism class of an $R(G)$-graded module.
We define also the {\it  $K$-theoretic Fr{\o}yshov invariant} by
\[
\kappa(Y, \frakt,\iota) := k(DSWF_G(Y, \frakt,\iota)) \in \Q.
\]
\end{defi}

%In the definition of $\kappa(Y, \frakt,\iota)$, we took the half of $k(DSWF_G(Y, \frakt,\iota))$ to cancel the effect of the doubling construction.

\begin{lem}
\label{lem: invariance of k-theory}
The isomorphism class $DSWFK_G(Y, \frakt,\iota)$ and the rational number $\kappa(Y, \frakt,\iota)$ are invariant of $(Y, \frakt,\iota)$.
\end{lem}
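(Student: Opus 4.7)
The plan is to observe that this lemma is an immediate consequence of the two results already established in the excerpt: Proposition~\ref{lem inv lamda mu} and Lemma~\ref{lem: inv k kappa}. Specifically, Proposition~\ref{lem inv lamda mu} shows that the spectrum class $DSWF_G(Y,\frakt,\iota)\in\mathfrak{C}_{\iota}$ itself is independent of the auxiliary choices (the cutoffs $\lambda,\mu$ used in the finite-dimensional approximation, and the $\iota$-invariant metric $g$), so it depends only on the triple $(Y,\frakt,\iota)$. Lemma~\ref{lem: inv k kappa} then asserts that, for any triple $(D,m,n)$ representing an element of $\mathfrak{C}_{\iota}$, both the graded $R(G)$-module $\tilde{K}_G^{\ast}(D,m,n)$ and the rational number $k(D,m,n)$ descend to well-defined invariants of the $G$-stable equivalence class.

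Concretely, I would proceed in two short steps. First, by Proposition~\ref{lem inv lamda mu}, any two choices of $(\lambda,\mu,g)$ yield representatives of $DSWF_G(Y,\frakt,\iota)$ which are $G$-stably equivalent as triples of the form $(D,m,n)$. Second, applying the two parts of Lemma~\ref{lem: inv k kappa} to these $G$-stably equivalent representatives yields that
\[
DSWFK_G(Y,\frakt,\iota) = \tilde{K}_G^{\ast}(DSWF_G(Y,\frakt,\iota))
\]
is well-defined as the isomorphism class of a graded $R(G)$-module, and that
\[
\kappa(Y,\frakt,\iota) = k(DSWF_G(Y,\frakt,\iota)) \in \Q
\]
is a well-defined rational number. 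This combination finishes the proof.

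There is no real obstacle here: the entire content has already been absorbed into Proposition~\ref{lem inv lamda mu} (which in particular used the factor $n(Y,\frakt,g)/2$ in Definition~\ref{def: equivariant Seiberg--Witten Floer stable homotopy type} to cancel the metric-dependence of the doubled equivariant spectral flow) and into Lemma~\ref{lem: inv k kappa} (which in turn relied on Bott periodicity and on Lemma~\ref{lem: shift by suspension} giving the clean behavior of $k$ under suspension by $\tilde{\C}$ and by $\C_+\oplus\C_-$---the very suspensions that are permitted in the definition of $\mathfrak{C}_{\iota}$). One might also remark, for completeness, that the independence of the lift $\tilde{\iota}$ of $\iota$ was already taken care of in Lemma~\ref{lem: indep tilde iota} via the canonical $G$-equivariant homeomorphism between the doublings, so no additional argument is needed in that direction either.
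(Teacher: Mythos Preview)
Your proposal is correct and matches the paper's approach exactly: the paper's proof is the single sentence ``This is a direct consequence of \cref{lem: inv k kappa,lem inv lamda mu},'' which is precisely the combination you invoke. Your additional remarks about Lemma~\ref{lem: indep tilde iota} and the role of $n(Y,\frakt,g)/2$ are accurate context but not needed for the formal proof.
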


\begin{proof}
This is a direct consequence of \cref{lem: inv k kappa,lem inv lamda mu}.
\end{proof}

If we allow us to use additional non-topological data $g, \lambda, \mu$, the invariant $\kappa(Y,\frakt,\iota)$ is concretely described as
\begin{align}
\label{eq: kappa describe}   
\kappa(Y,\frakt,\iota)
= k(D(I^\mu_\lambda(Y,\frakt,\iota,g)))-\dim_\C(V(\C_+)^0_\lambda)-\dim_\C(V(\C_-)^0_\lambda)-n(Y,\frakt,g)/2.
\end{align}

\begin{rem}
In this paper, we apply $K$-theory to the doubled or ``complexification" of the $I$-fixed point part of the Seiberg--Witten Floer stable homotopy type to define the invariant $\kappa$. In our setting, it could be able to apply $KR$-theory, which was introduced by Atiyah in~\cite{At66}, to the whole space of Seiberg--Witten Floer stable homotopy type with involution $I$.
\end{rem}

\begin{prop}
\label{prop: from duality}
Let $(Y,\frakt,\iota)$ be an oriented spin rational homology sphere with odd involution.
Then we have
\[
\kappa(Y,\frakt,\iota) + \kappa(-Y,\frakt,\iota)
\geq 0.
\]
\end{prop}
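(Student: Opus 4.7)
The plan is to deduce the inequality from \cref{lem: duality origin} applied to an equivariant Spanier--Whitehead duality between the doubled Conley indices of $Y$ and $-Y$.

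First I would set up the duality. Reversing the orientation of $Y$ reverses the sign of the Hodge star and of the three-dimensional Dirac operator, and hence reverses the formal gradient flow $\sigma'$ of the Chern--Simons--Dirac functional on the $I$-invariant configuration space. By the classical Conley index duality between a flow and its reverse inside an ambient vector space, extended $G$-equivariantly in the same spirit as the $S^1$-equivariant version in \cite{Ma03}, the indices $I^\mu_\lambda(Y,\frakt,\iota,g)$ and $I^{-\lambda}_{-\mu}(-Y,\frakt,\iota,g)$ are equivariantly $V_\lambda^\mu$-dual, where $V_\lambda^\mu = V(\tilde{\R})^\mu_\lambda \oplus V(\C_+)^\mu_\lambda \oplus V(\C_-)^\mu_\lambda$. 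Since the doubling operation is a smash product, this duality passes to a $D(V_\lambda^\mu)$-duality between $D(I^\mu_\lambda(Y,\frakt,\iota,g))$ and $D(I^{-\lambda}_{-\mu}(-Y,\frakt,\iota,g))$. By \cref{ex double of rep},
\[
D(V_\lambda^\mu) \cong \tilde{\C}^{s} \oplus (\C_+\oplus\C_-)^{l},
\]
with $s=\dim_\R V(\tilde{\R})^\mu_\lambda$ and $l=\dim_\C V(\C_+)^\mu_\lambda + \dim_\C V(\C_-)^\mu_\lambda$. Applying \cref{lem: duality origin} then yields
\[
k\bigl(D(I^\mu_\lambda(Y,\frakt,\iota,g))\bigr) + k\bigl(D(I^{-\lambda}_{-\mu}(-Y,\frakt,\iota,g))\bigr) \geq l.
\]

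Second, I would unfold \eqref{eq: kappa describe} applied to $Y$ (using cutoffs $\lambda<0<\mu$) and to $-Y$ (using the corresponding cutoffs $-\mu<0<-\lambda$). Decomposing $V(\C_\pm)^\mu_\lambda = V(\C_\pm)^\mu_0 \oplus V(\C_\pm)^0_\lambda$ and using that the positive-eigenvalue part of the linear operator $l$ for $Y$ is identified with the negative-eigenvalue part for $-Y$, the contributions from the nonzero eigenspaces match up on the two sides and only the zero-eigenspace term survives. The latter is controlled by the formula \eqref{eq: nminusn}, $n(Y,\frakt,g) + n(-Y,\frakt,g) = \dim_\C \Ker D_{(Y,\frakt,g)}$, which supplies exactly the term needed to convert the inequality above into $\kappa(Y,\frakt,\iota) + \kappa(-Y,\frakt,\iota) \geq 0$.

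The main obstacle will be establishing the $G$-equivariant Spanier--Whitehead duality with the correct decomposition of $D(V_\lambda^\mu)$ into $\tilde{\C}$- and $(\C_+ \oplus \C_-)$-summands. The non-equivariant duality for reversed flows is classical and an $S^1$-equivariant extension appears in \cite{Ma03}, but here the $G$-action involves the two complex representations $\C_+$ and $\C_-$ whose roles can get swapped under orientation reversal, so verifying that the standard duality morphisms admit $G$-equivariant lifts with precisely the representation $D(V_\lambda^\mu)$ requires some care. Once this geometric input is in place, the remaining bookkeeping via \eqref{eq: nminusn} is routine.
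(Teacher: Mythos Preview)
Your proposal is correct and follows essentially the same route as the paper: equivariant Spanier--Whitehead duality between the doubled Conley indices (the paper cites \cite[Subsection~3.6]{Ma16} for this), then \cref{lem: duality origin}, then the bookkeeping via \eqref{eq: kappa describe} and \eqref{eq: nminusn}. The paper streamlines slightly by taking the symmetric window $\mu=-\lambda$, and your worry about $\C_+/\C_-$ swapping under orientation reversal is a non-issue precisely because you have already passed to the double, where $D(\C_+)\cong D(\C_-)\cong \C_+\oplus\C_-$.
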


\begin{proof}
Denote by $\bar{V}_{-\lambda}^\mu$ for the finite-dimensional vector space for $-Y$ used in the definition of the $I$-invariant part Conley index.
Set $V(\C)^{\mu}_{\lambda} = V(\C_+)^\mu_{\lambda} \oplus V(\C_-)^\mu_{\lambda}$ in the expression \eqref{eq: fin. dim. approx V}.
Note that we have 
\[
\dim_{\C} V(\C)^0_{\lambda} + \dim_{\C} \bar{V}(\C)^0_{-\mu} + \frac{1}{2}\dim_{\C}\Ker D_{(Y,\frakt,g)} = \dim_{\C} V(\C)^\mu_{\lambda},
\]
where $D_{(Y, \frakt,g)}$ is the 3-dimensional Dirac operator on $(Y, \frakt,g)$.
(The coefficient $1/2$ emerges from that we took the $I$-invariant part.)
Combining this with \eqref{eq: nminusn}, we obtain
\begin{align}
\label{eq: n n pm}
\dim_{\C} V(\C)^0_{\lambda} + \dim_{\C} \bar{V}(\C)^0_{-\mu} + \frac{1}{2}(n(Y,\frakt,g)+n(-Y,\frakt,g)) = \dim_{\C} V(\C)^\mu_{\lambda}.
\end{align}

Now, in the expression \eqref{eq: fin. dim. approx V}, take $\mu=-\lambda$.
As described in \cite[Subsection~3.6]{Ma16},
the doubled $I$-invariant part Conley indices $D(I_{-\mu}^\mu(Y, \frakt, \iota,g))$ and $D(I_{-\mu}^\mu(-Y, \frakt, \iota,g))$ are equivariantly $D(V^\mu_{-\mu})$-dual.
It follows from this combined with \cref{lem: duality origin}, \eqref{eq: kappa describe}, and \eqref{eq: n n pm} that
\begin{align*}
&\kappa(Y,\frakt,\iota) + \kappa(-Y,\frakt,\iota)\\
=&k(D(I^\mu_{-\mu}(Y,\frakt,\iota,g)))+k(D(I^\mu_{-\mu}(-Y,\frakt,\iota,g)))
\\
&-(\dim_\C(V(\C)^0_{-\mu})+\dim_\C(\bar{V}(\C)^0_{-\mu}))-(n(Y,\frakt,g)+n(-Y,\frakt,g))/2\\
=&k(D(I^\mu_{-\mu}(Y,\frakt,\iota,g)))+k(D(I^\mu_{-\mu}(-Y,\frakt,\iota,g)))-\dim_\C(V(\C)^\mu_{-\mu}) \\
\geq& \dim_\C(V(\C)^\mu_{-\mu})  -\dim_\C(V(\C)^\mu_{-\mu})=0.
\end{align*}
This completes the proof.
\end{proof}

\subsection{Cobordisms}
\label{subsection: cobordisms}
Let $(Y_0, \frakt_0,\iota_0)$ and $(Y_1, \frakt_1,\iota_1)$ %\todo{Many connected components version is OK ? (Miyazawa)}
be spin closed $3$-manifold with $b_1(Y)=0$. We do not assume that $Y_0$ and $Y_1$ are connected. 
Suppose that we have an involution $\iota_i$ on each of $Y_i$.
Let $(W,\fraks)$ be a smooth spin 4-dimensional oriented cobordism with $b_1(W)=0$.
We assume that there is a involution $\iota$ on $W$ such that $\iota|_{Y_i}=\iota_i$ for $i=0,1$, and suppose that $\iota$ preserves $\fraks$ and of odd type.
%Let $S$ is the fixed point points of $\iota$.
We may take an $\iota$-invariant Riemannian metric $g$ on $W$ so that $g$ is a cylindrical metric near $\partial W$.
Then the metrics defined by $g_i=g|_{Y_i}$ on $Y_i$ are $\iota_i$-invariant metrics.
Then $\iota$ lifts to some $\tilde{\iota}$, which is a $\Z_4$-lift of $\iota$ to the spinor bundle of $\mathfrak{s}$. 
As well as the case of dimension 3, 
following \cite{Ka17},
we may define the involutions 
\begin{align*}
&I : \Omega^\ast(W) \to \Omega^\ast(W),\\  
&I : \Gamma(\mathbb{S}^{\pm}) \to \Gamma(\mathbb{S}^{\pm})
\end{align*}
by
\begin{align*}
&I(a)=(-\iota^{\ast}a),\\
&I(\phi) = \wt{\iota}( \phi)\cdot j,
\end{align*}
where $\mathbb{S}^{\pm}$ are positive and negative spinor bundles.
Here we consider the Sobolev norms $L^2_{k}$ for the spaces $\Omega^\ast(W)$ and $\Gamma(\mathbb{S}^{\pm})$ obtained from $\iota$-invariant metrics and $\iota$-invariant connections for a fixed integer $ k \geq 3$.
The relative Bauer--Furuta invariant of $W$ introduced by Manolescu~\cite{Ma03} gives a map between the Seiberg--Witten Floer stable homotopy types of $Y_0$ and $Y_1$.
This is obtained from the Seiberg--Witten map on $W$, which is given as a finite-dimensional approximation of a map
\begin{align}
    \label{eq: SW map}
SW : \Omega_{CC}^1(W) \times \Gamma(\mathbb{S}^+)
\to \Omega^+(W) \times \Gamma(\mathbb{S}^-) \times \hat{V}^{\mu}_{-\infty}(-Y_0) \times \hat{V}^{\mu}_{-\infty}(Y_1),
\end{align}
for large $\mu$. Here \[\Omega_{CC}^1(W)=\left\{ a \in \Omega^1(W) \mid d^{\ast} a=0, d^{\ast}\textbf{t}_i a=0, \int_{Y_i} \textbf{t}_i \ast a=0. \right\}\] is the space of all $1$-forms satisfying the double coulomb condition. This is introduced by T.~Khandhawit in \cite{Kha15}. 
Here for a general rational spin homology sphere $Y$, $\hat{V}(Y, \frakt)^\mu_{-\infty}$ is a subspace of
\[
\hat{V}(Y, \frakt) := \operatorname{Ker}d^* \times \Gamma(\mathbb{S}),
\]
which is defined as the direct sum of eigenspaces whose eigenvalues are less than $\mu$.
The $\Omega^+(W) \times \Gamma(\mathbb{S}^-)$-factor of the map $SW$ is given as the Seiberg--Witten equations,
and the $V^{\mu}_{-\infty}(-Y_0) \times V^{\mu}_{-\infty}(Y_1)$-factor is given, roughly, as the restriction of 4-dimensional configurations to 3-dimensional ones.
Taking the $I$-invariant part of \eqref{eq: SW map},
we obtain a $G$-equivariant map, and a finite-dimensional approximation of this gives us a $G$-equivariant map of the form
\begin{align}
\label{eq: cob map}
f : \Sigma^{m_0\tilde{\R}}\Sigma^{n_0^+\C_+}\Sigma^{n_0^-\C_+}I_{-\mu}^{-\lambda} (Y_0) \to \Sigma^{m_1\tilde{\R}}\Sigma^{n_1^+\C_+}\Sigma^{n_1^-\C_+}I^\mu_\lambda (Y_1),
\end{align}
where $I^\mu_\lambda (Y_i) = I^\mu_\lambda (Y_i, \frakt_i, \iota_i, g_i)$, and $m_i, n_i^\pm \geq 0$ and $-\lambda, \mu$ are sufficiently large.
Taking the double of $f$, we obtain the `doubled cobordism map' or the `doubled relative Bauer--Furuta invariant' 
\begin{align}
\label{eq: doubled cob map}
D(f) : 
\Sigma^{m_0\tilde{\C}}\Sigma^{n_0(\C_+ \oplus \C_-)}D(I_{-\mu}^{-\lambda} (Y_0)) \to \Sigma^{m_1\tilde{\C}}\Sigma^{n_1(\C_+ \oplus \C_-)}D(I^\mu_\lambda (Y_1)),    
\end{align}
where $n_i=n_i^+ + n_i^-$.
Denote by $V_0(\tilde{\R})_\lambda^\mu$ is the vector space $V(\tilde{\R})_\lambda^\mu$ for $Y_0$, and let us use similar notations also for other representaions and for $Y_1$.

\begin{rem}
\label{rem: integrality}
Note that, since the Dirac index $\ind_{\C}D$ is even by the existence of the quaternionic structure, we have that
\[
- \frac{\sigma(W)}{16}+ \frac{n(Y_1, \mathfrak{t}_1, g_1)}{2}-\frac{n(Y_0, \mathfrak{t}_0, g_0)}{2}
\]
is an integer.
\end{rem}

\begin{lem}
We have
\begin{align}
\label{eq:m difference}
m_0-m_1=&\dim_\R (V_1(\tilde{\R})^0_{\lambda})-\dim_\R(V_0(\tilde{\R})^0_{-\mu}) - b^+(W) + b^+_\iota(W), \\
\begin{split}
n_0-n_1=&\dim_\C(V_1(\C_+)^0_{\lambda}) +\dim_\C(V_1(\C_-)^0_{\lambda})\\
&-\dim_\C(V_0(\C_+)^0_{-\mu})-\dim_\C(V_0(\C_-)^0_{-\mu})\\
&- \frac{\sigma(W)}{16}+ \frac{n(Y_1, \mathfrak{t}_1, g_1)}{2}-\frac{n(Y_0, \mathfrak{t}_0, g_0)}{2}.\label{eq:n difference}
\end{split}
\end{align}
%\begin{align}
%\label{eq:m difference may be wrong}
%m_0-m_1=&\dim_\R (V_1(\tilde{\R})^0_{\lambda})-\dim_\R(V_0(\tilde{\R})^0_{\lambda}) - b^+(W) + b^+_\iota(W), \\
%\begin{split}
%n_0-n_1=&\dim_\C(V_1(\C_+)^0_{\lambda}) +\dim_\C(V_1(\C_-)^0_{\lambda})\\
%&-\dim_\C(V_0(\C_+)^0_{\lambda})-\dim_\C(V_0(\C_-)^0_{\lambda})\\
%&- \frac{\sigma(W)}{16}+ \frac{n(Y_1, \mathfrak{t}_1, g_1)}{2}-\frac{n(Y_0, \mathfrak{t}_0, g_0)}{2}.\label{eq:n difference}
%\end{split}
%\end{align}
\end{lem}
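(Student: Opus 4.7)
The plan is to derive \eqref{eq:m difference} and \eqref{eq:n difference} by computing the equivariant indices of the linearization of the Seiberg--Witten map \eqref{eq: SW map} decomposed into the $G$-isotypic components $\tilde{\R}$, $\C_+$, and $\C_-$, and then matching these indices with the suspension dimensions via a dimension count of the finite-dimensional approximations.

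First, I split the linear part of the $I$-invariant restriction of \eqref{eq: SW map} into its $\tilde{\R}$-component, coming from $d^+$ on the double-Coulomb 1-forms, and its $(\C_+\oplus\C_-)$-component, coming from the positive Dirac operator $D^+$. For the form component, since $I$ acts as $-\iota^\ast$ on forms, the $I$-invariant part is the $\iota$-anti-invariant part; under $b_1(W)=0$ and the double Coulomb condition, the kernel of $d^+$ vanishes while the cokernel is $\mathcal{H}^+(W)^{\iota\text{-anti-inv}}$, of real dimension $b^+(W)-b^+_\iota(W)$. Hence this summand has real index $-(b^+(W)-b^+_\iota(W))$. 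For the spinor component, $I$ is $\C$-linear with respect to the Clifford action (left multiplication by $i$), since $\tilde{\iota}$ is $\C$-linear and right multiplication by $j$ commutes with left multiplication by $i$. The key observation is that right multiplication by $i$ commutes with $D^+$ but anti-commutes with $I$, hence intertwines the $\pm 1$-eigenspaces of $I$ isomorphically; consequently each eigenspace carries exactly half of the complex Dirac index, which by the Atiyah--Patodi--Singer theorem equals
\[
-\frac{\sigma(W)}{16}+\frac{n(Y_1,\frakt_1,g_1)}{2}-\frac{n(Y_0,\frakt_0,g_0)}{2},
\]
an integer by \cref{rem: integrality}. Moreover, right multiplication by $j$ on $\Gamma(\mathbb{S}^\pm)^I$ is a $\C$-linear operator squaring to $-1$, so its $\pm i$-eigenspaces give the $\C_+\oplus\C_-$ decomposition, and the complex Dirac index is computed in either component.

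Next I match the indices with the suspension dimensions $m_i,n_i^\pm$. The map $f$ of \eqref{eq: cob map} is obtained from a finite-dimensional approximation of the $I$-invariant part of \eqref{eq: SW map}, and for each isotypic component the difference between the dimensions of the domain and codomain of this approximation equals the corresponding equivariant index. On the domain side the 4-dimensional approximation contributes together with the restriction $V_0(\ast)^{-\lambda}_{-\mu}$ on $-Y_0$; on the codomain side the 4-dimensional approximation contributes together with $V_1(\ast)^{\mu}_{\lambda}$ on $Y_1$. Applying the splittings
\[
V_0(\ast)^{-\lambda}_{-\mu}=V_0(\ast)^{0}_{-\mu}\oplus V_0(\ast)^{-\lambda}_{0},\qquad V_1(\ast)^{\mu}_{\lambda}=V_1(\ast)^{0}_{\lambda}\oplus V_1(\ast)^{\mu}_{0}
\]
for $\ast\in\{\tilde{\R},\C_+,\C_-\}$, and choosing the 4-dimensional codomain approximation large enough that the high-eigenvalue pieces $V_0(\ast)^{-\lambda}_{0}$ and $V_1(\ast)^{\mu}_{0}$ are absorbed into it, isolates $m_0-m_1$ in \eqref{eq:m difference} and $n_0-n_1=(n_0^++n_0^-)-(n_1^++n_1^-)$ in \eqref{eq:n difference}.

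The hardest part will be the careful bookkeeping of the finite-dimensional approximations and the verification of the halved form of the Dirac index. The latter is pinned down by the intertwining of the $I$-eigenspaces by right multiplication by $i$ explained above; once that is in hand, the remainder is a routine dimension count along the lines of \cite[Proof of Theorem~1]{Ma14} and \cite{Kha15}, performed $G$-equivariantly after restricting to the $I$-invariant subspace.
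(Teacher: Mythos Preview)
Your proposal is correct and follows essentially the same route as the paper: split the $I$-invariant linearization into its form and spinor summands, identify the form index as $-(b^+(W)-b^+_\iota(W))$ via the $\iota$-anti-invariant harmonic self-dual 2-forms, halve the APS Dirac index using that right multiplication by $i$ anti-commutes with $I$ while commuting with $D^+$, and then perform the finite-dimensional bookkeeping as in \cite{Kha15}. The paper spells out in more detail the passage from the double Coulomb boundary condition to the APS condition (via \cite[Proposition~17.2.6]{KM07} and the identification of kernel/cokernel with $L^2$ harmonic forms on the cylindrical-end manifold), which you compress into the single sentence about the cokernel being $\mathcal{H}^+(W)^{\iota\text{-anti-inv}}$; this is the step where the details actually live, so you should expand it. One minor imprecision: your aside that ``$I$ is $\C$-linear with respect to the Clifford action (left multiplication by $i$)'' does not quite parse in four dimensions---there is no canonical scalar Clifford $i$ on $\mathbb{S}^\pm$---but you do not actually use this claim, and the operative fact (right $i$ anti-commutes with $I$, giving an isomorphism of the $\pm1$-eigenspaces compatible with $D^+$) is stated correctly immediately afterward and is exactly what the paper invokes.
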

\begin{proof}
It is sufficient to show the equality when $\mu$ and $\lambda$ are not eigenvalue. The proof of this lemma is parallel to the proof of \cite[Proposition 2]{Kha15}. 
We will denote by $U_i$ the space $\text{Im}(d)_{Y_i}^I$. Let $r_i$ be the restriction to $Y_i$ and $\Pi^-$ be the projection from $\Omega_1(Y_i)^I \times \Gamma(\mathbb{S})^I$ to $\hat{V}^{0}_{-\infty}(Y_i)^I$ and let $\Pi_2$ be the projection from $\Omega_1(Y_i)^I \times \Gamma(\mathbb{S})^I$ to $U_i$. Let $\textbf{t}a$ be the tangent component of the restriction of a $1$-form $a$ to the boundary. 
When $\mu=0$, the index of the linearlization of the $I$-invariant part of the map (\ref{eq: SW map}) coincides with the index of the map
\begin{align}
 \label{eq:ExSWmap}
 F \colon \Omega^1(W)^I \times \Gamma(\mathbb{S}^+)^I
\to 
\Omega^+(W)^I \times \Gamma(\mathbb{S}^-)^I \times \Omega^0(W)^I \times  \hat{V}^{0}_{-\infty}(-Y_0)^I \times \hat{V}^{0}_{-\infty}(Y_1)^I \times U_0
 \times U_1
\end{align}
given by 
\[
F(a, \phi)=(d^+a,\; D\phi,\; d^{\ast}a,\; \Pi^- \circ r_0(a,\phi),\; \Pi^- \circ r_1(a, \phi),\; \Pi_2 \circ r_0(a, \phi), \;\Pi_2 \circ r_1(a, \phi))
\]
because the kernel of the map $d^{\ast}\oplus \Pi_2 \circ r_0 \oplus \Pi_2 \circ r_1$ coincides with $\Omega_{CC}^1(W)^I \times \Gamma(\mathbb{S})^I$ and the cokernel of $d^{\ast}\oplus \Pi_2 \circ r_0 \oplus \Pi_2 \circ r_1$ is $0$ because constant functions on $W$ are not fixed by $-\iota^{\ast}$. Note that $\int_{Y_i} \ast \textbf{t}a=0$ for all $a \in \Omega^1(W)^I$. Let $L_1$ be the operator acting on $ \text{Im}(d)_{Y_i}^I \times %\text{Ker}(d^{\ast})_{Y_i} \times 
\Omega_0(Y_i)^I$ by 
\[L_1=
\begin{pmatrix}
0&-d\\
%0&\ast d&0\\
-d^{\ast}&0
\end{pmatrix}.
\]
Let $\Pi^-_1$ be the projection to the non-negative eigenspace of the operator $L_1$. 
The kernel of the projection $\Pi_2 |_{\Omega_1(Y_i)^I}$ is $\text{Ker}(d^{\ast})^I_{Y_i}$ and the image of the $\Pi_1^-$ is 
\[
\{(b, d^{\ast}(dd^{\ast})^{-1/2}b) \mid b \in \Omega_1 (Y_i)^I \}.
\]
The kernel of $(\Pi_2|_{\Omega_1(Y_i)^I})\circ r_i$ is complementary to the non-positive eigenspace of $L_1$ in the space $\Omega_1(Y_i)^I \times \Omega_0(Y_i)^I$. By the \cite[Proposition 17.2.6]{KM07}, we have the operator $F$ is Fredholm and the index coincides with the index of the following operator: 
\begin{align}
 \label{eq:APSmap}
 F' \colon \Omega^1(W)^I \times \Gamma(\mathbb{S}^+)^I
\to 
\Omega^+(W)^I \times \Gamma(\mathbb{S}^-)^I \times \Omega^0(W)^I \times  
\hat{U}^0_{-\infty}(-Y_0) \times \hat{U}^0_{-\infty}(Y_1)
%\hat{V}^{0}_{-\infty}(\Omega_1(Y_0)^I \times \Gamma(Y_0, \mathbb{S})^I) \times \hat{V}^{0}_{-\infty}(\hat{V}^{0}_{-\infty}(\Omega_1(Y_0)^I \times \Gamma(Y_0, \mathbb{S})^I)) 
\end{align}
Here $\hat{U}^0_{-\infty}(-Y_0)$ and $\hat{U}^0_{-\infty}(Y_1)$ are the sum of the non-positive eigenspaces in $\Omega_1(Y_0)^I \times \Omega_0(Y_0)^I \times \Gamma(Y_0, \mathbb{S})^I$ and $\Omega_1(Y_1)^I \times \Omega_0(Y_1)^I \times \Gamma(Y_1, \mathbb{S})^I$ of $(L_1, D)^I$ respectively. We have the index of the spinor part of $F'$ is the half of the Atiyah-Patodi-Singer index of the $\text{Spin}^c$ Dirac operator because $I$ is anti-commutes with $i$. We see that the index of the form part of $F'$ is given by  $- b^+(W) + b^+_\iota(W)$ as follows: Elements in the kernel and the cokernel of $F'$ are able to be extended to a $L^2$ harmonic form of $\hat{W}=(-\infty, 0] \times Y_0 \cup_{Y_0} W \cup_{Y_1} Y_1 \times [0, \infty) $ because $Y_i$ are rational homology spheres. We see that the $L^2$ harmonic form on $\hat{W}$ coincides with the de Rham cohomology of $\hat{W}$ from in \cite[Proposition 4.9]{APSI} and the kernel and cokernel of $F'$ are coinsides with the $-\iota^{\ast}$-invariant part of the space of $L^2$ harmonic forms. Thus we have the index of the form part of $F'$. 

In the case $\mu >0$, we have the index of the $\text{Spin}^c$ Dirac part of $F'$ is $\text{ind}_{\C}(D^+)-\dim_{\C}(\hat{V}^{\mu}_0(-Y_0)^I\cap \Gamma(\mathbb{S})^I)-\dim_{\C}(\hat{V}^{\mu}_0(Y_1)^I\cap \Gamma(\mathbb{S})^I)$ 
and the index of the form part of $F'$ is $- b^+(W) + b^+_\iota(W)-\dim(\hat{V}^{\mu}_0(-Y_0)^I\cap \text{Ker}(d^{\ast})^I)-\dim(\hat{V}^{\mu}_0(Y_1)^I\cap \text{Ker}(d^{\ast})^I)$. Recall that the map \eqref{eq: cob map} is given the same way in \cite[Section 3.6]{Ma16} and that $\hat{V}^\mu_\lambda(-Y_0)=\hat{V}_{-\mu}^{-\lambda}(Y_0)$. From the construction of the map \eqref{eq: doubled cob map}, we easily see that \begin{align*}
&(\text{the index of the form part of}\;F')+ \dim(V_0(\tilde{\R})_{-\mu}^{-\lambda})-\dim(V_1(\tilde{\R})_{0}^{\mu})-\dim(V_0(\tilde{\R})^{0}_{-\mu})\\
&=(\text{the index of the form part of}\;F')+ \dim(V_0(\tilde{\R})_{0}^{-\lambda})-\dim(V_1(\tilde{\R})_{0}^{\mu})\\
&=m_0+\dim(V_0(\tilde{\R})_{-\mu}^{-\lambda})-m_1
-\dim(V_1(\tilde{\R})^{\mu}_{\lambda}), \\
&(\text{the index of the}\;\text{Spin}^c \;\text{Dirac part of}\; F')+\dim_\C(V_0(\C_+)_{-\mu}^{-\lambda})+\dim_\C(V_0(\C_-)_{-\mu}^{-\lambda})\\
&-\dim_\C(V_0(\C_+)_{-\mu}^{0}) -\dim_\C(V_1(\C_+)^{\mu}_{0})\\
&-\dim_\C(V_0(\C_-)_{-\mu}^{0}) -\dim_\C(V_1(\C_-)^{\mu}_{0})\\
&=(\text{the index of the}\;\text{Spin}^c \;\text{Dirac part of}\; F')+\dim_\C(V_0(\C_+)_{0}^{-\lambda}) -\dim_\C(V_1(\C_+)^{\mu}_{0})\\
&+\dim_\C(V_0(\C_-)_{0}^{-\lambda}) -\dim_\C(V_1(\C_-)^{\mu}_{0})\\ &=n_0+\dim_\C(V_0(\C_+)_{-\mu}^{-\lambda}) +\dim_\C(V_0(\C_-)_{-\mu}^{-\lambda})-n_1-\dim_\C(V_1(\C_+)^{\mu}_{\lambda}) -\dim_\C(V_1(\C_-)^{\mu}_{\lambda})
\end{align*}
respectively. Hence we have the formula \eqref{eq:m difference} and \eqref{eq:n difference}. 
%\todo{Write this proof completely ! (Miyazawa)}
\end{proof}

\begin{rem}
To calculate the differences $n_0^+ - n_1^+, n_0^- - n_1^-$, we need to use an additional equivariant index theorem.
But for our purpose, it is enough to know only the difference $n_0-n_1$.
\end{rem}

\subsection{Proof of \cref{main theo}} 
\label{subsection Proof of main theo} 

First, we state and prove the main result of this paper in the most general form:

\begin{theo}
\label{most general main theo}
Let $(Y_0, \frakt_0)$, $(Y_1, \frakt_1)$ be spin rational homology 3-spheres.
Let $\iota_0, \iota_1$ be smooth involutions on $Y_0, Y_1$.
Suppose that $\iota_0, \iota_1$ preserve the given orientations and spin structures $\frakt_0, \frakt_1$ on $Y_0, Y_1$ respectively, and suppose that $\iota_0, \iota_1$ are of odd type.
Let $(W,\fraks)$ be a smooth compact oriented spin cobordism with $b_1(W)=0$ from $(Y_0, \frakt_0)$ to $(Y_1, \frakt_1)$.
Suppose that there exists a smooth involution $\iota$ on $W$ such that $\iota$ preserves the given orientation and spin structure $\fraks$ on $W$, and that the restriction of $\iota$ to the boundary is given by $\iota_0, \iota_1$.
Then we have
\begin{align}
\label{eq: main rel 108 general}
-\frac{\sigma(W)}{16} + \kappa (Y_0, \frakt_0, \iota_0 )
\leq b^+(W)-b^+_{\iota}(W) + \kappa (Y_1, \frakt_1, \iota_1 ),
\end{align}
where $b^+_\iota(W)$ denotes the maximal dimension of $\iota$-invariant positive-definite subspaces of $H^2(W;\R)$.
\end{theo}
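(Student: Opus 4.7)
The plan is to extract the inequality from the doubled relative Bauer--Furuta map constructed in \cref{subsection: cobordisms} and feed it into the $K$-theoretic machinery of \cref{lem: origin of local eq,lem: suspension how much}.

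\emph{Step 1 (Cobordism map).} I would first choose an $\iota$-invariant cylindrical metric $g$ on $W$ inducing $\iota_i$-invariant metrics $g_i$ on $Y_i$. Following \cref{subsection: cobordisms}, a finite-dimensional approximation of the $I$-invariant Seiberg--Witten map on $W$ produces the $G$-equivariant map $f$ of \eqref{eq: cob map}, and its double $D(f)$ is the map \eqref{eq: doubled cob map}. Using the shift formulas \eqref{eq:m difference}, \eqref{eq:n difference} together with the definition of $DSWF_G$ in \cref{def: equivariant Seiberg--Witten Floer stable homotopy type} and the standard duality between $I^\mu_\lambda(-Y_0)$ and $I^{-\lambda}_{-\mu}(Y_0)$, one repackages $D(f)$ as a $G$-stable map representing an element of $\mathfrak{C}_\iota$ whose source is $DSWF_G(Y_0,\frakt_0,\iota_0)$, whose target is $DSWF_G(Y_1,\frakt_1,\iota_1)$, and which is shifted in the $\tilde{\C}$-direction by $b^+(W)-b^+_\iota(W)$ and in the $(\C_+\oplus\C_-)$-direction by a rational number whose integer part is controlled by $-\sigma(W)/16$ (which is an integer after combining with $n(Y_i,\frakt_i,g_i)/2$ by \cref{rem: integrality} and \eqref{eq: expression of n for Rohlin}).

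\emph{Step 2 (Fixed-point analysis).} Next I would analyze the $H$- and $G$-fixed-point sets of the repackaged map. On $H$-fixed points, the doubled map is a self-map between Thom spaces of $\tilde{\C}$-vector bundles over a point; its level difference equals $b^+(W)-b^+_\iota(W)$, coming from the $I$-invariant (equivalently, $\iota^\ast$-anti-invariant) part of $H^+(W)$ accounted for in \eqref{eq:m difference}. On $G$-fixed points, the map is a self-map $S^0\to S^0$ arising from the restriction of the Seiberg--Witten map to the reducible locus, and a standard degree computation (parallel to the one in Manolescu \cite{Ma14}) shows that it has degree $\pm 1$; doubling does not spoil this, since the $G$-fixed set of $D(X)=X\wedge X^\dagger$ is the smash product $X^G\wedge X^G$.

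\emph{Step 3 ($K$-theoretic conclusion).} With the $G$-fixed-point homotopy equivalence in hand, I would apply \cref{lem: suspension how much} to the $H$-fixed-point restriction of $D(f)$ in the case $b^+(W)>b^+_\iota(W)$, and \cref{lem: origin of local eq} in the borderline case $b^+(W)=b^+_\iota(W)$ (after suspending by $\C_+\oplus\C_-$ so that the levels still make sense), to deduce a comparison inequality of the form
\[
k\bigl(\text{source of }D(f)\bigr) + t_{\mathrm{src}} \;\leq\; k\bigl(\text{target of }D(f)\bigr) + t_{\mathrm{tgt}},
\]
where $2t_{\mathrm{src}}$ and $2t_{\mathrm{tgt}}$ are the $H$-fixed levels. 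Finally, one unwinds the definitions: using
\[
\kappa(Y_i,\frakt_i,\iota_i) \;=\; k\bigl(D(I^\mu_\lambda(Y_i))\bigr) - \dim_\C V_i(\C_+)^0_\lambda - \dim_\C V_i(\C_-)^0_\lambda - \tfrac{1}{2}n(Y_i,\frakt_i,g_i),
\]
together with \eqref{eq: expression of n for Rohlin} and the index formula \eqref{eq:n difference}, the $k$-inequality converts into precisely \eqref{eq: main rel 108 general}.

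\emph{Main obstacle.} The crux is Step 2: confirming that the doubled cobordism map is a $G$-local map in the sense needed by \cref{lem: suspension how much}, i.e., that its restriction to $G$-fixed points is a genuine homotopy equivalence. This reduces to identifying the reducible Seiberg--Witten solution with the $G$-fixed basepoint and verifying that the linearized map there — which is essentially the $I$-invariant part of the Dirac$\,\oplus\,d^+$ operator — realizes the claimed degree after doubling. A secondary subtlety is keeping the integrality of all shifts straight so that the final answer depends only on $b^+(W)-b^+_\iota(W)$ and $\sigma(W)/16$ and not on the auxiliary choices of $\lambda,\mu,g$; this is where \cref{rem: integrality} and the invariance statement \cref{lem inv lamda mu} do the bookkeeping.
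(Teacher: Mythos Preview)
Your proposal is correct and follows essentially the same route as the paper's proof: construct the doubled cobordism map \eqref{eq: doubled cob map}, split into the cases $b^+(W)-b^+_\iota(W)>0$ and $=0$, apply \cref{lem: suspension how much} or \cref{lem: origin of local eq} respectively, and unwind via \eqref{eq: kappa describe}, \eqref{eq:m difference}, \eqref{eq:n difference}. Two minor remarks: the parenthetical about suspending by $\C_+\oplus\C_-$ in the borderline case is unnecessary (the levels already agree when $b^+(W)=b^+_\iota(W)$), and \cref{lem: suspension how much} is applied to $D(f)$ itself rather than to its $H$-fixed restriction; the paper also takes the $G$-fixed degree-$\pm1$ property of the Bauer--Furuta map for granted rather than spelling it out, so your ``main obstacle'' is really just the standard fact inherited from \cite{Ma03,Ma14}.
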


\begin{proof}
As in \cref{subsection: cobordisms},
take an $\iota$-invariant metric $g$ on $W$ so that $g$ is a cylindrical metric near $\del W$.
The involution $\iota$ lifts to the spin structure $\fraks$ as $\Z_4$-action, and we have the doubled cobordism map \eqref{eq: doubled cob map}. In this proof, we take $\mu=\nu$ and $\lambda=-\nu$. 
Recall that the $H$-invariant part of $I^\nu_{-\nu} (Y_i)$ is of (real) dimension $\dim_\R(V_i(\tilde{\R})^\nu_{-\nu})$, so the $H$-invariant part of the double
$D(I^\nu_{-\nu} (Y_i))$ is of dimension $2\dim_\R(V_i(\tilde{\R})^\nu_{-\nu})$ and there is a canonical homotopy which collapses $(V_i(\tilde{\R})^\nu_0)^2$ part in $D(I^\nu_{-\nu}(Y_i))$ to the base point.
Hence the level of $\Sigma^{m_i\tilde{\C}}\Sigma^{n_i(\C_+ \oplus \C_-)}D(I^\nu_{-\nu} (Y_i))$ is given by $2(m_i + \dim_\R(V_i(\tilde{\R})^0_{-\nu}))$.

First, consider the case that $b^+(W) - b^+_\iota(W)>0$.
Then, by \eqref{eq:m difference},  the level of the domain of the doubled cobordism map \eqref{eq: doubled cob map} is smaller than that of the codomain of \eqref{eq: doubled cob map}.
Therefore we can apply \cref{lem: suspension how much} to \eqref{eq: doubled cob map}, and obtain that
\begin{align*}
    k(\Sigma^{m_0\tilde{\C}}\Sigma^{n_0(\C_+ \oplus \C_-)}D(I^\nu_{-\nu} (Y_0))) + m_0 + \dim_\R(V_0(\tilde{\R})^0_{-\nu})\\
    \leq k(\Sigma^{m_1\tilde{\C}}\Sigma^{n_1(\C_+ \oplus \C_-)}D(I^\nu_{-\nu} (Y_1))) + m_1 + \dim_\R(V_1(\tilde{\R})^0_{-\nu}).
\end{align*}
By \cref{lem: shift by suspension}, this is equivalent to
\begin{align*}
        \begin{split}
        &k(D(I^\nu_{-\nu}(Y_0)))+n_0+m_0+\dim_\R(V_0(\tilde{\R})^0_{-\nu})\\
        \leq& k(D(I^\nu_{-\nu}(Y_1)))+n_1+m_1+\dim_\R(V_1(\tilde{\R})^0_{-\nu}).
        \end{split}
\end{align*}
From this combined with \eqref{eq: kappa describe}, \eqref{eq:m difference}, \eqref{eq:n difference}, we obtain the desired inequality \eqref{eq: main rel 108}.

Next, consider the case that $b^+(W) - b^+_\iota(W)=0$.
Then, by \eqref{eq:m difference},  the level of the domain of the doubled cobordism map \eqref{eq: doubled cob map} is the same as that of the codomain of \eqref{eq: doubled cob map}.
Then we can apply \cref{lem: origin of local eq} to \eqref{eq: doubled cob map}, instead of \cref{lem: suspension how much}, and obtain the desired inequality \eqref{eq: main rel 108} by the same argument for the case that $b^+(W) - b^+_\iota(W)>0$ above.
\end{proof}

\begin{proof}[Proof of \cref{main theo}]
The property (i) follows from the equality
\[
n(Y,\frakt,g) = \mu(Y,\frakt) \mod 2,
\]
which follows from the expression of $n(Y,\frakt,g)$ given as \eqref{eq: expression of n for Rohlin}, and the expression of $\kappa(Y,\frakt,\iota)$ given as \eqref{eq: kappa describe}.

Next, we prove the property (ii).
Denote by $I_\iota$ the involution on the configuration space discussed in \cref{subsection: An involution on the configuration space} defined by $\iota$ and an $\iota$-invariant metric $g$.
Denote by the configuration space $\cal{C}(Y,g)=\Ker(d^{*_g} \times \Gamma(\mathbb{S}))$ defined by the metric $g$.
Similarly, denote by $I_{f^{-1} \circ \iota \circ f}$ the involution defined by $f^{-1} \circ \iota \circ f$ and the metric $f^{\ast}g$.
Then the pull-back under $f$ induces a $G$-homeomorphism
\[
\cal{C}(Y,g) \to \cal{C}(Y,f^{\ast}g),
\]
which is equivariant under the action of $I_\iota$ and $I_{f^{-1} \circ \iota \circ f}$.
This induces an $G$-equivariant homomorphism
\begin{align*}
I^\mu_\lambda (Y, \frakt,\iota, g)
\to I^\mu_\lambda (Y, \frakt, f^{-1} \circ \iota \circ f, f^{\ast}g),    
\end{align*}
and an equivalence between  representatives of $DSWF_G(Y, \frakt,\iota)$ and of $DSWF_G(Y, \frakt,f^{-1} \circ \iota \circ f)$.
Thus we have that 
\[
DSWF_G(Y, \frakt,\iota)
= DSWF_G(Y, \frakt,f^{-1} \circ \iota \circ f)
\]
in $\mathfrak{C}_{\iota}$.
This and \cref{lem: inv k kappa} imply the property (ii) of \cref{main theo}.

The property (iii) is already proved in \cref{prop: from duality}.

Lastly, we give the proof of the property (iv).
In general, for a spin manifold of $\dim \leq 4$ and an involution $\iota$ on this manifold preserving the spin structure with non-empty fixed-point set, the condition that 
$\iota$ is of odd type is equivalent to that the fixed-point set of $\iota$ is of codimension-2 \cite[Proposition~8.46]{AB68}.
Thus \cref{main theo} immediately follows from \cref{most general main theo}.
\end{proof}

\begin{rem}
In this paper, we only consider $\Z_2$-branched covering spaces. 
On the other hand, $\Z_p$-branched covering spaces have been used in gauge theory (see for examples \cite{Ja12, BH21, AH21}). 
However, it seems not to be straightforward to extend the $\Z/p$ action to the settings in this paper. 
\end{rem}

Note that for a statement similar to \cref{most general main theo} for a spin 4-manifold $W$ with one boundary component follows from \cref{most general main theo} by removing a ball in $W$ near a fixed point, as far as the involution has non-empty fixed-point set.
But, for free involutions, it seems that the case of one boundary component is not deduced from the two component case \cref{most general main theo}.
However, we can carry out the proof of \cref{most general main theo} also for the one boundary component case without any essential change.
We record this as a statement as follows.
We wish to thank David Baraglia for pointing this out.
See also Nakamura's work \cite{Na13} in his $Pin^-(2)$-monopole setting for free involutions.

\begin{theo}
\label{most general main theo one boundary}
Let $(Y, \frakt)$ be a spin rational homology 3-sphere.
Let $\iota$ be a smooth involution on $Y$.
Suppose that $\iota_Y$ preserves the given  orientation and spin structure $\frakt$ respectively, and suppose that $\iota_Y$ is of odd type.
Let $(W,\fraks)$ be a smooth compact oriented spin 4-manifold $b_1(W)=0$ bounded by $(Y, \frakt)$.
Suppose that there exists a smooth involution $\iota$ on $W$ such that $\iota$ preserves the given orientation and spin structure $\fraks$ on $W$, and that the restriction of $\iota$ to the boundary is given by $\iota_Y$.
Then we have
\begin{align}
\label{eq: main rel 108 general one boundary}
-\frac{\sigma(W)}{16} \leq b^+(W)-b^+_{\iota}(W) + \kappa (Y, \frakt, \iota_Y),
\end{align}
where $b^+_\iota(W)$ denotes the maximal dimension of $\iota$-invariant positive-definite subspaces of $H^2(W;\R)$.
\end{theo}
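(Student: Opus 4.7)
The plan is to repeat the argument of \cref{most general main theo} essentially verbatim, replacing the relative Bauer--Furuta map for a cobordism with its analogue for a 4-manifold with a single boundary component, as developed in Khandhawit's double-Coulomb gauge framework~\cite{Kha15}. Concretely, I would equip $W$ with an $\iota$-invariant Riemannian metric $g$ that is cylindrical near the boundary and choose a $\Z_4$-lift $\tilde{\iota}$ of $\iota$ on $\fraks$. Mimicking \cref{subsection: cobordisms}, taking the $I$-invariant part of the Seiberg--Witten map on $W$ in the double Coulomb gauge, performing finite-dimensional approximation, and then applying the doubling procedure of \cref{subsection Doubling}, I obtain a $G$-equivariant pointed map
\begin{align*}
D(f)\colon (m_0\tilde{\C} \oplus n_0(\C_+ \oplus \C_-))^+ \to \Sigma^{m_1\tilde{\C}}\Sigma^{n_1(\C_+ \oplus \C_-)} D(I^\nu_{-\nu}(Y,\frakt,\iota_Y,g|_Y))
\end{align*}
for sufficiently large $\nu>0$. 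The domain reduces to a single sphere precisely because there is no incoming boundary component.

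Repeating the Fredholm-index computation leading to \eqref{eq:m difference} and \eqref{eq:n difference}, but with the (nonexistent) ``$Y_0$'' contribution set to zero, I expect to obtain
\begin{align*}
m_0-m_1 &= \dim_\R V(\tilde{\R})^0_{-\nu} - b^+(W) + b^+_\iota(W),\\
n_0-n_1 &= \dim_\C V(\C_+)^0_{-\nu} + \dim_\C V(\C_-)^0_{-\nu} - \tfrac{\sigma(W)}{16} + \tfrac{n(Y,\frakt,g|_Y)}{2}.
\end{align*}
As in the proof of \cref{most general main theo}, the level of the domain equals $2m_0$, while that of the codomain equals $2(m_1 + \dim_\R V(\tilde{\R})^0_{-\nu})$, so the level gap is precisely $2(b^+(W)-b^+_\iota(W))$. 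Applying \cref{lem: suspension how much} when $b^+(W)>b^+_\iota(W)$ and \cref{lem: origin of local eq} when $b^+(W)=b^+_\iota(W)$, and using the identity $k\bigl((m_0\tilde{\C} \oplus n_0(\C_+ \oplus \C_-))^+\bigr) = n_0$ from \eqref{eq: easiest cal of k} together with \cref{lem: shift by suspension} on the codomain side, the desired inequality \eqref{eq: main rel 108 general one boundary} drops out after substituting the expression \eqref{eq: kappa describe} for $\kappa(Y,\frakt,\iota_Y)$ and rearranging.

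The main technical step to be checked is the fixed-point hypothesis of the applied lemma: that the $G$-fixed-point part (respectively the $H$-fixed-point part) of $D(f)$ is a homotopy equivalence (respectively a $G$-homotopy equivalence). The $G$-fixed subspaces of both the domain and the codomain are just $S^0$, and the Seiberg--Witten map acts as the identity on the non-basepoint by construction; on the $H$-fixed parts, after doubling, the restriction of $D(f)$ is essentially a suspension of the identity on the $\iota$-invariant sector of the form equation, so it is a $G$-homotopy equivalence. These verifications parallel the standard arguments from Manolescu's $Pin(2)$-framework and Khandhawit's double-Coulomb construction used in the proof of \cref{most general main theo}, and no essentially new input is required.
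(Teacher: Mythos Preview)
Your proposal is correct and follows exactly the approach the paper indicates: the paper does not give a separate proof of \cref{most general main theo one boundary} but simply states that ``we can carry out the proof of \cref{most general main theo} also for the one boundary component case without any essential change,'' and you have spelled out precisely that argument. Your index formulas, level comparison, and invocation of \cref{lem: suspension how much} or \cref{lem: origin of local eq} all match the structure of the proof of \cref{most general main theo}, specialized to the case where the incoming boundary is empty.
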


\subsection{Stronger 10/8-inequalities for SWF-spherical triples}
\label{subsection SWF-spherical}

For some class of spin rational homology sphere with odd involution, we can refine the 10/8-type inequality stated in \cref{main theo}.

To state this without loss coming from the doubling construction, we need to recall our construction of the Seiberg--Witten Floer stable homotopy type.
Let $(Y,\frakt)$ be a spin rational homology sphere with odd involution $\iota$.
Fix a $\iota$-invariant metric $g$ on $Y$ and a lift $\tilde{\iota}$ of $\iota$ to the spin structure.
Then we consider a finite-dimensional approximation of the $I$-invariant part of the configuration space on which we have the formal gradient flow for the Chern--Simons--Dirac functional, and obtained the Conley index $I^\mu_\lambda (Y, \iota, \tilde{\iota}, g)$.
Denote by $\C$ the standard cyclic 1-dimensional complex representation of $G=\Z_4$.

\begin{defi}\label{SWF-spherical}
We say that a triple $(Y, \mathfrak{t}, \iota)$ of a rational homology 3-sphere, spin structure, and an odd involution is {\it SWF-spherical} if
there exist an $\iota$-invariant metric $g$ on $Y$ and a lift $\tilde{\iota}$ of $\iota$ to the spin structure such that the Conley index $I^\mu_\lambda (Y, \iota, \tilde{\iota}, g)$ is $G$-homotopy equivalent to $S(\C^n)$, the unit sphere of $\C^n$.
\end{defi}

The definition of SWF-spherical triples may seem to be modeled on Seiberg--Witten Fleor stable homotopy type of 3-manifolds with positive scalar curvature metric.
However, in our setup with involution, many of Seifert homology spheres are examples of this kind:
In \cref{ex: S3 case,ex: lens} and \cref{theo:Seifert cal most general}, we will see that some natural involutions on $S^3$, lens spaces, and Seifert homology spheres give examples of SWF-spherical triples.

\begin{lem}
\label{lem: strong SWF spherical kappa}
If $(Y, \mathfrak{t}, \iota)$ is SWF-spherical so that $I^\mu_\lambda (Y, \iota, \tilde{\iota}, g) \cong S(\C^n)$, then we have
\[
\kappa (Y, \mathfrak{t}, \iota) = -n(Y, \frakt,g)/2.
\]
\end{lem}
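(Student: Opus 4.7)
The plan is to unpack $\kappa(Y,\frakt,\iota)$ via the explicit formula~\eqref{eq: kappa describe} and reduce the claim to a single $K$-theoretic computation. Substituting into the desired identity $\kappa(Y,\frakt,\iota) = -n(Y,\frakt,g)/2$, what needs to be proved collapses to
\[
k\bigl(D(I^\mu_\lambda)\bigr) \;=\; \dim_\C V(\C_+)^0_\lambda + \dim_\C V(\C_-)^0_\lambda.
\]
Thus the entire proof concentrates on computing $k$ of the doubled Conley index under the SWF-spherical hypothesis.

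The first step is to pin down the integer $n$ in $I^\mu_\lambda \simeq_G S(\C^n)$. Since the Conley index is constructed from the finite-dimensional approximation $V(\tilde{\R})^\mu_\lambda \oplus V(\C_+)^\mu_\lambda \oplus V(\C_-)^\mu_\lambda$, and, by \cref{lem: shift by suspension}, $\tilde{\C}$-suspensions (which absorb the $V(\tilde{\R})^\mu_\lambda$-part in the definition of $DSWF_G$) do not affect $k$, the nontrivial content of $I^\mu_\lambda$ is recorded on a complex $G$-representation of complex dimension $\dim_\C V(\C_+)^0_\lambda + \dim_\C V(\C_-)^0_\lambda$. After a choice of small positive $\mu$ so that no further eigenvalue crossings are introduced, this forces $n = \dim_\C V(\C_+)^0_\lambda + \dim_\C V(\C_-)^0_\lambda$.

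Next I would carry out the computation of $k(D(S(\C^n)))$ using the equivariant cofibration $S(V)^+ \to S^0 \to V^+$ for a complex $G$-representation $V$: the image of $\tilde{K}_G(V^+) \to \tilde{K}_G(S^0) = R(G)$ is the ideal generated by the equivariant $K$-theoretic Euler class $\chi_G(V)$. By \cref{ex double of rep}, $D(\C^n) \cong (\C_+\oplus\C_-)^n$, whose Euler class in $R(G)$ is
\[
\chi_G\bigl((\C_+\oplus\C_-)^n\bigr) \;=\; \bigl(z(w+z-wz)\bigr)^n.
\]
The relation $w\cdot z(w+z-wz) = 2w$ in $R(G)$, already derived in~\eqref{rel wz...}, then gives by induction $w\cdot(z(w+z-wz))^n = 2^n w$, and this element lies in $\mathfrak{J}(D(I^\mu_\lambda))$. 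Hence $k(D(I^\mu_\lambda)) \leq n$. A matching lower bound follows from \cref{lem: origin of local eq} applied to the local equivalence given by the SWF-spherical identification, since reducing the power would force the Euler class of a proper complex subrepresentation to generate the ideal, contradicting the localization computation.

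The main obstacle is a bookkeeping issue: $S(\C^n)^+$ does not literally satisfy the definition of a space of type $G$-SWF, since its $H$-fixed subspace is just the added basepoint rather than some $(\tilde{\R}^s)^+$. I would resolve this by passing to the spectrum class $DSWF_G(Y,\frakt,\iota) \in \mathfrak{C}_\iota$: after sufficient $\tilde{\C}$- and $(\C_+\oplus\C_-)$-suspensions the representative becomes a genuine space of type $G$-SWF at an even level, and \cref{lem: inv k kappa} guarantees that the resulting value of $k$ does not depend on the choice of representative. The Euler-class computation above is carried out on this suspended representative, and combining it with the first step yields $\kappa(Y,\frakt,\iota) = -n(Y,\frakt,g)/2$, as claimed.
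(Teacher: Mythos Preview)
Your proof is correct but more elaborate than necessary. The paper's argument is a direct reduction in the spectrum class $\mathfrak{C}_\iota$: once one observes $n = \dim_\C V(\C_+)^0_\lambda + \dim_\C V(\C_-)^0_\lambda$, the doubled Conley index is the representation sphere $((\C_+\oplus\C_-)^n)^+$ (up to the $\tilde\R$-part, which desuspends away), so
\[
DSWF_G(Y,\frakt,\iota) \;=\; \bigl[((\C_+\oplus\C_-)^n)^+,\, 0,\, n + n(Y,\frakt,g)/2\bigr] \;=\; \bigl[(S^0,\, 0,\, n(Y,\frakt,g)/2)\bigr]
\]
by the very definition of $G$-stable equivalence, whence $\kappa = k(S^0) - n(Y,\frakt,g)/2 = -n(Y,\frakt,g)/2$. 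No Euler-class computation is needed.

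Your route instead keeps the representative $((\C_+\oplus\C_-)^n)^+$ and computes $k$ of it directly. This is exactly the content of equation~\eqref{eq: easiest cal of k}, already obtained from $k(S^0)=0$ and the suspension formula in \cref{lem: shift by suspension}; your Euler-class argument re-derives that equation from scratch. The upper bound $k\le n$ via $w\cdot\bigl(z(w+z-wz)\bigr)^n = 2^n w$ is clean, but your lower-bound sentence (``reducing the power would force the Euler class of a proper complex subrepresentation to generate the ideal'') is hand-wavy and not needed once~\eqref{eq: easiest cal of k} is invoked. Finally, the paper's ``$S(\C^n)$'' should be read as the representation sphere $(\C^n)^+$ rather than the literal unit sphere, as is clear from how it is used here and in the proof of \cref{theo:Seifert cal most general}; with that reading the ``main obstacle'' you flag about the $H$-fixed set disappears, and no passage through auxiliary suspended representatives is required.
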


\begin{proof}
We have
\[
\dim_{\C}V(\C_+)^0_\lambda
+\dim_{\C}V(\C_-)^0_\lambda
=n,
\]
and thus obtain
\begin{align*}
DSWF_G(Y, \frakt,\iota)
=& 
[((\C_+ \oplus \C_-)^n,0,\dim_{\C}V(\C_+)^0_\lambda
+\dim_{\C}V(\C_-)^0_\lambda
+n(Y, \frakt,g)/2)]\\
=&
[(S^0,0,n(Y, \frakt,g)/2)].
\end{align*}
The assertion of \lcnamecref{lem: strong SWF spherical kappa} follows from this.
\end{proof}

For $N \in \Z$, define 
$A(N) \in \{1,2,3\}$ by
\begin{align}\label{A-definition}
A(N) = 
\begin{cases}
&1, \quad N=0,2 \mod 8\\
&2, \quad N=1,3,4,5,7 \mod 8\\
&3, \quad N=6 \mod 8.
\end{cases}    
\end{align}
Denote by $\tilde{\R}$ the non-trivial real 1-dimensional representation of $G$.
The main input in this \lcnamecref{subsection SWF-spherical} is the following Borsuk--Ulam-type theorem:

\begin{theo}[Crabb~\cite{Cra89}, Stolz~\cite{Sto89}]
\label{CrabbStolz}
Suppose that $N \geq 2, N' \geq 1$.
Then there exists a pointed continuous $G$-map
\[
f : (\C^N)^+ \to (\tilde{\R}^{N'})^+,
\]
if and only if
\[
N+A(N) \leq N'.
\]
\end{theo}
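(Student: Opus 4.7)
My plan is to approach this via equivariant $KO$-theory, following the framework of Crabb \cite{Cra89} and Stolz \cite{Sto89}. The statement should be understood with the convention (implicit in the 10/8-type applications made later in the paper) that the $G$-map $f$ in question is essential on $G$-fixed points, i.e., the restriction $f^G \colon S^0 \to S^0$ has degree one; otherwise the constant map would make the assertion trivially false in one direction. With this convention, the theorem becomes a genuine Borsuk--Ulam-type statement about representation spheres of $G = \Z_4$.

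For the ``if'' direction ($N + A(N) \leq N'$ implies existence): I would construct $f$ by iterating explicit building blocks. In low dimensions one writes down specific $G$-maps $(\C)^+ \to (\tilde{\R}^{1+A(1)})^+$ using Hopf-type constructions together with the residual $\Z_2 = G/\{1,-1\}$-action on $\tilde{\R}$; these are then smashed and composed to build maps for arbitrary $N$, using the fact that $A(N)$ satisfies sub-additivity mod $8$. An alternative route is obstruction theory on an equivariant cell decomposition of $(\C^N)^+$, where the cellular obstructions take values in Bredon cohomology groups with coefficients in equivariant homotopy groups of $(\tilde{\R}^{N'})^+$, and the inequality ensures they vanish.

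For the ``only if'' direction ($N + A(N) > N'$ forbids a map): Suppose an essential $G$-map $f$ exists. Apply the equivariant Thom isomorphism to see that $\widetilde{KO}_G((\C^N)^+)$ and $\widetilde{KO}_G((\tilde{\R}^{N'})^+)$ are free $KO_G^*(\mathrm{pt})$-modules generated by Thom classes $U_{\C^N}$ and $U_{\tilde{\R}^{N'}}$. Then $f^* U_{\tilde{\R}^{N'}} = \lambda \cdot U_{\C^N}$ for some $\lambda \in KO_G^*(\mathrm{pt})$, and restricting to the origin $0 \in \C^N$ converts this into a relation $e_G(\tilde{\R})^{N'} = \lambda \cdot e_G(\C)^N$ among Euler classes in $KO^*(B\Z_4)$; essentiality on the fixed point $S^0$ prevents $\lambda$ from being absorbed into the augmentation ideal in a trivializing way. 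The smallest $N'$ for which such a $\lambda$ exists turns out to be exactly $N + A(N)$.

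The main technical obstacle is the explicit computation in $KO^*(B\Z_4)$ that produces the function $A(N)$. The $\bmod\;8$ dependence reflects the eightfold Bott periodicity of real $K$-theory, and the trichotomy $A(N) \in \{1,2,3\}$ encodes how the periodic generators pair against the $\Z_4$-equivariant Euler class of $\C$, which in turn is governed by congruences in the kernel of the restriction $KO^*(B\Z_4) \to KO^*(B\Z_2)$. Carrying this calculation out honestly is the technical core of Stolz's thesis and Crabb's equivariant $KO$-theory setup; since the present paper invokes their theorem as a black-box input, I would present the result by citing \cite{Cra89, Sto89} and simply verify that the statement as formulated matches the form needed for the applications to $SWF$-spherical triples.
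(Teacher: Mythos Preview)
The paper does not supply its own proof of this theorem: it is stated with attribution to Crabb~\cite{Cra89} and Stolz~\cite{Sto89} and used as a black-box input to the subsequent Proposition (the refined Borsuk--Ulam inequality). Your proposal to sketch the $KO_G$-theoretic argument and then cite \cite{Cra89,Sto89} for the computation is therefore exactly how the paper treats the result, and your outline of the mechanism (Euler-class divisibility in $KO^*(B\Z_4)$, with the mod-$8$ pattern of $A(N)$ coming from real Bott periodicity) is the correct one from those sources.

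Your observation about the missing hypothesis is well taken and worth making explicit. As literally stated, the ``only if'' direction is false because the constant map to the basepoint is always a pointed $G$-map. The intended statement is the one you describe: $f$ should be essential on $G$-fixed points, i.e.\ $f^G:S^0\to S^0$ is the identity (equivalently, $f$ arises from a $G$-map of unit spheres $S(\C^N)\to S(\tilde{\R}^{N'})$, which is the formulation in the original sources). This hypothesis is satisfied in the paper's application: the map $f'$ in the proof of the refined inequality is obtained by equivariant Freudenthal desuspension of a map $f$ that is a $G$-homotopy equivalence on fixed points, and since the desuspending representation $V=\C^{n_1}\oplus\tilde{\R}^{m_0}$ has $V^G=0$, the desuspension does not alter the $G$-fixed-point map. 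So the paper's use of the theorem is sound, but the statement itself should carry the essentiality condition you identified.
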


\begin{rem}
\label{rem: weak Borsuk-Ulam}
Note that, from the argument of the proof of \cref{lem: suspension how much}, we already have the following Borsuk--Ulam-type theorem:
if there exists a pointed continuous $G$-map
\[
f : (\C^{n_0} \oplus \tilde{\R}^{m_0})^+  \to (\C^{n_1} \oplus \tilde{\R}^{m_1})^+
\]
that induces a homotopy equivalence on $G$-fixed-point sets,
then we have
\begin{align}
\label{eq:weak BU}
n_0 - n_1 \leq m_1-m_0.
\end{align}
\cref{CrabbStolz} means that, if $f$ is ``desuspended'' so that $f$ is a map from the  complex representation space $\C^N$ to the real representation space $\tilde{\R}^{N'}$,
then we may get a stronger constraint.
\end{rem}

Let us also recall a classical fact from equivariant stable homotopy theory.
This is a version of equivariant Freudenthal suspension theorem:

\begin{theo}[See, for example, {\cite[Theorem~1.5]{GM95}}]
\label{theo: desuspension}
Let $G'$ be a compact Lie group, $X_0, X_1$ be pointed $G'$-spaces, and $V$ be a representation of $G'$.
Denote by $\mathrm{Sub}(G')$ the set of subgroups of $G'$ and by $\mathrm{conn}(X)$ the connectivity of a given space $X$.
Suppose that there exists a conjugation-invariant function $\mathcal{N} : \mathrm{Sub}(G') \to \{-1, 0, 1, 2,  \ldots, +\infty\}$ that satisfies the following properties:
\begin{itemize}
\item[(i)] For all $H \in \mathrm{Sub}(G')$ with $V^H\neq \{0\}$, we have $\mathcal{N}(H) \leq 2\mathrm{conn}(X_1^H)+1$.
\item[(ii)] For all $H \in \mathrm{Sub}(G')$ and subgroup $K \subset H$ with $V^K \neq V^H$, we have $\mathcal{N}(H) \leq \mathrm{conn}(X_1^K)$.
\item [(iii)] For all $H \in  \mathrm{Sub}(G')$, we have $\dim X^H \leq \mathcal{N}(H)$.
\end{itemize}
Then the suspension map
\[
\Sigma^{V} : [X_0, X_1]_{G'} \to [\Sigma^{V}X_0, \Sigma^{V}X_1]_{G'}
\]
is surjective.
\end{theo}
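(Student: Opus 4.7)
The plan is to establish surjectivity of $\Sigma^V$ via the adjunction $\Sigma^V \dashv \Omega^V$ combined with $G'$-equivariant obstruction theory. First I would rephrase the problem: a class in $[\Sigma^V X_0, \Sigma^V X_1]_{G'}$ corresponds, under the adjunction, to a $G'$-homotopy class of maps $X_0 \to \Omega^V \Sigma^V X_1$, and the suspension map corresponds to postcomposition with the unit $\eta : X_1 \to \Omega^V \Sigma^V X_1$. Thus surjectivity of $\Sigma^V$ is equivalent to the statement that every $G'$-map $g : X_0 \to \Omega^V \Sigma^V X_1$ lifts, up to $G'$-homotopy, through $\eta$.

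Second, I would replace $X_0$ by a $G'$-CW complex using equivariant cellular approximation, and construct the lift by induction over the skeleta of $X_0$. A typical cell has the form $(G'/H)_+ \wedge D^n$ attached along a map of its boundary; extending a given partial lift across this cell has an obstruction living in the $H$-equivariant homotopy group $\pi_{n-1}^{H}$ of the homotopy fiber $F_H$ of $\eta^H$. Hypothesis (iii), $\dim X_0^H \leq \mathcal N(H)$, caps the cells we need to attach at orbit type $(H)$; so it suffices to show that $F_H$ is $\mathcal N(H)$-connected for every subgroup $H$ that occurs.

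Third, I would estimate the connectivity of $F_H$, splitting according to whether $V$ is `generic' at $H$ or not. When $V^K = V^H$ for all $K \subset H$ such that the cell lives genuinely at orbit type $(H)$, the restriction $\eta^H$ reduces to the classical (non-equivariant) unit map $X_1^H \to \Omega^{V^H} \Sigma^{V^H} X_1^H$, whose connectivity is $2\mathrm{conn}(X_1^H)+1$ by the ordinary Freudenthal theorem with a dimension shift; this is exactly the bound in hypothesis (i). When $V^K \neq V^H$ for some subgroup $K \subsetneq H$, the equivariant loop space $(\Omega^V \Sigma^V X_1)^H$ contains additional `singular' strata arising from the fixed points under $K$, and one has to run a Blakers--Massey-type argument equivariantly (or invoke the tom Dieck splitting of equivariant homotopy groups) to decompose the obstruction into pieces indexed by such $K$. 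The connectivity of each such piece is governed by $\mathrm{conn}(X_1^K)$, which is precisely the content of hypothesis (ii).

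The main obstacle will be this last stratification argument: organizing the induction so that at each cell of orbit type exactly $(H)$, the full list of subgroups $K \leq H$ with $V^K \neq V^H$ is dealt with simultaneously, and checking that the bound $\mathcal N(H)$ is consistent across the two clauses (i) and (ii). Once this is in place, each obstruction group vanishes in the relevant range, the inductive lift proceeds through all skeleta, and the resulting $G'$-map $X_0 \to X_1$ satisfies $\Sigma^V f \simeq g$, proving surjectivity. (A parallel argument using one extra dimension of connectivity would yield the corresponding injectivity statement, but that is not needed here.)
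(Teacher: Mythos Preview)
The paper does not actually give a proof of this statement: it is quoted as a classical result in equivariant stable homotopy theory (the equivariant Freudenthal suspension theorem), with a reference to \cite{GM95}, and is used as a black box in the proof of Proposition~3.21. So there is no ``paper's own proof'' to compare your proposal against.

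That said, your sketch is a reasonable outline of the standard argument one finds in the literature. The adjunction/obstruction-theory framework is correct, and the two cases in your third step correspond exactly to hypotheses (i) and (ii). A couple of points where your sketch is loose: the connectivity estimate for the homotopy fiber of $\eta^H$ in the ``generic'' case follows from the ordinary Freudenthal theorem, but you should be careful that condition (i) only applies when $V^H \neq \{0\}$ --- when $V^H = \{0\}$ the map $\eta^H$ is already a homeomorphism and there is nothing to do. In the ``singular'' case, the cleanest organization is not via tom Dieck splitting but rather via an induction on isotropy types of $V$ (filtering $S^V$ by its orbit-type strata and using relative equivariant obstruction theory), which is closer to how the result is proved in \cite{GM95} and in the Lewis--May--Steinberger book. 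Your Blakers--Massey formulation would also work but is harder to make precise. Since the paper simply cites the result, any correct proof you supply would be an addition rather than a reproduction.
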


\begin{rem}
If we replace the condition (iii) of \cref{theo: desuspension} with the condition that $\dim X^H \leq \mathcal{N}(H)+ 1$ for all $H \in  \mathrm{Sub}(G')$, we have that $\Sigma^{V} : [X_0, X_1]_{G'} \to [\Sigma^{V}X_0, \Sigma^{V}X_1]_{G'}$ is isomorphic.
But we do not have to use this version of suspension theorem.
\end{rem}

The next \lcnamecref{lem: from weak BU to strong BU} refines the weak Borsuk--Ulam theorem~\eqref{eq:weak BU}:

\begin{prop}
\label{lem: from weak BU to strong BU}
Suppose that there exists a pointed continuous $G$-map
\[
f : (\C^{n_0} \oplus \tilde{\R}^{m_0})^+  \to (\C^{n_1} \oplus \tilde{\R}^{m_1})^+
\]
that induces a homotopy equivalence on the $G$-fixed-point sets for natural numbers $n_0, n_1, m_0, m_1$.
Suppose that $n_0-n_1 \geq 2$ and $m_1-m_0 \geq 1$.
Then we have
\begin{align}
\label{eq: refined BU}
n_0-n_1+A(n_0-n_1) \leq m_1-m_0.    
\end{align}
\end{prop}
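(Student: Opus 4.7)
The plan is to reduce to the Crabb--Stolz theorem (\cref{CrabbStolz}) via the equivariant Freudenthal suspension theorem (\cref{theo: desuspension}). Set $N := n_0 - n_1$ and $M := m_1 - m_0$, so $N \leq M$ by the weak Borsuk--Ulam bound (\cref{rem: weak Borsuk-Ulam}).

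Assume first the strict inequality $N < M$. I would apply \cref{theo: desuspension} with $X_0 = (\C^N)^+$, $X_1 = (\tilde{\R}^M)^+$, $V = \C^{n_1} \oplus \tilde{\R}^{m_0}$, and the minimal function $\mathcal{N}(\{1\}) = 2N$, $\mathcal{N}(H) = \mathcal{N}(G) = 0$. The decisive condition (i) at $H' = \{1\}$ reads $2N \leq 2M - 1$, which is exactly the strict inequality; the remaining subgroup-wise parts of (i) and (ii) follow from $M \geq 1$, and (iii) is immediate. Hence the suspension map $\Sigma^V$ is surjective, and $f$ desuspends to a $G$-map $g : (\C^N)^+ \to (\tilde{\R}^M)^+$ with $g^G \simeq f^G$ (since $V^G = 0$), to which \cref{CrabbStolz} applies, giving $N + A(N) \leq M$.

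The remaining case $N = M$ would require $A(N) \leq 0$, contradicting $A(N) \geq 1$, so I would derive a contradiction from the existence of $f$. Pre-composing $f$ with the zero-section inclusion $(\C^{n_0 - 1} \oplus \tilde{\R}^{m_0})^+ \hookrightarrow (\C^{n_0} \oplus \tilde{\R}^{m_0})^+$ yields a $G$-map with $\C$-difference $N - 1 < M$, while post-composing $f$ with the inclusion into an added $\tilde{\R}$-summand yields a $G$-map with $\tilde{\R}$-difference $M + 1 > N$; both induce homotopy equivalences on $G$-fixed points, since the two inclusions do. Applying the strict-case conclusion to each gives $A(N - 1) \leq 1$ and $A(N) \leq 1$, which by the periodicity of $A$ in \eqref{A-definition} force $N \equiv 1, 3 \pmod{8}$ and $N \equiv 0, 2 \pmod{8}$ respectively, an impossibility for $N \geq 3$.

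The main obstacle is the sub-case $N = 2$, where the pre-composition reduction drops $N - 1 = 1$ out of the range of \cref{CrabbStolz}. I expect to dispatch this sub-case by an equivariant $K_G$-theoretic Euler class computation: in $R(G) = \Z[t]/(t^4 - 1)$, naturality of the Thom class together with $f^G \simeq \mathrm{id}$ forces the $K_G$-degree of $f$ (after reducing $V$ to its complex part) to satisfy $d \cdot z^2 = w$, where $z = 1 - \C_+$ and $w = 1 - \tilde{\C}$; a direct expansion shows this equation admits no solution $d \in R(G)$, yielding the required contradiction.
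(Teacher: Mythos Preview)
Your strict case $N<M$ is exactly the paper's proof: desuspend by $V=\C^{n_1}\oplus\tilde{\R}^{m_0}$ via the equivariant Freudenthal theorem and then invoke Crabb--Stolz; your $\mathcal{N}$ is a harmless reparametrisation of the paper's choice $\mathcal{N}(\{1\})=2M-2$, $\mathcal{N}(\Z_2)=\mathcal{N}(G)=M-2$. The paper does \emph{not} split off $N=M$: it asserts that the strict bound $N\le M-1$ already follows from the argument behind \cref{rem: weak Borsuk-Ulam}, so that hypothesis~(iii) of \cref{theo: desuspension} at the trivial subgroup is automatic. Your caution here is reasonable, since \cref{rem: weak Borsuk-Ulam} as stated gives only $N\le M$; and your pre/post-composition trick for $N=M\ge 3$ is correct.

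The genuine gap is your sub-case $N=M=2$. The equation $d\cdot z^2=w$ does not fall out of the $K_G$-degree of $f$ directly. Writing $m_0=2s_0$ (suspend once by $\tilde{\R}$ if $m_0$ is odd), the Thom/Euler square gives only
\[
z^{n_1}w^{s_0}\,(d z^2-w)=0 \quad\text{in }R(G),
\]
and for $s_0\ge 1$ the factor $w^{s_0}$ kills the character at $j^2$; the surviving constraints at $j,j^3$ are solved by $d=t$ (indeed $t(1-t)^2-(1-t^2)=-1+t-t^2+t^3$ has character $(0,0,-4,0)$). Your parenthetical ``after reducing $V$ to its complex part'' does not repair this: desuspending by $\C^{n_1}$ alone still leaves $w^{s_0}$ in front. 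A correct fix is to first suspend by $\C$ (forcing $n_1\ge 1$) and \emph{then} desuspend by $\tilde{\R}^{m_0}$; Freudenthal now applies because $\dim(\C^{n_0+1})^+=2n_1+6\le 4n_1+7=2\,\mathrm{conn}\big((\C^{n_1+1}\oplus\tilde{\C})^+\big)+1$, and one is left with $z^{n_1+1}(dz^2-w)=0$. Since $z$ has nonzero character at each of $j,j^2,j^3$, this forces $dz^2=w$, whose coefficient system yields $2a=2e+1$ and hence the desired contradiction.
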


\begin{proof}
For $G=\Z_4$,
define a function $\mathcal{N} : \mathrm{Sub}(G) \to \mathbb{N}$ by
\[
\mathcal{N}(H)
=
\begin{cases}
m_1-m_0-2, & H=\Z_2  \text{ or } G\\
2(m_1-m_0)-2, &H=\{1\}.
\end{cases}
\]
We shall apply \cref{theo: desuspension}
to this $\mathcal{N}$ and
\[
G'=G,\quad X_0 = (\C^{n_0-n_1})^+,\quad X_1 = (\tilde{\R}^{m_1-m_0})^+, \quad V=\C^{n_1}\oplus \tilde{\R}^{m_0}.
\]
Let us check that these satisfy the assumptions of \cref{theo: desuspension}.
For $H=\Z_2$ or $G$, it is straightforward to check
that the assumptions (i), (ii), (iii) in
\cref{theo: desuspension} are satisfied.
For $H=\{1\}$, again it is easy to check that the assumptions (i), (ii) are satisfied, and the assumption (iii) is equivalent to that
\[
n_0 - n_1 \leq m_1 -m_0 -1.
\]
But this inequality follows from the weaker Borsuk--Ulam-type theorem mentioned in \cref{rem: weak Borsuk-Ulam}.
Thus we can apply \cref{theo: desuspension} and deduce that there exists a pointed continuous $G$-map
\[
f' : (\C^{n_0-n_1})^+  \to (\tilde{\R}^{m_1-m_0})^+.
\]
Applying \cref{CrabbStolz} to this map $f'$, we obtain the inequality \eqref{eq: refined BU}. 
\end{proof}

Let us apply the refined Borsuk--Ulam-type theorem~\cref{lem: from weak BU to strong BU} to our setup:

\begin{theo}
\label{most general main theo strong SWF spherical kappa}
Let $(Y_0, \frakt_0)$, $(Y_1, \frakt_1)$ be spin rational homology 3-spheres.
Let $\iota_0, \iota_1$ be smooth involutions on $Y_0, Y_1$.
Suppose that $\iota_0, \iota_1$ preserve the given orientations and spin structures $\frakt_0, \frakt_1$ on $Y_0, Y_1$ respectively, and suppose that $\iota_0, \iota_1$ are of odd type.
Suppose that $(Y_0, \frakt_0, \iota_0)$, $(Y_1, \frakt_1,\iota_1)$ are SWF-spherical. 
Let $(W,\fraks)$ be a smooth compact oriented spin cobordism with $b_1(W)=0$ from $(Y_0, \frakt_0)$ to $(Y_1, \frakt_1)$.
Suppose that there exists a smooth involution $\iota$ on $W$ such that $\iota$ preserves the given orientation and spin structure $\fraks$ on $W$, and that the restriction of $\iota$ to the boundary is given by $\iota_0, \iota_1$.
Set
\[
N(W, Y_0, \frakt_0, \iota_0, Y_1, \frakt_1, \iota_1):=
-\frac{\sigma(W)}{16} + \kappa (Y_0, \frakt_0, \iota_0 ) - \kappa (Y_1, \frakt_1, \iota_1).
\]
Then $N(W, Y_0, \frakt_0, \iota_0, Y_1, \frakt_1, \iota_1)$ is an integer, and
if we have
\[
N(W, Y_0, \frakt_0, \iota_0, Y_1, \frakt_1, \iota_1) \geq 2 \quad \text{and} \quad 
b^+(W) - b^+_\iota(W) \geq 1,
\] 
then the inequality
\begin{align}
\label{eq: main rel 108 general strong}
\begin{split}
&-\frac{\sigma(W)}{16} + \kappa (Y_0, \frakt_0, \iota_0 ) + A(N(W, Y_0, \frakt_0, \iota_0, Y_1, \frakt_1, \iota_1))\\
\leq& b^+(W)-b^+_{\iota}(W) + \kappa (Y_1, \frakt_1, \iota_1 )
\end{split}
\end{align}
holds.
\end{theo}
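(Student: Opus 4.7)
The strategy is to apply the refined Borsuk--Ulam theorem \cref{lem: from weak BU to strong BU} to the \emph{undoubled} relative Bauer--Witten--Furuta cobordism map \eqref{eq: cob map} rather than to its double \eqref{eq: doubled cob map}. A direct attempt with $D(f)$ reduces the desired estimate to an inequality $A(2N) \geq 2A(N)$ which fails in general (for instance at $N=0$ and $N=4$); working with the undoubled map gives exactly the correct scaling. Integrality of $N$ is immediate from the SWF-spherical hypothesis: by \cref{lem: strong SWF spherical kappa} we have $\kappa(Y_i, \frakt_i, \iota_i) = -n(Y_i, \frakt_i, g_i)/2$, so
\[
N = -\frac{\sigma(W)}{16} + \frac{n(Y_1, \frakt_1, g_1) - n(Y_0, \frakt_0, g_0)}{2},
\]
which is an integer by \cref{rem: integrality}.

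Using sphericity, one chooses $\lambda, \mu$ so that $I^{-\lambda}_{-\mu}(Y_0)$ and $I^\mu_\lambda(Y_1)$ are $G$-homotopy equivalent to representation spheres built purely from the complex summands $\C_+$ and $\C_-$. In particular $\dim_\R V_i(\tilde{\R})^0_\lambda = 0$, so \eqref{eq:m difference} collapses to $m_1 - m_0 = b^+(W) - b^+_\iota(W)$. Substituting these sphere models into \eqref{eq: cob map} realizes $f$ as a pointed $G$-map
\[
f\colon (\tilde{\R}^{m_0} \oplus \C_+^{a_0} \oplus \C_-^{b_0})^+ \longrightarrow (\tilde{\R}^{m_1} \oplus \C_+^{a_1} \oplus \C_-^{b_1})^+.
\]
Complex conjugation $v \mapsto \bar v$ defines a real $G$-linear isomorphism $\C_- \cong \C_+$, as verified by $c(jv) = \overline{-iv} = i\bar v = jc(v)$. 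Under this identification, $f$ may be rewritten as a $G$-map
\[
f\colon (\tilde{\R}^{m_0} \oplus \C_+^{N_0})^+ \longrightarrow (\tilde{\R}^{m_1} \oplus \C_+^{N_1})^+, \qquad N_i := a_i + b_i.
\]

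Combining \eqref{eq:n difference} with the identification (provided by the proof of \cref{lem: strong SWF spherical kappa}) of the sphere complex dimensions $\bar n_i$ with $\dim_\C V_i(\C_+)^0_\lambda + \dim_\C V_i(\C_-)^0_\lambda$, a short calculation gives $N_0 - N_1 = N$. Since $(\C_+)^G = (\tilde{\R})^G = 0$, both $G$-fixed-point sets collapse to $S^0$, and since Bauer--Furuta sends the reducible critical point to the reducible critical point, $f^G = \mathrm{id}_{S^0}$ is a homotopy equivalence. Under the hypotheses $N \geq 2$ and $b^+(W) - b^+_\iota(W) \geq 1$, \cref{lem: from weak BU to strong BU} therefore applies to $f$ and yields
\[
N + A(N) \leq b^+(W) - b^+_\iota(W),
\]
which is equivalent to \eqref{eq: main rel 108 general strong}. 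The main technical hurdle is the bookkeeping identity $N_0 - N_1 = N$: one must arrange the $(\lambda, \mu)$-approximation so that SWF-sphericity is realized simultaneously for $Y_0$ and $Y_1$, and then match the sphere-dimension contributions against the index count \eqref{eq:n difference}. The conceptual obstacle, already noted, is that applying Crabb--Stolz directly to the doubled map $D(f)$ destroys the improvement provided by $A$, forcing a return to the undoubled cobordism map $f$, which plays no role in the earlier arguments of the paper.
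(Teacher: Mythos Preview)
Your proof is correct and follows essentially the same route as the paper's: both work with the undoubled cobordism map \eqref{eq: cob map}, use SWF-sphericality to identify domain and codomain with representation spheres of the form $(\C^{n_i}\oplus\tilde{\R}^{m_i})^+$, read off $m_1-m_0=b^+(W)-b^+_\iota(W)$ and $n_0-n_1=N$ from the index formulas, and then invoke \cref{lem: from weak BU to strong BU}. Your explicit real $G$-isomorphism $\C_-\cong\C_+$ via conjugation and your remark that applying Crabb--Stolz to $D(f)$ would require the false inequality $A(2N)\geq 2A(N)$ are helpful clarifications that the paper leaves implicit.
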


\begin{proof}
Under the SWF-spherical assumption,
the $I$-invariant-relative Bauer--Furuta invariant \eqref{eq: cob map} is given by 
\[
f : (\C^{n_0} \oplus \tilde{\R}^{m_0})^+  \to (\C^{n_1} \oplus \tilde{\R}^{m_1})^+
\]
with
\begin{align*}
&m_1 - m_0 = b^+(W) - b^+_\iota(W),\\    
&n_0 - n_1 = - \frac{\sigma(W)}{16}+ \frac{n(Y_1, \mathfrak{t}_1, g_1)}{2}-\frac{n(Y_0, \mathfrak{t}_0, g_0)}{2}
= N(W, Y_0, \frakt_0, \iota_0, Y_1, \frakt_1, \iota_1).
\end{align*}
By \cref{rem: integrality} and this expression of $n_0-n_1$, $N(W, Y_0, \frakt_0, \iota_0, Y_1, \frakt_1, \iota_1)$ is an integer.
The assertion of \cref{most general main theo strong SWF spherical kappa} follows from \cref{lem: from weak BU to strong BU} applied to this $f$.
\end{proof}

\begin{rem}
\label{rem: SWF spherical disjoint}
Without any essential change, \cref{most general main theo strong SWF spherical kappa} holds for $(Y_i, \frakt_i, \iota_i)$ that are disjoint union of spin rational homology spheres with odd involution that are SWF-spherical.
\end{rem}

\subsection{Connected sum formula}

First, define equivariant connected sums for spin rational homology spheres equipped with odd involutions that have nono-empty-fixed-point set as follows:

\begin{defi}\label{Connected sum equi}
Let $(Y_0, \mathfrak{t}_0, y_0)$ and $ (Y_1,\mathfrak{t}_1, y_1)$ be spin rational homology $3$-spheres with base points. Let $\iota_0$ and $\iota_1$ are involutions on $Y_0$ and $Y_1$ respectively. Suppose that 
\begin{itemize}
\item fixed points of $\iota_i$ are codimension-2 and $\iota_i(y_i)=y_i$, 
\item $\iota_i$ preserves the spin structure $\mathfrak{t}_i$ for $i=0, 1$. 
\end{itemize}
We give an orientation $o(\iota_i)$ of the set of fixed points of the involution $\iota_i$. We define the connected sum $(Y_0, \mathfrak{t}_0, \iota_0, o(\iota_0)) \# (Y_1, \mathfrak{t}_1, \iota_1, o(\iota_1))$ as follows. 
Let $f_i \colon (D^3, 0) \to (Y_i, y_i)$ is the embedding of the $3$-disk to $Y_i$ which satisfies following conditions.  
\begin{itemize}
    \item $\iota_i(f_i(D^3))=f_i(D^3)$.  
    \item %$f_i(D^3) \cap (Y_i)^{\iota_i}=\{(x, y, 0) \in D^3\}$ and 
    $f_i^{-1}\circ \iota_i \circ f_i(x, y, z)=(x, -y, -z)$. 
    \item The orientation of $Y_i^{\iota_i}$ coincides with the orientation of $f_i(D^3) \cap Y_i^{\iota_i}$ induced by the natural orientation of $\{ (x, 0,0) \in D^3 \}$.
\end{itemize}
Take a trivialization of $TY|_{f_i(D^3)}$ using the coordinate of $D^3$. This trivialization gives a trivialization of the spin structure $\mathfrak{t}_i$ on $f_i(D^3)$. We set $D^3_{1/2}=\{(x, y, z) \in D^3 \mid x^2+y^2+z^2 \le 1/2 \}$. 
We set $\phi \colon \partial D^3_{1/2} \to \partial D^3_{1/2}$ by $(x, y, z) \mapsto (x, -y, z)$ and  $Y^{\#}=(Y_0 \setminus f_0(\Int(D^3_{1/2}))) \cup_{\phi} (Y_1 \setminus f_1(\Int(D^3_{1/2})))$. If that we can define the involution $\iota^{\#}$ on $Y^{\#}$ such that $\iota^{\#} |_{Y_i \setminus f_i(D^3_{1/2})}=\iota_i$ and the spin structure $\mathfrak{t}^{\#}$ on $Y^{\#}$ such that $\mathfrak{t}^{\#} |_{Y_i \setminus f_i(D^3_{1/2})}=\mathfrak{t}_i$. 
We define $(Y_0, \mathfrak{t}_0, y_0,\iota_0, o(\iota_0)) \# (Y_1, \mathfrak{t}_1, y_1, \iota_1, o(\iota_1))\colon=(Y^{\#}, \mathfrak{t}^{\#}, \iota^{\#})$. 
\end{defi}

\begin{rem}
\label{rem: connected sum dependence on base point}
If the fixed-point set of $\iota_i$ is connected for $i=0, 1$, we have that the $\Z/2$-equivariant isotopy class of the connected sum $(Y_0, \mathfrak{t}_0, y_0,\iota_0) \# (Y_1, \mathfrak{t}_1, y_1, \iota_1)$ is independent of the choice of base points $y_0$ and $y_1$, and we sometimes drop $y_0, y_1$ from our notation.
Similarly, if the orientations $o(\iota_i)$ of the fixed-point sets are given, we drop $o(\iota_i)$ from our notation.
\end{rem}

\begin{lem}
Let $K$ and $K'$ be oriented knots in $S^3$.
There is a $\Z_2$-equivariant orientation preserving diffeomorphism between $(\Sigma (K\# K'), \iota_{K\# K'} )$ and $(\Sigma (K), \iota_{K} )\# (\Sigma (K'), \iota_{K'} )$.  
\end{lem}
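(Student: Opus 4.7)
The plan is to realize both sides as the same gluing of local pieces along $(S^2,S^0)$/$S^2$, and then to identify the gluing maps equivariantly. Choose base points $p\in K$ and $p'\in K'$, together with small pair-neighbourhoods $(B,B\cap K)\cong(B',B'\cap K')\cong(D^3,D^1)$ with $D^1$ an unknotted diameter. By definition $K\# K'$ is obtained from $(S^3\setminus\Int B,\,K\setminus \Int B)$ and $(S^3\setminus \Int B',\,K'\setminus \Int B')$ by gluing along $(S^2,S^0)$ via an orientation-reversing pair-diffeomorphism $\psi$ that matches arc endpoints in an orientation-compatible way. A direct computation shows that the double branched cover of $(D^3,D^1)$ is again $D^3$, and in suitable coordinates the covering involution is $(x,y,z)\mapsto(x,-y,-z)$ with fixed arc $\{(x,0,0)\}$ — exactly the local form $f_i^{-1}\circ\iota_i\circ f_i$ required in \cref{Connected sum equi}.

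Since formation of the double branched cover is compatible with this decomposition, we deduce
\[
\Sigma(K\# K')=\bigl(\Sigma(K)\setminus \Int D^3\bigr)\cup_{\widetilde\psi}\bigl(\Sigma(K')\setminus \Int D^3\bigr),
\]
where the excised $D^3$'s are the preimages of $B$ and $B'$ — hence $\iota$-invariant pair-neighbourhoods of fixed points lying on the branch curves $\widetilde K\subset \Sigma(K)$ and $\widetilde{K'}\subset \Sigma(K')$, which are exactly the fixed-point sets of $\iota_K$ and $\iota_{K'}$. Orienting $\widetilde K$ and $\widetilde{K'}$ by the lifts of the given orientations of $K$ and $K'$ supplies the data $o(\iota_K), o(\iota_{K'})$ required in \cref{Connected sum equi}, and all three conditions imposed on the embeddings $f_i\colon(D^3,0)\to(\Sigma(K_i),y_i)$ are then verified.

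It remains to match $\widetilde\psi$ with the specific gluing $\phi(x,y,z)=(x,-y,z)$ used in \cref{Connected sum equi}. The downstairs map $\psi\colon(S^2,S^0)\to(S^2,S^0)$ is, up to isotopy among orientation-reversing pair-diffeomorphisms matching arc endpoints correctly, unique; therefore its equivariant lift $\widetilde\psi$ is an orientation-reversing $\Z_2$-equivariant diffeomorphism of $S^2$ that fixes pointwise the two preimages of $S^0$. These conditions also characterise $\phi$, and the equivariant mapping class group of $S^2$ with two marked $\iota$-fixed points is small enough that $\widetilde\psi$ and $\phi$ are equivariantly isotopic — which is precisely the flexibility allowed in \cref{Connected sum equi}. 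The spin structures play no further role: since $\Sigma(K\# K')$ and $\Sigma(K)\#\Sigma(K')$ are $\Z/2$-homology spheres, their spin structures are unique and automatically correspond under the constructed diffeomorphism. The main obstacle is this last bookkeeping step — verifying the particular equivariant isotopy class of the lift — but once orientation conventions are fixed consistently upstairs and downstairs, it reduces to a finite check in the local model.
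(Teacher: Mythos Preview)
Your argument is correct and is essentially the same approach as the paper's, which simply asserts that the claim is clear from the construction of the double branched cover together with \cref{Connected sum equi}; you have carefully spelled out the local model and the equivariant gluing that the paper leaves implicit. The only place worth a second look is your final bookkeeping step on the equivariant isotopy class of $\widetilde\psi$: once you record that equivariant diffeomorphisms of $S^2$ (with the rotation-by-$\pi$ involution) descend to orbifold diffeomorphisms of the quotient $S^2$ with two cone points, the required uniqueness is immediate, and your argument goes through.
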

\begin{proof}
It is clear by the construction of the branched double covering and \cref{Connected sum equi}. 
%Let $k_1$ and $k_0$ are the base point in $K_0$ and $K_1$ respectively. We regared $D^3$ as a subset of $\R \times \C$. Let a map $f_i \colon (D^3, 0) \to (S^3, k_i)$ such that $f_i(\{ (x, 0) \in D^3 \subset \R \times \C \}=f_i(D^3)\cap K_i$. We have that $K_0 \# K_1$ are given by $S^3\setminus f_0(\Int(D^3)) \cup_{\partial D^3}S^3\setminus f_1(\Int(D^3))$. Let $Y_0$ and $Y_1$ are the double covering of 
\end{proof}

\begin{defi}\label{connected sum cob}
Let $(Y_0, \mathfrak{t}_0, y_0,\iota_0)$, $(Y_1, \mathfrak{t}_1, y_1, \iota_1)$ and $(Y_0, \mathfrak{t}_0, y_0,\iota_0) \# (Y_1, \mathfrak{t}_1, y_1, \iota_1)$ are spin rational $3$-spheres with involutions as \cref{Connected sum equi}. We give an orientation of $Y_i^{\iota_i}$. 
We define a spin structure and a involution on $I \times Y_i$ as follows: 
\begin{itemize}
    \item We define the involution $\tilde{\iota_i}$ on $I \times Y_i$ by $(t, y) \mapsto (t, \iota_i(y))$. 
    \item We denote $\mathfrak{s}_i$ by the spin structure on $I \times Y_i$ given by the pull back of $\mathfrak{t}_i$. We see that the involution $\tilde{\iota_i}$ preserves $\mathfrak{s}_i$. 
\end{itemize}
We set $D^4_+=\{ (t, x, y, z) \in D^4 \mid t \ge 0\}$ and $D^3_0 := \{ (0, x, y, z) \in D^4_+ \}$. Let $\tilde{f}_i \colon (D^4_+, D^3_0, 0) \to (I \times Y_i, \{0\} \times Y_i, (0, y_i))$ be an embedding of $D^4_+$ to $I \times Y_i$ which satisfies the following conditions.
\begin{itemize}
    \item $\tilde{f}_i(D^4_+) \cap (\{1\} \times Y_i)=\emptyset$. 
    \item $\tilde{f}_i^{-1}\circ \tilde{\iota_i} \circ \tilde{f}_i(t, x, y, z)=(t, x, -y, -z)$. 
    \item The orientation of $I \times Y_i^{\iota_i}$ coincides with the orientation of $\tilde{f}_i(D^4_+) \cap Y_i^{\iota_i}$ induced by the orientation of $\{(t, x, 0, 0) \in D^4_+ \}$ given by $dt \wedge dx$. 
\end{itemize}
It is easy to see that $\tilde{f}_i |_{D^3_0} \colon D^3_0 \to \{0\} \times Y_i$ is $\Z_2$-equivariant isotopic to $f_i$. 
We define a cobordism $W_{01} := W((Y_0, \mathfrak{t}_0, y_0,\iota_0), (Y_1, \mathfrak{t}_1, y_1,\iota_1))$ from $(Y_0, \mathfrak{t}_0, y_0,\iota_0)\#  (Y_1, \mathfrak{t}_1, y_1,\iota_1)$ to $(Y_0, \mathfrak{t}_0, y_0,\iota_0)\cup  (Y_1, \mathfrak{t}_1, y_1,\iota_1)$ as follows:
\begin{itemize}
    \item Let $D^4_{+, 1/2}=\{ (t, x, y, z) \in D^4_+ \mid t^2+x^2+y^2+z^2 \le 1/2 \}$, $D^3_{0, 1/2}\colon =\partial D^4_{+, 1/2} \cap D^3_0$ and $D^3_{+, 1/2}\colon =\{ (t, x, y, z) \in D^4_+\mid t^2+x^2+y^2+z^2=1/2 \}$. 
    \item Let $\tilde{\phi} \colon D^3_{+,1/2} \to D^3_{+,1/2}$ be a map given by $(t, x, y, z) \mapsto (t, x, -y, z)$. Note that $\tilde{\phi}|_{D^3_{+,1/2} \cap D^3_0} = \phi$. 
    \item We denote $W_{01}$ by $(I \times Y_0 \setminus \tilde{f}_0(\Int(D^4_{+, 1/2}) \cup D^3_{0, 1/2} ))\cup_{\tilde{\phi}}(I \times Y_1 \setminus \tilde{f}_1(\Int(D^4_{+, 1/2}) \cup D^3_{0, 1/2}))$.
\end{itemize}
We define the spin structure $\mathfrak{s}$ and involution $\iota$ on $W_{01}$ in a similar way in \cref{Connected sum equi}. 
\end{defi}

For given two knots $K$ and $K'$ in $S^3$.
We consider the following knot cobordism $(W, S)$ from $K \# K'$  to $K \cup K'$: 
\begin{itemize}
    \item The cobordism $W$ is a 3-handle cobordism attached to $I \times S^3$. 
    \item The cobordism $S$ inside $W$ is a 1-handle cobordism attached to $K\# K' $ corresponding to the connected sum decomposition of $K \cup K'$.
\end{itemize}
    
\begin{lem}
Let $K$ and $K'$ be oriented knots in $S^3$. 
Then there is an orientation preserving $\Z_2$-equivariant diffeomorphism from $\Sigma (S)$ to $W_{01} = W((\Sigma (K) ,\iota_K), (\Sigma (K') ,\iota_{K'}))$. 
\end{lem}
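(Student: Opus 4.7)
The plan is to decompose the pair $(W, S)$ along the 3-handle/1-handle, take the double branched cover of each piece separately, and identify the result with the pieces of $W_{01}$ used in Definition~\ref{connected sum cob}. Write $W = C \cup_\partial H^3$, where $C = I \times S^3$ and $H^3 \cong D^3 \times D^1$ is the 3-handle attached along a tubular neighborhood $S^2 \times D^1$ of a separating 2-sphere in $\{1\} \times S^3$ realizing the connected-sum decomposition of $K \# K'$, and correspondingly $S = S_C \cup H^1$, where $S_C = (I \times (K \# K'))$ with a small bigon removed and $H^1 \cong D^1 \times D^1$ is the 1-handle, sitting inside $H^3$ as the standard properly embedded arc-pair $D^1 \times D^1 \subset D^3 \times D^1$.

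I then compute the branched covers of the two pieces. The branched cover of $(C, S_C)$ is canonically $I \times \Sigma(K \# K')$ with the product covering involution, and the standard $\Z_2$-equivariant identification $\Sigma(K \# K') \cong \Sigma(K) \# \Sigma(K')$ realizes this exactly as the cylindrical piece appearing in the construction of $W_{01}$ (using Definition~\ref{Connected sum equi}). For the handle, since the double branched cover of $D^3$ along an unknotted arc is again $D^3$, taking the product with $D^1$ gives $\Sigma(H^3, H^1) \cong D^3 \times D^1$; a natural diffeomorphism sends this to $D^4_+ = \{(t,x,y,z) \in D^4 \mid t \geq 0\}$ carrying the involution $(t,x,y,z) \mapsto (t,x,-y,-z)$ whose fixed set is the disk $\{(t,x,0,0)\}$, which is exactly the local model for the removed half-balls $\tilde f_i(D^4_{+,1/2})$ in Definition~\ref{connected sum cob}.

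Assembling these covers equivariantly along the shared boundary region (whose preimage is $S^2 \times D^1$ with the $\Z_2$-action fixing a pair of circles) yields a $\Z_2$-equivariant, orientation-preserving diffeomorphism $\Sigma(S) \cong W_{01}$. The main obstacle is verifying that the attaching map $\tilde\phi$ in Definition~\ref{connected sum cob} coincides, under the explicit coordinates on the 3-handle and 1-handle, with the gluing produced by the double branched cover; this reduces to a routine local check in the coordinates of Definitions~\ref{Connected sum equi} and~\ref{connected sum cob}, provided one carefully matches the orientations of the fixed-point sets as specified by the paper's conventions on $o(\iota)$. Once this local matching is verified, the global equivariant diffeomorphism is obtained by patching the cover of the cylinder piece with the cover of the handle piece along this identified gluing region.
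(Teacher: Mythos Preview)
Your overall strategy---decompose $(W,S)$ into pieces, take the branched cover of each, and reassemble---is correct and considerably more explicit than the paper's one-line proof (``clear from the construction''). The branched-cover computations themselves are right: $\Sigma(I\times S^3, I\times(K\#K')) \cong I\times\Sigma(K\#K')$ and $\Sigma(D^3\times D^1,\text{arc}\times D^1)\cong D^3\times D^1$ with the standard involution.

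The gap is in the final matching with $W_{01}$. Your decomposition produces $\Sigma(W,S)$ as the cylinder $I\times(\Sigma(K)\#\Sigma(K'))$ with a $3$-handle attached at the $\{1\}$ end along the connect-sum sphere. But $W_{01}$ in Definition~\ref{connected sum cob} is built differently: one removes half-balls $\tilde f_i(D^4_{+,1/2})$ from the $\{0\}$ ends of $I\times\Sigma(K)$ and $I\times\Sigma(K')$ and glues along $\tilde\phi$. These two $4$-manifolds are indeed equivariantly diffeomorphic (both are the elementary splitting cobordism from $Y_0\#Y_1$ to $Y_0\sqcup Y_1$), but your text asserts that the pieces line up directly---calling $I\times(\Sigma(K)\#\Sigma(K'))$ ``the cylindrical piece'' of $W_{01}$ and the covered $3$-handle ``the removed half-balls''---which is not what the definition says. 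The attaching map you want to check against $\tilde\phi$ is not the gluing your decomposition actually produces.

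The quickest repair is to use the dual handle decomposition: view $(W,S)$ as the boundary-connect-sum $(I\times S^3, I\times K)\,\natural_{\{0\}}\,(I\times S^3, I\times K')$ at the incoming end. Each factor has branched cover $I\times\Sigma(K)$ (resp.\ $I\times\Sigma(K')$), and the local model of the boundary-connect-sum region is exactly $(D^4_+,\{(t,x,0,0)\})$ on each side, glued by the map that lifts to $\tilde\phi$. With this decomposition the pieces match Definition~\ref{connected sum cob} on the nose, and no extra diffeomorphism between two descriptions of the splitting cobordism is needed.
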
 
\begin{proof}
It is clear from the construction of the branched double covering and \cref{connected sum cob}. 
\end{proof}

We prove a certain connected sum formula of $[DSWF(Y, \mathfrak{t}, \iota )]$. For the connected sum formula of local equvalence classes of usual Seiberg--Witten Floer homotopy types, see \cite{Sto20}. 
\begin{theo}\label{conn sum of local eq}
Let $(Y_0, \mathfrak{t}_0, y_0, \iota_0)$ and $ (Y_1,\mathfrak{t}_1, y_1, \iota_1)$ are spin rational homology $3$-spheres with involutions $\iota_0$ and $\iota_1$ and with base points $y_0$ and $y_1$. 
Then, we have 
\[
[DSWF_G (Y_0, \mathfrak{t}_0, \iota_0) \wedge DSWF_G (Y_1, \mathfrak{t}_1, \iota_1) ]_{\rm{loc}} = [DSWF_G(Y^\#, \mathfrak{t}^\#, \iota^\#)]_{\rm{loc}}, 
\]
where $(Y^\#, \mathfrak{t}^\#, \iota^\#) := (Y_0, \mathfrak{t}_0, y_0,\iota_0) \# (Y_1, \mathfrak{t}_1, y_1, \iota_1)$. 
\end{theo}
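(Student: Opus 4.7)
The plan is to establish $G$-local maps in both directions using the relative Bauer–Furuta invariants of the cobordism $W_{01}$ from \cref{connected sum cob} (viewed as a cobordism from $Y^\#$ to $Y_0 \sqcup Y_1$) and of its reverse $\bar{W}_{01}$. First I would record the topological properties: $W_{01}$ is a 3-handle cobordism and $\bar{W}_{01}$ is a 1-handle cobordism (joining the two components), so both satisfy $b_1 = b^+ = b^+_\iota = 0$ and $\sigma = 0$, and both carry by construction the equivariant spin structure extending the data on the two ends. Applying the machinery of \cref{subsection: cobordisms} to these cobordisms then yields $G$-stable maps
\begin{align*}
\Phi_+ &: DSWF_G(Y^\#, \mathfrak{t}^\#, \iota^\#) \to DSWF_G(Y_0, \mathfrak{t}_0, \iota_0) \wedge DSWF_G(Y_1, \mathfrak{t}_1, \iota_1),\\
\Phi_- &: DSWF_G(Y_0, \mathfrak{t}_0, \iota_0) \wedge DSWF_G(Y_1, \mathfrak{t}_1, \iota_1) \to DSWF_G(Y^\#, \mathfrak{t}^\#, \iota^\#),
\end{align*}
where I use that $DSWF_G$ of a disjoint union factors as a smash product, from the product structure of the Seiberg–Witten flow on disjoint unions together with the compatibility of the doubling construction with smash products.

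The key step is showing that $\Phi_+$ and $\Phi_-$ are $G$-local maps, i.e.\ induce $G$-homotopy equivalences on $H$-fixed-point sets. Since $H = \{\pm 1\}$ acts by $-1$ on $\C_\pm$ and trivially on $\tilde{\R}$, the $H$-fixed part of the $I$-invariant configuration space is precisely the form part $V(\tilde{\R})^\mu_\lambda$. Spinor bilinears vanish there, so the Chern–Simons–Dirac flow is linear on this subspace; hence the $H$-fixed Conley index is canonically $(V(\tilde{\R})^\mu_\lambda)^+$, and after doubling $(\tilde{\C}^{\dim_\R V(\tilde{\R})^\mu_\lambda})^+$. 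The restriction of the doubled cobordism Bauer–Furuta map to $H$-fixed-point sets is the linearised form-part map; by \eqref{eq:m difference} and $b^+(W_{01}) = b^+_\iota(W_{01}) = 0$, it is a map between spheres of equal dimension in the same $G$-representation, and the vanishing of $b^+_\iota$, $b_1$, and $b^+$ forces both its kernel and cokernel to vanish. Consequently it is a $G$-linear isomorphism, in particular a $G$-homotopy equivalence; the same argument handles $\bar{W}_{01}$.

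The main obstacle is carrying out the last index computation rigorously: one must verify, paralleling the proof of \eqref{eq:m difference}, that the $(-\iota^*)$-invariant form complex on $W_{01}$ (and on $\bar{W}_{01}$) with the double Coulomb boundary conditions of \cite{Kha15} has vanishing APS-type cohomology in both relevant degrees, so that on finite-dimensional approximations the form-part map is genuinely a linear isomorphism and not merely a map of matched dimension. One must also check that the smash-product identification $DSWF_G(Y_0 \sqcup Y_1) \simeq DSWF_G(Y_0) \wedge DSWF_G(Y_1)$ is compatible with the doubled cobordism maps $\Phi_\pm$, which amounts to unpacking how the product Conley index and the doubling functor interact. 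Granting these technical verifications, $\Phi_+$ and $\Phi_-$ are $G$-local maps, yielding the desired equality $[DSWF_G(Y_0, \mathfrak{t}_0, \iota_0) \wedge DSWF_G(Y_1, \mathfrak{t}_1, \iota_1)]_{\mathrm{loc}} = [DSWF_G(Y^\#, \mathfrak{t}^\#, \iota^\#)]_{\mathrm{loc}}$.
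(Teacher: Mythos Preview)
Your proposal is correct and follows essentially the same approach as the paper: both use the doubled relative Bauer--Furuta invariants of the cobordism $W_{01}$ and its reverse, and both deduce that the resulting maps are $G$-local from the vanishing $b^+(W_{01}) = b^+_\iota(W_{01}) = 0$ via \eqref{eq:m difference}. The paper's proof is considerably terser, simply asserting that $b^+ = b^+_\iota = 0$ makes the map local (as it does elsewhere, e.g.\ in the proof of \cref{concordance}); your discussion of the $H$-fixed linear form part and the technical caveats you flag are reasonable elaborations of points the paper leaves implicit.
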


\begin{proof}
It is sufficient to construct local maps 
\[
f : DSWF (Y_0, \mathfrak{t}_0, y_0, \iota_0) \wedge DSWF (Y_1, \mathfrak{t}_1, y_1, \iota_1) \to DSWF(Y^\#, \mathfrak{t}^\#, \iota^\#) \]
and 
\[
f' : DSWF(Y^\#, \mathfrak{t}^\#, \iota^\#) \to DSWF (Y_0, \mathfrak{t}_0, y_0, \iota_0) \wedge DSWF (Y_1, \mathfrak{t}_1, y_1, \iota_1). 
\]
We consider the doubled relative Bauer--Furuta invariant for the cobordism $W_{01}$ constructed in \eqref{eq: doubled cob map}: 
\begin{align*}
f  : 
\Sigma^{m_0\tilde{\C}}\Sigma^{n_0(\C_+ \oplus \C_-)}D(I_{-\mu}^{-\lambda} (Y_0)) \wedge D(I_{-\mu}^{-\lambda} (Y_1)) \to \Sigma^{m_1\tilde{\C}}\Sigma^{n_1(\C_+ \oplus \C_-)}D(I_{-\mu}^{-\lambda} (Y^\#)), 
\end{align*}
where 
\begin{align*}
m_0-m_1=&\dim_\R (V^\#(\tilde{\R})^0_{\lambda})-\dim_\R(V_0(\tilde{\R})^0_{-\mu}) - \dim_\R(V_1(\tilde{\R})^0_{-\mu}) - b^+(W_{01}) + b^+_\iota(W_{01}), \\
\begin{split}
n_0-n_1=&\dim_\C(V_1(\C_+)^0_{\lambda}) +\dim_\C(V^\#(\C_-)^0_{\lambda})\\
&-\dim_\C(V_0(\C_+)^0_{-\mu})-\dim_\C(V_0(\C_-)^0_{-\mu}) -\dim_\C(V_1(\C_+)^0_{-\mu})-\dim_\C(V_1(\C_-)^0_{-\mu})\\
&- \frac{\sigma(W)}{16}+ \frac{n(Y^\#, \mathfrak{t}^\#, g^\#)}{2}-\frac{n(Y_0, \mathfrak{t}_0, g_0)}{2}- \frac{n(Y_1, \mathfrak{t}_1, g_1)}{2}, 
\end{split}
\end{align*} (See \eqref{eq:n difference} and \eqref{eq:m difference}), and
$V^\#(U)^0{_{\lambda}}$, $V_0(U)^0_{-\mu}$ and $V_1(U)^0_{-\mu}$ are finite dimensional approximations of sliced configuration spaces for $Y^\#$, $Y_0$ and $Y_1$ respectively, $g'$, $g_0$ and $g_1$ are invariant Riemann metrics on $Y^\#$, $Y_0$ and $Y_1$ respectively. 
Now, we can easily see $b^+(W_{01})=0$ and $b^+_\iota(W_{01})=0$. So, $f$ is a local map. The doubled relative Bauer--Furuta invariant for $-W_{01}$ gives a local map $f'$. This completes the proof. 
\end{proof}

Let us introduce a slight variant of the notion of SWF-spherical:

\begin{defi}
\label{defi: DSWF-spherical}
We say that a spin rational homology 3-sphere $(Y, \mathfrak{t}, \iota)$ with odd involution preserving the spin structure is {\it locally DSWF-spherical} if we have
\[
[DSWF_G(Y, \mathfrak{t}, \iota) ]_{\text{loc}} = [(S^0,0, n)]_{\text{loc}} 
\]
for some $n$. 
Note that, in this case, we have 
\[
\kappa (Y, \mathfrak{t}, \iota) = -n. 
\]
\end{defi}

The difference of locally DSWF-spherical from SWF-spherical is that we consider the double and local equivalence class in the definition of locally DSWF-spherical.
It is clear that, if $(Y, \mathfrak{t}, \iota)$ is SWF-spherical, then this triple is locally DSWF-spherical.
As a corollary of \cref{conn sum of local eq}, we prove: 

\begin{cor}
\label{locally DSWF-spherical conn sum}
Let $(Y, \mathfrak{t}, \iota)$ and $(Y', \mathfrak{t}', \iota')$ be 
spin rational homology 3-spheres with odd involutions preserving the spin structures respectively. 
If $(Y, \mathfrak{t}, \iota)$ and $(Y', \mathfrak{t}', \iota')$ are locally DSWF-spherical, then $(Y, \mathfrak{t}, \iota)\#(Y', \mathfrak{t}', \iota')$ and $(-Y, \mathfrak{t}, \iota)$ are also locally DSWF-spherical. 
\end{cor}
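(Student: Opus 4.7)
The plan is to prove the two assertions separately, the first by direct appeal to the connected sum formula and the second by $G$-equivariant Spanier--Whitehead duality.

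For the connected sum case, write the hypotheses as $[DSWF_G(Y, \mathfrak{t}, \iota)]_{\mathrm{loc}} = [(S^0, 0, n)]_{\mathrm{loc}}$ and $[DSWF_G(Y', \mathfrak{t}', \iota')]_{\mathrm{loc}} = [(S^0, 0, n')]_{\mathrm{loc}}$ for the integers $n, n'$ supplied by the definition. A smash product of two $G$-local maps is again a $G$-local map, since the $H$-fixed part of a smash of pointed $G$-CW complexes is naturally the smash of the $H$-fixed parts, and a smash of $G$-homotopy equivalences is still a $G$-homotopy equivalence (together with \cref{local map desuspension} to absorb the formal desuspensions). Smashing the two pairs of witnessing $G$-local maps therefore produces $G$-local maps in both directions between $DSWF_G(Y) \wedge DSWF_G(Y')$ and $(S^0, 0, n) \wedge (S^0, 0, n') = (S^0, 0, n+n')$. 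Combined with \cref{conn sum of local eq} this gives $[DSWF_G(Y^{\#}, \mathfrak{t}^{\#}, \iota^{\#})]_{\mathrm{loc}} = [(S^0, 0, n+n')]_{\mathrm{loc}}$, which is the desired conclusion for the connected sum.

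For the orientation-reversal case I would use the $G$-equivariant Spanier--Whitehead duality already present in our setup. The finite-dimensional $D(V^{\mu}_{-\mu})$-duality between $D(I^{\mu}_{-\mu}(Y))$ and $D(I^{\mu}_{-\mu}(-Y))$ used in the proof of \cref{prop: from duality}, together with the degree bookkeeping in \eqref{eq: n n pm} and the analogous orientation-reversal identity for the $\tilde \R$-part, promotes inside $\mathfrak{C}_\iota$ to duality maps $\epsilon : DSWF_G(Y) \wedge DSWF_G(-Y) \to (S^0, 0, 0)$ and $\eta : (S^0, 0, 0) \to DSWF_G(Y) \wedge DSWF_G(-Y)$ satisfying the standard duality relations. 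Given a $G$-local map $f : DSWF_G(Y) \to (S^0, 0, n)$ witnessing the hypothesis, the standard dualization formula
\[
f^{\vee} : (S^0, 0, -n) \xrightarrow{\eta \wedge \mathrm{id}} DSWF_G(Y) \wedge DSWF_G(-Y) \wedge (S^0, 0, -n) \xrightarrow{f \wedge \mathrm{id} \wedge \mathrm{id}} DSWF_G(-Y),
\]
together with the analogous dualization of a $G$-local map $(S^0, 0, n) \to DSWF_G(Y)$ in the opposite direction, produces candidate $G$-local maps between $DSWF_G(-Y)$ and $(S^0, 0, -n)$.

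The main remaining point, which I expect to be the only nontrivial step, is to verify that dualization preserves the property of being a $G$-local map, i.e.\ that $(f^{\vee})^H$ is a $G$-homotopy equivalence whenever $f^H$ is. This is exactly the observation already exploited in the proof of \cref{lem: duality origin}: the restrictions $\epsilon^H$ and $\eta^H$ themselves realize a $G/H$-equivariant duality between the $H$-fixed parts of the two doubled Conley indices (which are equivariant spheres of the form $(\tilde{\R}^t)^+$), so $(f^{\vee})^H$ is literally the Spanier--Whitehead dual of $f^H$ in the $G/H$-equivariant stable category, and Spanier--Whitehead dualization preserves equivalences. With this check in hand, the two dualized maps assemble into a $G$-local equivalence $DSWF_G(-Y, \mathfrak{t}, \iota) \simeq_{\mathrm{loc}} (S^0, 0, -n)$, proving that $(-Y, \mathfrak{t}, \iota)$ is locally DSWF-spherical and completing the proof.
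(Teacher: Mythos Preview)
Your proof is correct, and for the connected-sum half it matches the paper. For the orientation-reversal half you take a genuinely different route.

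The paper does not invoke Spanier--Whitehead duality for $-Y$. Instead it observes that there is an equivariant cobordism $W_{01}$ from $(Y,\mathfrak t,\iota)\#(-Y,\mathfrak t,\iota)$ to $(S^3,\iota_0)$ (built just as in \cref{connected sum cob}), and the doubled Bauer--Furuta invariants of $\pm W_{01}$ give the local equivalence $[DSWF_G(Y\#(-Y))]_{\mathrm{loc}}=[(S^0,0,0)]_{\mathrm{loc}}$. Combining this with \cref{conn sum of local eq} and the hypothesis $[DSWF_G(Y)]_{\mathrm{loc}}=[(S^0,0,n)]_{\mathrm{loc}}$ yields $[(I(-Y),m',n'+n)]_{\mathrm{loc}}=[(S^0,0,0)]_{\mathrm{loc}}$, and a formal shift via \cref{local map desuspension} gives $[DSWF_G(-Y)]_{\mathrm{loc}}=[(S^0,0,-n)]_{\mathrm{loc}}$.

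Your duality argument is a clean categorical alternative: you promote the $D(V^{\mu}_{-\mu})$-duality of \cref{prop: from duality} to duality data in $\mathfrak C_\iota$ (the index bookkeeping you cite, \eqref{eq: n n pm} plus the easy $\tilde{\R}$-analogue, is exactly what makes the desuspensions match), and then dualize the given local maps, using the observation from \cref{lem: duality origin} that $\epsilon^H,\eta^H$ are $G$-equivalences to see that duals of local maps are local. This buys a conceptual explanation of why reversing orientation negates the parameter $n$, and would generalize to any setting where the Floer spectra are set up as a monoidal category with duals. The paper's cobordism argument, by contrast, stays entirely inside the Bauer--Furuta machinery already built and avoids having to verify that the finite-level duality lifts to $\mathfrak C_\iota$.
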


\begin{proof}
The first claim follows from \cref{conn sum of local eq}.
We prove the latter statement. By considering a similar construction in \cref{connected sum cob}, we have a cobordism $W_{01}$ from $(Y, \mathfrak{t}, \iota)\#(-Y, \mathfrak{t}, \iota)$ to $(S^3,\mathfrak{t}_0 ,\iota_0)$, where $\mathfrak{t}_0$ is the unique spin structure and $\iota_0$ is the complex conjugation. We can easily check that the doubled Bauer-Furuta invariants of $W_{01}$ and $-W_{01}$ give a local equivalence 
\[
[DSWF ( (Y, \mathfrak{t}, \iota)\#(-Y, \mathfrak{t}, \iota) )]_{\text{loc}}  = [(S^0, 0, 0)]_{\text{loc}}. 
\]
Then, by \cref{conn sum of local eq}, 
\[
[ ( I(-Y), m',n' + n )]_{\text{loc}}  = [(S^0, 0, 0)]_{\text{loc}}, 
\]
where $DSWF(-Y, \mathfrak{t}, \iota)= ( I(-Y), m',n' )$ and $[DSWF ( (Y, \mathfrak{t}, \iota)]_{\text{loc}} =[(S^0, 0, n)]_{\text{loc}}$. 
So, we have 
\[
[ DSWF(-Y, \mathfrak{t}, \iota)]_{\text{loc}} = [ ( I(-Y), m',n'  )]_{\text{loc}}  = [(S^0, 0, -n)]_{\text{loc}}.
\]
from \cref{local map desuspension}. 
\end{proof}

\begin{cor}
\label{cor: connected sum formula kappa}
Let $(Y, \mathfrak{t}, \iota)$ be a spin rational homology sphere with odd involution.
Suppose that $(Y, \mathfrak{t}, \iota)$ is locally DSWF-spherical.
Then, for any spin rational homology 3-sphere $(Y', \mathfrak{t}', \iota')$ with odd involution, we have 
\[
\kappa ( Y\# Y', \mathfrak{t} \# \mathfrak{t}' , \iota\# \iota') = \kappa (Y, \mathfrak{t}, \iota) + \kappa (Y', \mathfrak{t}', \iota').  
\]
Here the connected sum of $(Y, \mathfrak{t}, \iota)$ and $(Y', \mathfrak{t}', \iota')$ can be taken along any fixed points. 
\end{cor}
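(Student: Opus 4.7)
The plan is to combine the connected sum formula for local equivalence classes (\cref{conn sum of local eq}) with the local-equivalence invariance of $k$ (\cref{lem:loc eq}). First I would apply \cref{conn sum of local eq} to $(Y,\mathfrak{t},\iota)$ and $(Y',\mathfrak{t}',\iota')$ to get the identity
\[
[DSWF_G(Y\# Y',\mathfrak{t}\#\mathfrak{t}',\iota\#\iota')]_{\text{loc}} = [DSWF_G(Y,\mathfrak{t},\iota)\wedge DSWF_G(Y',\mathfrak{t}',\iota')]_{\text{loc}}
\]
in $\mathcal{LE}_G$.

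Next, because $(Y,\mathfrak{t},\iota)$ is assumed to be locally DSWF-spherical, by \cref{defi: DSWF-spherical} we have $[DSWF_G(Y,\mathfrak{t},\iota)]_{\text{loc}} = [(S^0,0,n)]_{\text{loc}}$ with $n=-\kappa(Y,\mathfrak{t},\iota)$. Writing $DSWF_G(Y',\mathfrak{t}',\iota')$ by a representative triple $(D',m',n')$, the right-hand side above is therefore locally equivalent to the smash product class $[(D',m',n'+n)]_{\text{loc}}$, since smashing with $(S^0,0,n)$ just shifts the third entry by $n$ (local equivalences between the two factors smash together to give a local equivalence of the smash products, because taking $H$-fixed points commutes with smash and a smash of two $G$-homotopy equivalences is a $G$-homotopy equivalence).

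Finally, applying \cref{lem:loc eq} to this equivalence yields
\[
\kappa(Y\# Y',\mathfrak{t}\#\mathfrak{t}',\iota\#\iota') = k(D',m',n'+n) = k(D',m',n') - n = \kappa(Y',\mathfrak{t}',\iota')+\kappa(Y,\mathfrak{t},\iota),
\]
where the middle equality is the definition $k(D,m,n)=k(D)-n$ from the paragraph preceding \cref{lem: inv k kappa}. The only mildly delicate point is the first step of the middle paragraph: one should verify that smashing a local equivalence class with a fixed class is well-defined and compatible with the addition of third coordinates, but this is immediate from the definitions (suspension coordinates of $\tilde{\mathbb{C}}$ and $\mathbb{C}_+\oplus\mathbb{C}_-$ simply add), so no real obstacle arises.
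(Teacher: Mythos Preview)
Your argument is correct and follows essentially the same route as the paper: apply \cref{conn sum of local eq} to identify the local equivalence class of the connected sum with that of the smash product, use the locally DSWF-spherical hypothesis to replace $DSWF_G(Y,\mathfrak{t},\iota)$ by $(S^0,0,n)$, and then read off the additivity of $\kappa$ from the shift in the third coordinate together with \cref{lem:loc eq}. Your explicit remark that smashing local equivalences yields a local equivalence (since $H$-fixed points commute with smash) is a detail the paper leaves implicit.
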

\begin{proof}
By \cref{conn sum of local eq}, we see 
\[
[DSWF (Y, \mathfrak{t}, \iota)  \wedge DSWF(Y', \mathfrak{t}', \iota') ]_{\text{loc}} = [DSWF( Y\# Y', \mathfrak{t} \# \mathfrak{t}' , \iota\# \iota')]_{\text{loc}}.  
\]
Thus, we have 
\[
\kappa (DSWF (Y, \mathfrak{t}, \iota) \wedge DSWF (Y', \mathfrak{t}', \iota')) = \kappa (DSWF( Y\# Y', \mathfrak{t} \# \mathfrak{t}' , \iota\# \iota')). 
\]
Since we are assuming that 
\[
[DSWF(Y, \mathfrak{t}, \iota) ]_{\text{loc}} = [(S^0,0, m )]_{\text{loc}}
\]
for some $ m$, we see 
\[
[DSWF (Y, \mathfrak{t}, \iota) \wedge DSWF (Y', \mathfrak{t}', \iota')]_{\text{loc}} = [(I(Y', \mathfrak{t}', \iota'),0,m'+ m )]_{\text{loc}}, 
\]
where $I(Y', \mathfrak{t}', \iota')$, $m'$ represents $DSWF (Y, \mathfrak{t}, \iota)$, i.e. 
\[
[(I(Y', \mathfrak{t}', \iota'), 0 , m')] = DSWF (Y, \mathfrak{t}, \iota). 
\]
Also, we have 
\[
\kappa (I(Y', \mathfrak{t}', \iota'),0 ,m'+ m )= \kappa (Y', \mathfrak{t}', \iota') + m 
\]
by the definition of $\kappa$. This completes the proof. 
\end{proof}

We also note the stronger 10/8-inequality, \cref{most general main theo strong SWF spherical kappa}, holds also for equivariant connected sums of SWF-spherical triples:

\begin{theo}
\label{theo: SWF spherical connected sum}
For $N, N' \geq 0$ and $i \in \{1, \ldots, N\}, j \in \{1, \ldots, N'\}$,
let $(Y_i, \mathfrak{t}_i, \iota_i), (Y_j', \mathfrak{t}_j', \iota_j')$ be spin rational homology spheres with odd involution.
Suppose that $(Y_i, \mathfrak{t}_i, \iota_i), (Y_j', \mathfrak{t}_j', \iota_j')$ are SWF-spherical and $\iota_i, \iota_j'$ have non-empty fixed-point set.
Form the equivariant connected sums
\[
(Y, \frakt, \iota)
= \#_{i=1}^N  (Y_i, \mathfrak{t}_i, \iota_i),\quad
(Y', \frakt', \iota')
= \#_{j=1}^{N'}  (Y_j', \mathfrak{t}_j', \iota_j')
\]
along fixed-points.
Suppose that $\iota, \iota'$ preserve the given orientations and spin structures $\frakt, \frakt'$ on $Y, Y'$ respectively, and suppose that $\iota, \iota'$ are of odd type.
Suppose that $(Y, \frakt, \iota)$, $(Y', \frakt',\iota')$ are SWF-spherical. 
Let $(W,\fraks)$ be a smooth compact oriented spin cobordism with $b_1(W)=0$ from $(Y, \frakt)$ to $(Y', \frakt')$.
Suppose that there exists a smooth involution $\iota$ on $W$ such that $\iota$ preserves the given orientation and spin structure $\fraks$ on $W$, and that the restriction of $\iota$ to the boundary is given by $\iota, \iota'$.
Set
\[
N(W, Y, \frakt, \iota, Y', \frakt', \iota'):=
-\frac{\sigma(W)}{16} + \kappa (Y, \frakt, \iota) - \kappa (Y', \frakt', \iota').
\]
Then $N(W, Y, \frakt, \iota, Y', \frakt', \iota')$ is an integer, and
if we have
\[
N(W, Y, \frakt, \iota, Y', \frakt', \iota') \geq 2 \quad \text{and} \quad 
b^+(W) - b^+_\iota(W) \geq 1,
\] 
then the inequality
\begin{align*}
\begin{split}
&-\frac{\sigma(W)}{16} + \kappa (Y, \frakt, \iota ) + A(N(W, Y, \frakt, \iota, Y', \frakt', \iota'))\\
\leq& b^+(W)-b^+_{\iota}(W) + \kappa (Y', \frakt', \iota' )
\end{split}
\end{align*}
holds.
\end{theo}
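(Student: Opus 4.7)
The plan is to reduce the inequality to \cref{most general main theo strong SWF spherical kappa}, in the disjoint-union form of \cref{rem: SWF spherical disjoint}, applied to an auxiliary cobordism whose ends are the disjoint unions $\bigsqcup_i Y_i$ and $\bigsqcup_j Y'_j$ rather than the connected sums $Y$ and $Y'$. First, iterating the construction of \cref{connected sum cob} yields smooth spin cobordisms with odd involution
\[
C_{\mathrm{merge}}:\bigsqcup_i (Y_i,\frakt_i,\iota_i)\longrightarrow (Y,\frakt,\iota),\qquad C_{\mathrm{split}}:(Y',\frakt',\iota')\longrightarrow \bigsqcup_j(Y'_j,\frakt'_j,\iota'_j),
\]
each satisfying $b_1=b^+=b^+_\iota=0$ and $\sigma=0$, as observed in the proof of \cref{conn sum of local eq}. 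Form the composite spin cobordism with odd involution
\[
\tilde W := C_{\mathrm{merge}}\cup_Y W \cup_{Y'} C_{\mathrm{split}} : \bigsqcup_i Y_i \longrightarrow \bigsqcup_j Y'_j.
\]

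Second, I would match all relevant topological invariants between $W$ and $\tilde W$. Since $Y$ and $Y'$ are rational homology $3$-spheres, Mayer--Vietoris applied at the two gluing loci gives $b_1(\tilde W)=0$ together with an $\iota$-equivariant orthogonal decomposition of the intersection form of $\tilde W$ into the three pieces, so $b^+(\tilde W)=b^+(W)$, $b^+_\iota(\tilde W)=b^+_\iota(W)$, and $\sigma(\tilde W)=\sigma(W)$.

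Third, I would verify that the $\kappa$-invariants on the two ends of $\tilde W$ agree with those on the corresponding ends of $W$. Each $(Y_i,\frakt_i,\iota_i)$ is SWF-spherical, so \cref{lem: strong SWF spherical kappa} identifies $DSWF_G(Y_i,\frakt_i,\iota_i)$ with $[(S^0,0,n_i)]\in\mathfrak{C}_{\iota}$ where $n_i=-\kappa(Y_i,\frakt_i,\iota_i)$; in particular each summand is locally DSWF-spherical. Iterating \cref{cor: connected sum formula kappa}, valid at every step by \cref{locally DSWF-spherical conn sum}, yields $\kappa(Y,\frakt,\iota)=\sum_i\kappa(Y_i,\frakt_i,\iota_i)$, and similarly for $Y'$. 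For disjoint unions, $DSWF_G(\bigsqcup_i Y_i)$ is canonically the smash product $\bigwedge_i DSWF_G(Y_i,\frakt_i,\iota_i)$ (the Seiberg--Witten flow decouples on the components), which is $G$-locally equivalent to $(S^0,0,\sum_i n_i)$, so $\kappa(\bigsqcup_i Y_i)=\sum_i \kappa(Y_i,\frakt_i,\iota_i)=\kappa(Y,\frakt,\iota)$ by \cref{lem:loc eq}, and similarly for $Y'$. Consequently the quantity $N$ associated with $\tilde W$ equals $N(W,Y,\frakt,\iota,Y',\frakt',\iota')$.

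Finally, invoking \cref{most general main theo strong SWF spherical kappa} (via \cref{rem: SWF spherical disjoint}) for $\tilde W$ between the disjoint unions of SWF-spherical triples completes the proof: the hypotheses $N(W,Y,\frakt,\iota,Y',\frakt',\iota')\geq 2$ and $b^+(W)-b^+_\iota(W)\geq 1$ transfer verbatim to $\tilde W$, and the resulting inequality is exactly the one to be proved. The main technical point, which I anticipate as the principal obstacle, is the additivity of $\kappa$ on equivariant disjoint unions of locally DSWF-spherical triples; I would establish this by the smash-product decomposition of $DSWF_G$ together with \cref{lem:loc eq} and the conventions of \cref{def: equivariant Seiberg--Witten Floer stable homotopy type}, taking care that the doubling and suspension shifts match. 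No new analytic or equivariant stable-homotopy input is required beyond what is already developed in Sections~\ref{section Gequivariant Seiberg--Witten Floer homotopy theory}.
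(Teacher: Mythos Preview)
Your proof is correct and follows essentially the same approach as the paper: both apply \cref{most general main theo strong SWF spherical kappa} in its disjoint-union form (\cref{rem: SWF spherical disjoint}) to the composite cobordism obtained by capping $W$ with the connected-sum/split cobordisms of \cref{connected sum cob}, and both match the $\kappa$-values on the ends. The paper phrases the $\kappa$-matching more tersely---the disjoint unions are equivariantly homology cobordant to the connected sums, hence locally equivalent on $DSWF$, hence have equal $\kappa$ by \cref{lem:loc eq}---whereas you unpack this via the smash-product decomposition and \cref{cor: connected sum formula kappa}; these are equivalent routes.
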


\begin{proof}
As noted in \cref{rem: SWF spherical disjoint}, 
\cref{most general main theo strong SWF spherical kappa} holds also for disjoint unions of spin rational homology spheres with odd involution that are SWF-spherical.
Applying this to $\tilde{Y} = \sqcup_{i=1}^N  (Y_i, \mathfrak{t}_i, \iota_i)$ and $\tilde{Y}' =\sqcup_{j=1}^{N'}  (Y_j', \mathfrak{t}_j', \iota_j')$, we have
\begin{align*}
\begin{split}
&-\frac{\sigma(W)}{16} + \kappa (\tilde{Y}, \frakt, \iota ) + A(N(W, \tilde{Y}, \frakt, \iota, \tilde{Y}', \frakt', \iota'))\\
\leq& b^+(W)-b^+_{\iota}(W) + \kappa (\tilde{Y}', \frakt', \iota' ).
\end{split}
\end{align*}
These disjoint unions are equivariantly homology cobordant to $Y$ and $Y'$, these induce local equivalecnes between $DSWF$.
Since $\kappa$ is local equivalence invariant, this completes the proof.
\end{proof}

\subsection{Calculations}
\label{subsection: Calculations}

We carry out calculations of the doubled Seiberg--Witten Floer stable homotopy type $DSWF_G(Y, \frakt,\iota)$ and equivariant $K$-theoretic Fr{\o}yshov invariant $\kappa(Y, \frakt,\iota)$ for some $(Y,\frakt,\iota)$.
The most general statement on our calculation in this \lcnamecref{subsection: Calculations} is summarized in \cref{theo:kappa connecedt sum Seifert cal most general}.

\begin{prop}
\label{ex: kappa in PSC case}
Let $(Y,\frakt)$ be a spin rational homology 3-sphere equipped with an involution $\iota$ preserving $\frakt$.
Assume that $Y$ admits an $\iota$-invariant positive scalar curvature metric $g$.
Construct $DSWF_G(Y, \frakt,\iota)$ using $g$, as well as non-equivariant case \cite[Subsection~5.1]{Ma03}. Then we have that
\begin{align*}
DSWF_G(Y, \frakt,\iota)=[(S^0,0,n(Y, \frakt,g)/2)]
\end{align*}
and 
\[
\kappa(Y, \frakt, \iota) = -n(Y, \frakt,g)/2,\quad{\text and}\quad
\kappa(-Y, \frakt, \iota) = -\kappa(Y, \frakt, \iota).
\]
Moreover, $(Y, \frakt, \iota)$ is SWF-spherical.
\end{prop}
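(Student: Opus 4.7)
The plan is to adapt Manolescu's argument in the non-equivariant PSC case \cite{Ma03} to our $G$-equivariant setup. First I would use the Lichnerowicz--Weitzenbock identity: since $g$ has positive scalar curvature, any critical point $(a,\phi)$ of the Chern--Simons--Dirac functional in Coulomb slice satisfies $\phi \equiv 0$, and since $Y$ is a rational homology sphere $a \equiv 0$ as well. Hence the origin is the only critical point of the gradient flow $\sigma'$ on $V(Y,\frakt,\iota)$, and by the standard boundedness for the trajectories (as in \cite{Ma03}), the origin is the only critical point of the approximated flow on $V^{\mu}_{\lambda}$ for $-\lambda,\mu$ large. Note that no $I$-invariance subtlety arises here, as the $I$-invariant critical set is a subset of the full critical set which already reduces to the origin.

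Next I would apply equivariant Conley index theory to this non-degenerate critical point: the Conley index of an isolating neighborhood of the origin equals the $G$-equivariant one-point compactification of the negative eigenspace of the linearization $l=(\ast d, D)$. Writing
\[
V^{0}_{\lambda} = V(\tilde{\R})^0_\lambda \oplus V(\C_+)^0_\lambda \oplus V(\C_-)^0_\lambda,
\]
with real/complex dimensions $s, a, b$ respectively, I get
\[
I^{\mu}_{\lambda}(Y,\frakt,\iota,\tilde{\iota},g) \simeq_{G} (\tilde{\R}^{s} \oplus \C_{+}^{a} \oplus \C_{-}^{b})^{+}.
\]
Taking the double and applying the identifications of \cref{ex double of rep}, namely $D(\tilde{\R}) \cong \tilde{\C}$ and $D(\C_{\pm}) \cong \C_{+}\oplus\C_{-}$, I obtain
\[
D(I^{\mu}_{\lambda}(Y,\frakt,\iota,g)) \cong_{G} (\tilde{\C}^{s} \oplus (\C_{+}\oplus\C_{-})^{a+b})^{+}.
\]
Plugging into \cref{def: equivariant Seiberg--Witten Floer stable homotopy type} and desuspending by $s\tilde{\C}$ and $(a+b)(\C_{+}\oplus\C_{-})$ within $\mathfrak{C}_{\iota}$ yields the $G$-stable equivalence class $[(S^{0},0,n(Y,\frakt,g)/2)]$, as claimed. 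The value of $\kappa$ then follows from \cref{lem: inv k kappa}(2) and $k(S^{0})=0$, giving $\kappa(Y,\frakt,\iota) = -n(Y,\frakt,g)/2$.

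Finally, the same $g$ is PSC on $-Y$, so Lichnerowicz gives $\Ker D_{(Y,\frakt,g)} = 0$. Substituting into \eqref{eq: nminusn} yields $n(-Y,\frakt,g) = -n(Y,\frakt,g)$, hence $\kappa(-Y,\frakt,\iota) = -\kappa(Y,\frakt,\iota)$. For SWF-sphericality, the Conley index $I^{\mu}_{\lambda}(Y,\iota,\tilde{\iota},g) \simeq_{G} (V^{0}_{\lambda})^{+}$ is already a $G$-representation sphere, so it satisfies \cref{SWF-spherical}. The only mildly delicate point, which I would state explicitly, is that the Lichnerowicz vanishing relies only on the $C^{0}$-bound $|\phi|^{2} \leq 0$ coming from Weitzenbock, independently of the $I$-symmetry, so restricting to the $I$-invariant part cannot introduce new critical points.
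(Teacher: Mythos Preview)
Your proof is correct and follows essentially the same route as the paper, which simply refers to Manolescu's original PSC argument and then invokes \eqref{eq: nminusn} together with $\Ker D_{(Y,\frakt,g)}=0$ for the orientation-reversal statement. Your write-up is in fact a faithful and more detailed unpacking of that sketch: the Lichnerowicz argument reducing to the reducible, the identification of the Conley index with $(V^0_\lambda)^+$, the doubling computation via \cref{ex double of rep}, and the desuspension within $\mathfrak{C}_\iota$ are exactly the steps the paper leaves implicit (and spells out later in the proof of \cref{theo:Seifert cal most general}).
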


\begin{proof}
The proof is essentially the same as Manolescu's original case without involution \cite{Ma03}.
(We sketch the argument also in the proof of \cref{theo:Seifert cal most general}.)

To check the equality $\kappa(-Y, \frakt, \iota) = -\kappa(Y, \frakt, \iota)$, recall the formula \eqref{eq: nminusn}.
Now we have $\Ker D_{(Y, \frakt,g)}=0$ since $g$ is a positive scalar curvature metric.
This completes the proof.
\end{proof}

\begin{rem}
In the non-equivariant case \cite[Subsection~5.1]{Ma03}, the $K$-theoretic Fr{\o}yshov invariant $\kappa(Y, \frakt)$ is given by 
\[
\kappa(Y, \frakt) = -n(Y, \frakt,g)
\]
whenever $Y$ admits a positive scalar metric $g$.
\cref{ex: kappa in PSC case} says that the equivariant $\kappa$ is the half of the non-equivariant one provided the existence of $\iota$-invariant positive scalar curvature metric.
Intuitively, this difference arises from the fact that the $I$-invariant part of the space of spinors is the `half' of the whole space of spinors.
\end{rem}

\begin{ex}
\label{ex: S3 case}
Consider $Y=S^3$.
We drop the unique spin structure on $S^3$ from our notation.
Let $g$ denote the standard metric on $S^3$, which has positive scalar curvature.
Then we have 
\begin{align*}
DSWF_G(S^3, \iota)=[(S^0,0,0)]
\end{align*}
and 
\[
\kappa(S^3, \iota) = 0
\]
for every involution $\iota$ preserving the metric $g$.
For example, if we regard $S^3$ as a subset of $\C^2$,
the complex conjugation on $\C^2$ defines such an involution $\iota$ on $S^3$.
This example can be generalized as in \cref{ex: lens}.
\end{ex}

\begin{ex}
\label{ex: lens}
Let $p,q$ be coprime natural numbers.
Regard the lens space $Y=L(p,q)$ as a subset of $\C^2$, and equip $Y$ with the standard metric $g$, which has positive scalar curvature.
The complex conjugation on $\C^2$ defines an involution $\iota$ on $Y$ that preserves $g$.
The fixed-point set of $\iota$ is non-empty and of codimension-2, wihch is called two bridge knot/link.
The lens space $Y$ admits at most two spin structures.
If $p$ is odd, a spin structure is unique.
In any case, it is easy to see that any orientation-preserving diffeomorpshim on $Y$ preserves each spin structure.
Thus we have, for a spin structure $\frakt$ on $L(p,q)$,
\begin{align*}
DSWF_G(L(p,q), \frakt,\iota)=[(S^0,0,n(L(p,q), \frakt,g)/2)]
\end{align*}
and 
\[
\kappa(L(p,q), \frakt, \iota) = -n(L(p,q), \frakt,g)/2 = -\kappa(-L(p,q), \frakt, \iota).
\]
Using the Fr{\o }yshov invariant $\delta$, we have
\[
\kappa(L(p,q), \frakt, \iota) = \delta(L(p,q), \frakt,g)/2 = -\kappa(-L(p,q), \frakt, \iota).
\]
\end{ex}

We also provide several hyperbolic examples. 
\begin{ex}\label{hyperbolic example}
 Let $Y_W$ be the Weeks manifold, which is the closed hyperbolic 3-manifold with minimal volume $0.94270 \cdots $. It is known that $Y_W$ can be written as the branched double covering space of $9_{49}$(\cite[Section A.4]{AA98}).
 Let $\iota$ be the covering involution $\iota$ on $Y_W$. By \cite[Corollary 2]{MJ01}, we can take $\iota$ so that $\iota$ preserves the hyperbolic metric on $Y_W$.
 The notion $\mathfrak{t}$ denotes the unique spin structure on $Y_W$.
 From \cite[Theorem 1]{FM21}, we see that there is no irreducible solution to the Seiberg--Witten equations for $(Y_W, \mathfrak{t})$.
So, the situation is completely the same as \cref{ex: kappa in PSC case}.
We have 
 \[
 \kappa (Y_W, \mathfrak{t}, \iota) = \frac{1}{2}\delta (Y_W, \mathfrak{t}). 
 \]
From \cite[Theorem 1.3]{MO07}, we have 
 \[
\delta (Y_W, \mathfrak{t}) = - \frac{1}{8} \sigma (9_{49})= \frac{1}{2}. 
 \]
Thus we conclude 
 \[
 \kappa (Y_W, \mathfrak{t}, \iota) = \frac{1}{4}. 
 \]
 Similarly, for the hyperbolic $\Z_2$-homology 3-spheres listed in \cite[Table 1]{FM21}, similar computations hold. 
\end{ex}

A large class of examples are obtained from Seifert homology spheres.
Recall that, for coprime natural numbers $a_{1}, \ldots, a_{n}$, the Seifert 3-manifold $\Sigma(a_{1}, \ldots, a_{n})$ is given as a subset of $\C^n$, and $\Sigma(a_{1}, \ldots, a_{n})$ is an integral homology sphere.
Denote by $\frakt$ the the unique spin structure on $\Sigma(a_{1}, \ldots, a_{n})$, but we shall often drop $\frakt$ from our notation.
A {\it Seifert metric} $g$ on $\Sigma(a_{1}, \ldots, a_{n})$ is a Riemannian metric given by
\begin{align}
\label{eq: Seifert metric}
\eta^2 +  g_{S^2(a_{1}, \ldots, a_{n}) }     
\end{align}
where $g_{S^2(a_{1}, \ldots, a_{n})}$ is the orbifold metric on $S^2$ of type $(a_{1}, \ldots, a_{n})$ and $i\eta$ is a connection on the circle bundle $Y \to S^2(a_{1}, \ldots, a_{n})$.
Henceforth we consider such a metric on $\Sigma(a_{1}, \ldots, a_{n})$.

\begin{theo}
\label{theo:Seifert cal most general}
Let $a_{1}, \ldots, a_{n}$ be coprime natural numbers.
Suppose that $a_{1}$ is even number.
Set $Y=\Sigma(a_{1}, \ldots, a_{n})$, and define an involution $\iota : Y \to Y$ by
\[
\iota(z_1,z_2, \ldots,z_n)=(-z_1,z_2,\ldots,z_n).
\]
Then we have that
\begin{align}
\label{eq: DSWF for Seifert}
DSWF_G(Y, \iota)=[(S^0,0,\bar{\mu}(Y)/2)].
\end{align}
and that
\begin{align*}
\kappa(Y, \iota) = -\bar{\mu}(Y)/2,\quad {\text and} \quad \kappa(-Y, \iota) = -\kappa(Y, \iota).    
\end{align*}
Moreover, $(Y, \iota)$ is SWF-spherical.
\end{theo}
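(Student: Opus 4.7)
The plan is to prove SWF-sphericality by choosing an $\iota$-invariant Seifert metric and verifying that the $I$-fixed part of the Seiberg--Witten flow admits only the reducible as a critical point.

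Since the standard $S^1$-action on $Y \subset \mathbb{C}^n$ commutes with $\iota$, I can choose a connection form $\eta$ and an orbifold metric on $S^2(a_1,\ldots,a_n)$ that are both $\iota$-invariant, producing an $\iota$-invariant Seifert metric $g$ of the form \eqref{eq: Seifert metric}; the assumption that $a_1$ is even ensures $\iota$ lifts to a $\mathbb{Z}_4$-automorphism $\tilde{\iota}$ of the unique spin structure.

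The central step is to analyze the $I$-fixed locus in $V(Y,\frakt,\iota)$. The standard description of $S^1$-invariant Seiberg--Witten solutions on a Seifert fibration identifies the irreducible critical points of $CSD$ as $S^1$-orbits whose spinor components have definite weights under the Seifert $S^1$-action. Since $I$-invariance forces $\tilde{\iota}(\phi) = -\phi\cdot j$, a weight computation---using that the even parity of $a_1$ pins down the action of $\tilde{\iota}$ on the half-integer weight spaces of $\mathbb{S}$---rules out $I$-invariance for every irreducible $S^1$-orbit. Hence the only $I$-invariant critical point is the reducible, at which the linearization is the $I$-fixed part $D^I$ of the Dirac operator. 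Because $I$ anticommutes with right multiplication by $i$, the operator $D^I$ is complex-linear, and its negative eigenspace is a sum of copies of $\mathbb{C}_+$ and $\mathbb{C}_-$ as a $G$-representation. A standard Conley index argument then produces a $G$-homotopy equivalence $I^\mu_\lambda(Y,\frakt,\iota,\tilde{\iota},g) \simeq_G S(\mathbb{C}^n)$ for the appropriate $n$, establishing SWF-sphericality.

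Combining this with \cref{lem: strong SWF spherical kappa} gives $DSWF_G(Y,\iota) = [(S^0,0,n(Y,\frakt,g)/2)]$ and $\kappa(Y,\iota) = -n(Y,\frakt,g)/2$. The identification $n(Y,\frakt,g) = \bar{\mu}(Y)$ for the Seifert metric on a Seifert integral homology sphere follows from the index formula \eqref{eq: expression of n for Rohlin} applied to the standard negative-definite plumbed spin resolution $W$ of $Y$, combined with the Neumann--Siebenmann formula for $\bar{\mu}$; this yields \eqref{eq: DSWF for Seifert} and $\kappa(Y,\iota) = -\bar{\mu}(Y)/2$. Finally, $\kappa(-Y,\iota) = -\kappa(Y,\iota)$ follows by applying the same argument to $-Y$ and using $\bar{\mu}(-Y) = -\bar{\mu}(Y)$. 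The main obstacle is the weight-space analysis ruling out $I$-invariance of irreducible $S^1$-orbits, where one must carefully track the action of $\tilde{\iota}$ on each spinor weight space and exploit the parity of $a_1$.
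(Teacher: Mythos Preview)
Your outline is correct and will work, but the central step---showing that no irreducible critical point is $I$-fixed---takes a different route from the paper's, and yours is the harder one to make rigorous.

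The paper does not do a weight-space computation. Instead it observes that since $a_1$ is even, the involution $\iota$ sits inside the standard Seifert $S^1$-action (the element $-1 \in S^1$ acts exactly as $z_1 \mapsto -z_1$), so $\iota$ is isotopic to the identity through isometries of the Seifert metric. \Cref{lem: lifting path} then lifts this isotopy to the spin level, so $(\iota,\tilde\iota)$ is isotopic in $\Aut(Y,\frakt,g)$ to $\pm\id$. By the Mrowka--Ozsv\'ath--Yu structure theorem the irreducible solutions split into two sets $\Sol^+$ and $\Sol^-$ interchanged by $j$; since $(\iota,\tilde\iota)$ is isotopic to $\pm\id$ it preserves each piece, hence $I = \tilde\iota \cdot j$ swaps them and has no fixed irreducibles (\cref{lem: I-inv part of Pin(2)}). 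This is where the evenness of $a_1$ actually enters---to place $\iota$ inside the $S^1$-action---not in a half-integer-weight bookkeeping. Your proposed weight analysis would essentially rederive the $\Sol^\pm$ swap by hand; it can be done, but you must be careful that $I$-invariance of a \emph{gauge orbit} is the relevant condition, and the cleanest way to track this is exactly the structural argument above.

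Two smaller points. First, the ``standard Conley index argument'' linearizing near the reducible requires the Dirac operator to have trivial kernel for the Seifert metric; the paper cites Nicolaescu for this, and you should state it explicitly. Second, for $n(Y,\frakt,g)=\bar\mu(Y)$ the paper invokes Ruberman--Saveliev directly; your plumbed-resolution argument is the same content and is fine.
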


\begin{proof}
First, let us recall a generality of Conley index theory.
We use some terms of Conley index theory explained in \cite{Ma03}.
Let $G'$ be a finite group and $H'$ be a normal subgroup of $G'$.
Let $X$ be a pointed $G'$-space.
Suppose that $X$ is equipped with an $\R$-action, and that the $\R$-action commutes with the $G'$-action.
Given an isolated invariant set $S$ in $X$.
We claim that there is a homotopy equivalence from the $G'/H'$-Conley index $I(S^{H'})$ to the $H'$-invariant part $I(S)^{H'}$ of the $G'$-Conlex index of $S$.
Indeed, it is straightforward to see that, for an index pair $(N,L)$ for $S$, the pair $(N^{H'}, L^{H'})$ is an index pair for $S^{H'}$.
Then the claim follows from the uniqueness of the Conley index up to homotopy equivalence.

Apply the argument in the last paragraph to $G'=\Z_2 \times \Z_4$ generated by $I$ and $j \in G$ and $H'=\Z_4=G$. 
Then the remaining main task to us is to calculate the Conlex index of the $I$-invariant part of the $\R$-fixed-point set of the configuration space.
By a result by Mrowka--Ozsv{\'a}th--Yu~\cite{MOY97}, the set of irreducible solutions to a Seifert 3-manifold $Y$ with a Seifert metric splits into two disjoint sets $\Sol^+(Y,\frakt,g)$ and $\Sol^-(Y,\frakt,g)$, and they are interchanged by the action of $j$.
(See also \cite{Sto20}.)

In \cref{lem: lifting path}, we will see that $(\iota, \tilde{\iota})$ is isotopic to an element of the kernel of the natural map from the automorphism group of the spin structure to the isometry group of $Y$ for a Seifert metric.
Using this fact, we will see in \cref{lem: I-inv part of Pin(2)} that
the intersection of the $I$-invariant part and the $\R$-invariant part (i.e. the critical point of the Chern--Simons--Dirac functional) of the configuration space consists only the reducible solution.

The remaining argument is almost the same as the calculation of the Seiberg--Witten Floer stable homotopy type for a 3-manifold with positive scalar curvature metric.
First, Nicolaescu~\cite[Section 2.3]{Nicol99} proved that the Dirac operator has zero kernel for general Seifert homology spheres with Seifert metrics.
Using this fact, under a homotopy to kill the quadratic term of the Seiberg--Witten flow, we may take a common isolating neighborhood of the reducible solution, and thus we may take an index pair of the reducible solution to be a standard index pair of of a linear flow.
Hence the Conley index of the $I$-invariant part is a (virtual) sphere of certain dimension.
By the assumption that $b_1(Y)=0$, the representation $\tilde{\C}$ does not appear in this sphere.
This shows that $(Y,\iota)$ is SWF-spherical: for any Seifert metric $g$ and any lift $\tilde{\iota}$, the $I$-invariant part Conley index is a (virtual) sphere consisting of copies of $\C_+\oplus \C_-$.
The contribution from the spinor part is $(\C_+ \oplus \C_-)^{\dim_{\C} V(\C_+)^0_{\lambda}+\dim_{\C} V(\C_-)^0_{\lambda}}$, but this cancels out the desuspension in the definition of the doubled Seiberg--Witten Floer $G$-spectrum class.
Thus the contribution to this sphere is only $n(Y,g)/2$, which coincides with $\bar{\mu}(Y)/2$ by a result by Ruberman--Saveliev~\cite[Theorem~1]{RS11}.
This completes the proof of \eqref{eq: DSWF for Seifert}.
The calculation of $\kappa$ immediately follows from this.
\end{proof}

\begin{rem}
\label{rem: general involusion not minus}
By the proof of \cref{theo:Seifert cal most general}, it is evident that the statement of \cref{theo:Seifert cal most general} holds for for all odd involution on $\Sigma(a_{1}, \ldots, a_{n})$ which is isotopic to the identity.

 It is known that, up to conjugation, there are not many types of involutions (or, more genrally, finite group actions) on Seifert homology spheres.
 For example, for the Brieskorn homology sphere $Y=\Sigma(p,q,r)$, any
finite group action on $Y$ is conjugate to a subgroup of the canonical $O(2)$-action generated by the standard $S^1$ action on $Y$ and the complex conjugation on $Y \subset \C^3$.
See \cite{AH21} for example.

Recall also that, except for $S^3$ and $\Sigma(2,3,5)$, the mapping class group $\Sigma(p,q,r)$ is isomorphic to $\Z_2$: $\pi_0(\Diff^+(\Sigma(p,q,r))) \cong \Z_2$.
(See, for example, \cite{BO91,MS13}.)
Therefore, the ``half'' of the orientation-preserving diffeomorphisms of $\Sigma(p,q,r)$ are isotopic to the identity.
\end{rem}

We give proofs of facts used in the proof of \cref{theo:Seifert cal most general}.

\begin{lem}
\label{lem: lifting path}
Let $(Y,\frakt)$ be a spin rational homology sphere.
Let $\iota$ be a smooth involution preserving $\frakt$.
Let $g$ be an $\iota$-invariant metric on $Y$.
Suppose that $\iota$ is isotopic to the identity of $Y$ through the isometry group of $(Y,g)$.
Then a path from $\iota$ to $\mathrm{id}_Y$ in the isometry group of $(Y,g)$ lifts to a path from $\tilde{\iota}$ to the identity $\id$ or $-\id$ on the spin structure in the automorphism group of the spin structure.
\end{lem}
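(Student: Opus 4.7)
The plan is to reduce this to a standard covering-space path-lifting argument. Write $P_{SO}\to Y$ for the oriented orthonormal frame bundle of $(Y,g)$ and $P_{Spin}\to P_{SO}$ for the double cover giving the spin structure $\frakt$. Let $\Aut(P_{Spin})$ denote the group of principal $\mathrm{Spin}(3)$-bundle automorphisms of $P_{Spin}$ (each covering some diffeomorphism of $Y$), topologized in the $C^\infty$-topology, and let $\pi\colon \Aut(P_{Spin}) \to \Aut(P_{SO})$ be the forgetful map. Its kernel is $\{\pm\id\}$ (the gauge transformations of $P_{Spin}$ covering $\id_Y$), and $\pi$ is a $2$-fold covering onto its image $\Aut_\frakt(P_{SO})$, namely the subgroup of automorphisms of $P_{SO}$ whose base diffeomorphism preserves the isomorphism class of $\frakt$.

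Pulling back along the differential $d\colon \Isom(Y,g)\to \Aut(P_{SO})$, $\phi\mapsto \phi_\ast$, one obtains a $2$-fold covering
\[
\widetilde{\Isom}_\frakt(Y,g) := \Isom(Y,g)\times_{\Aut(P_{SO})} \Aut(P_{Spin})\;\longrightarrow\;\Isom_\frakt(Y,g),
\]
where $\Isom_\frakt(Y,g) := d^{-1}(\Aut_\frakt(P_{SO}))$. The next step is to observe that the given path $\{\phi_t\}_{t\in[0,1]}$ from $\iota$ to $\id_Y$ lies entirely in $\Isom_\frakt(Y,g)$. This holds because the map $\Isom(Y,g)\to H^1(Y;\Z/2)$ sending $\phi$ to $[\phi^\ast\frakt]-[\frakt]$ factors through $\pi_0(\Isom(Y,g))$ (isotopic diffeomorphisms act identically on cohomology), so it is locally constant; since it vanishes at the endpoint $\phi_1=\id_Y$, it vanishes along the entire connected path.

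Applying path lifting to the covering above with initial lift $(\iota,\tilde\iota)$ yields a continuous lift $\{\tilde\phi_t\}$ in $\widetilde{\Isom}_\frakt(Y,g)$, i.e.\ a continuous path in $\Aut(P_{Spin})$ starting at $\tilde\iota$ and covering $\{\phi_t\}$. Since $\phi_1=\id_Y$, the endpoint $\tilde\phi_1$ must lie in the fiber $\pi^{-1}(\id_Y)=\{\pm\id\}$, which is exactly the desired conclusion. The only technical point worth care, and likely the main obstacle to a fully rigorous write-up, is checking that $\pi$ is genuinely a covering map in the $C^\infty$-topology on these automorphism groups; this reduces to the standard fact that continuous lifts of a bundle automorphism of $P_{SO}$ through $P_{Spin}\to P_{SO}$ form a $\Z/2$-torsor varying continuously with the automorphism, and is straightforward once formulated precisely.
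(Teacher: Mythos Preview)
Your argument is correct and rests on the same underlying idea as the paper's proof—lifting through the spin double cover—but the packaging differs. You work at the level of the automorphism groups, setting up a $2$-fold covering $\widetilde{\Isom}_\frakt(Y,g)\to\Isom_\frakt(Y,g)$ and then invoking path lifting. The paper instead packages the isotopy $\{\phi_t\}$ as a single bundle automorphism $f$ of $P_{SO}\times[0,1]$ (finite-dimensional total space), observes that $f$ acts trivially on $\pi_1(P_{SO}\times[0,1])$ since the path ends at the identity, and then applies the classical $\pi_1$-lifting criterion to lift $f$ to $P_{Spin}\times[0,1]$; restricting to the slices recovers the lifted path. The advantage of the paper's packaging is precisely that it sidesteps the point you flag as the main technicality: one never needs to topologize $\Aut(P_{Spin})$ or $\Aut(P_{SO})$ or verify that $\pi$ is a covering map in the $C^\infty$-topology, because everything happens on finite-dimensional manifolds. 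Your version becomes equally clean once you observe that $\Isom(Y,g)$ is a finite-dimensional Lie group (Myers--Steenrod), so the pulled-back covering $\widetilde{\Isom}_\frakt(Y,g)\to\Isom_\frakt(Y,g)$ is a covering of Lie groups and path lifting is unproblematic; you might want to say this explicitly rather than leaving the infinite-dimensional issue open.
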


\begin{proof}
Let $P_{SO}$ be the oriented orthogonal frame bundle of $Y$ and $P_{Spin}$ is the principal $Spin(3)$ bundle of the spin structure $\mathfrak t$.
We denote by the induced action on $P_{SO}$ from $\iota$ by the same notation.
Note that $P_{Spin} \times [0,1]$ is a fiberwise double covering of $P_{SO} \times [0,1]$, and the path from $\iota$ to the identity of $P_{SO}$ induces an automorphism $f$ of the fiber bundle $P_{SO} \times [0,1]$ and the automorphism of the fundamental group of $P_{SO} \times [0,1]$ induced by $f$ is the identity. Thus $f$ lifts to an automorphism $\tilde{f}$ on $P_{Spin} \times [0,1]$. If necessary, we change that map by composing covering transformation, the lift coincides with the automorphism which is given by the path from $\tilde{\iota}$ to $\id$ or $-\id$.
It is easy to see that $\tilde{f}$ is a isomorphism of the principal $Spin(3)$ bundle, since it is a lift of the isomorphism of the principal $SO(3)$ bundle on $P_{SO} \times [0,1]$ and $\tilde{\iota} \times \id_{[0,1]}$ is a isomorphism of principal $Spin(3)$ bundle. 
\end{proof}

\begin{lem}
\label{lem: I-inv part of Pin(2)}
Let $(Y,\frakt)$ be a spin rational homology sphere,
$\iota$ be a smooth odd involution preserving $\frakt$, and $g$ be an $\iota$-invariant metric on $Y$.
Suppose that the set of irreducible solutions $\Sol^{\rm irr}(Y,\frakt,g)$ to the Seiberg--Witten equations on $(Y,\frakt,g)$ consists of two disjoint set $\Sol^+(Y,\frakt,g)$ and $\Sol^-(Y,\frakt,g)$, and $\Sol^+(Y,\frakt,g)$ and $\Sol^-(Y,\frakt,g)$ are interchanged by the action of $j$.
Suppose also that $\iota$ is isotopic to the identity of $Y$ through the isometry group of $(Y,g)$.
Then we have 
\[
\Sol^{\rm irr}(Y,\frakt,g)^{I}=\emptyset.
\]
\end{lem}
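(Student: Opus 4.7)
The plan is to deform $I$ through a continuous family of symmetries of the Seiberg--Witten equations into a transformation that coincides, up to a gauge transformation, with the right action of $j \in \Pin(2)$, which by hypothesis interchanges $\Sol^{+}$ and $\Sol^{-}$, and then conclude by a path-continuity argument that $I$ itself must swap $\Sol^{+}$ and $\Sol^{-}$.

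First I would apply \cref{lem: lifting path} to the given isotopy of $\iota$ to $\id_{Y}$ through isometries of $(Y,g)$, obtaining a family of isometries $f_{t}$ with $f_{0} = \iota$ and $f_{1} = \id_{Y}$, together with a lift to a family $\tilde{f}_{t}$ of spin bundle automorphisms with $\tilde{f}_{0} = \tilde{\iota}$ and $\tilde{f}_{1} = \varepsilon \cdot \id$ for some $\varepsilon \in \{\pm 1\}$. Then I would set
\[
I_{t}(a,\phi) := \bigl(-f_{t}^{\ast} a,\ \tilde{f}_{t}(\phi) \cdot j\bigr),
\]
a continuous family of maps on the configuration space with $I_{0} = I$. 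Each $I_{t}$ is the composition of the pullback by the spin automorphism $(f_{t},\tilde{f}_{t})$ and the right $(-1,j)$-action, so by the argument of \cref{ooo1} applied with $(f_{t},\tilde{f}_{t})$ in place of $(\iota,\tilde{\iota})$, each $I_{t}$ preserves the Seiberg--Witten equations and restricts to a continuous family of self-maps of $\Sol^{\mathrm{irr}}$. (Note that $I_{t}$ need not be an involution for $t \in (0,1)$, but this is irrelevant.)

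At the endpoint, $I_{1}(a,\phi) = (-a,\ \varepsilon \phi \cdot j)$, which equals the right action of $j \in \Pin(2)$ when $\varepsilon = +1$, and its composition with the constant gauge transformation $-1 \in U(1)$ when $\varepsilon = -1$. Since $\Sol^{\pm}$ are defined in a gauge-invariant way, the element $-1 \in U(1)$ preserves each of $\Sol^{\pm}$ individually; together with the hypothesis that $j$ interchanges $\Sol^{+}$ and $\Sol^{-}$, this yields $I_{1}(\Sol^{+}) \subseteq \Sol^{-}$. For any $(a,\phi) \in \Sol^{+}$, the curve $t \mapsto I_{t}(a,\phi)$ is then a continuous path in $\Sol^{\mathrm{irr}}$ whose endpoint at $t=1$ lies in $\Sol^{-}$; provided $\Sol^{\pm}$ are separated enough inside $\Sol^{\mathrm{irr}}$ that the path cannot jump between them, we conclude $I(a,\phi) = I_{0}(a,\phi) \in \Sol^{-}$, and symmetrically $I(\Sol^{-}) \subseteq \Sol^{+}$, giving $\Sol^{\mathrm{irr}, I} = \emptyset$.

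The main obstacle will be justifying this final path-continuity step: the hypothesis as stated only asserts that $\Sol^{+}$ and $\Sol^{-}$ are disjoint, whereas my argument needs them to be unions of connected components of $\Sol^{\mathrm{irr}}$. In the intended application (\cref{theo:Seifert cal most general}) this is ensured by the Mrowka--Ozsv\'ath--Yu description of $\Sol^{\mathrm{irr}}$ on a Seifert rational homology 3-sphere with Seifert metric as a finite disjoint union of tori, each contained entirely within one of $\Sol^{+}$ or $\Sol^{-}$, so the subtlety does not affect the intended usage.
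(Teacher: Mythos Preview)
Your proof is correct and follows essentially the same route as the paper's. The paper separates the two ingredients of $I$: it first argues that the pullback by $(\iota,\tilde\iota)$ does not interchange $\Sol^{+}$ and $\Sol^{-}$ (using the isotopy to $(\id_Y,\pm\id_\frakt)$ and the fact that $-1\in S^1\subset\mathcal G$ is isotopic to the identity gauge transformation), and then composes with the right $j$-action, which does interchange them, to conclude that $I$ interchanges $\Sol^{\pm}$. You instead deform the full map $I_t$ directly from $I$ to $j$ (up to the gauge element $-1$); these are the same argument packaged slightly differently.

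The ``path-continuity'' subtlety you flag is also present in the paper: the assertion that the property of not interchanging $\Sol^{+}$ and $\Sol^{-}$ is \emph{invariant under isotopy} in $\Aut(Y,\frakt,g)$ is exactly the statement that $\Sol^{\pm}$ are unions of connected components of $\Sol^{\mathrm{irr}}$, and the paper leaves this implicit. Your explicit acknowledgment of this point, together with the observation that it is satisfied in the Seifert case by the Mrowka--Ozsv\'ath--Yu description, is if anything an improvement in clarity.
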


\begin{proof}
First, recall some generalities on actions on the irredicible solutions $\Sol^{\rm irr}(Y,\frakt,g)$.
The automorphism group  $\Aut(Y,\frakt,g)$ of the spin structure acts on $\Sol^{\rm irr}(Y,\frakt,g)$ via pull-back.
The gauge group $\mathcal{G}=\Map(Y,S^1)$ and $\Aut(Y,\frakt,g)$ have non-empty intersection: $\{1, -1\} \subset \mathcal{G}$, which corresponds to $\{(\id_Y, \id_{\frakt}), (\id_Y, -\id_{\frakt})\} \subset \Aut(Y,\frakt,g)$.
The property of an element in $\Aut(Y,\frakt,g)$ (resp. in $\mathcal{G}$) that it does not interchange the two disjoint sets $\Sol^+(Y,\frakt,g)$ and $\Sol^-(Y,\frakt,g)$ is invariant under isotopy in $\Aut(Y,\frakt,g)$  (resp. in $\mathcal{G}$).

By \cref{lem: lifting path}, the pair $(\iota, \tilde{\iota}) \in \Aut(Y,\frakt,g)$ is isotopic to either the identity element $(\id_Y, \id_{\frakt})$ or $(\id_Y, -\id_{\frakt})$ in $\Aut(Y,\frakt,g)$.
Not only $(\id_Y, \id_{\frakt})$, the element $(\id_Y, -\id_{\frakt})$ also does not interchange $\Sol^+(Y,\frakt,g)$ and $\Sol^-(Y,\frakt,g)$.
Indeed, $(\id_Y, -\id_{\frakt})$ corresponds to $-1 \in S^1 \subset \mathcal{G}$, which is isotopic to the identity element in $\mathcal{G}$.
Thus we deduce that $(\iota, \tilde{\iota})$ does not interchange $\Sol^+(Y,\frakt,g)$ and $\Sol^-(Y,\frakt,g)$.

Recall that the involution $I : \Gamma(\mathbb{S}) \to \Gamma(\mathbb{S})$ is given as the composition of $\tilde{\iota}$ (strictly speaking, regarded as the pair  $(\iota, \tilde{\iota})$) and the right multiplication by $j$.
By the above argument, $\tilde{\iota}$ does not interchange $\Sol^+(Y,\frakt,g)$ and $\Sol^-(Y,\frakt,g)$.
However, by the assumption of the \lcnamecref{lem: I-inv part of Pin(2)}, the action of $j$ interchanges $\Sol^+(Y,\frakt,g)$ and $\Sol^-(Y,\frakt,g)$.
In total, $I$ interchanges $\Sol^+(Y,\frakt,g)$ and $\Sol^-(Y,\frakt,g)$.
Thus we obtain $\Sol^{\rm irr}(Y,\frakt,g)^{I}=\emptyset$.
\end{proof}

Recall the definition of the equivariant connected sum \cref{Connected sum equi}, \cref{rem: connected sum dependence on base point}.
We can compute $\kappa$ for equivariant connected sums of lens spaces and Seifert homology spheres by connected sum formula.
We summarize as follows:

\begin{theo}
\label{theo:kappa connecedt sum Seifert cal most general}
For $N, N' \geq 0$ and $i \in \{1, \ldots, N\}$ and $j \in \{1, \ldots, N'\}$, let $(Y_i, \frakt_i, \iota_i)$ be the following triple: $Y_i=\pm L(p,q)$ for some coprime natural numbers $p,q$, and
$\frakt_i$ is a spin structure on $Y_i$ (which is unique if $p$ is odd), and $\iota_i : Y_i \to Y_i$ is the complex conjugation.
Let $(Y_j', \frakt_j', \iota_j')$ be the following triple:
$Y_j'=\pm \Sigma(a_{1}, \ldots, a_{n})$ for some coprime natural numbers $a_{1}, \ldots, a_{n}$ with $a_1$ even,
$\frakt_j'$ is the unique spin structure on $Y_j'$, and $\iota_j' : Y_j' \to Y_j'$ is given by $\iota_j'(z_1,z_2, \ldots,z_n)=(-z_1,z_2,\ldots,z_n)$.

For each $i$, choose a connected component of the fixed-point set of $\iota_i$, which is unique if and only if $p$ is odd.
(Note that the fixed-point set of $\iota_j'$ is connected.)
Let $o(\iota_i)$ be an orientation of this component of the fixed-point set, and similarly equip the fixed-point set of $\iota_j'$ with an orientation $o(\iota_j')$. 
Then, for the equivariant connected sum
\[
(Y,\frakt,\iota) = \#_{i=1}^{N} (Y_i,\frakt_i,\iota_i,o(\iota_i)) \#_{j=1}^{N'} (Y_j',\frakt_j',\iota_j',o(\iota_j')),
\]
along these connected components of fixed-point sets, we have
\[
\kappa(Y,\frakt,\iota)
= \frac{1}{2}\sum_{i=1}^N\delta(Y_i,\frakt_i)- \frac{1}{2}\sum_{j=1}^{N'}\bar{\mu}(Y_j')
= \frac{1}{2}\delta\left(\#_{i=1}^N(Y_i,\frakt_i)\right)- \frac{1}{2}\bar{\mu}\left(\#_{j=1}^{N'}Y_j'\right).
\]
\end{theo}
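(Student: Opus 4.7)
The plan is to combine the three ingredients already proved in the paper: (a) each summand is SWF-spherical, (b) SWF-spherical implies locally DSWF-spherical (in the sense of \cref{defi: DSWF-spherical}) and this class is closed under equivariant connected sum and orientation reversal, and (c) the connected sum formula for $\kappa$ holds whenever one of the summands is locally DSWF-spherical.

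First I would verify the base cases. For each lens space summand $(Y_i,\frakt_i,\iota_i)$ with $Y_i=\pm L(p,q)$, \cref{ex: lens} shows that the triple is SWF-spherical (via the standard positive scalar curvature metric, which is $\iota_i$-invariant since $\iota_i$ is an isometry), and gives $\kappa(L(p,q),\frakt_i,\iota_i)=\delta(L(p,q),\frakt_i)/2$, together with $\kappa(-L(p,q),\frakt_i,\iota_i)=-\kappa(L(p,q),\frakt_i,\iota_i)=\delta(-L(p,q),\frakt_i)/2$ (using $\delta(-Y)=-\delta(Y)$ in the non-equivariant theory). For each Seifert summand $(Y_j',\frakt_j',\iota_j')$, \cref{theo:Seifert cal most general} directly states that the triple is SWF-spherical, computes $\kappa(Y_j',\frakt_j',\iota_j')=-\bar{\mu}(Y_j')/2$, and provides the orientation-reversal identity so that in both signs the value is $-\bar{\mu}(Y_j')/2$ (using $\bar{\mu}(-Y)=-\bar{\mu}(Y)$).

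Next I would iterate \cref{cor: connected sum formula kappa}. Since every summand is SWF-spherical, each is locally DSWF-spherical by definition, and \cref{locally DSWF-spherical conn sum} guarantees that any equivariant connected sum of locally DSWF-spherical triples remains locally DSWF-spherical. Therefore, forming the full connected sum one summand at a time, we may at each step apply \cref{cor: connected sum formula kappa} with the partial sum (already locally DSWF-spherical) playing the role of $(Y,\frakt,\iota)$. This yields the additivity
\begin{align*}
\kappa(Y,\frakt,\iota)
=\sum_{i=1}^N \kappa(Y_i,\frakt_i,\iota_i)+\sum_{j=1}^{N'}\kappa(Y_j',\frakt_j',\iota_j')
=\frac{1}{2}\sum_{i=1}^N \delta(Y_i,\frakt_i)-\frac{1}{2}\sum_{j=1}^{N'}\bar{\mu}(Y_j'),
\end{align*}
which is the first claimed equality. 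Note that \cref{cor: connected sum formula kappa} tells us the answer is independent of the chosen fixed points, and \cref{rem: connected sum dependence on base point} handles the mild ambiguity from disconnected fixed-point sets once orientations $o(\iota_i)$ are fixed.

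Finally, the second equality follows from the standard additivity of the Fr{\o}yshov invariant $\delta$ and of the Neumann--Siebenmann invariant $\bar{\mu}$ under connected sums of integral/rational homology spheres (both being homomorphisms from the appropriate cobordism group). The only subtlety I anticipate is making sure our sign conventions for $\delta$ on $-L(p,q)$ and for $\bar{\mu}$ on $-\Sigma(a_1,\ldots,a_n)$ match the orientation-reversal formulas derived from \cref{ex: lens} and \cref{theo:Seifert cal most general}; this is a routine bookkeeping step and not a genuine obstacle, since the relevant identities $\kappa(-Y)=-\kappa(Y)$ have already been established in the SWF-spherical setting.
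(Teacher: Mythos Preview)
Your proposal is correct and follows exactly the approach of the paper's proof, which simply cites \cref{ex: lens}, \cref{theo:Seifert cal most general}, \cref{locally DSWF-spherical conn sum}, and \cref{cor: connected sum formula kappa}. You have merely spelled out the logical flow that the paper leaves implicit, including the additivity of $\delta$ and $\bar{\mu}$ needed for the second equality.
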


\begin{proof}
This follows from calculations \cref{ex: lens}, \cref{theo:Seifert cal most general}, well-behavior of the notion of locally DSWF-spherical given in \cref{locally DSWF-spherical conn sum}, and the connected sum formula \cref{cor: connected sum formula kappa}.
\end{proof}

\subsection{Knot invariants}\label{Knot invariants}
For any oriented knot $K$ in $S^3$, we can uniquely associate a double branched cover 
\[
\Sigma (K) \to S^3
\]
and an involution $\iota_K$ on $\Sigma (K)$. It is proven that $H^1(\Sigma (K) ;\Z_2) = \{0\} $. 
So, the isomorphism class of the spin structure $\frakt$ on $\Sigma (K)$ is automatically preserved by $\iota_K$.

\begin{defi}For any knot $K$ in $S^3$, we define the {\it Seiberg--Witten Floer stable homotopy type} of $K$ by 
\[
DSWF(K) :=  DSWF_G(\Sigma (K) , \frakt,\iota). 
\]
We also define the {\it Seiberg--Witten Floer $K$-theory} of $K$ by 
\[
DSWFK(K) :=  DSWFK_G(\Sigma (K) , \frakt,\iota), 
\]
and the {\it $K$-theoretic Fr{\o}yshov invariant} of $K$ by
\[
\kappa(K) := k(DSWF(K)) \in \Q.
\]
\end{defi}

Since the $\Z_2$-equivariant 3-manifold $\Sigma (K)$ does not depend on the choices of orientations of $K$, the condition (ii) in \cref{main knot} holds, i.e. 
\[
\kappa (K) = \kappa (-K). 
\]
Also, \cref{main knot} (iii) and (iv) follows from \cref{main theo} (i) and (iii).

\begin{rem}\label{generalY}
Since the observation above (begining of \Cref{Knot invariants}) only use the homological information of $S^3$, the same observation can be applied to the case of a pair $(Y, K)$ of an oriented homology 3-sphere $Y$ and a knot $Y$. Thus, we can define the Seiberg--Witten Floer stable homotopy type $DSWF(Y,K)$, the Seiberg--Witten Floer $K$-theory $DSWFK(Y,K)$ and the $K$-theoretic Fr{\o}yshov invariant $\kappa(Y, K)$. 
In \cref{concordance}, we will see that $\kappa(K)$ is a concordance invariant. For the generalization $\kappa(Y, K)$, we can see that $\kappa(Y, K)$ is invariant under {\it homology concordance}, which is defined by the following way: if there are a homology cobordism $W$ from $Y_0$ to $Y_1$ and an embedded concordance (annulus) in $W$ from $K_0$ to $K_1$,  then we call $(Y_0, K_0)$ and $(Y_1, K_1)$ are homology concordance. 
\end{rem}

\begin{rem}\label{b+iotarem}
Before proving several properties of $\kappa$, we explain how to compute $b^+_{\iota}$ for double branched covers.  Let $X$ be a  smooth closed 4-manifold $X$ with $H_1(X; \Z) =\{ 0\}$ and an embedded oriented connected surface $S$ with $[S]$ is divisible by $2$, we have a double branched cover  
\[
\Sigma (S) \to X. 
\]
It is proven in \cite[Page 254 Lemma]{Hir69} that 
\begin{align}\label{b+iota}
b^+_{\iota }(\Sigma (S)) = b^+(X). 
\end{align}
Also, we will use \eqref{b+iota} for 4-manifolds $X$ with several $S^3$-boundaries $S^3_1 \cup \cdots \cup S^3_m$. Suppose $S$ is properly smoothly embedded surface in $X$ such that $S\cap S_i^3$ is a knot for any $i$. We can easily see that \eqref{b+iota} is still true in such a situation. Let us give a short sketch of proof:  When the covering action $\wt{X}  \to X$ is free, then the pull-back gives an isomorphism $H^* (X; \Q ) \to H^* (\wt{X}; \Q)$. Then, for the case $\pi: \Sigma (S) \to X$, we decompose $X$ into two parts: $\nu(S) \cup (X \setminus \nu(S))$, where $\nu(S)$ is a normal disk neighborhood of $S$. Since the action on $X \setminus \nu(S)$ is free, we have an isomorphism $H^* (X \setminus \nu(S); \Q ) \to H^* (\pi^{-1}(X \setminus \nu(S))  ; \Q)$. Then, the Mayer-Vietoris exact sequence combined with the five lemma, we can see $\pi^*: H^*(X)  \to H^* (\Sigma (S))$ is an isomorphism. Also, it is not difficult to check that this correspondence does not change $b^+$. 
\end{rem}

Henceforth, if we say a surface, we always assume that it is connected.

When $K$ and $K'$ are isotopic, then there is an orientation preserving diffeomorphism 
\[
\phi : \Sigma (K) \to \Sigma (K')
\]
which is $\Z_2$-equivariant. This enables us to prove the following: 
\begin{lem}
$DSWF(K)$ and $\kappa(K)$ are isotopy invariants of knots. 
\end{lem}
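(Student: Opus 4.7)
The plan is to establish that if $\phi : (\Sigma(K), \frakt, \iota_K) \to (\Sigma(K'), \frakt', \iota_{K'})$ is an orientation-preserving, spin-structure-preserving, $\Z_2$-equivariant diffeomorphism, then $DSWF_G(\Sigma(K), \frakt, \iota_K) = DSWF_G(\Sigma(K'), \frakt', \iota_{K'})$ as elements of $\mathfrak{C}_\iota$. Such a $\phi$ exists whenever $K$ and $K'$ are isotopic, as stated just above the lemma. Note that the spin-structure-preservation comes for free, since $H^1(\Sigma(K); \Z/2) = 0$ guarantees uniqueness of $\frakt$. The argument parallels the proof of property (ii) in \cref{main theo} and amounts to checking functoriality of the whole construction with respect to equivariant diffeomorphisms of spin rational homology 3-spheres.

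First, I would fix an $\iota_{K'}$-invariant Riemannian metric $g'$ on $\Sigma(K')$ and pull it back to the $\iota_K$-invariant metric $g = \phi^*g'$ on $\Sigma(K)$. Lifting $\phi$ to the unique spin structure and composing with $\phi^*$, one obtains a $G$-equivariant isomorphism
\[
\phi^* : \Ker d^{*_{g'}} \oplus \Gamma(\mathbb{S}_{\Sigma(K')}) \longrightarrow \Ker d^{*_g} \oplus \Gamma(\mathbb{S}_{\Sigma(K)})
\]
that intertwines the involutions $I_{\iota_K}$ and $I_{\iota_{K'}}$ from \cref{subsection: An involution on the configuration space} and commutes with the formal gradient of the Chern--Simons--Dirac functional, essentially by naturality of all the ingredients (Hodge decomposition, Dirac operator, quadratic spinor pairing).

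Next, restricting $\phi^*$ to the finite-dimensional approximations $V^\mu_\lambda$ (which are identified via $\phi^*$, since spectral subspaces of a naturally defined operator are preserved), I would obtain a canonical $G$-equivariant homeomorphism
\[
I^\mu_\lambda(\Sigma(K'), \frakt', \iota_{K'}, g') \xrightarrow{\;\cong\;} I^\mu_\lambda(\Sigma(K), \frakt, \iota_K, g)
\]
of Conley indices. Applying the doubling construction and using that $\phi^*$ also identifies the representation spaces $V(\tilde{\R})^0_\lambda$, $V(\C_+)^0_\lambda$, $V(\C_-)^0_\lambda$, as well as the eta-invariant correction $n(\Sigma(K'),\frakt',g') = n(\Sigma(K),\frakt,g)$ (since $\phi$ is an isometry with respect to $g'$ and $g$), one concludes that the two doubled Floer spectrum classes coincide in $\mathfrak{C}_\iota$. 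This proves that $DSWF(K)$ is an isotopy invariant of $K$. The isotopy invariance of $\kappa(K)$ then follows immediately from \cref{lem: inv k kappa}, which asserts that $k(\mathcal{D}) \in \Q$ depends only on the equivalence class $\mathcal{D} \in \mathfrak{C}_\iota$. No step is really subtle; the mild bookkeeping concern is to ensure that the lift of $\phi$ to the spin structure can be chosen so as to intertwine the chosen lifts $\tilde{\iota}_K$ and $\tilde{\iota}_{K'}$, but by \cref{lem: indep tilde iota} the doubled construction is independent of this choice, so any lift suffices.
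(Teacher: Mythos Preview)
Your proposal is correct and follows precisely the approach the paper intends. In fact, the paper does not give a separate proof of this lemma at all: the sentence immediately preceding the lemma statement (``When $K$ and $K'$ are isotopic, then there is an orientation preserving diffeomorphism $\phi : \Sigma(K) \to \Sigma(K')$ which is $\Z_2$-equivariant. This enables us to prove the following:'') is the entire argument the authors offer, with the details left implicit. Your write-up fills in exactly those details, in the same spirit as the proof of property (ii) in \cref{main theo}, and your handling of the lift $\tilde{\iota}$ via \cref{lem: indep tilde iota} is the right way to dispose of that bookkeeping point.
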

The invariant $\kappa(K)$ is actually invariant under knot concordance. 
Let $\mathcal{C}$ be the knot concordance group. 
\begin{lem}\label{concordance} The correspondence 
\[
\mathcal{C}\to \mathcal{LE}_G, \ K \mapsto [DSWF_G(\Sigma (K) , \frakt,\iota)]_{\text{loc}} 
\]
is a well-defined map. Moreover, since $\kappa(K)$ can be recovered from the local equivalence class 
\[
[DSWF_G(\Sigma (K) , \frakt,\iota)]_{\text{loc}},
\]
$\kappa(K)$ is a concoradance invariant. 
\end{lem}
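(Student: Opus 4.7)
The plan is to mimic the connected sum argument of \cref{conn sum of local eq}, replacing the $3$-handle cobordism there with the double cover of a concordance annulus. Suppose $K_0$ and $K_1$ are smoothly concordant, and let $C \subset S^3 \times [0,1]$ be a smooth, properly embedded, oriented annulus with $\partial C = -K_0 \sqcup K_1$. Since $C$ is null-homologous in $S^3 \times [0,1]$, taking the double branched cover along $C$ produces a smooth compact oriented $4$-manifold $W$ equipped with a smooth covering involution $\iota_C$, whose boundary is $(-\Sigma(K_0), \iota_{K_0}) \sqcup (\Sigma(K_1), \iota_{K_1})$ and whose fixed-point set is $C$ itself (hence of codimension $2$, so $\iota_C$ is of odd type and preserves the unique spin structure $\fraks$ on $W$).

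First I would verify that $W$ is a rational homology cobordism with the right properties for the doubled relative Bauer--Furuta invariant to yield a local map. Using the transfer for the double branched cover and the fact that $C$ has genus $0$ and is null-homologous in $S^3 \times [0,1]$, one checks
\[
H_1(W;\Q)=0,\qquad b^+(W)=b^-(W)=0,\qquad \sigma(W)=0,
\]
so in particular $b^+(W)-b^+_{\iota_C}(W)=0$ (compare with \cref{b+iotarem}).

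Next I would feed $(W,\fraks,\iota_C)$ into the machinery of \cref{subsection: cobordisms}: forming the doubled relative Bauer--Furuta map as in \eqref{eq: doubled cob map} yields a $G$-stable map
\[
f_C : DSWF_G(\Sigma(K_0),\frakt,\iota_{K_0}) \to DSWF_G(\Sigma(K_1),\frakt,\iota_{K_1}).
\]
The key point, exactly as at the end of the proof of \cref{conn sum of local eq}, is that because $b^+(W)=b^+_{\iota_C}(W)=0$, the formulas \eqref{eq:m difference} and \eqref{eq:n difference} show that the domain and codomain of $f_C$ are at the same level and the $H$-fixed-point sets have the same dimension; the standard finite-dimensional-approximation argument then shows $f_C$ induces a $G$-homotopy equivalence on $H$-fixed-point sets, so $f_C$ is a $G$-local map.

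Reversing the concordance (using $-W$ as a cobordism from $\Sigma(K_1)$ to $\Sigma(K_0)$) produces a $G$-local map in the opposite direction, which gives the required local equivalence
\[
[DSWF_G(\Sigma(K_0),\frakt,\iota_{K_0})]_{\text{loc}} = [DSWF_G(\Sigma(K_1),\frakt,\iota_{K_1})]_{\text{loc}}.
\]
This proves well-definedness of the map $\mathcal{C}\to\mathcal{LE}_G$; the concordance invariance of $\kappa(K)$ is then immediate from \cref{lem:loc eq}. The main technical point is verifying the local-map property, i.e.\ that the $H$-fixed-part of the Bauer--Furuta map is a homotopy equivalence; this reduces to the index computations of \cref{subsection: cobordisms} applied to the branched double cover, and is the step that genuinely uses $b^+(W)-b^+_{\iota_C}(W)=0$.
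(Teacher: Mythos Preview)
Your proof is correct and follows essentially the same approach as the paper: take the double branched cover of the concordance annulus, observe (via \cref{b+iotarem} and the computations of \cref{branched spin}) that $b^+(W)=b^+_{\iota_C}(W)=0$, and use the index formula \eqref{eq:m difference} to conclude that the doubled relative Bauer--Furuta maps for $W$ and $-W$ are $G$-local maps. The paper's proof is slightly more terse but identical in substance; your framing via \cref{conn sum of local eq} is apt since that proof is itself the same argument applied to a different homology cobordism.
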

\begin{proof}
Let $S$ be a concordance from $K$ to $K'$ in $I \times S^3$. Then the double branched cover $\Sigma (S)$ along $S$ gives a $\Z_2$-equivariant $\Z_2$-homology cobordism from $\Sigma(K) $ to $\Sigma(K')$. Note that the isomorphism class of the spin structure on $\Sigma (S)$ is preserved under the involution. 

We have an associate doubled cobordism map
\begin{align}
\label{eq: doubled cob map}
D(f) : 
\Sigma^{m_0\tilde{\C}}\Sigma^{n_0(\C_+ \oplus \C_-)}D(I_{-\mu}^{-\lambda} (\Sigma (K))) \to \Sigma^{m_1\tilde{\C}}\Sigma^{n_1(\C_+ \oplus \C_-)}D(I^\mu_\lambda (\Sigma (K' )) )).   
\end{align}
\eqref{eq:m difference} implies 
\begin{align*}
& m_0-m_1 \\ 
&=\dim_\R (V_1(\tilde{\R})^0_{\lambda})-\dim_\R(V_0(\tilde{\R})^0_{-\mu}) - b^+(\Sigma(S)) + b^+_\iota(\Sigma(S)) \\
&=\dim_\R (V_1(\tilde{\R})^0_{\lambda})-\dim_\R(V_0(\tilde{\R})^0_{-\mu}).   \\
\end{align*}
Here we used \cref{b+iotarem} to calculate $b^+_{\iota}$.
This means $D(f)$ is a local map. By considering the same discussion for $-S$, we see 
\[
[DSWF_G(\Sigma (K) , \frakt,\iota)]_{\text{loc}} = [DSWF_G(\Sigma (K' ) , \frakt,\iota)]_{\text{loc}}. 
\]
By \cref{lem:loc eq}, we saw $\kappa(K)$ is recovered from $[DSWF_G(\Sigma (K) , \frakt,\iota)]_{\text{loc}}$. This completes the proof. 
\end{proof}

Let us give calculations of $\kappa$ for two bridge knots.  
Before that, we remark on the sign convention of the knot signature.

\begin{rem}
\label{rem: sign convention of knot signature}
We use the same convention of the signature as in \cite{Sa19, Ba14}.
Namely, the signature of $T(2,3)$ is given by $\sigma(T(2,3))=-2$.
On the other hand, the sign convention of the signature in the Knot Atlas \cite{KnotAtlas} is opposite to ours.
\end{rem}

\begin{lem}\label{single two bridge}
For any two bridge knot $K(p,q)$ whose branched cover is $L(p,q)$, one has 
\begin{align}
\label{eq: DSWF Lpq}
DSWF(K)=[(S^0,0,\frac{1}{16} \sigma (K(p,q) )]
\end{align}
and 
\begin{align}
\label{two bridge kappa}
\kappa (K(p,q)) = -\frac{1}{16} \sigma (K(p,q)). 
\end{align}
\end{lem}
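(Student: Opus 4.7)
The strategy is to identify the triple $(\Sigma(K(p,q)), \frakt, \iota_K)$ with the complex-conjugation model appearing in \cref{ex: lens}, and then convert the resulting expression in terms of $n(L(p,q), \frakt, g)$ into one involving $\sigma(K(p,q))$. First I would recall that the branched double cover of $S^3$ along the two-bridge knot $K(p,q)$ is orientation-preservingly diffeomorphic to $L(p,q)$, and the covering involution $\iota_K$ has non-empty $1$-dimensional fixed-point set (the preimage of $K(p,q)$) and is of odd type. Since $p$ is odd, $L(p,q)$ carries a unique spin structure $\frakt$, which is automatically preserved by every self-diffeomorphism; moreover, up to orientation-preserving smooth conjugation, there is only one such involution on $L(p,q)$, and it agrees with the complex conjugation $\iota : L(p,q) \to L(p,q)$ inherited from $\C^2$ considered in \cref{ex: lens}. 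By the conjugation invariance of both $DSWF_G$ and $\kappa$ (property (ii) of \cref{main theo}), we may therefore assume $\iota_K = \iota$.

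With this identification, \cref{ex: lens} (which in turn invokes \cref{ex: kappa in PSC case}) gives
\[
DSWF(K(p,q)) = [(S^0, 0, n(L(p,q), \frakt, g)/2)]
\quad\text{and}\quad
\kappa(K(p,q)) = -n(L(p,q), \frakt, g)/2,
\]
where $g$ is the standard round metric on $L(p,q)$, which is $\iota$-invariant and has positive scalar curvature. The lemma thus reduces to the identity $n(L(p,q), \frakt, g) = \sigma(K(p,q))/8$.

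To prove this identity I would use the expression $n(Y, \frakt, g) = \ind_\C D_W + \sigma(W)/8$ recalled in \eqref{eq: expression of n for Rohlin}, applied to a spin $4$-manifold $W$ bounded by $L(p,q)$ that is the double branched cover of $B^4$ along a Seifert surface $F$ for $K(p,q)$ pushed into $B^4$. For a two-bridge knot, $F$ can be chosen from the standard plumbing description coming from the continued-fraction expansion of $p/q$, in which case $W$ is a plumbing of disk bundles over $S^2$; one verifies that this $W$ is spin (the relevant obstruction is controlled by $p$ being odd) and negative (or positive) definite. By Kauffman--Taylor's signature formula for double branched covers, $\sigma(W) = \sigma(K(p,q))$ (with our sign convention $\sigma(T(2,3)) = -2$). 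The APS Dirac index $\ind_\C D_W$ is then computed to vanish: since $W$ is definite and spin and $L(p,q)$ has PSC metric, the positive part of the spinor kernel is trivial, and the boundary correction matches the cylindrical-end contribution.

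The main obstacle is the third step, namely ensuring that the bounding manifold $W$ can be chosen spin with a vanishing Dirac index and tracking signs correctly to land on $+\sigma(K(p,q))/8$ rather than $-\sigma(K(p,q))/8$. A cleaner alternative, which bypasses the explicit Dirac index calculation, is to invoke the Manolescu--Owens identification of the $K$-theoretic Fr\o yshov invariant $\delta$ with the Heegaard Floer $d$-invariant for $L$-spaces together with the classical formula expressing $d(L(p,q))$ in terms of $\sigma(K(p,q))$ for two-bridge knots; this yields $\delta(L(p,q), \frakt) = -\sigma(K(p,q))/8$ and hence \eqref{two bridge kappa} via $\kappa = \delta/2$. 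Either route delivers the claimed identity, and \eqref{eq: DSWF Lpq} then follows immediately from the description of $DSWF(K(p,q))$ above.
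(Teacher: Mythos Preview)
Your ``cleaner alternative'' is exactly the paper's argument: identify the covering involution with complex conjugation on $L(p,q)$, invoke \cref{ex: lens} (via \cref{ex: kappa in PSC case}) to obtain $\kappa(K(p,q)) = -n(L(p,q),\frakt,g)/2 = \delta(L(p,q),\frakt)/2$, and then use the Manolescu--Owens formula $\frac{1}{2}d(\Sigma(K),\frakt) = -\frac{1}{8}\sigma(K)$ together with $\delta = d/2$. Since you land on this route, the proof is correct.

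Two small remarks. First, your primary approach---computing $n(L(p,q),\frakt,g)$ via an explicit spin plumbing filling $W$---is not what the paper does, and the step asserting $\ind_\C D_W = 0$ is not adequately justified: definiteness of $W$ and a PSC metric on the boundary do not by themselves force the APS Dirac index to vanish. You rightly flag this as the main obstacle and move on. Second, a terminological slip: $\delta$ is the (monopole) Fr{\o}yshov invariant, not the $K$-theoretic one; the $K$-theoretic invariant is Manolescu's $\kappa$.
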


\begin{proof}
Recall that the two bridge knot $K(p,q)$ is regarded as the fixed-point set of the complex conjugation $\iota$ of the lens space $L(p,q)$.
Let $g$ denote the standard positive scalar curvature metric on $L(p,q)$.
By \cref{ex: lens}, we have
\begin{align}
\label{eq: kappa Lpq n 346}
\kappa (K(p,q)) =  \kappa(L(p,q) , \frakt, \iota) = -n(L(p,q), \frakt,g)/2,
\end{align}
where $\frakt$ is the spin structure on $L(p,q)$, which is unique since we supposed that $p$ is odd.
On the other hand, since $g$ is a positive scalar curvature metric again, we also have
\begin{align}
\label{eq: kappa n 346}
\delta (L(p,q), \frakt ) = -n(L(p,q), \frakt,g). 
\end{align}
(See \cite{Ma16}.)
%Set
%\[
%\delta (K)  := 2d (\Sigma (K) , \frakt)
%\]
Also, it follows from \cite[Theorem 1.2]{MO07} that 
\begin{align}
\label{eq: 2d sign n 346}
\frac{1}{2}d(\Sigma(K(p,q)), \mathfrak{t})  = -\frac{1}{8} \sigma (K(p,q)). 
\end{align}
Here we used the convetion of the Heegaard Floer $d$-invariant so that $d(\Sigma(2,3,5))=2$. (Note that the convention used in \cite[Theorem 1.2]{MO07} is $d(\Sigma(2,3,5))=\frac{1}{2}$.) 
%(Recall that the sign convention of the signature in \cite{MO07} is opposite to ours.)
In general, for a homology sphere $Y$, the $d$ invariant and the $\delta$ invariant are related by 
\begin{align}
\label{eq: delta d2}
\delta(Y) = d(Y)/2.
\end{align}
(See, for example, \cite[Remark~1.1]{LRS18}.)
Now it follows from \eqref{eq: kappa Lpq n 346}, \eqref{eq: kappa n 346}, \eqref{eq: 2d sign n 346}, \eqref{eq: delta d2}
that
\[
\kappa (K(p,q)) = -\frac{1}{16} \sigma (K(p,q)) = -n(L(p,q), \frakt,g)/2,
\]
and we have \eqref{eq: DSWF Lpq} and \eqref{two bridge kappa}.
\end{proof}

%Before proving \cref{two bridge kappa12}, in order to treat kappa invariants for connected sums of knots, we use the following lemma: 
%\begin{lem}
%Let $K$ and $K'$ be oriented knots in $S^3$.
%There is a $\Z_2$-equivariant orientation preserving diffeomorphism between $(\Sigma (K\# K'), \iota_{K\# K'} )$ and $(\Sigma (K), \iota_{K} )\# (\Sigma (K'), \iota_{K'} )$.  
%\end{lem}
%\begin{proof}

%\end{proof}

Now we give a proof of \cref{two bridge kappa12}. 
\begin{proof}[Proof of \cref{two bridge kappa12}]
It follows from \cref{theo:kappa connecedt sum Seifert cal most general},  \cref{single two bridge} and the fact $\Sigma (K(p,q)) = L(p,q)$ 
\end{proof}

We also provide calculations of $\kappa(Y)$ for a certain class of torus knots giving a proof of \cref{torus knot calculation}. 

%\begin{prop}\label{calculation:to%rus}Let $k$ be a positive integer %and $T(3,12k-5)$ a torus knot of %type $(3,12k-5)$. We have 
%\[
%\kappa(T(3,12k-5) ) = -1/2 \text{ %and } \kappa(T^*(3,12k-5) ) = %1/2, 
%\]
%where $K^*$ means the mirror %image of $K$ and $-K$ is the knot %$K$ with the opposite %orientation. Also, one has 
%\[
%\kappa(\#_nT(3,12k-5) ) = -1/2n %\text{ and } %\kappa(\#_nT^*(3,12k-5) ) = 1/2n
%\]
%for any positive integer $n$.
%\end{prop}

\begin{proof}[Proof of \cref{torus knot calculation}]
It follows from \cref{theo:kappa connecedt sum Seifert cal most general} and the fact $\Sigma (T(p,q)) = \Sigma (2,p,q)$. 
\begin{comment}

This is a corollary of \cref{theo:Seifert cal most general}.
Indeed, $T(3,12k-5)$ can be regarded as the branch locus of the branched cover 
\[
\Sigma (2,3,12k-5) \to S^3
\]
whose involution is given by \eqref{eq: involution on Brieskorn}. 
Next, we consider the $n$-fold connected sum $K_n$ of $T(3,7)$.
It is proven that
\[
DSWF(K_1) = [(S^0, 0, 1/2)]. 
\]
So, by the same proof in \cref{two bridge kappa12}, we obtain local maps
\[
DSWF(K_1) \wedge DSWF(K_1) \to DSWF(K_1\#K_1)
\]
and 
\[
DSWF(K_1\#K_1) \to DSWF(K_1) \wedge DSWF(K_1) 
\]
as in the proof of \eqref{two bridge kappa1}. 
This implies $\kappa(K_2)=1$ for $K_2=K_1\#K_1$. The remaining part is completely the same as in the proof of \cref{two bridge kappa12}. Also, for $T(3,12k-5)$, since it is proven in \eqref{eq: involution on Brieskorn} that
\[
DSWF(K_1) = [(S^0, 0, 1/2)],  
\]
the same argument works. For the other torus knots, the same argument also works. 
This completes the proof. 
\end{comment}
\end{proof}

\begin{ex}
\label{cor: kappa for torus knot 6npm1}
The kappa invariants $\kappa(\#_m T(3,6k\pm1))$ are given as in Table~\ref{table: kappa for T 6npm1}.
Here $n$ and $m$ are positive integers, and $ K^*$ denotes the mirror image of $K$. 
 \begin{table}[htb]
 \caption{{$\kappa(\#_m T(3,6k\pm1))$}}
  \label{table: kappa for T 6npm1}
 \begin{tabular}{ll}
     $\kappa (\#_m T(3,12n-5)) = -m/2$, & $\kappa (\#_m T(3,12n-5)^*) = m/2$,\\
     $\kappa (\#_m T(3,12n-1)) = 0$, & $\kappa (\#_m T(3,12n-1)^*) = 0 $,\\
     $\kappa (\#_m T(3,12n-7)) = m/2$, & $\kappa (\#_m T(3,12n-7)^*) = -m/2 $,\\
     $\kappa (\#_m T(3,12n+1)) = 0$, & $\kappa (\#_m T(3,12n+1)^*) = 0 $. 
\end{tabular}
\end{table}
\end{ex}

We give calculations of $\kappa$ for prime 3-bridge knots whose branched covering space is a hyperbolic 3-manifold with small volume. 

\begin{ex}\label{several hype ex}From \cref{hyperbolic example}, we can see 
 $\kappa (9_{49}) = \frac{1}{4}$. In \cite[Table 7]{BS21}, several hyperbolic 3-manifolds and the double branched covers of knots are identified using SnapPy. 
Here we use the knots listed in \cite[Table 7]{BS21}: see \cref{table: hyperbolic}. 
Also, note that the double branched covering spaces of the knots in \cref{table: hyperbolic} are also in \cite[Table 1]{FM21}.  
Using the same argument given in \cref{hyperbolic example}, for any knot listed in Table \ref{table: hyperbolic}, we have 
\[
\kappa (K) = \frac{1}{2} \delta (\Sigma (K), \mathfrak{t}) ,  
\]
where $\mathfrak{t}$ denotes the unique spin structure. 
In \cite{L015}, it is proven that if $K$ is quasi-alternating, then the equality 
\[
\delta (\Sigma (K), \mathfrak{t}) = - \frac{1}{8 } \sigma (K)
\]
holds. Except for $K11n92$ and $K11n118$, it is checked that the knots in Table \ref{table: hyperbolic} are quasi-alternating. 

\end{ex}
 \begin{table}[htb]
 \caption{{Hyperbolic 3-manifolds with small volumes}}
  \label{table: hyperbolic}
 \begin{tabular}{ll}
     $(3, m003(-4,3)) = \Sigma (  10_{155})$ & $(14, m007(4,1)) = \Sigma ( K11n118)$\\
     $(8, m003(-4,1)) = \Sigma ( 10_{163})$ & $(15, m007(3,2) ) = \Sigma ( 9_{47}) $\\
   $(12,  m003(-5,3)) = \Sigma  (10_{156})$  & $(18, m006(-3,2)) = \Sigma ( K11n92) $ \\
    $(13, m007(1,2)) = \Sigma ( 10_{160})$   & 
\end{tabular}
\end{table} 

\begin{table}[htb]
 \caption{Kappa invariants for knots with hyperbolic branched coverings}
  \label{table: hyperbolic1}
 \begin{tabular}{ll}
     $\kappa (10_{155}) = 0 $ &  $ \kappa ( 10_{160})=-\frac{1}{4} $   \\
     $\kappa ( 10_{163}) = -\frac{1}{8}$ & $\kappa ( 9_{47}) = -\frac{1}{8} $\\
   $ \kappa  (10_{156}) =\frac{1}{8}$  & 
   
\end{tabular}
\end{table}

%\begin{rem}
%We expect that $\mathcal{LE}_G$ forms an abelian group with respect to the wedge product. 
%\end{rem}

\section{Applications to knot theory}\label{Applications to knot theory}

\subsection{Branched covers of punctured 4-manifolds} 

For a smooth closed 4-manifold $X$ with $H_1(X; \Z) =0$ and an embedded oriented surface $S$ with $[S]$ is divisible by $2$, we have a double branched cover  
\[
\Sigma (S) \to X. 
\]
%When %$H^1(X; \Z_2) =0$, one can see the double branched cover is unique. 
The following calculations are proven in \cite{Hi69, HS71}: 
\begin{itemize} 
\item[(i)]
$\sigma (\Sigma (S)) = 2 \sigma (X) - \frac{1}{2} [S]^2$
\item[(ii)]
$b^+(\Sigma (S)) = 2b^+(X) + g(S) -\frac{1}{4} [S]^2$, $b^-(\Sigma (S)) = 2b^-(X) + g(S) + \frac{1}{4} [S]^2$ and 
\item[(iii)] $b_1(\Sigma (S)) =0$. 
\end{itemize}

Also, in \cite[Theorem 1.1]{Na00}, it is proven that $\Sigma(S)$ has a spin structure if and only if $PD(w_2(X)) = \frac{1}{2}[S] \operatorname{mod} 2$.
We suppose $PD(w_2(X)) = \frac{1}{2}[S] \operatorname{mod} 2$. 
\begin{lem}\label{H1}
Under above assumptions, $H^1(\Sigma (S); \Z_2) =\{0\}$ holds. 
In particular, the spin structure on $\Sigma (S)$ is unique up to isomorphism. 
\end{lem}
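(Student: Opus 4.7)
The plan is to compute $H^1(\Sigma(S);\Z_2)$ directly by a Mayer--Vietoris argument tailored to the branched cover, in the same spirit as the $\Q$-coefficient argument sketched in \cref{b+iotarem}. By universal coefficients and the fact $b_1(\Sigma(S))=0$ recorded in (iii) above, proving the lemma is equivalent to showing that $H_1(\Sigma(S);\Z)$ has no $2$-torsion, so I will work throughout with $\Z_2$ coefficients.

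First I would set up the usual decomposition for a branched cover. Let $U=\nu(S)$ be a tubular neighborhood of $S$ in $X$ and $V=X\setminus\Int U$, and set $\tilde U=\pi^{-1}(U)$, $\tilde V=\pi^{-1}(V)$. The branching occurs only along $S$, so $\pi|_{\tilde V}\colon\tilde V\to V$ is an unramified double cover, while $\tilde U$ is the total space of a $D^2$-bundle over $\tilde S\cong S$ with Euler number $[S]\cdot[S]/2$ (an integer since $[S]$ is divisible by $2$). In particular $\tilde U\simeq S$ and $\tilde U\cap\tilde V=\partial\tilde U$ is a circle bundle over $S$ with this same Euler number. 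Next I would analyze $V$ using the Thom isomorphism $H_*(X,X\setminus S)\cong H_{*-2}(S)$ in the long exact sequence of the pair $(X,V)$: since $H_1(X;\Z)=0$, this identifies $H_1(V;\Z)$ with a cyclic group generated by the meridian $\mu$ of $S$, of order equal to the divisibility $d$ of $[S]$ in $H_2(X;\Z)/\mathrm{torsion}$. Because $[S]$ is divisible by $2$, so is $d$, so $H^1(V;\Z_2)\cong\Z_2$ is generated by the class $w\in H^1(V;\Z_2)$ classifying the double cover $\tilde V\to V$.

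With these ingredients in place, I would compute $H^1(\tilde V;\Z_2)$ from the Gysin sequence of the real line bundle $L\to V$ with $w_1(L)=w$, whose sphere bundle is $\tilde V$:
\begin{equation*}
H^0(V;\Z_2)\xrightarrow{\cup w}H^1(V;\Z_2)\to H^1(\tilde V;\Z_2)\to H^1(V;\Z_2)\xrightarrow{\cup w}H^2(V;\Z_2).
\end{equation*}
The first map sends $1\mapsto w$ and is an isomorphism of $\Z_2$'s, so $H^1(\tilde V;\Z_2)$ injects into $H^1(V;\Z_2)$ via the second $\cup w$. I would then feed this into the Mayer--Vietoris sequence
\begin{equation*}
\cdots\to H^0(\tilde U\cap\tilde V;\Z_2)\to H^1(\Sigma(S);\Z_2)\to H^1(\tilde U;\Z_2)\oplus H^1(\tilde V;\Z_2)\to H^1(\tilde U\cap\tilde V;\Z_2)\to\cdots,
\end{equation*}
using the Gysin sequence for the circle bundle $\tilde U\cap\tilde V\to S$ to compare the $H^1(S;\Z_2)$-piece coming from $\tilde U$ with that coming from $\tilde U\cap\tilde V$. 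The classes in $H^1(S;\Z_2)$ contributed by $\tilde U$ map isomorphically into $H^1(\tilde U\cap\tilde V;\Z_2)$ (via the zero-section), and a careful diagram chase, together with the vanishing $H^1(V;\Z_2)\to H^1(\tilde V;\Z_2)=0$ of the ``pullback'' part, forces the Mayer--Vietoris map into $H^1(\Sigma(S);\Z_2)$ to be zero.

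The main obstacle will be the bookkeeping in the gluing step: one has to identify the restriction maps from $\tilde U$ and from $\tilde V$ to $\tilde U\cap\tilde V$ with the natural geometric maps (fiber inclusion versus zero-section), and verify that the image of the horizontal classes from $S\subset\tilde U$ cancels the corresponding horizontal classes from the Gysin sequence of the circle bundle $\tilde U\cap\tilde V\to S$, so that only the meridian-type class survives, and that class is already killed by $H^1(\tilde V;\Z_2)=0$. Once this bookkeeping is done the conclusion $H^1(\Sigma(S);\Z_2)=0$ is immediate, and the uniqueness of the spin structure follows because spin structures form a torsor over $H^1(\Sigma(S);\Z_2)$.
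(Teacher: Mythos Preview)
Your approach is different from the paper's, which simply invokes the argument of \cite[Proposition~2.1]{KoMa95} to conclude that $H_*(\Sigma(S);\Z)$ has no $2$-torsion, and then finishes with the universal coefficient theorem together with $b_1(\Sigma(S))=0$. Your direct Mayer--Vietoris computation can be made to work, but there is a gap: the final assertion that ``that class is already killed by $H^1(\tilde V;\Z_2)=0$'' is not valid in general. What your Gysin sequence actually gives is $H^1(\tilde V;\Z_2)\cong\ker\bigl(\cup\,w\colon H^1(V;\Z_2)\to H^2(V;\Z_2)\bigr)$, and since $w\cup w$ equals the mod-$2$ Bockstein $\beta(w)$ for $w\in H^1$, this kernel vanishes exactly when $w$ fails to lift to $H^1(V;\Z_4)$, i.e.\ when the divisibility $d$ of $[S]$ satisfies $d\equiv 2\pmod 4$. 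But when $4\mid d$ --- for instance whenever $X$ is spin, since then the hypothesis $[S]/2\equiv PD(w_2(X))\pmod 2$ forces $4\mid[S]$ --- one has $H^1(\tilde V;\Z_2)=\Z_2$, and the claimed vanishing fails.

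To repair the argument in this case you must show that the generator of $H^1(\tilde V;\Z_2)$ restricts to a class on $\tilde U\cap\tilde V$ with nonzero fiber component, so that the Mayer--Vietoris restriction $H^1(\tilde U)\oplus H^1(\tilde V)\to H^1(\tilde U\cap\tilde V)$ is still injective. Dually, this amounts to $[\tilde\mu]\neq 0$ in $H_1(\tilde V;\Z_2)$. One clean way: in the homology Gysin sequence for the $S^0$-bundle $\tilde V\to V$, the map $\cap\,w\colon H_1(V;\Z_2)\to H_0(V;\Z_2)$ is an isomorphism (as $\langle w,[\mu]\rangle=1$), hence $\pi_*\colon H_1(\tilde V;\Z_2)\to H_1(V;\Z_2)$ is zero and the connecting map $H_1(V;\Z_2)\to H_1(\tilde V;\Z_2)$, which sends $[\gamma]\mapsto[\pi^{-1}(\gamma)]$, is onto; thus $[\tilde\mu]=[\pi^{-1}(\mu)]$ generates. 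With this extra step your argument goes through, but it is substantially more involved than the paper's one-line appeal to \cite{KoMa95}.
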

\begin{proof}
By the arugument of the proof of \cite[Proposition 2.1]{KoMa95}, one can see that there is no 2-torsion in $H_*(\Sigma(S); \Z)$. Thus, the universal coefficient theorem implies 
\[
0 \to \operatorname{Ext}^1_\Z (H_0 (\Sigma(S); \Z), \Z_2) \to H^1(\Sigma(S); \Z_2) \to \Hom (H_1( \Sigma(S); \Z) , \Z_2) \to 0. 
\]
is exact. 
This shows $H^1(\Sigma (S); \Z_2) =\{0\}$.
\end{proof}

Let $K$ be an oriented knot in $S^3$. 
Let $X$ be a simply-connected 4-manifold $X$ bounded by $S^3$ and $S$ be an oriented compact connected surface $S$ bounded by $K$ satisfying $[S]$ is divisible by $2$.
Then one can associate the double branched cover 
\[
\Sigma (S) \to X
\]
of 4-manifolds with boundary. 

The following provides a sufficient condition so that the double branched cover has the unique spin structure. 

\begin{lem}\label{branched spin}
We suppose $PD(w_2(X)) = [S] \operatorname{mod} 2$. The double branched cover $\Sigma (S)$ along $S$ has the unique spin structure.  Moreover, the following equalities 
\begin{align}
&\sigma (\Sigma (S ))= 2 \sigma (X) - \frac{1}{2} [S]^2 + \sigma (K) ,  \\ 
& b^+ (\Sigma (S )) = 2b^+(X) + g(S) -\frac{1}{4} [S]^2  +  \frac{1}{2} \sigma (K), \\ 
& b^- (\Sigma (S )) = 2b^-(X) + g(S) +\frac{1}{4} [S]^2   - \frac{1}{2} \sigma (K), \text{ and}  \\ 
& b_1(\Sigma(S) ) =0 
 \end{align} 
hold.  
\end{lem}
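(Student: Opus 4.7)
The strategy is to reduce the bounded case to the closed case (i)--(iii) already recorded just above. Pick a Seifert surface $F$ for $K$ and push its interior into the interior of $D^4$ to obtain a properly embedded surface $\hat{F} \subset D^4$ with $\partial \hat{F} = K$. Glue $(D^4,\hat{F})$ to $(X,S)$ along the common boundary pair $(S^3, K)$; the result is a closed pair $(X', S')$ with $X' = X \cup_{S^3} D^4$ a closed simply-connected 4-manifold and $S' = S \cup_K \hat{F}$ a closed oriented embedded surface. The plan is then to read off the invariants of $\Sigma(S)$ by combining the closed formulas for $(X',S')$ with the classical invariants of the double branched cover of $D^4$ along the pushed-in Seifert surface.

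The bookkeeping for the closure is elementary: $\sigma(X')=\sigma(X)$, $b^\pm(X')=b^\pm(X)$, $[S']^2=[S]^2$ (the self-intersection is computed away from the boundary), $g(S') = g(S) + g(F)$ by a one-circle Euler characteristic count, and the congruence $PD(w_2(X')) \equiv [S']/2 \bmod 2$ descends to (and is equivalent to) the hypothesis on $(X,S)$. Hence (i)--(iii) apply to $(X',S')$ and determine $\sigma(\Sigma(S'))$, $b^\pm(\Sigma(S'))$, $b_1(\Sigma(S'))$. Writing $V_F$ for the double branched cover of $D^4$ along $\hat{F}$, the closed cover decomposes as $\Sigma(S') = \Sigma(S) \cup_{\Sigma(K)} V_F$, and since $\Sigma(K)$ is a rational homology 3-sphere (with $H_1$ of odd order $|\det K|$) Novikov additivity gives
\begin{align*}
\sigma(\Sigma(S')) &= \sigma(\Sigma(S)) + \sigma(V_F),\\
b^\pm(\Sigma(S')) &= b^\pm(\Sigma(S)) + b^\pm(V_F),\\
b_1(\Sigma(S')) &= b_1(\Sigma(S)) + b_1(V_F).
\end{align*}
The final ingredient is the classical computation $b_1(V_F)=0$ together with
\[
\sigma(V_F) = -\sigma(K), \qquad b^+(V_F) = g(F) - \tfrac{1}{2}\sigma(K), \qquad b^-(V_F) = g(F) + \tfrac{1}{2}\sigma(K),
\]
obtained from the symmetrized Seifert form $V+V^T$ with the paper's sign convention $\sigma(T(2,3))=-2$ (see \cref{rem: sign convention of knot signature}). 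Solving for the invariants of $\Sigma(S)$ causes the $g(F)$ contributions from $S'$ and $V_F$ to cancel, and produces the four displayed identities directly.

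It remains to address the spin structure. The hypothesis $PD(w_2(X)) \equiv [S] \bmod 2$ extends to $(X',S')$ on capping with $(D^4, \hat{F})$, so by the closed result of Nakamura the cover $\Sigma(S')$ admits a spin structure; restricting it to the complement of $V_F$ gives a spin structure on $\Sigma(S)$. For uniqueness it suffices to show $H^1(\Sigma(S);\Z_2)=0$, and this follows exactly as in \cref{H1}: the Kotschick--Morgan argument of \cite{KoMa95} applies verbatim in the bounded setting (its essential inputs are the simple connectivity of the base and the absence of 2-torsion in $H_\ast$ of the boundary, both of which hold), yielding no 2-torsion in $H_\ast(\Sigma(S);\Z)$, after which the universal coefficient theorem concludes. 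The main technical point of the whole argument is the additivity of $\sigma$ and $b^\pm$ along the rational homology sphere $\Sigma(K)$ and fixing the correct orientation conventions for $V_F \subset \Sigma(S')$ so that the sign of $\sigma(K)$ matches the statement; once these are pinned down the rest is substitution.
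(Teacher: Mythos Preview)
Your proof is correct and follows essentially the same route as the paper: cap off $(X,S)$ with $(D^4,\hat F)$, apply the closed formulas (i)--(iii) together with Nakamura's spin criterion to $(X',S^\#)$, then subtract off the contribution of the branched cover of the capping piece using additivity along the rational homology sphere $\Sigma(K)$. The paper phrases the capping surface as an arbitrary properly embedded surface $S'$ bounding $-K^*$ and cites Gordon--Litherland for $\sigma(\Sigma(S'))=-\sigma(K)$, while you specialize to a pushed-in Seifert surface and read the invariants directly from $V+V^T$; and for uniqueness of the spin structure the paper deduces $H^1(\Sigma(S);\Z_2)=0$ via Mayer--Vietoris from the closed case (\cref{H1}) rather than invoking the Kotschick--Morgan argument directly in the bounded setting---but these are cosmetic differences. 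One small terminological point: what you call ``Novikov additivity'' for $b^{\pm}$ and $b_1$ is really the Mayer--Vietoris splitting of $H^*(\,\cdot\,;\Q)$ along a rational homology sphere; Novikov additivity proper refers only to the signature.
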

\begin{rem}\label{homological cal of W}
We also use the computation of these invariants in the following setting: 
Let $(W, S)$ be a connected oriented knot cobordism from $(S^3, K)$ to $(S^3, K')$. Suppose $H_1(W; \Z)=0$, $[S]$ is divisible by $2$ and $PD(w_2(W)) = [S] \operatorname{mod} 2$.
Then, by the same discussions, we have 
\begin{align*}
&\sigma (\Sigma (W ))= 2 \sigma (W) - \frac{1}{2} [S]^2 - \sigma (K)+ \sigma (K') ,  \\ 
& b^+ (\Sigma (S )) = 2b^+(X) + g(S) -\frac{1}{4} [S]^2  -  \frac{1}{2} \sigma (K) + \frac{1}{2} \sigma (K') , \\ 
& b^- (\Sigma (S )) = 2b^-(X) + g(S) +\frac{1}{4} [S]^2   + \frac{1}{2} \sigma (K)-\frac{1}{2} \sigma (K') , \text{ and}  \\ 
& b_1(\Sigma(S) ) =0 . 
 \end{align*} 
\end{rem}

Recall that, as mentioned in \cref{rem: sign convention of knot signature}, our sign convention of the knot signature is the same as in \cite{Sa19, Ba14}.

\begin{proof}
When $X$ is a closed 4-manifold, the existence condition of a spin structure on the double branched cover is completely determined in \cite[Theorem 1.1]{Na00}. Define $X' := X \cup_{S^3} D^4$. Take a properly embedded oriented surface $S'$ in $D^4$ bounded by $-K^*$. Consider the union $S^\# := S \cup S' \subset X'$. Then we just apply \cite[Theorem 1.1]{Na00} to $(X', S^\#)$ and obtain a spin structure on $\Sigma (S^\#)$. As a restriction, we obtain a spin structure on $\Sigma (S )$. Next, we see the spin structure is unique up to isomorphism. 
It is sufficient to say $H^1(\Sigma (S) ; \Z_2) = \{0\}$. The Mayer-Vietoris exact sequence for $(\Sigma (S), \Sigma(S'))$ implies 
\begin{align*}
0\to  H^1 (\Sigma (S^\# ) ; \Z_2 ) \to  H^1 (\Sigma (S ) ; \Z_2 )\oplus H^1 (\Sigma (S' ) ; \Z_2 ) \to H^1 (\Sigma (K);  \Z_2 ) \to \cdots
\end{align*}
One can verify $H^1 (\Sigma (K);  \Z_2 )=0$. 
By \cref{H1}, we see $H^1 (\Sigma (S ) ; \Z_2 )= \{0\}$.

Next, we see the equations above. 
The double branched cover along $S^\#$ has the following decomposition: 
\[
\Sigma (S^\# ) =  \Sigma (S )\cup \Sigma (S' ).
\]
Thus, one has
\[
\sigma (\Sigma (S^\# ))  = \sigma (\Sigma (S )) +\sigma ( \Sigma (S' )) 
\]
and 
\[
b^+ (\Sigma (S^\# ))  = b^+ (\Sigma (S )) + b^+ ( \Sigma (S' )) . 
\]
On the other hand, we have 
\[
\sigma (\Sigma (S^\# ))=  2 \sigma (X) - \frac{1}{2} [S]^2 
\]
and 
\[
 b^+(\Sigma (S^\#)) = 2b^+(X) + g(S^\#) -\frac{1}{4} [S^\#]^2. 
\]
It is proven in \cite[Theorem 6]{GL78} that
\[
\sigma (\Sigma (S') ) = \sigma (-K^*)= -\sigma (K). 
\]
Also, we can verify 
\[
b^+ ( \Sigma (S' ) ) = g (S') +  \frac{1}{2} \sigma (-K^*)= g (S') -  \frac{1}{2} \sigma (K).  
\]

These equations imply 
\[
\sigma (\Sigma (S ))= 2 \sigma (X) - \frac{1}{2} [S]^2 - \sigma (-K^*)=2 \sigma (X) - \frac{1}{2} [S]^2 +  \sigma (K)
\]
and 
\[
 b^+ (\Sigma (S )) = 2b^+(X) + g(S) -\frac{1}{4} [S]^2   +   \frac{1}{2} \sigma (K). 
 \]
 The calculation $b_1(\Sigma(S) ) =0$ also follows from the Mayer--Vietoris exact sequence. 
\end{proof}

\subsection{Genus bounds from \cref{main theo}}\label{ Genus bounds from }
In this subsection, we provide a genus bound in 4-manifolds. Let $X$ be a smooth closed 3-manifold $H_1(X; \Z)= \{0\}$.
\begin{defi}For a fixed homology class $x  \in H_2(X; \Z)$ and an oriented knot $K$ in $S^3$, we define the {\it $X$-genus} of $K$ is defined by
\begin{align*}
g_{X, x} (K):= \min \{ g(S) | \text{$S$ is an oriented properly embedded connected surface } \\
\text{in $X \setminus D^4$ bounded by $K$}, \ x = [S] \in H_2(X \setminus D^4, \partial (X \setminus D^4)  ; \Z)  \}  
\end{align*}
\end{defi}
The $X$-genus has been studied in various situations. 
Also, by considering locally flat embedding, we define the topological version $g^{\mathrm{Top}}_{X, x} (K)$ of $g_{X, x} (K)$. In general, these two invariants $g^{\mathrm{Top}}_{X, x} (K)$ and $g_{X, x} (K)$ are different. 
When $X=S^4$ and $x=0$, $g_{X, x} (K)$ is called the {\it smooth slice genus} of $K$, and denoted by $ g_4(K)$. Obviously, we have 
\[
g_{X, x} (K) \leq g_4(K). 
\]
Also, when we take $K=U $(the unknot), $g_{X, x} (U)$ is called the {\it minimal genus} of $(X, x)$. 

We now prove \cref{main knot}. 
 \begin{proof}[Proof of \cref{main knot}]
% We first prove (i). 
% Let us take a smooth properly embedded surface $S \subset D^4$ bounded by $K$. Suppose $S$ is a genus minimizing surface, i.e. 
 %\[
 %g(S) = g_4(K),  
 %\]
 %where $g_4(K)$ is the 4-genus of $K$. 
 %Note that the double branched cover $\Sigma(S)$ admits the unique spin structure which is preserved by the involution. 
 %Then, we apply \cref{main theo} to the double branched cover along $\Sigma$ and obtain 
 %\[
%-\frac{\sigma(\Sigma(S))}{16}
%\leq b^+(\Sigma(S))-b^+_{\iota}(\Sigma(S)) + \kappa (K ).
% \]
 %Since $b^+_{\iota}(\Sigma(S))= b^+(D^4)=0$ and $b^+(\Sigma(S))= g_4(K) + \frac{1}{2} \sigma(K)$, 
 %\[
 %-\frac{9\sigma(K)}{16}- g_4(K)
%\leq   \kappa (K ).
%\]

 We consider the double branched cover $\Sigma(S)$ along $S$. By \cref{branched spin}, $\Sigma(S)$ has the unique spin structure. In particular, the branched involution preserves the isomorphism class of the spin structure. 
Now we apply \cref{main theo} and obtain 
\[
-\frac{\sigma(\Sigma(S))}{16}+ \kappa (K)
\leq b^+(\Sigma(S))-b^+_{\iota}(\Sigma(S)) + \kappa (K' ). 
\]
 
 Moreover, using \cref{branched spin}, \cref{homological cal of W} and \cref{b+iotarem}, we obtain the equalities
\begin{align}
&\sigma (\Sigma (S ))= 2 \sigma (W) - \frac{1}{2} [S]^2 + \sigma (K')- \sigma (K) \  \\ 
& b^+ (\Sigma (S )) = 2b^+(W) + g(S) -\frac{1}{4} [S]^2  -  \frac{1}{2} \sigma (K')- \frac{1}{2} \sigma (K) \text{ and }  \\
&  b^+_{\iota} (\Sigma (S)) = b^+(W). 
 \end{align} 
Thus, we obtain 
\begin{align*} 
&-\left(\frac{ 2 \sigma (W) - \frac{1}{2} [S]^2 + \sigma (K') -\sigma (K)}{16}\right) + \kappa (K) \\
& \leq b^+(W) + g(S) -\frac{1}{4} [S]^2   +  \frac{1}{2} \sigma (K')-\frac{1}{2} \sigma (K)+ \kappa (K ').
\end{align*}
 This completes the proof. 
 \end{proof}
 
 \begin{rem}
 When $W=I \times S^3$ and $K=U$, then \cref{main knot} implies 
 \begin{align} \label{four genus bound}
 -\frac{9}{16}\sigma(K') \leq  g_4(K') + \kappa(K')
  \end{align}
  for any knot $K'$ in $S^3$.
 For a connected sum of two bridge knots, \eqref{four genus bound} shows 
 \[
 -\frac{1}{2}\sigma(K) \leq  g_4(K) , 
 \]
 which is equivalent to a constraint from the signature given in \cref{000}. 
 Also, for torus knots, $g_4(T(p,q)) = \frac{(p-1)(q-1)}{2}$ is known as Milnor's conjecture proved by Kronheimer--Mrowka \cite{KM93}. Also see an alternative proof based on $\tau$-invariant \cite{OS03}.   On the other hand, for example, our inequalities and \cref{torus knot calculation} for $ \#_n T(3,7 )$ imply the following weak inequality: 
    \begin{align}\label{bad ineq}
    \frac{9}{2}n + \frac{1}{2}n= 5 n  \leq  g_4 ( \#_n  T(3,7) )= 6 n
    \end{align}
    for a positive integer $n$. 
Also, using \eqref{most general main theo strong SWF spherical kappa}, we obtain stronger inequality than \eqref{bad ineq} as follows: 
We now have $N(S^4, 0, \#_n T(3,7))=n$, which is defined in \eqref{most general main theo strong SWF spherical kappa}.
When $n \geq 2$, $N(S^4, 0, \#_n T(3,7)) \geq 2$ and 
\begin{align*} 
b^+(X)+ g_{X,x}(K) - \frac{1}{4}x^2 + \frac{1}{2}\sigma (K)  
\geq  5 n  - 4n  \geq 1
\end{align*} 
are satisfied. So, we can apply \eqref{most general main theo strong SWF spherical kappa} and obtain 
\begin{align*}
\begin{split}
&5n   +A(n ) \leq g_{4}(\#_n T(3,7)) = 6n \text{ when } n \geq 2, 
\end{split}
\end{align*}
 where 
\begin{align*}
A(N) = 
\begin{cases}
&1, \quad N=0,2 \mod 8\\
&2, \quad N=1,3,4,5,7 \mod 8\\
&3, \quad N=6 \mod 8.
\end{cases}    
\end{align*}
 \end{rem}
 
 \begin{rem}
When $K= K ' = U$ (the unknot), \cref{main knot} implies a version of 10/8-inequality corresponding to the inequality given in \cite{Ka17}.  
 In this case, \eqref{ineq1} gives 
 \[
  -\frac{\sigma(X)}{8} + \frac{9}{32}[S]^2 \leq b^+(X) + g(S) 
\]
for a closed oriented smooth 4-manifold $X$ with $H_1(X ;\Z) =0 $ and an oriented compact properly smoothly embedded surface $S$ in $X$ such that the homology class $[S]$ of $S$ is divisible by $2$ and $PD(w_2(X)) = [S]/2 \operatorname{mod} 2$. For example, for a positive integer $n$ and any class $x \in H_2(\#_nK3)$ satisfying that $x$ is divisible by $2$ and $0= x/2 \operatorname{mod} 2$, we have the following genus bound: 
\[
\frac{9}{32}[S]^2 \leq n + g_{\#_n K3, x}(U).
\]
Note that a similar inequality is proved using Donaldson's Theorem B and C in \cite{KoMa95}. 
\end{rem}
  \begin{rem}
 As it is remarked in \cref{generalY}, we can generalize the invariant $\kappa(K)$ to an invariant $\kappa(Y,K)$ for a pair of an oriented homology 3-sphere and an oriented knot $K$. We can also generalize \cref{main knot} to a theorem for such an invariant. 
 \end{rem}

 Now we give a proof of \cref{knot main app}. 
 \begin{proof}[Proof of \cref{knot main app}]Since it is proven in \cite[Theorem 1]{BBL20} that, for a positive integer $m$ coprime to $3$, 
\[
g^{\mathrm{Top}}_4 (T(3,m) ) = \left\lceil \frac{2m}{3} \right\rceil , 
\]
we have 
\[
g^{\mathrm{Top}}_4 ( T(3,11))= 8 .
\]
Also, from \cref{000}, we have a lower bound
\[
 8n = \frac{1}{2} | \sigma ( \#_n T(3,11)) ) |   \leq g^{\mathrm{Top}}_4 ( \#_n T(3,11)) \leq 8n. 
\]
This implies $g^{\mathrm{Top}}_4 ( \#_n T(3,11)) = 8n$. 
Thus we have 
\[
g^{\mathrm{Top}}_{X,x}(K \#  ( \#_n T(3,11) ) ) \leq g^{\mathrm{Top}}_{X,x}(K )+8n
\]
for every knot $K$ in $S^3$.
From \cref{torus knot calculation} and \cref{connected sum}, one has 
\[
\kappa (K \# \#_n T(3,11)) = \kappa(K)  . 
\]
Thus, by \cref{main knot}, we have \[
-\frac{\sigma(X)}{8} + \frac{9}{32}x^2 - b^+(X) + 9n - \kappa (K) - \frac{9}{16}\sigma (K) \leq  g_{X,x}(K \# (\#_n T(3,11)) ).
\]
This completes the proof.
\end{proof}

\begin{rem}
Note that for the torus knots $T(3,12l+1)$, $T(3,12l-1)$, $T(3,12(l+1)-5))$ and  $T(3,12(l+1)-7)$ for $l \geq 1$, the same argument of the proof of \cref{knot main app} can be applied. On the other hand, using \cref{manolescu1} and \cite[Theorem 1]{BBL20}, one can also see 
\[
\lim_{l \to \infty} ( g_{X,x}( T(3,12l+1)) - g^{\mathrm{Top}}_{X,x}(T(3,12l+1)))  = \infty .
\]
\end{rem}

\begin{rem}\label{knot main app related results}
We remark that several related results on \cref{knot main app} can be proved by using several known results. 
For definite 4-manifolds, for example, Ozv\'ath-Szab\'o's $\tau$-invariant and its genus bound (\cite{OS03}) enable us to see 
\[
\lim_{n\to \infty } g_{X, x}(K \#(\#_n T(3,11)))  - g^{\mathrm{Top}}_{X, x}(K \#(\#_n T(3,11)))) = \infty
\]
for negative definite 4-manifolds $X$ with any class $x$. For positive 4-manifolds with any class $x$, we can see 
\[
\lim_{n\to \infty } g_{X, x}(K \#(\#_n T^* (3,11)))  - g^{\mathrm{Top}}_{X, x}(K \#(\#_n T^* (3,11))) = \infty. 
\]
\end{rem}

We remark that, if one can find an embedded surface in a spin 4-manifold which is suitable in view of \cref{main knot}, the 10/8-inequality can be refined.
We record this as a potential approach to the 11/8-conjecture:
 
\begin{cor}\label{11/8-conjecture}
Let $X$ be a simply connected smooth closed spin 4-manifold $X$. 
Suppose that there is a smoothly and properly embedded connected oriented surface $S$ in $X\setminus \operatorname{int}D^4$ such that $\partial S = K$ in $S^3 = \partial (X\setminus \operatorname{int}D^4)$ and 
\[
-\frac{1}{8} \sigma(X) \leq \frac{9}{32}[S]^2-\frac{9}{16}\sigma(K) -\kappa (K). 
\]
Then the 11/8-conjecture for $X$ 
    \[
    -\frac{3}{16} \sigma(X)  \leq b^+(X)
    \]
is true.
\end{cor}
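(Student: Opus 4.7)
The plan is to apply \cref{main knot}(v) to the surface $S$ viewed as a cobordism inside a twice-punctured copy of $X$, and to feed the displayed hypothesis into the resulting inequality.

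First, I would promote $S$ into a genuine cobordism surface by removing a second open $4$-ball $D^4_2\subset X\setminus\operatorname{int}D^4_1$ positioned so that $D^4_2\cap S$ is a flat $2$-disk $D^2_\ast$, and taking $W=X\setminus\operatorname{int}(D^4_1\sqcup D^4_2)$ together with $S'=S\setminus\operatorname{int}D^2_\ast$. Then $S'$ is a properly and smoothly embedded, connected, oriented cobordism in $W$ from the unknot $U\subset\partial D^4_2$ to $K\subset\partial D^4_1$, and the invariants transport cleanly: $\sigma(W)=\sigma(X)$, $b^+(W)=b^+(X)$, $H_1(W;\Z)=0$, $g(S')=g(S)$, and $[S']^2=[S]^2$. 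Since $X$ is spin, $w_2(W)=0$, so the divisibility condition $PD(w_2(W))=[S']/2 \bmod 2$ in \cref{main knot}(v) reduces to $[S]$ being divisible by $4$, which I take as part of the setup. Applying \cref{main knot}(v) with $K_0=U$, $K_1=K$ and using $\sigma(U)=\kappa(U)=0$ produces
\[
-\tfrac{\sigma(X)}{8}+\tfrac{9[S]^2}{32}-\tfrac{9\sigma(K)}{16}-\kappa(K)\;\le\;b^+(X)+g(S);
\]
the same inequality also drops out of \cref{most general main theo one boundary} applied to the one-boundary double branched cover $\Sigma(S)\to X\setminus\operatorname{int}D^4_1$, by plugging in the formulas of \cref{branched spin} for $\sigma(\Sigma(S))$ and $b^+(\Sigma(S))$ and the identity $b^+_\iota(\Sigma(S))=b^+(X)$ from \cref{b+iotarem}.

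Next I would invoke the hypothesis of the corollary, which asserts exactly that $\tfrac{9[S]^2}{32}-\tfrac{9\sigma(K)}{16}-\kappa(K)\ge -\sigma(X)/8$. Substituting this lower bound into the left-hand side of the previous display collapses it to $-\tfrac{\sigma(X)}{4}\le b^+(X)+g(S)$, i.e.
\[
b^+(X)\;\ge\;-\tfrac{\sigma(X)}{4}-g(S).
\]
The target $-\tfrac{3}{16}\sigma(X)\le b^+(X)$ then reduces to the single scalar comparison $g(S)\le -\tfrac{1}{16}\sigma(X)$, so the chain of inequalities above closes into the 11/8-inequality exactly when the surface $S$ furnished by the hypothesis is thin enough to satisfy this genus bound. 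The main obstacle I expect is precisely this last rearrangement: the literal hypothesis only controls $[S]^2$, $\sigma(K)$, and $\kappa(K)$ and does not by itself bound $g(S)$, so the heart of the argument is the reading of "suitable embedded surface" in the preceding remark as implicitly packaging together both the displayed inequality \emph{and} the auxiliary estimate $g(S)\le -\sigma(X)/16$; granting this reading, the two lines above compose to $-\tfrac{3}{16}\sigma(X)\le b^+(X)$, which is the $11/8$-conjecture for $X$.
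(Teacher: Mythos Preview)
Your approach is exactly the intended one: apply the relative $10/8$-inequality of \cref{main knot}(v) (or equivalently \cref{most general main theo one boundary} to $\Sigma(S)$), plug in the formulas for $\sigma(\Sigma(S))$, $b^+(\Sigma(S))$, $b^+_\iota(\Sigma(S))$ from \cref{branched spin} and \cref{b+iotarem}, and then substitute the hypothesis. The paper states the corollary without proof, and this is the only natural derivation.

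You are also right that the argument does not close as the corollary is literally written. The derived inequality is
\[
-\tfrac{\sigma(X)}{8}+\tfrac{9}{32}[S]^2-\tfrac{9}{16}\sigma(K)-\kappa(K)\le b^+(X)+g(S),
\]
and feeding in the stated hypothesis yields $-\sigma(X)/4\le b^+(X)+g(S)$, not $-3\sigma(X)/16\le b^+(X)$. This is not a flaw in your reasoning; it is an omission in the printed statement. The corollary should carry $g(S)$ in its hypothesis: if one assumes
\[
g(S)-\tfrac{1}{16}\sigma(X)\;\le\;\tfrac{9}{32}[S]^2-\tfrac{9}{16}\sigma(K)-\kappa(K),
\]
then substitution gives exactly $-\tfrac{3}{16}\sigma(X)\le b^+(X)$. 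Your reading of ``suitable surface'' in the preceding remark as absorbing the genus constraint is therefore the correct interpretation, and with that correction your proof is complete and matches the paper's intent.

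One small remark: your divisibility check is slightly too restrictive. The condition in \cref{main knot}(v) requires $[S]$ divisible by $2$ together with $[S]/2 \equiv PD(w_2(X))\bmod 2$; since $X$ is spin this means $[S]\in 4H_2(X;\Z)$, which you stated, but you phrased it as an extra assumption rather than as already implicit in the applicability of \cref{main knot}(v).
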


%Note that when $n=1$ or $m=1$, \eqref{genus bound} is weaker than usual adjuntion inequality. 

%\begin{cor} \label{concrete genus bound}Let $X$ be $\#_n \C P^2 \#_m (-\C P^2)$. 
%Under the same assumptions of \eqref{cp-genus}, for a linear combination $K$ of two bridge knots,  we have 
%  \begin{align} \label{genus bound}
 %   \frac{1}{32} \left( -36n+  4m + 9(\sum a_i^2 - b_j^2) + 16 \sigma(K)+ 32g_{S^4}(K) \right) \leq g_{X, H}(K)
 %   \end{align}
%\end{cor}

Next, we prove a crossing change formula \cref{crossing change}: 
\begin{proof}[Proof of \cref{crossing change} (i)]
Suppose $K'$ is obtained by a sequence of $n$ crossing changes of $K$. Then, we have an immersed annulus $S'$ in $I\times S^3$ from $K'$ to $K$ with $n$ immersed points. Then, by a certain surgery along these points (\cite[Lemma 9]{S74}), we obtain an embedded genus $n$ cobordism $S$ in $I\times S^3$ from $K$ to $K'$. Then, by applying \cref{main knot} to $S$, we obtain    
 \[
-\frac{9}{16}\sigma(K') + \frac{9}{16}\sigma(K) \leq  n+ \kappa(K')-\kappa(K).
 \]
 On the other hand, $-S$ also gives a genus $n$-cobordism from $K'$ to $K$, we have 
  \[
-\frac{9}{16}\sigma(K) + \frac{9}{16}\sigma(K') \leq  n+ \kappa(K)-\kappa(K').
 \]
 This completes the proof. 
 \end{proof}

\begin{figure}[htb]
\centering
\includegraphics[width=5cm,height=2.3cm]{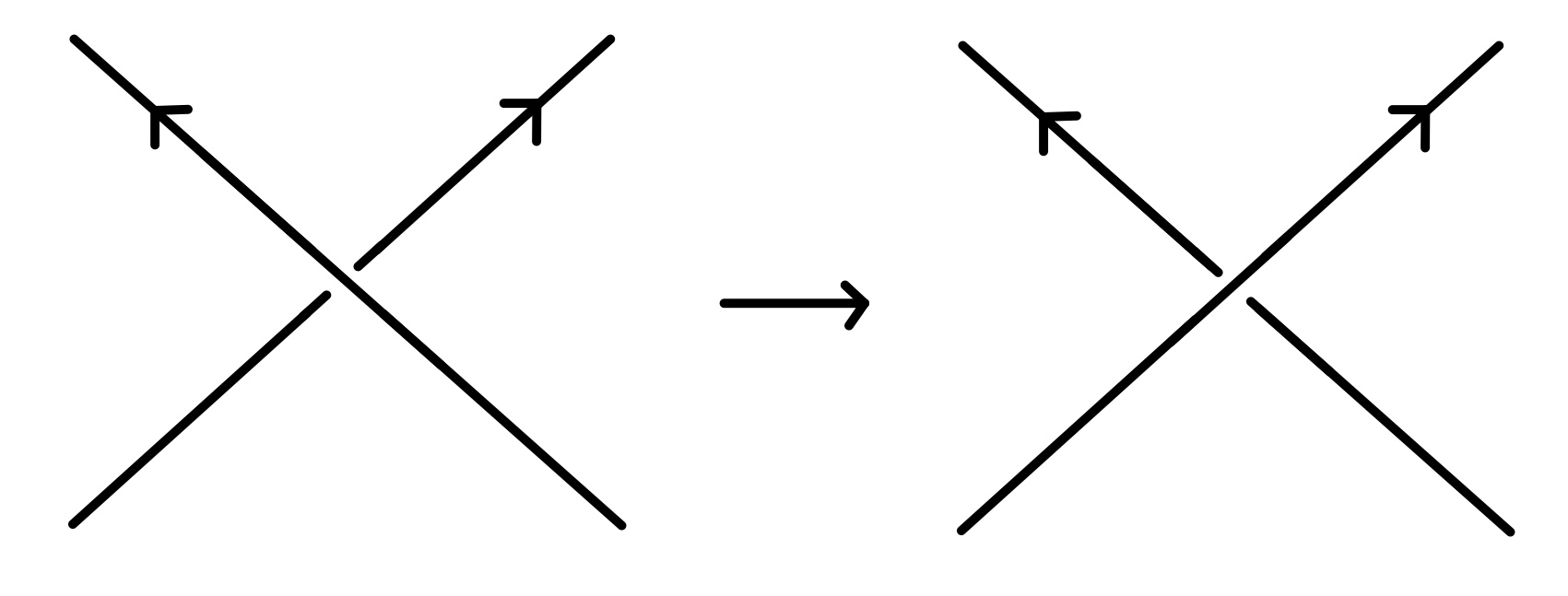}
\caption{Positive crossing change}
\label{Positive crossing change}
\end{figure}

\begin{proof}[Proof of \cref{crossing change} (ii)]
Suppose $K'$ is obtained by a sequence of $n$ positive crossing changes of $K$. Then, we have an immersed annulus $S'$ in $I\times S^3$ from $K'$ to $K$ with $n$ {\it positive} immersed points. By blow-up of these positive immersed points of $S'$, we obtain an embedded annulus $S$ in $(I \times S^3) \# \#_n (-\C P^2)$ from $K$ to $K'$ such that $[S] = (2, \cdots, 2) \in H_2( (I \times S^3) \# \#_n (-\C P^2))$. (For example, see \cite[Page 937]{Kr97} and \cite[Lemma 4.7]{DS20}.)
We apply \cref{main knot} to $-S$ as a cobordism from $K'$ to $K$ in $(I \times S^3) \# \#_n \C P^2$ and obtain
\begin{align*}
& -\frac{1}{8}n + \frac{9}{32}\cdot 4n -\frac{9}{16}\sigma(K) + \frac{9}{16}\sigma(K') \\
&  =  n -\frac{9}{16}\sigma(K) + \frac{9}{16}\sigma(K')   \\  
& \leq n  + \kappa(K)-\kappa(K'). 
\end{align*}
This completes the proof. 
\end{proof}
\begin{rem}\label{full-twist} 
As it is proven in \cite[Lemma 4.7]{DS20}, a full twist from $ K$ to $K'$ around a collection of strands with linking number $d \in \Z_{\geq 0} $ gives an annulus $S$ in $(I \times S^3) \# \#_n \overline{\C P}^2$ from $K$ to $K'$ such that $[S]^2 = -d^2$. So, when $\frac{d}{2}  \in 2\Z +1 $, then one can have a similar full-twist formula using $( (I \times S^3) \# \#_n \overline{\C P}^2, S) $ and $(- (I \times S^3) \# \#_n \overline{\C P}^2, -S) $. 
\end{rem}

In \cref{two bridge kappa12} and \cref{torus knot calculation}, we provided calculations of $\kappa$ for two bridge knots and a certain class of torus knots. Using \cref{crossing change}, we give more examples. 
\begin{theo}\label{more computation}
 Let $K$ and $K'$ be oriented knots in $S^3$.  Suppose there is a positive crossing change from $K$ to $K'$. (Under this assumption, one has $\sigma(K') = \sigma (K)$ or $\sigma(K') = \sigma (K)-2$. )%\todo{Give a reference}
 \begin{itemize}
     \item[(i)] We also impose $K'$ is a connected sum of two bridge knots. 
     Then, we have 
     \[
     \kappa (K) = \begin{cases}
     -\frac{1}{16} \sigma (K) \text{ or } -\frac{1}{16} \sigma (K)+1 \text{ if } \sigma(K') = \sigma (K) \\
      -\frac{1}{16} \sigma (K)-1 \text{ or } -\frac{1}{16} \sigma (K) \text{ if } \sigma(K') = \sigma (K) -2. 
     \end{cases}
     \]
     \item[(ii)]We also impose $K$ is a connected sum of two bridge knots. 
     Then, we have 
     \[
     \kappa (K') = \begin{cases}
     -\frac{1}{16} \sigma (K') \text{ or } -\frac{1}{16} \sigma (K')-1 \text{ if } \sigma(K') = \sigma (K) \\
      -\frac{1}{16} \sigma (K') \text{ or } -\frac{1}{16}\sigma (K')+1  \text{ if } \sigma(K') = \sigma (K) -2. 
     \end{cases}
     \]
 \end{itemize}
\end{theo}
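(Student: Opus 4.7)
The plan is to combine the two inequalities of \cref{crossing change} with the calculation of $\kappa$ for connected sums of two bridge knots given in \cref{two bridge kappa12}, and then use the Rokhlin-type congruence of \cref{main knot}~(iii) to squeeze $\kappa$ into a finite set of values. Recall that by \cref{main knot}~(iii) the quantity
\[
m(K) := \kappa(K) + \tfrac{1}{16}\sigma(K)
\]
is an integer for every knot $K$. So the task is to bound $m(K)$ (or $m(K')$) from both sides by integers that differ by at most $1$.

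For part~(i), $K'$ is a connected sum of two bridge knots, so by \cref{two bridge kappa12} one has $\kappa(K') = -\tfrac{1}{16}\sigma(K')$, i.e.\ $m(K') = 0$. A single positive crossing change from $K$ to $K'$ fed into the one-sided inequality \cref{crossing change}~(ii) yields
\[
\kappa(K') - \kappa(K) \leq -\tfrac{9}{16}\sigma(K') + \tfrac{9}{16}\sigma(K),
\]
which after substituting $\kappa(K') = -\tfrac{1}{16}\sigma(K')$ and reorganizing gives the lower bound $m(K) \geq \tfrac{1}{2}(\sigma(K') - \sigma(K))$. The absolute-value inequality \cref{crossing change}~(i) with $n=1$ gives the two-sided bound
\[
\tfrac{1}{2}(\sigma(K') - \sigma(K)) - 1 \leq m(K) \leq \tfrac{1}{2}(\sigma(K') - \sigma(K)) + 1.
\]
Inserting the two admissible signature jumps $\sigma(K') - \sigma(K) \in \{0,-2\}$ and intersecting with the lower bound from \cref{crossing change}~(ii) pins down $m(K) \in \{0,1\}$ in the first case and $m(K) \in \{-1,0\}$ in the second case, which is exactly the claim.

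For part~(ii), the same strategy applies with the roles of $K$ and $K'$ essentially reversed. Now $\kappa(K) = -\tfrac{1}{16}\sigma(K)$ by \cref{two bridge kappa12}, so $m(K) = 0$. The inequality \cref{crossing change}~(ii) now becomes an upper bound on $\kappa(K')$, giving $m(K') \leq \tfrac{1}{2}(\sigma(K) - \sigma(K'))$, and \cref{crossing change}~(i) yields a two-sided control on $m(K')$ centered at the same quantity. Splitting into $\sigma(K') = \sigma(K)$ and $\sigma(K') = \sigma(K)-2$ again reduces the possibilities to the two integer values listed in the statement.

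The main obstacle is essentially bookkeeping: the inequalities combine to give a range of length $2$ on an integer-valued quantity, so one must use both the signed inequality (ii) and the integrality from \cref{main knot}~(iii) to rule out one of the three a priori possible integers. The argument is robust enough that it also treats the case of a sequence of positive crossing changes provided one keeps track of $n$ in the inequalities; here we only need $n=1$. No new input from Floer $K$-theory is required beyond the three previously proved statements.
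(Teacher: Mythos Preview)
Your proof is correct and follows essentially the same approach as the paper's: both combine the one-sided bound from \cref{crossing change}~(ii) with the two-sided bound from \cref{crossing change}~(i), substitute the value $\kappa=-\tfrac{1}{16}\sigma$ for the two-bridge side from \cref{two bridge kappa12}, and then invoke the integrality of $\kappa(K)+\tfrac{1}{16}\sigma(K)$ from \cref{main knot}~(iii) to reduce the length-$1$ interval to two integer values. Your introduction of $m(K)=\kappa(K)+\tfrac{1}{16}\sigma(K)$ makes the bookkeeping slightly cleaner than in the paper, but the argument is otherwise identical.
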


\begin{proof}[Proof of \cref{more computation}]
Let $K$ and $K'$ be oriented knots in $S^3$.  Suppose there is a positive crossing change from $K$ to $K'$. 
We first suppose $K'$ is a connected sum of two bridge knots. Then, we have \[
\kappa (K') = -\frac{1}{16}\sigma (K'). 
\]
There two cases: $\sigma(K') = \sigma (K)$ and $\sigma(K') = \sigma (K)-2$. We first assume $\sigma(K') = \sigma (K)$. Then, from \cref{crossing change}, we have 
\[
-1 - \frac{9}{16}\sigma (K') + \frac{9}{16}\sigma (K) \leq  \kappa (K')- \kappa (K) \leq  - \frac{9}{16}\sigma (K') + \frac{9}{16}\sigma (K). 
\]
This implies 
\[
1  - \frac{1}{16}\sigma (K) \geq  \kappa (K) \geq    - \frac{1}{16}\sigma (K). 
\]
On the other hand, \cref{main knot} (iii) implies 
\[
\kappa (K) = 
     -\frac{1}{16} \sigma (K) \text{ or } -\frac{1}{16} \sigma (K)+1. 
\]
 When $\sigma(K') = \sigma (K)-2$, a similar computation implies 
\[
 - \frac{1}{16}\sigma (K) \geq  \kappa (K) \geq    - \frac{1}{16}\sigma (K)-1. 
\]
Again, \cref{main knot} (iii) implies 
\[
\kappa (K) = 
     -\frac{1}{16} \sigma (K) \text{ or } -\frac{1}{16} \sigma (K)-1. 
\]
This completes the proof of (i). The proof of (ii) is similar. 

\end{proof}

%When $g_{X, H=0} (K)=0$, a knot $K$ is called {\it (smoothly) H-slice in a closed 4-manifold $X$}. 

We give a more strong estimate for $\kappa$ using \cref{most general main theo strong SWF spherical kappa}. 
In order to describe such an inequality, we introduce the following notion: 
\begin{defi}
We say that a knot $K$ in $S^3$ is {\it SWF-spherical} if $(\Sigma (K), \mathfrak{t}, \iota_K)$ is SWF-spherical, whose definition is given in \cref{SWF-spherical}. 
\end{defi}

We will give several examples of SWF-spherical knots. 

\begin{theo}
\label{most general main theo strong SWF spherical kappa knot}
Let $K_1, \cdots , K_n$ be oriented knots in $S^3$. Suppose that $K_1, \cdots , K_n$ are  SWF-spherical. Put $K:= K_1 \# \cdots \# K_n$.
Let $(W,S)$ be a smooth compact oriented cobordism with $H_1(W)=0$ from $(S^3, U)$ to $(S^3, K)$.  We also impose that the homology class $[S]$ of $S$ is divisible by $2$ and $PD(w_2(W)) = [S]/2 \operatorname{mod} 2$.
Set
\[
N(W, S, K, K'):=
-\frac{1}{8}\sigma (W) + \frac{1}{32}[S]^2 + \frac{1}{16}\sigma (K)  - \kappa (K).
\]
Then $N(W, S, K)$ is an integer, and
if we have
\[
N(W, S, K) \geq 2 \quad \text{and} \quad 
 b^+(W)+ g(S) - \frac{1}{4}[S]^2 + \frac{1}{2}\sigma (K) \geq 1,
\] 
then the inequality
\begin{align}
\label{eq: main rel 108 general strong}
\begin{split}
&-\frac{\sigma(W)}{8} + \frac{9}{32}[S]^2-\frac{9}{16}\sigma(K)  +A(N(W, S, K, K')) \\
&\leq b^+(W) + g(S) + \kappa(K)
\end{split}
\end{align}
holds, where $A$ is the function defined in \eqref{A-definition}. 
\end{theo}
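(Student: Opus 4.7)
The plan is to reduce the statement to the stronger 10/8-inequality for equivariant connected sums of SWF-spherical triples established in \cref{theo: SWF spherical connected sum}, applied to the double branched cover of $W$ along $S$.

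First, I would form the double branched cover $\pi : \Sigma(S) \to W$. Since $[S]$ is divisible by $2$ and $PD(w_2(W)) = [S]/2 \bmod 2$, the arguments in \cref{branched spin} and \cref{homological cal of W} show that $\Sigma(S)$ admits a unique spin structure $\fraks$ preserved by the covering involution $\iota$, and that $b_1(\Sigma(S)) = 0$. The boundary of $(\Sigma(S),\fraks,\iota)$ is canonically identified with the disjoint union of $(S^3,\frakt,\iota_U)$ (the branched cover of the unknot, which is SWF-spherical by \cref{ex: S3 case}) and the equivariant connected sum $(\Sigma(K),\frakt,\iota_K) = \#_{i=1}^n (\Sigma(K_i),\frakt,\iota_{K_i})$, which by hypothesis is a connected sum of SWF-spherical triples. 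Thus $\Sigma(S)$ is a spin cobordism of precisely the type addressed by \cref{theo: SWF spherical connected sum}.

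Next, I would apply the homological formulas of \cref{branched spin}, \cref{homological cal of W} and \cref{b+iotarem}, together with $\sigma(U)=0$, to obtain
\begin{align*}
\sigma(\Sigma(S)) &= 2\sigma(W) - \tfrac{1}{2}[S]^2 + \sigma(K), \\
b^+(\Sigma(S)) &= 2b^+(W) + g(S) - \tfrac{1}{4}[S]^2 + \tfrac{1}{2}\sigma(K), \\
b^+_\iota(\Sigma(S)) &= b^+(W).
\end{align*}
Using $\kappa(S^3,\frakt,\iota_U)=0$ (\cref{ex: S3 case}) and $\kappa(\Sigma(K),\frakt,\iota_K)=\sum_i \kappa(K_i)=\kappa(K)$ via the connected-sum formula \cref{cor: connected sum formula kappa} (applicable because SWF-spherical triples are locally DSWF-spherical), the quantity
\[
N(\Sigma(S), S^3, \frakt, \iota_U, \Sigma(K), \frakt, \iota_K) = -\tfrac{\sigma(\Sigma(S))}{16} + \kappa(S^3) - \kappa(\Sigma(K))
\]
of \cref{theo: SWF spherical connected sum} reduces precisely to $N(W,S,K,K')$ as defined in the statement, and its integrality follows from \cref{rem: integrality}. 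The hypothesis $b^+(\Sigma(S))-b^+_\iota(\Sigma(S)) \geq 1$ translates into $b^+(W)+g(S)-[S]^2/4+\sigma(K)/2 \geq 1$, which is exactly the second standing assumption.

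Finally, under $N \geq 2$, \cref{theo: SWF spherical connected sum} yields
\begin{align*}
-\tfrac{\sigma(\Sigma(S))}{16} + A(N) \leq b^+(\Sigma(S)) - b^+_\iota(\Sigma(S)) + \kappa(K),
\end{align*}
and substituting the three displayed identities above and simplifying produces exactly the asserted inequality. There is no substantive analytic obstacle — the proof is a direct transport of \cref{theo: SWF spherical connected sum} across the branched cover — the only point requiring care is confirming that the knot connected sum $K = K_1 \# \cdots \# K_n$ lifts under branched covering to the equivariant connected sum of the $(\Sigma(K_i),\iota_{K_i})$ along points of their fixed-point sets, in the sense of \cref{Connected sum equi}; this is standard, since the fixed locus of $\iota_{K_i}$ is the knot $K_i$ itself and the knot sum operation can be performed along small equivariant balls centered at a point of each $K_i$.
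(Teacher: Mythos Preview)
Your approach is correct and is exactly the paper's own argument: the paper's proof reads in full, ``We just apply \cref{theo: SWF spherical connected sum} to the double branched covers of $S$ as in the proof of \cref{main knot},'' and you have simply unpacked those words, correctly identifying the branched cover as a spin cobordism between equivariant connected sums of SWF-spherical triples and transporting the inequality via the formulas of \cref{branched spin}, \cref{homological cal of W}, and \cref{b+iotarem}. Your final remark about the knot connected sum lifting to the equivariant connected sum of the branched covers is the one nontrivial bookkeeping point, and it is indeed handled by the discussion around \cref{Connected sum equi}.
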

\begin{proof}
We just apply \cref{theo: SWF spherical connected sum} to the double branched covers of $S$ as in the proof of \cref{main knot}. 
\end{proof}
For connected sums of two bridge knots and a certain class of torus knots, a stronger inequality than \eqref{ineq1} follows from \cref{most general main theo strong SWF spherical kappa knot}: 
\begin{theo}
\label{most general main theo strong SWF spherical kappa1}
Let $K$ be a knot in $S^3$, which is obtained by a connected sum of knots that appeared in \cref{two bridge kappa12} and \cref{torus knot calculation}. 
Let $X$ be a smooth closed oriented 4-manifold with $H_1(X; \Z )=0$.   Suppose $x$ is a second homology class of $X$, which is divisible by $2$ and $x/2 \equiv PD (w_2(X))$.
Set
\[
N(X, x, K):=
-\frac{1}{8}\sigma (X) + \frac{1}{32}x^2 - \frac{1}{16}\sigma (K)  - \kappa (K).
\]
Then $N(W, x, K)$ is an integer, and
if we have
\[
N(X, x, K) \geq 2 \quad \text{and} \quad 
 b^+(X)+ g_{X,x}(K) - \frac{1}{4}x^2 + \frac{1}{2}\sigma (K)  \geq 1,
\] 
then the inequality
\begin{align}
\label{eq: main rel 108 general strong}
\begin{split}
&-\frac{\sigma(X)}{8} + \frac{9}{32}x^2-\frac{9}{16}\sigma(K)  +A(N(W, x, K)) \\
&\leq b^+(X) + g_{X,x}(K) + \kappa(K)
\end{split}
\end{align}
holds, where $A: \Z \to \{1,2,3\} $ is the function defined by 
\begin{align*}
A(N) = 
\begin{cases}
&1, \quad N=0,2 \mod 8\\
&2, \quad N=1,3,4,5,7 \mod 8\\
&3, \quad N=6 \mod 8.
\end{cases}    
\end{align*}
\end{theo}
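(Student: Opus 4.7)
The plan is to reduce \cref{most general main theo strong SWF spherical kappa1} to \cref{theo: SWF spherical connected sum} by passing to the double branched cover of a minimal-genus surface and then carefully matching the quantities.

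\textbf{First,} I would choose a properly and smoothly embedded oriented connected surface $S$ in $X \setminus \operatorname{int} D^4$ with $\partial S = K$, $[S] = x$ and $g(S) = g_{X,x}(K)$. To convert this one-boundary problem into a cobordism between two three-manifolds (needed since \cref{theo: SWF spherical connected sum} is stated for cobordisms), I would excise a second small open ball $D^4_2 \subset X \setminus (\operatorname{int}D^4 \cup S)$, set $W_0 = X \setminus (\operatorname{int} D^4 \sqcup \operatorname{int} D^4_2)$, and cap the new boundary component by a standard smooth two-disc $D \subset D^4_2$ bounded by an unknot $U$. Then $(W_0, S_0 := S \cup D)$ is an oriented smooth knot cobordism from $(S^3, U)$ to $(S^3, K)$ with $[S_0] = x$, $g(S_0) = g_{X,x}(K)$, $H_1(W_0;\Z)=0$, and $PD(w_2(W_0)) \equiv [S_0]/2 \pmod 2$. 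By \cref{branched spin}, the double branched cover $\Sigma(S_0) \to W_0$ carries a unique spin structure preserved by the covering involution $\iota$, producing a smooth spin cobordism-with-involution from $(S^3, \iota_U)$ to $(\Sigma(K), \iota_K)$, where $\iota_U$ is the complex conjugation.

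\textbf{Second,} I would compute homological invariants using \cref{homological cal of W} and \cref{b+iotarem}:
\begin{align*}
\sigma(\Sigma(S_0)) &= 2\sigma(X) - \tfrac{1}{2}x^2 + \sigma(K), \\
b^+(\Sigma(S_0)) - b^+_\iota(\Sigma(S_0)) &= b^+(X) + g_{X,x}(K) - \tfrac{1}{4}x^2 + \tfrac{1}{2}\sigma(K),
\end{align*}
together with $b_1(\Sigma(S_0)) = 0$.

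\textbf{Third,} since $K = K_1 \# \cdots \# K_n$ with each summand from \cref{two bridge kappa12,torus knot calculation}, the branched cover $\Sigma(K)$ is the equivariant connected sum of the corresponding lens spaces $\pm L(p,q)$ with complex conjugation and Seifert homology spheres $\pm\Sigma(2, p, q)$ with the canonical odd involution. Each summand is SWF-spherical by \cref{ex: lens} and \cref{theo:Seifert cal most general}, and $(S^3, \iota_U)$ is SWF-spherical by \cref{ex: S3 case}. Hence \cref{theo: SWF spherical connected sum} applies to $W = \Sigma(S_0)$ as a cobordism from $Y_0 = S^3$ to $Y_1 = \Sigma(K)$ (with $\kappa(Y_0) = 0$ and $\kappa(Y_1) = \kappa(K)$). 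Substituting $\sigma(\Sigma(S_0))$ into the definition of $N$ in \cref{theo: SWF spherical connected sum} gives
\[
N(W, Y_0, \iota_0, Y_1, \iota_1) = -\tfrac{1}{8}\sigma(X) + \tfrac{1}{32}x^2 - \tfrac{1}{16}\sigma(K) - \kappa(K) = N(X,x,K),
\]
so the hypothesis $N \geq 2$ of \cref{theo: SWF spherical connected sum} matches the hypothesis on $N(X,x,K)$, and $b^+(W)-b^+_\iota(W) \geq 1$ matches the second numerical hypothesis. The refined inequality supplied by \cref{theo: SWF spherical connected sum}, after moving the terms $-\tfrac{1}{4}x^2 + \tfrac{1}{2}\sigma(K)$ from the right to the left, collapses to the desired inequality.

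\textbf{Expected obstacle.} The conceptual content is minimal once \cref{theo: SWF spherical connected sum} is in hand; the delicate step is sign-bookkeeping. In particular, the cobordism must be oriented so as to go from $(S^3, \iota_U)$ to $(\Sigma(K), \iota_K)$ rather than the opposite direction, since the formula in \cref{homological cal of W} for $\sigma$ is asymmetric in $\sigma(K), \sigma(K')$, and only this choice makes the quantity $N$ coming out of \cref{theo: SWF spherical connected sum} agree with the $N(X,x,K)$ in the statement. The other point to verify is that the hypothesis of \cref{theo: SWF spherical connected sum} allows equivariant connected sums of SWF-spherical triples rather than a single SWF-spherical one---this is exactly what the theorem is set up to handle.
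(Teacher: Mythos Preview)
Your proposal is correct and follows essentially the same route as the paper. The paper presents \cref{most general main theo strong SWF spherical kappa1} as an immediate consequence of \cref{most general main theo strong SWF spherical kappa knot}, whose entire proof reads ``We just apply \cref{theo: SWF spherical connected sum} to the double branched covers of $S$ as in the proof of \cref{main knot}.'' You have spelled out exactly those details: pass to the branched double cover, invoke the SWF-spherical property of the summands via \cref{ex: lens} and \cref{theo:Seifert cal most general}, and match the numerical invariants using \cref{homological cal of W} and \cref{b+iotarem}.

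One small glitch in your write-up: you take $D^4_2$ disjoint from $S$ and then try to set $S_0 = S \cup D$ with $D \subset D^4_2$, but $D^4_2$ has been excised from $W_0$, so $D$ does not live in $W_0$ (and even if it did, $S_0$ would be disconnected, which conflicts with the connectedness hypothesis used in the branched-cover computations). The clean fix is to choose $D^4_2$ centered at an interior point of $S$; then $S_0 := S \cap W_0$ is automatically a connected genus-$g_{X,x}(K)$ surface cobordism from $U$ to $K$, and the rest of your argument goes through verbatim.
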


\begin{proof}[The proof of \cref{connected sum}]
This is a corollary of \cref{cor: connected sum formula kappa}, \cref{ex: lens} and \cref{theo:Seifert cal most general}. 
\end{proof}

\subsubsection{Results on $\#_N S^2\times S^2$}

It is proven in \cite{Sch10} that for any knot $K$ in $S^3$ whose Arf invariant $\operatorname{Arf}(K)$ is zero, there is a positive integer $N$ such that $K$ is smoothly H-slice in $\#_N S^2\times S^2$.  This result enables us to define  knot concordance invariants 
\[
\mathrm{sn}(K):= \min \{ N | \text{ $K$ is smoothly H-slice in $\#_N S^2\times S^2$} \} 
\]
and 
\[
\mathrm{sn}^{\mathrm{Top}} (K):= \min \{ N | \text{ $K$ is topologically H-slice in $\#_N S^2\times S^2$} \} 
\]
when $\operatorname{Arf}(K)$ is zero. In \cite{CN20}, $\mathrm{sn}(K)$ is defined and called {\it stabilizing number}. As it is asked in \cite[Question 1.4]{CN20}, the problem whether there exists a knot $K$ such that 
\[
0<\mathrm{sn}^{\mathrm{Top}} (K) < \mathrm{sn}(K)
\]
or not was open. We will give an answer. 
%The topological version $s^{\operatorname{Top}}_H (K)$ of $s_n (K)$ is also defined by a similar way. 
Our invariant $\kappa (K)$ can be used to give a lower bound on $s_n(K)$.

\begin{cor}[\cref{H-sliceing number1}]\label{H-sliceing number2}For any knot $ K \subset S^3$ with $\operatorname{Arf}(K) =0$, we have 
    \[
        -\frac{9 }{16}  \sigma (K)  - \kappa (K) \leq  \mathrm{sn}(K).
        \]
Suppose $K$ is obtained by a connected sum of knots appeared in \cref{two bridge kappa12} and \cref{torus knot calculation}. 
If we have
\[
- \frac{1}{16}\sigma (K)  - \kappa (K) \geq 2 \quad \text{and} \quad 
 \mathrm{sn}(K)+ \frac{1}{2}\sigma (K)  \geq 1,
\] 
then the stronger inequality
\begin{align}
\label{stabilizing number stronger ineq}
\begin{split}
&-\frac{9}{16}\sigma(K)   -\kappa (K)+A(- \frac{1}{16}\sigma (K)  - \kappa (K))  \leq \mathrm{sn}(K) 
\end{split}
\end{align}
holds, where $A: \Z \to \{1,2,3\} $ is the function used in \cref{most general main theo strong SWF spherical kappa}.
\end{cor}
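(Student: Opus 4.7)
The plan is to deduce both inequalities on $\mathrm{sn}(K)$ directly from \cref{main knot}(v) (for the basic inequality) and \cref{theo: SWF spherical connected sum} (for the refined one), applied to a cobordism manufactured from any smooth H-slicing of $K$ in $\#_N S^2 \times S^2$.

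For the first inequality, I would argue as follows. From a smooth H-slicing of $K$ in $X_N := \#_N S^2 \times S^2$ we obtain a properly and smoothly embedded null-homologous disk $D \subset X_N \setminus \operatorname{int} D^4$ with $\partial D = K$; removing a small open $4$-ball around an interior point of $D$ produces a cobordism $W$ from $(S^3, U)$ to $(S^3, K)$ in which $D$ becomes an embedded annulus $S$ from the unknot $U$ to $K$. By construction $b^+(W) = N$, $\sigma(W) = 0$, $H_1(W;\Z) = 0$, $w_2(W) = 0$, $g(S) = 0$ and $[S] = 0$, so the divisibility and $w_2$ hypotheses of \cref{main knot}(v) are trivially verified. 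Substituting these values together with $\sigma(U) = \kappa(U) = 0$ into \eqref{ineq1} with $K_0 = U$, $K_1 = K$ collapses every term involving $\sigma(W)$, $[S]^2$ and $g(S)$, leaving $-\tfrac{9}{16}\sigma(K) \leq N + \kappa(K)$; minimizing over admissible $N$ gives the first inequality.

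For the refined inequality, I would specialize to $K = K_1 \# \cdots \# K_n$ with each $K_i$ coming from \cref{two bridge kappa12,torus knot calculation}, so that each triple $(\Sigma(K_i), \frakt, \iota_{K_i})$ is SWF-spherical by \cref{ex: lens,theo:Seifert cal most general}. Passing to the double branched cover $\widetilde{W}$ of $W$ along $S$, which by \cref{branched spin} is a compact spin cobordism from $S^3$ to the equivariant connected sum $\Sigma(K) = \#_i \Sigma(K_i)$ with $b_1 = 0$, the formulas of \cref{branched spin,homological cal of W,b+iotarem} together with $\sigma(W) = [S]^2 = g(S) = 0$ produce $\sigma(\widetilde{W}) = \sigma(K)$, $b^+(\widetilde{W}) = 2N + \tfrac12\sigma(K)$ and $b^+_{\iota}(\widetilde{W}) = N$. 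Hence the quantity $N(\widetilde{W}, S^3, \iota_U, \Sigma(K), \iota_K)$ of \cref{theo: SWF spherical connected sum} equals $-\tfrac{1}{16}\sigma(K) - \kappa(K)$, and $b^+(\widetilde{W}) - b^+_{\iota}(\widetilde{W}) = N + \tfrac12\sigma(K)$, so the two assumptions of \cref{theo: SWF spherical connected sum} translate exactly into the two stated hypotheses. Its conclusion, after moving the $N + \tfrac12\sigma(K)$ term, is precisely \eqref{stabilizing number stronger ineq}.

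The only subtle point is that \cref{theo: SWF spherical connected sum} is literally stated under the hypothesis that the connected-sum target itself is SWF-spherical, whereas $\Sigma(K) = \#_i \Sigma(K_i)$ is only guaranteed to be locally DSWF-spherical (\cref{defi: DSWF-spherical}) via \cref{locally DSWF-spherical conn sum}. This is not a genuine obstacle: the proof of \cref{theo: SWF spherical connected sum} factors the connected-sum cobordism through an equivariant homology cobordism to the disjoint union $\bigsqcup_i \Sigma(K_i)$, which \emph{is} SWF-spherical in the disjoint-union sense, and invokes only the local equivalence class of $DSWF_G$ through the Borsuk--Ulam-type input \cref{lem: from weak BU to strong BU}, so the argument applies verbatim.
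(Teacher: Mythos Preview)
Your proof is correct and follows essentially the same route as the paper. The paper's proof simply says: apply \cref{main knot} with $X=\#_N S^2\times S^2$, $[S]=0$, $g(S)=0$ for the first inequality, and invoke \cref{most general main theo strong SWF spherical kappa} for the second; you have unpacked this by explicitly constructing the cobordism $(W,S)$ from $(S^3,U)$ to $(S^3,K)$, computing the invariants of the double branched cover via \cref{branched spin,b+iotarem}, and then appealing to \cref{theo: SWF spherical connected sum}, which is exactly what the paper's \cref{most general main theo strong SWF spherical kappa knot} does in its own proof. Your final paragraph on the redundant-looking SWF-spherical hypothesis on the connected sum in \cref{theo: SWF spherical connected sum} is a fair observation; as you note, the proof there passes through the disjoint union and local equivalence, so the argument goes through regardless.
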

\begin{proof}[Proof of \cref{H-sliceing number2}]
We just apply \cref{main knot} to the case $X= \#_N S^2\times S^2$, $[S]=0 \in H_2(S^2\times S^2)$ and $g(S)=0$ and obtain the inequality: 
\[
-\frac{1}{16} \sigma(K) \leq N  + \frac{1}{2} \sigma(K) + \kappa (K) .   
\]
The second statement follows from \cref{most general main theo strong SWF spherical kappa}. 
This completes the proof. 
\end{proof}

\begin{theo}
\label{slicing number different}
For a positive integer $l$ and a positive integer $m$, we have the following estimates:
\begin{align*}
    9 ml  &\leq \mathrm{sn}( \#_m T(3,12l-1)) ,\\ 
    9 ml  &\leq \mathrm{sn}( \#_m T(3,12l+1)), \\
   9ml- 4m &\leq \mathrm{sn}( \#_m T(3,12l-5)),\text{ and }\\ 
   9ml- 5m &  \leq \mathrm{sn}( \#_m T(3,12l-7)).  
\end{align*}
On the other hand, we also have 
\begin{align*}
    &\mathrm{sn}^{\mathrm{Top}} ( \#_{m }T(3,12l-1)) =  8ml, \\ 
     8ml \leq &\mathrm{sn}^{\mathrm{Top}} ( \#_{m }T(3,12l+1)) \leq 8ml +m,  \\ 
     8ml-4m \leq &\mathrm{sn}^{\mathrm{Top}} ( \#_{m }T(3,12l-5)) \leq  8ml -3m, \text{ and } \\
      &\mathrm{sn}^{\mathrm{Top}} ( \#_{m }T(3,12l-7))=  8ml - 4m.
\end{align*}
In particular, if $K$ is one of the following: $T(3,12l-1)$, $T(3,12(l+1)+1)$, $T(3,12(l+1)-5)$, and $T(3,12(l+1)-7)$ for $l>0$, and if we set $K_n = \#_{2n} K$ for $n>0$,
then we have
\[
0 < \mathrm{sn}^{\mathrm{Top}} (K_n)< \mathrm{sn}(K_n)
\]
for all $n$ and the sequence $\{K_n\}_{n=1}^{\infty}$ satisfies that
\[
\lim_{n\to \infty }\left(\mathrm{sn}(K_n) - \mathrm{sn}^{\mathrm{Top}} ( K_n)\right) = +\infty.  
\]
\end{theo}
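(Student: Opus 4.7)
The plan is to combine the smooth lower bound supplied by \cref{H-sliceing number1}, namely $\mathrm{sn}(K)\geq -\tfrac{9}{16}\sigma(K)-\kappa(K)$, with classical estimates in the topological category. All the necessary pieces are already in hand: by \cref{cor: kappa for torus knot 6npm1} one has $\kappa(T(3,12l\pm 1))=0$, $\kappa(T(3,12l-5))=-\tfrac{1}{2}$ and $\kappa(T(3,12l-7))=\tfrac{1}{2}$, and the connected-sum formula \cref{connected sum} promotes these to $\kappa(\#_m T(3,q)) = m\,\kappa(T(3,q))$. The signatures follow from the standard Brieskorn-type identity $\sigma(T(3,6k\pm 1))=-8k$: for $q=12l\pm 1$ one has $\sigma(T(3,q))=-16l$, while for $q=12l-5$ or $q=12l-7$ one has $\sigma(T(3,q))=-16l+8$.

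Substituting these values into \cref{H-sliceing number1} gives the four asserted smooth lower bounds by a one-line computation. For the topological lower bound I will invoke the Kauffman--Taylor inequality in the locally flat 4-manifold $W_N=\#_N(S^2\times S^2)\setminus\operatorname{int}(D^4)$, which has signature zero and $b_2=2N$; any null-homologous locally flat disk bounded by $K$ in $W_N$ forces $|\sigma(K)|\leq 2N$, so $\mathrm{sn}^{\mathrm{Top}}(K)\geq |\sigma(K)|/2$. For the topological upper bound I will use the standard surgery argument converting a locally flat genus-$g$ surface bounded by $K$ in $D^4$ into a locally flat null-homologous disk in $\#_g(S^2\times S^2)\setminus\operatorname{int}(D^4)$---carried out by surgering along a symplectic basis of $H_1$ of the surface---combined with sub-additivity of $g_4^{\mathrm{Top}}$ under connected sum. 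Feeding in the Baader--Banfield--Lewark computation $g_4^{\mathrm{Top}}(T(3,q))=\lceil 2q/3\rceil$ of \cite{BBL20} then yields the claimed upper bounds $8ml$, $8ml+m$, $8ml-3m$ and $8ml-4m$.

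The final assertion that $\mathrm{sn}(K_n)-\mathrm{sn}^{\mathrm{Top}}(K_n)\to\infty$ is immediate from direct subtraction of the smooth lower bound and the topological upper bound with $m=2n$: in each of the four families the difference is at least $2nl$. The index shift $l\mapsto l+1$ in the second, third and fourth cases is precisely what is needed so that this lower bound on the difference is strictly positive already at $l=1$, absorbing the rounding defect coming from the $\lceil\cdot\rceil$ in the Baader--Banfield--Lewark formula. I expect no genuine obstacle: all of the substantive input---the $\kappa$ calculation and the 10/8-type inequality for knots---has been done in the preceding sections; what remains is careful bookkeeping of the four arithmetic cases modulo $12$, and verifying once that the combinatorics of the four residue classes do indeed line up with the stated numerology.
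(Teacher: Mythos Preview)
Your proposal is correct and follows essentially the same route as the paper. The paper packages the smooth lower bounds separately as \cref{estimate main s2}, obtains the topological upper bound $\mathrm{sn}^{\mathrm{Top}}(K)\le g_4^{\mathrm{Top}}(K)$ by citing \cite[Theorem~5.15]{CN20} (which is precisely your ``standard surgery argument''), and derives the topological lower bound from the Tristram--Levine/$G$-signature inequality (your Kauffman--Taylor bound). The only ingredient you leave implicit is the verification that $\operatorname{Arf}(T(3,6n\pm1))=0$, which the paper records as \cref{lem: torus Arf}; this is needed for $\mathrm{sn}^{\mathrm{Top}}$ to be defined and for the surgery-to-disk argument to apply, so you should state it.
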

The estimates for $\mathrm{sn}^{\mathrm{Top}}$ in \cref{slicing number different} follow from results by \cite[Theorem 1]{BBL20}, \cite[Theorem 5.15]{CN20},
and the lower bounds on $sn$ in \cref{slicing number different} shall be deduced from \cref{torus knot calculation,H-sliceing number1}. Also, using \eqref{stabilizing number stronger ineq}, we obtain inequalities which are stronger than the lower bounds  of $sn$ in \cref{slicing number different}. For details, see \cref{more refined ineq for sn}.

\begin{prop}\label{estimate main s2}For an even integer $m$ and a psoitive integer $l$,
\begin{itemize}
    \item $9 ml  \leq \mathrm{sn}( \#_m T(3,12l-1))$,
    \item $9 ml  \leq \mathrm{sn}( \#_m T(3,12l+1))$,
    \item $9ml- 5m   \leq \mathrm{sn}( \#_m T(3,12l-7)) $, and
    \item $9ml- 4m  \leq \mathrm{sn}( \#_m T(3,12l-5)) $. 
\end{itemize}
\end{prop}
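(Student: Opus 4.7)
The plan is to apply \cref{H-sliceing number1} (which yields $-\tfrac{9}{16}\sigma(K)-\kappa(K)\le \mathrm{sn}(K)$ for any knot $K$ with $\operatorname{Arf}(K)=0$) directly to each of the four connected sums $K_m=\#_m T(3,12l+r)$ for $r\in\{-7,-5,-1,+1\}$. Since each quantity is additive over connected sum, this reduces the proposition to computing $\sigma(T(3,12l+r))$ and $\kappa(T(3,12l+r))$ and checking that $K_m$ has trivial Arf invariant when $m$ is even.

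First I would assemble the classical signature values
\[
\sigma(T(3,12l-1))=\sigma(T(3,12l+1))=-16l,\qquad
\sigma(T(3,12l-5))=\sigma(T(3,12l-7))=-16l+8,
\]
which are standard for torus knots $T(3,q)$ and may be obtained, for example, from the recursion for the Tristram--Levine signature or directly from Brieskorn's formula. Additivity of the signature under connected sum yields $\sigma(K_m)=m\sigma(T(3,12l+r))$ in each case.

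Next I would import the $\kappa$-values
\[
\kappa(\#_m T(3,12l-1))=0,\quad
\kappa(\#_m T(3,12l+1))=0,\quad
\kappa(\#_m T(3,12l-5))=-\tfrac{m}{2},\quad
\kappa(\#_m T(3,12l-7))=\tfrac{m}{2},
\]
which have already been recorded in \cref{cor: kappa for torus knot 6npm1} as consequences of \cref{torus knot calculation} and the connected sum formula \cref{connected sum}. Substituting these into $-\tfrac{9}{16}\sigma(K_m)-\kappa(K_m)$ gives respectively $9ml$, $9ml$, $9ml-4m$ and $9ml-5m$, which are exactly the claimed lower bounds.

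The only nonobvious point is that $\mathrm{sn}(K_m)$ must be defined in order for \cref{H-sliceing number1} to apply; this is where the parity hypothesis on $m$ enters. A direct inspection using $\operatorname{Arf}(T(p,q))=0$ iff $pq\equiv\pm1\pmod 8$ shows that $\operatorname{Arf}(T(3,12l\pm1))=\operatorname{Arf}(T(3,12l-7))=1$ and $\operatorname{Arf}(T(3,12l-5))=0$; hence for each of the four knots the Arf invariant of $K_m$ vanishes whenever $m$ is even, so \cite{Sch10} guarantees that $\mathrm{sn}(K_m)$ is finite and \cref{H-sliceing number1} applies. There is no serious obstacle here; the argument is essentially a bookkeeping exercise combining the already-established formulas.
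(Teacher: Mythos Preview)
Your approach is essentially identical to the paper's: apply \cref{H-sliceing number1}, plug in the signature and $\kappa$ values, and read off the bounds. The signature and $\kappa$ computations are correct and match \cref{cor: kappa for torus knot 6npm1}.

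However, your Arf invariant computation is wrong. The formula ``$\operatorname{Arf}(T(p,q))=0$ iff $pq\equiv\pm1\pmod 8$'' is not correct; indeed, for coprime \emph{odd} $p,q$ one has $\Delta_{T(p,q)}(-1)=1$, so $\operatorname{Arf}(T(p,q))=0$ always in this case. In particular all four of $T(3,12l-7)$, $T(3,12l-5)$, $T(3,12l-1)$, $T(3,12l+1)$ already have trivial Arf invariant (this is \cref{lem: torus Arf} in the paper), and the evenness hypothesis on $m$ plays no role. Your conclusion that $\operatorname{Arf}(K_m)=0$ for even $m$ is of course still true, so the proof goes through, but the justification you give for it is incorrect and should be replaced by the simple observation that each summand has $\operatorname{Arf}=0$.
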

\begin{proof}
It is well-known that 
\begin{align}
\label{eq: signature of torus knots}
\sigma ( \#_m T(3,6n-1)) =- 8nm \text{ and } \sigma (\#_m T(3,6n+1))= -8nm    
\end{align}
for positive integers $m$ and $n$.
(For example, see \cite{MO07}. ) 
%Note that our sign convention of signatures and that in \cite{MO07} are different.
%In our convention, we have $\sigma (T(2,3)) =-2$, but the convention in Manolescu--Owens~\cite{MO07} is that $\sigma (T(2,3)) =2$. )
Putting $n=2l$, it follows from \eqref{eq: signature of torus knots} that
\[
\sigma ( \#_m T(3,12l-1))=\sigma ( \#_m T(3,12l+1)) = -16ml. 
\]
So our inequality in \cref{H-sliceing number1} and \cref{torus knot calculation} implies that
\[
9 ml  \leq \mathrm{sn}( \#_m T(3,12l-1)) \text{ and } 9 ml  \leq \mathrm{sn}( \#_m T(3,12l+1)).
\]
We next put $n=2l-1$. 
Then  it follows from \eqref{eq: signature of torus knots} and \cref{H-sliceing number1} that
\[
\sigma ( \#_m T(3,12l-7))  = \sigma ( \#_m T(3,12l-5)) =- 16ml + 8m. 
\]
So \cref{main knot} and \cref{torus knot calculation} imply that
\[
9 ml - \frac{9}{2}m - \frac{1}{2}m =9ml- 5m  \leq \mathrm{sn}( \#_m T(3,12l-7)) 
\]
and 
\[
9 ml - \frac{9}{2 }m +  \frac{1}{2}m =9ml- 4m  \leq \mathrm{sn}( \#_m T(3,12l-5)) . 
\]
\end{proof}

\begin{lem}
\label{lem: torus Arf}
For $n \geq 1$, we have $\operatorname{Arf}(T(3, 6n\pm1))=0$.
\end{lem}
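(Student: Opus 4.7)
The plan is to invoke the classical Murasugi formula, which characterizes the Arf invariant through the knot determinant: for any knot $K$, one has $\operatorname{Arf}(K) = 0$ if and only if
\[
\Delta_K(-1) \equiv \pm 1 \pmod 8,
\]
where $\Delta_K$ denotes the Alexander polynomial. For a torus knot $T(p,q)$ with $\gcd(p,q) = 1$, I will use the standard product expression
\[
\Delta_{T(p,q)}(t) \;=\; \frac{(t^{pq}-1)(t-1)}{(t^p-1)(t^q-1)},
\]
specialized to $p = 3$ and $q = 6n \pm 1$.

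The key observation is that both $p = 3$ and $q = 6n \pm 1$ are odd (and automatically coprime, since $\gcd(3, 6n \pm 1) = 1$), hence $pq$ is odd as well. Consequently each of the four factors $t^{pq}-1$, $t-1$, $t^p-1$, $t^q-1$ evaluates to $-2$ at $t = -1$, and the quotient equals $1$. Since $1 \equiv 1 \pmod 8$, Murasugi's criterion gives $\operatorname{Arf}(T(3, 6n \pm 1)) = 0$. There is no substantive obstacle here: the argument is a one-line specialization of two classical formulas, and the only points to record are the parity of $6n \pm 1$ and the coprimality with $3$, both immediate.
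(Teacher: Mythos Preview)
Your proof is correct and follows essentially the same approach as the paper: both invoke the Murasugi criterion $\operatorname{Arf}(K)=0 \iff \Delta_K(-1)\equiv\pm1\pmod 8$ and evaluate the torus-knot Alexander polynomial at $t=-1$ to obtain $1$. The only cosmetic difference is that the paper writes the Alexander polynomial in the simplified form $\Delta_{T(3,q)}(t)=(t^{2q}+t^q+1)/(t^2+t+1)$, whereas you use the general product expression $\frac{(t^{pq}-1)(t-1)}{(t^p-1)(t^q-1)}$; these are equivalent, and your parity observation that all four factors become $-2$ is a clean way to see the result.
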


\begin{proof}
Recall a formula of the Arf invariant in terms of the Alexander polynomial:
for a general knot $K$, we have $\operatorname{Arf}(K) = 0$ if $\Delta_K(-1)=\pm1 \mod 8$.
Since the Alexander polynomial of $T(3,q)$ for $q$ coprime to $3$ is known to be
\[
\Delta_{T(3,q)}(t) = \frac{t^{2q}+t^q+1}{t^2+t+1},
\]
we have $\Delta_{T(3,6n\pm1)}(-1)=1$, and thus obtain $\operatorname{Arf}(T(3, 6n\pm1))=0$.
\end{proof}

\begin{proof}[Proof of \cref{slicing number different}]
%Let $K_n$ be the $n$-fold connected sum of $T(3,5)^*$. 
From \cref{estimate main s2}, we obtain the desired lower bounds on $sn$, and in what follows we give bounds on $\mathrm{sn}^{\mathrm{Top}}$.
On the other hand, in \cite[Theorem 5.15]{CN20}, we have 
\begin{align}
\label{eq: sn gtop}
\mathrm{sn}^{\mathrm{Top}} (K) \leq g_4^{Top} (K)      
\end{align}
for any knot $K$ with $\operatorname{Arf}(K) =0$.
Since it is proven in \cite[Theorem 1]{BBL20} that, for a positive integer $m$ coprime to $3$, 
\[
g^{\mathrm{Top}}_4 (T(3,m) ) = \left\lceil \frac{2m}{3} \right\rceil , 
\]
we have 
\[
g^{\mathrm{Top}}_4 ( T(3,6n-1))= 4n \text{ and }  g^{\mathrm{Top}}_4 ( T(3,6n+ 1))= 4n+1, 
\]
where $g^{\mathrm{Top}}_4$ is the topological slice genus. 
This proves
\begin{align}
\label{g4top6npm1}
g^{\mathrm{Top}}_4 (\#_m T(3,6n-1))\leq  4nm ,\quad  g^{\mathrm{Top}}_4 (\#_m T(3,6n+ 1)) \leq 4nm + m .     
\end{align}
In particular, 
\[
g^{\mathrm{Top}}_4 ( \#_{m }T(3,12l-1)) \leq 8lm,\quad  g^{\mathrm{Top}}_4 ( \#_{m }T(3,12l+1)) \leq 8lm+ m 
\]
and 
\[
g^{\mathrm{Top}}_4 ( \#_{m }T(3,12l-7)) \leq 8lm - 4m, \quad g^{\mathrm{Top}}_4 ( \#_{m }T(3,12l-5)) \leq 8lm - 3m. 
\]
The desired upper bounds on $\mathrm{sn}^{\mathrm{Top}}$ follows from this combined with \eqref{eq: sn gtop} and
\cref{lem: torus Arf}.

Also, from \cref{000} and, we have 
\[
\mathrm{sn}^{\mathrm{Top}} ( \#_{m }T(3,12l-1)) \geq 8lm \text{, } \mathrm{sn}^{\mathrm{Top}}( \#_{m }T(3,12l+1)) \geq 8lm
\]
and 
\[
\mathrm{sn}^{\mathrm{Top}} ( \#_{m }T(3,12l-7)) \geq 8lm - 4m  \text{, } \mathrm{sn}^{\mathrm{Top}}( \#_{m }T(3,12l-5)) \geq 8lm - 4m. 
\]
This completes the proof. 
%In particular, the sequence $\{K_m\}$ satisfies 
%\[
%\lim_{n\to \infty }s_n (K_m) - s^{Top}_H ( K_m) = \infty.  
%\]
\end{proof}

\begin{rem}[Comparison with other methods]\label{3 methods1}
We compare \cref{H-sliceing number1} with Maloescu's relative 10/8-inequality, obstructions from Arf invariant and signature function.
\begin{enumerate}
\item Manolescu's relative 10/8 inequality \cite[Theorem 1.1]{Ma14} enable us to prove that 
for any knot $K$, 
\begin{align}\label{111}
    -\frac{5}{8} \sigma (K)- \kappa (\Sigma (K), \frakt)    \leq 2 \mathrm{sn}(K) .  
    \end{align}

For a connected sum of two bridge knots $K$ in $S^3$, we have 
   \begin{align}\label{11}
    -\frac{1}{8} \sigma (K)  \leq  2 \mathrm{sn}(K) ,  
    \end{align}
where $\kappa^*(K)$ is the Manolescu's kappa invariant $\kappa (\Sigma (K), \frkt)$ for the double branched cover.
For more details, see \cref{manolescu}.

           \item 
    From an obstruction via Arf invariant (For more details see \cref{arf inv}), we have  
       \begin{align}\label{21}
    \operatorname{Arf} (K)\equiv 0 \operatorname{mod} 2.
    \end{align}
    
    \item From an obstruction via the Tristram-Levine signature $\sigma_K(t)$ (For more details see \cref{000}), we have  
    \[
    \frac{1}{2} \max_m \max_{r \in \{1, \cdots, m-1\}} | \sigma_K (e^{2\pi r i m } )  | \leq \mathrm{sn}(K ) 
    \]
    %where $s^{\text{Top}}_H (K )$ is locally flat embedding version of $s_n(K)$.
\end{enumerate}
\end{rem}

Using \eqref{stabilizing number stronger ineq}, we can give lower bounds for $sn$. 

\begin{cor}\label{more refined ineq for sn}
For a positive integer $l$ and a positive integer $m$, we have the following estimates:
\begin{align*}
    9 ml  + A ( ml ) &\leq \mathrm{sn}( \#_m T(3,12l-1)) ,\\ 
    9 ml + A ( ml ) &\leq \mathrm{sn}( \#_m T(3,12l+1)), \\
   9ml- 4m + A ( ml )&\leq \mathrm{sn}( \#_m T(3,12l-5))\  (l \geq 2),\text{ and }\\ 
   9ml- 5m + A ( ml ) &  \leq \mathrm{sn}( \#_m T(3,12l-7))\  (l \geq 2) .  
\end{align*}
\end{cor}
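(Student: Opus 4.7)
The plan is to deduce all four inequalities from the refined stabilizing-number bound \eqref{stabilizing number stronger ineq} in \cref{H-sliceing number1}, applied to $K = \#_m T(3, 12l + r)$ for $r \in \{-7, -5, -1, +1\}$. Since each of these knots is a connected sum of torus knots of the form treated in \cref{torus knot calculation}, the refined inequality is applicable provided its two hypotheses hold, so the entire proof reduces to a case-by-case verification.

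First I would assemble the inputs. The signature values $\sigma(\#_m T(3, 6n-1)) = \sigma(\#_m T(3, 6n+1)) = -8nm$ are standard and are already used in the proof of \cref{slicing number different}. The $\kappa$ values come from \cref{torus knot calculation} together with the connected-sum formula \cref{connected sum}, as tabulated in \cref{cor: kappa for torus knot 6npm1}: namely $\kappa(\#_m T(3,12l\pm1))=0$, $\kappa(\#_m T(3,12l-5))=-m/2$, and $\kappa(\#_m T(3,12l-7))=m/2$. Substituting these, the putative lower bound $-\tfrac{9}{16}\sigma(K) - \kappa(K)$ evaluates to $9ml$, $9ml$, $9ml-4m$, $9ml-5m$ in the four cases, matching the desired principal terms.

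Next I would verify the two hypotheses of \eqref{stabilizing number stronger ineq}. For the condition $N(K) := -\tfrac{1}{16}\sigma(K) - \kappa(K) \geq 2$, a direct substitution shows $N(K) = ml$ in the first three cases and $N(K) = m(l-1)$ in the last; each is $\geq 2$ under the stated assumptions on $l$ (the hypothesis $l \geq 2$ in the last two bounds being exactly what is needed here). For the condition $\mathrm{sn}(K) + \tfrac{1}{2}\sigma(K) \geq 1$, I would feed in the weaker unconditional bounds already established in \cref{slicing number different}: for instance $\mathrm{sn}(\#_m T(3,12l-1)) \geq 9ml$ combined with $\tfrac{1}{2}\sigma = -8ml$ yields $\mathrm{sn} + \tfrac{1}{2}\sigma \geq ml \geq 1$, and the remaining three cases are analogous, using $9ml-4m - (8ml-4m) = ml$ and $9ml-5m - (8ml-4m) = m(l-1)$ respectively. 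With both hypotheses verified, \eqref{stabilizing number stronger ineq} immediately produces the four refined inequalities.

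The work is genuinely mechanical once the table of $\kappa$ values is in hand; the only mild obstacle is the bookkeeping of which residue class of the torus-knot parameter modulo $12$ produces which value of $\kappa$ and the corresponding arithmetic of the $A$-term. No new analytic or topological input is required beyond \cref{H-sliceing number1,torus knot calculation,connected sum,slicing number different}.
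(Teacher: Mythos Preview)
Your approach is exactly the paper's: verify the two hypotheses of \eqref{stabilizing number stronger ineq} case by case using the $\kappa$ values from \cref{cor: kappa for torus knot 6npm1} and the unrefined bounds of \cref{slicing number different}, then read off the refined inequality. Two small points are worth flagging. First, your computation $N(K)=m(l-1)$ in the fourth case is correct (the paper's own proof writes $ml$ there by a slip), so the $A$-term one actually obtains from \eqref{stabilizing number stronger ineq} is $A\bigl(m(l-1)\bigr)$ rather than the $A(ml)$ printed in the statement; this is a typo in the corollary, not a defect in your argument. Second, your assertion that $N(K)\ge 2$ holds under the stated hypotheses is not quite right at the boundary: in the first two cases $N=ml$ equals $1$ when $m=l=1$, and in the fourth case $N=m(l-1)$ equals $1$ when $m=1,\,l=2$. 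The paper's proof shares these oversights, so they reflect minor imprecision in the statement rather than any flaw in the method.
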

 
 \begin{proof}
Put $N(K):= - \frac{1}{16}\sigma (K)  - \kappa (K)$
To apply \eqref{stabilizing number stronger ineq}, we need to confirm that 
$N(K) \geq 2$ and $\mathrm{sn}(K)+ \frac{1}{2}\sigma (K) \geq 1$. 
\begin{itemize}
    \item Suppose $K_{m,l} := \#_m T(3,12l-1)$. Then $\sigma (\#_m T(3,12l-1))= -16ml$ and $\kappa (K_{m,l}) = 0$. Our inequality implied $9 ml  \leq \mathrm{sn}( \#_m T(3,12l-1))$. So, the condition $\mathrm{sn}(K)+ \frac{1}{2}\sigma (K) \geq 1$ is satisfied. We also have $N(K_{m,l} )= ml \geq 2$. Thus, \eqref{stabilizing number stronger ineq} implies 
    \[
    9 ml  + A ( ml ) \leq \mathrm{sn}( \#_m T(3,12l-1)). 
    \]
      \item Suppose $K_{m,l} := \#_m T(3,12l+1)$. Then $\sigma (\#_m T(3,12l+1))= -16ml$ and $\kappa (K_{m,l}) = 0$. Our inequality implied $9 ml  \leq \mathrm{sn}( \#_m T(3,12l+1))$. So, the condition $\mathrm{sn}(K)+ \frac{1}{2}\sigma (K) \geq 1$ is satisfied. We also have $N(K_{m,l} )= ml \geq 2$. Thus, \eqref{stabilizing number stronger ineq} implies 
    \[
    9 ml  + A ( ml ) \leq \mathrm{sn}( \#_m T(3,12l+1)). 
    \]
      \item Suppose $K_{m,l} := \#_m T(3,12l-5)$. Then $\sigma (\#_m T(3,12l+1))= -16ml + 8m $ and $\kappa (K_{m,l}) = - \frac{m}{2}$. Our inequality implied $9 ml  -4m \leq \mathrm{sn}( \#_m T(3,12l+1))$. So, the condition $\mathrm{sn}(K)+ \frac{1}{2}\sigma (K) \geq 1$ is satisfied if $l \geq 2$. We also have $N(K_{m,l} )= ml \geq 2$. Thus, \eqref{stabilizing number stronger ineq} implies 
    \[
    9 ml  + A ( ml ) \leq \mathrm{sn}( \#_m T(3,12l+1)). 
    \]
      \item Suppose $K_{m,l} := \#_m T(3,12l-7)$. Then $\sigma (\#_m T(3,12l+1))= -16ml + 8m$ and $\kappa (K_{m,l}) = \frac{m}{2}$. Our inequality implied $9 ml -5m \leq \mathrm{sn}( \#_m T(3,12l+1))$. So, the condition $\mathrm{sn}(K)+ \frac{1}{2}\sigma (K) \geq 1$ is satisfied if $l \geq 2$. We also have $N(K_{m,l} )= ml \geq 2$. Thus, \eqref{stabilizing number stronger ineq} implies 
    \[
    9 ml  + A ( ml ) \leq \mathrm{sn}( \#_m T(3,12l+1)). 
    \]
\end{itemize}
 \end{proof}

 More generally, we can also prove the following result on connected sums. 
\begin{theo}
 Let $K$ be a knot in $S^3$ with $\operatorname{Arf}(K)=0$. Then, we have 
 \[
  \lim_{n\to \infty} \left( \mathrm{sn}(K \#_{n}  T(3,11)  ) -  \mathrm{sn}^{\mathrm{Top}} (K \#_{n}  T(3,11)  ) \right) = \infty . 
 \]
 \end{theo}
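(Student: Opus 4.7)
The plan is to combine our smooth lower bound on $\mathrm{sn}$ (\cref{H-sliceing number1}) with a straightforward topological upper bound via the slice genus, exploiting the additivity properties of the signature, the Arf invariant, and $\kappa$, together with the key computational inputs $\kappa(T(3,11))=0$, $\sigma(T(3,11))=-16$, and $g_4^{\mathrm{Top}}(T(3,11))=8$.

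First I would control the smooth side. By the connected sum formula \cref{connected sum} together with $\kappa(T(3,11))=0$ (a special case of \cref{torus knot calculation}, since $T(3,11)=T(3,12\cdot 1-1)$), we obtain $\kappa(K\#_n T(3,11))=\kappa(K)$ for every $n$. The signature is additive under connected sum, and $\sigma(T(3,11))=-16$, so $\sigma(K\#_n T(3,11))=\sigma(K)-16n$. Feeding these into \cref{H-sliceing number1} yields
\[
\mathrm{sn}(K\#_n T(3,11))\;\geq\;-\tfrac{9}{16}\sigma(K\#_n T(3,11))-\kappa(K\#_n T(3,11))\;=\;9n-\tfrac{9}{16}\sigma(K)-\kappa(K),
\]
i.e.\ $\mathrm{sn}(K\#_n T(3,11))\geq 9n-C_1(K)$ with $C_1(K)$ depending only on $K$. (I should also verify that $\operatorname{Arf}(K\#_n T(3,11))=0$ so that $\mathrm{sn}$ is defined: since the Arf invariant is additive and $\operatorname{Arf}(T(3,11))=0$ by \cref{lem: torus Arf}, this is immediate from the hypothesis $\operatorname{Arf}(K)=0$.)

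Next I would control the topological side using the inequality $\mathrm{sn}^{\mathrm{Top}}(K')\leq g_4^{\mathrm{Top}}(K')$ of \cite[Theorem 5.15]{CN20}, valid whenever $\operatorname{Arf}(K')=0$. Subadditivity of the topological slice genus under connected sum together with $g_4^{\mathrm{Top}}(T(3,11))=\lceil 22/3\rceil=8$ (by \cite[Theorem 1]{BBL20}, as recorded in the proof of \cref{slicing number different}) gives
\[
\mathrm{sn}^{\mathrm{Top}}(K\#_n T(3,11))\;\leq\;g_4^{\mathrm{Top}}(K\#_n T(3,11))\;\leq\;g_4^{\mathrm{Top}}(K)+8n,
\]
so $\mathrm{sn}^{\mathrm{Top}}(K\#_n T(3,11))\leq 8n+C_2(K)$ for a constant $C_2(K)$.

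Subtracting the two estimates gives
\[
\mathrm{sn}(K\#_n T(3,11))-\mathrm{sn}^{\mathrm{Top}}(K\#_n T(3,11))\;\geq\;n-C_1(K)-C_2(K),
\]
and the right-hand side tends to $+\infty$ as $n\to\infty$, which is precisely the claim. There is no serious obstacle here beyond bookkeeping: the proof is a direct application of \cref{main knot} via \cref{H-sliceing number1}, the connected-sum formula \cref{connected sum}, and the two key numerical facts ($\kappa(T(3,11))=0$ and $g_4^{\mathrm{Top}}(T(3,11))=8<9$) that create the gap between the smooth and topological growth rates.
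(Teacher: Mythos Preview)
Your proposal is correct and follows essentially the same approach as the paper: both use \cref{connected sum} to get $\kappa(K\#_n T(3,11))=\kappa(K)$, then \cref{H-sliceing number1} for the smooth lower bound $9n-C_1(K)$, and an $8n+C_2(K)$ upper bound on $\mathrm{sn}^{\mathrm{Top}}$. The only cosmetic difference is that the paper bounds $\mathrm{sn}^{\mathrm{Top}}(K\#_n T(3,11))$ via subadditivity of $\mathrm{sn}^{\mathrm{Top}}$ and the computation $\mathrm{sn}^{\mathrm{Top}}(\#_n T(3,11))=8n$, whereas you go through $\mathrm{sn}^{\mathrm{Top}}\leq g_4^{\mathrm{Top}}$ and subadditivity of $g_4^{\mathrm{Top}}$; either route yields the same $8n$ growth rate.
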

 \begin{proof}
 From \cref{connected sum}, one can compute 
 \[
 \kappa (K \#_{n}  T(3,11) ) = \kappa (K). 
 \]
 On the other hand, \cref{H-sliceing number2} implies that 
    \[
        -\frac{9 }{16}  \sigma (K ) + 9n  - \kappa (K) \leq  \mathrm{sn}(K\#_{n}  T(3,11)  ).
        \]
    One can see 
    \[
    \mathrm{sn}^{\mathrm{Top}} (K \#_{n}  T(3,11)  ) \leq  \mathrm{sn}^{\mathrm{Top}} (K ) +\mathrm{sn}^{\mathrm{Top}} (K ) (  \#_{n}  T(3,11)  ) = \mathrm{sn}^{\mathrm{Top}} (K ) + 8n. 
    \]
    These inequalities imply the conclusion. 
 \end{proof}

\subsubsection{Results on $\#_n K3$} 

Next we focus on $\#_n K3$. 
Our invariant $\kappa (K)$ can be used to give a lower bound on $g_{\#_n K3}$:

\begin{theo} \label{H-sliceing numberK3}Let $n$ be a non-negative ingeter. 
Let $K$ be a connected sum of two bridge knots with $\operatorname{Arf}(K) =0$. 
Then we have 
    \[
   - \frac{1}{2}\sigma (K)-n  \leq  g_{\#_n K3,0} (K).
        \]
     Also, the same inequality holds for any homotopy $\#_n K3$. 
\end{theo}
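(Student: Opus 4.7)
The plan is to apply Theorem \ref{main knot}(v) with a carefully chosen cobordism. Let $X$ denote $\#_n K3$ or any closed smooth $4$-manifold homotopy equivalent to it. Set $W = X \setminus (\operatorname{int} D^4_1 \sqcup \operatorname{int} D^4_2)$, viewed as a cobordism from $S^3$ to $S^3$. Since $X$ is spin (as $K3$ is, and $w_2$ is a homotopy invariant of the intersection form when $H_1=0$) we have $H_1(W;\Z) = 0$, $b^+(W) = 3n$, $\sigma(W) = -16n$, and $PD(w_2(W)) = 0$. Place the unknot $U$ on one $S^3$ end and the given knot $K$ on the other.

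Given any oriented properly smoothly embedded connected surface $\Sigma \subset X \setminus \operatorname{int} D^4$ with $\partial \Sigma = K$ realizing $[\Sigma] = 0 \in H_2(X;\Z)$, I would form the cobordism surface $S \subset W$ by gluing $\Sigma$ to a smoothly embedded disk in the other puncture bounded by $U$. Then $S$ is a genus-$g(\Sigma)$ cobordism from $U$ to $K$ with $[S] = 0$, so $[S]$ is trivially divisible by $2$ and $PD(w_2(W)) = [S]/2 \bmod 2$, verifying the hypotheses of Theorem \ref{main knot}(v).

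Plugging into the inequality \eqref{ineq1} with $\sigma(U) = 0$, $\kappa(U) = 0$ (from \cref{ex: S3 case}), $[S]^2 = 0$, and the values above for $W$ gives
\[
2n - \frac{9}{16}\sigma(K) \;\leq\; 3n + g(\Sigma) + \kappa(K),
\]
that is, $-\frac{9}{16}\sigma(K) - n - \kappa(K) \leq g(\Sigma)$. Since $K$ is a connected sum of two-bridge knots, \cref{two bridge kappa12} yields $\kappa(K) = -\frac{1}{16}\sigma(K)$, and substituting collapses the left-hand side to $-\frac{1}{2}\sigma(K) - n$. Taking the infimum over all admissible $\Sigma$ gives the desired bound on $g_{\#_n K3, 0}(K)$.

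The argument is almost entirely a matter of setup — the substantive input is the connected-sum/two-bridge calculation of $\kappa$ (\cref{two bridge kappa12}), which has already been established. The only mild subtlety is verifying that the claim extends to any homotopy $\#_n K3$: this is immediate because the quantities $\sigma(W)$, $b^+(W)$, $H_1(W;\Z)$, and the spin condition $PD(w_2(W)) = 0$ depend only on the oriented homotopy type of $X$, so no modification of the argument is needed.
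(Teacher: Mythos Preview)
Your proof is correct and follows essentially the same approach as the paper's: both apply Theorem~\ref{main knot}(v) to the punctured $\#_n K3$ viewed as a cobordism from $(S^3,U)$ to $(S^3,K)$ with $[S]=0$, and then substitute $\kappa(K)=-\tfrac{1}{16}\sigma(K)$ from \cref{two bridge kappa12}. The paper's proof differs only cosmetically, writing the intermediate inequality in the unsimplified form $2n-\tfrac{1}{16}\sigma(K)\le 3n+\tfrac{1}{2}\sigma(K)+\kappa(K)+g(S)$ rather than the form \eqref{ineq1} you use; your description of how to build the surface $S$ (tubing $\Sigma$ to a trivial disk near the second puncture) is slightly more explicit than the paper's, which simply invokes \cref{main knot} directly.
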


%Using \cref{H-sliceing number}, for example, we can detect the genus for the following knots:  
%\[
%g_{K3} (8_3)=1, g_{K3} (9_{19})=1 , g_{K3}(10_1) =1.
%\]
%As an infinite family of such knots, we have the following: 

We first give a proof of \cref{H-sliceing numberK3}. 
 \begin{proof}[Proof of \cref{H-sliceing numberK3}]
We just apply \cref{main knot} to the case $X= \#_n K3$, $[S]=0 \in H_2(\#_n K3)$ and obtain the inequality: 
\[
2n  -\frac{1}{16} \sigma(K) \leq 3n  + \frac{1}{2} \sigma(K) + \kappa (K) + g(S).   
\]
By combining this with \eqref{two bridge kappa}, we obtain 
\[
-\frac{1}{2} \sigma(K)   \leq n  + g(S). 
\]
This completes the proof. 
\end{proof}

\begin{rem}[Comparison with other methods]
\label{3 methods}
We compare \cref{main knot} with Maloescu's relative 10/8-inequality, obstructions from Arf invariant and signature function. We review these methods in \cref{Comparison}. 
\begin{enumerate}
\item Manolescu's relative 10/8 inequality \cite{Ma14} enable us to prove that 
for any connected sum of two bridge knots $K$ in $S^3$, we have 
   \begin{align}\label{1}
   -\frac{1}{2} \sigma (K)  \leq 2 n + g(S) . 
    \end{align}
For more details, see \cref{manolescu}.

           \item 
    From an obstruction via Arf invariant (For more details see \cref{arf inv}), we have  
       \begin{align}\label{2}
    \operatorname{Arf} (K)\equiv 0 \operatorname{mod} 2.
    \end{align}
    
    \item From an obstruction via the Tristram-Levine signature $\sigma_K(t)$ (For more details see \cref{000}), we have  
  \begin{align}\label{3}
    \max_m  \max_{r \in \{1, \cdots, m-1\}} \frac{1}{2}| \sigma_K (e^{2\pi r i m } ) -16n | \leq 11n +g .
    \end{align}
    %where $s^{\text{Top}}_H (K )$ is locally flat embedding version of $s_H(K)$.
\end{enumerate}
\end{rem}

\begin{ex}\label{ex1}
 Let $K_m$ be the $m$-fold connected sum of $5_2$ for $m \in \Z_{>0}$. 
 Note that $5_2$ is two bridge knot $K(7,3)$ and $\operatorname{Arf}(5_2)=0$. 
 Then $\sigma(K_m)= -2m$. Suppose $S$ is a properly embedded oriented surface in $\#_n K3 \setminus \operatorname{int} D^4$ bounded by $K$ such that $[S]=0$. 
 
 Then so our inequality \cref{H-sliceing numberK3} implies 
 \[
m \leq n + g(S)
 \]
 holds. 
 On the other hand, \eqref{1} implies
 \[
m \leq 2n + g(S). 
 \]
 Also \eqref{3} implies 
 \[
   \max\{ 8n  ,  | -\frac{1}{2}m -8n  |,  |-m -8n  |\} \leq 11n +g(S).
 \]
\end{ex}

Note that a family of topologically H-slice but not smoothly H-slice knots in the punctured $\#_3 K3$ are given in \cite{IMT21}. However, the Bauer-Furuta type invariant used in \cite{IMT21} vanishes for $\#_n K3$ when $n\geq 4$. On the other hand, our invariant $\kappa(K)$ may be used to give such examples. 
\begin{problem} Let $n$ be a positive integer with $n \geq 4$.
Is there a topologically H-slice but not smoothly H-slice knot in the punctured $\#_n K3$ ? 
\end{problem}

\subsubsection{Results on $\#_n\C P^2 \#_m \C P^2$} 

We also consider $X := \#_n \C P^2 \#_m (-\C P^2)$ for positive integers $n$ and $ m$.

When $\min\{n,m\} \geq 2$, since known gauge theoretic invariant of $X$ vanishes, there is no way to obtain adjunction type inequality. 

\begin{theo}
\label{cp-genus}
Let $X$ be a homotopy $\#_n \C P^2 \#_m (-\C P^2)$ for a pair $(m,n)$ of non-negative integers. 
Let $H$ be an element in $H_2(X; \Z)$ such that 
\[
H = \sum_{1\leq i\leq n } a_i e^+ _i+ \sum_{1\leq j\leq n } b_j e^- _j 
\]
for integers $a_i, b_j$ with $a_i \equiv 2 \mod 4$ and $b_j \equiv 2 \mod 4$, where $\{e^+_i\}$ and $\{e^-_i\} $ are generators corresponding to $H_2( \C P^2)$ and $H_2( -\C P^2)$ for each summand. 
Then for any knot $ K$ in $S^3$, we have 
  \begin{align} \label{genus boundCP2}
    \frac{1}{32} \left( -36n+  4m + 9\left(\sum_{i=1}^n a_i^2 -  \sum_{j=1}^m b_j^2 \right) -18\sigma(K) -32 \kappa(K) \right) \leq g_{X, H}(K).  
    \end{align}
    In particular, when $K$ is a connected sum of two bridge knots, one has 
    \[
    \frac{1}{32} \left( -36n+  4m + 9\left(\sum_{i=1}^n a_i^2 -  \sum_{j=1}^m b_j^2 \right) -16\sigma(K)  \right) \leq g_{X, H}(K).
    \]
\end{theo}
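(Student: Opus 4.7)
My plan is to apply Theorem \ref{main knot} (v) directly, taking for $W$ the double puncture $X \setminus (\operatorname{int} D_0^4 \sqcup \operatorname{int} D_1^4)$, viewed as a cobordism from $(S^3, U)$ to $(S^3, K)$. Given an oriented properly smoothly embedded surface $S_0$ in $X \setminus \operatorname{int} D_1^4$ bounded by $K$ with $[S_0] = H$, I would glue to $S_0$ a standard smooth disk $D$ in $D_0^4$ bounded by an unknot $U \subset \partial D_0^4$. The result is a connected cobordism surface $S = D \cup S_0$ inside $W$ from $U$ to $K$ with $g(S) = g(S_0)$ and $[S] = H$ in $H_2(W;\Z) \cong H_2(X;\Z)$.

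Before invoking Theorem \ref{main knot}, I need to check the two hypotheses on $(W,S)$: that $H_1(W;\Z) = 0$ and that $[S]$ is divisible by $2$ with $[S]/2 \equiv PD(w_2(W)) \bmod 2$. The first is immediate since $X$ is simply connected. For the second, write $H/2 = \sum_i (a_i/2) e_i^+ + \sum_j (b_j/2) e_j^-$; the condition $a_i \equiv 2 \bmod 4$ and $b_j \equiv 2 \bmod 4$ forces each $a_i/2, b_j/2$ to be odd, so $H/2 \equiv \sum_i e_i^+ + \sum_j e_j^- \bmod 2$. Since $w_2(\mathbb{CP}^2)$ is the mod-$2$ reduction of the generator of $H^2$ and the $\pm\mathbb{CP}^2$ summands are added connect-summed (which does not affect the computation for a homotopy $\#_n\mathbb{CP}^2\#_m(-\mathbb{CP}^2)$), we get $PD(w_2(W)) = PD(w_2(X)) \equiv \sum_i e_i^+ + \sum_j e_j^- \bmod 2$, matching $H/2$.

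With hypotheses verified, substitute $\sigma(W) = \sigma(X) = n-m$, $b^+(W) = n$, $[S]^2 = \sum a_i^2 - \sum b_j^2$, and use $\sigma(U) = 0$ together with $\kappa(U) = 0$ (from Example \ref{ex: S3 case}, since $\Sigma(U) = S^3$ with the standard complex conjugation) into \eqref{ineq1}, taking the source $K$ of the cobordism to be $U$ and the target $K'$ to be our knot $K$. This gives
\[
-\frac{n-m}{8} + \frac{9}{32}\Bigl(\sum_i a_i^2 - \sum_j b_j^2\Bigr) - \frac{9}{16}\sigma(K) \leq n + g(S_0) + \kappa(K).
\]
Multiplying by $32$, rearranging, and taking the infimum over admissible $S_0$ yields exactly \eqref{genus boundCP2}. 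For the refined inequality when $K$ is a connected sum of two bridge knots, substitute $\kappa(K) = -\tfrac{1}{16}\sigma(K)$ from Theorem \ref{two bridge kappa12}; the term $-32\kappa(K) = 2\sigma(K)$ combines with $-18\sigma(K)$ to give $-16\sigma(K)$, producing the stated bound.

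There is no substantial obstacle here: the theorem is essentially a corollary of Theorem \ref{main knot} (v), and the only point requiring a brief verification is the characteristic class computation $H/2 \equiv PD(w_2(X)) \bmod 2$, which is routine given the congruence hypothesis on the coefficients $a_i, b_j$.
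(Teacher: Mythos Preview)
Your proposal is correct and follows the same route as the paper, which simply records that the theorem is a corollary of Theorem~\ref{main knot}. The only imprecision is in your description of the cobordism surface: rather than ``gluing'' a disk $D \subset D_0^4$ to $S_0$ (which would place part of $S$ outside $W$), you should choose $D_0^4$ to meet $S_0$ in a standard disk and set $S = S_0 \cap W$, so that $S$ is a genuine cobordism in $W$ from the unknot on $\partial D_0^4$ to $K$ on $\partial D_1^4$ with $g(S)=g(S_0)$.
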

This is just a corollary of \cref{main knot}.

 %We consider the case that $n=2$ and $ K$ is a connected sum of two bridge knots.
%\begin{cor}\label{00}
%Let $m$ be a positive integer with $m \geq 2$.
%Let $X$ be $\#_m \C P^2 \#_{2m} (-\C P^2)$ and $K$ be a connected sum of %two bridge knots. 
%Consider the homology class $ H_m$ such that 
%\[
%H_m^2 = -2+m \text{ and } \operatorname{PD}(H_m) \equiv w_2(X) \operatorname{mod}2.
%\]
%Then the inequality 
 %\begin{align} \label{genus bound2}
  %  \frac{13}{32}m - \frac{13}{16}- \frac{1}{2}\sigma(K)   \leq g_{X, H_m}(K) + 2.  
   % \end{align}
    %holds. 
%\end{cor}

\begin{rem}Under the same assumptions of \cref{cp-genus}, the following facts hold: 
\begin{enumerate}
\item Manolescu's relative 10/8 inequality \cite{Ma14}(see \cref{manolescu1}) enable us to prove that  implies 
\[
\frac{1}{16}\left( -36n + 4m  + 5\left(\sum_{i=1}^n a_i^2 -  \sum_{j=1}^m b_j^2 \right) -10\sigma (K) - 16\kappa^* (K)\right)  \leq  g_{X,H}(S). 
\]
In particular, when $K$ is a connected sum of two bridge knots, one has 
 \[
\frac{1}{16}\left( -36n + 4m  + 5\left(\sum_{i=1}^n a_i^2 -  \sum_{j=1}^m b_j^2 \right) -8\sigma (K) \right)  \leq  g_{X,H}(S). 
\]
    \item 
From an obstruction via Arf invariant (For more details see \cref{arf inv}), we have 
\[
0  = \operatorname{Arf}(K) + \operatorname{Arf}(X, S). 
\]

\item 
Since $H$ is divisible by $2$, by \cref{000}, one has
\begin{align}\label{CP2topological}
\frac{1}{2} \left(\left| \sigma (K) + n-m  + \sum_{i=1}^n a_i^2 -  \sum_{j=1}^m b_j^2 \right|-n-m \right)    \leq g^{\operatorname{Top}}_{X, H}(K)
\end{align}
\end{enumerate}
\end{rem}

\begin{theo}\label{73exampleCP2} \label{37CP2}
 Let $K_m$ be the $m$-fold connected sum of $ T(3,7)$ for a positive integer $m$ and $X$ be $\#_m\C P^2 \#_m(-\C P^2 )$.  
 Then, 
 \[
 \frac{7}{4}m  \leq g_{ ( X, x)  } (K_m), 
 \]
 where $x=(2, \cdots, 2) \in H_2( \#_m\C P^2 \#_m(-\C P^2 )) $. 
\end{theo}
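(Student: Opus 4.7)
The plan is to apply the 10/8-type inequality for knots (\cref{main knot}(v)), equivalently the specialization recorded in \cref{cp-genus}, to the pair $(X,x)$ and the knot $K_m$, after first computing the relevant numerical inputs. To cast this as a cobordism statement between two copies of $S^3$, I would remove two open balls from $X$ to obtain a cobordism $W$ from $S^3$ to $S^3$ with $b^+(W)=b^+(X)=m$, $\sigma(W)=\sigma(X)=0$, and $H_1(W;\Z)=0$, and then embed any surface realising $g_{X,x}(K_m)$ as a cobordism $S\subset W$ from the unknot $U$ to $K_m$, representing the class $x$.

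First I would verify the divisibility and characteristic-class hypotheses needed to apply \cref{main knot}(v). Since $x=(2,\ldots,2)$ we have $x/2=(1,\ldots,1)$, which equals $PD(w_2(X))\bmod 2$ because $w_2$ is Poincar\'e dual to the sum of the generators of each $\C P^2$ and $-\C P^2$ summand; also $x$ is visibly divisible by $2$. Moreover $[S]^{2}=\sum_{i=1}^{m}2^{2}-\sum_{j=1}^{m}2^{2}=0$.

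Next I would assemble the two knot invariants. The additivity of the signature together with $\sigma(T(3,7))=-8$ (this is the case $n=1$ of the formula $\sigma(\#_{m}T(3,6n+1))=-8nm$ recorded in the proof of \cref{estimate main s2}) yields $\sigma(K_m)=-8m$. For the kappa invariant, observe that $T(3,7)=T(3,12\cdot 1-5)$, so \cref{cor: kappa for torus knot 6npm1} gives $\kappa(T(3,7))=-\tfrac{1}{2}$, and by the connected-sum formula \cref{connected sum} (applicable because $T(3,7)$ is one of the knots covered there) we obtain $\kappa(K_m)=-\tfrac{m}{2}$.

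Plugging these values, together with $\sigma(U)=0$ and $\kappa(U)=0$, into the inequality \eqref{ineq1} of \cref{main knot}(v) applied to the cobordism $(W,S)$ from $U$ to $K_m$ yields
\[
-\frac{\sigma(W)}{8}+\frac{9}{32}[S]^{2}-\frac{9}{16}\sigma(K_m)\;\le\; b^{+}(W)+g(S)+\kappa(K_m),
\]
which, upon substitution, becomes an explicit linear inequality in $m$ and $g(S)$; rearranging gives the claimed lower bound $\tfrac{7}{4}m\le g_{(X,x)}(K_m)$. No step is expected to present a real obstacle beyond being careful with the sign conventions (\cref{rem: sign convention of knot signature}) and with the bookkeeping for $b_{\iota}^{+}$ implicit in the reduction from \cref{main theo} to \cref{main knot}(v); the latter is already handled by \cref{b+iotarem}.
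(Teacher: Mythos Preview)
Your approach is exactly the paper's: apply \cref{main knot}(v) (equivalently \cref{cp-genus}) with the numerical inputs $\sigma(W)=0$, $[S]^2=0$, $b^+(W)=m$, $\sigma(K_m)=-8m$, and $\kappa(K_m)=-m/2$.

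One point to flag: with the values you (correctly) computed, the inequality reads
\[
-\tfrac{9}{16}\sigma(K_m)\ \le\ b^+(W)+g(S)+\kappa(K_m),
\qquad\text{i.e.}\qquad
\tfrac{9m}{2}\ \le\ m+g(S)-\tfrac{m}{2},
\]
which rearranges to $4m\le g_{(X,x)}(K_m)$, not $\tfrac{7}{4}m$. The paper's own proof obtains $\tfrac{7}{4}m$ because in its displayed computation it substitutes $\sigma(K_m)=-4m$; this is inconsistent with the formula $\sigma(\#_m T(3,6n+1))=-8nm$ (for $n=1$) quoted in the proof of \cref{estimate main s2} and with the computation in \eqref{bad ineq}. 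So your argument actually yields the stronger bound $4m$, and the stated $\tfrac{7}{4}m$ follows \emph{a fortiori}; just don't assert that the final arithmetic literally gives $\tfrac{7}{4}m$.
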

\begin{proof}
We apply \eqref{genus boundCP2} to $X$, $K_m$, and $[S]= x$ and obtain
\begin{align*}
\frac{1}{32} \left( -36m+  4m  -18 (-4m) -32 (-\frac{1}{2}m) \right) \leq g_{X, x}(K_m) \\ 
\frac{7}{4} m \leq g_{X, x}(K_m). 
\end{align*}
\end{proof}

\begin{rem}\label{CP2topological}
We again compare \cref{37CP2} with the topological obstruction coming from the knot signature. Under the same assumptions in \cref{37CP2}, the inequality \eqref{CP2topological} implies  
\[
\frac{1}{2} \left(\left| -4m    + m - m + 0 \right|-2m \right) =m   \leq g^{\operatorname{Top}}_{X, x}(K_m). 
\]
\end{rem}

%\begin{ex}
%Let $X$ be $\#_2 \C P^2 \#_m (-\C P^2)$ and $K$ be a connected sum of two bridge knots. 
%Consider the homology class $ H_m$ such that 
%\[
%H_m^2 = -2+m \text{ and } \operatorname{PD}(H_m) \equiv w_2(X) \operatorname{mod}2.
%\]
%Let $K_l$ be the $l$-fold connected sum of $5_2$ for $l \in \Z_{>0}$. 
% Then $ g_4(K_l) =l$ and $\sigma(K_l)= -2l$. 
 
% Then so our inequality implies 
% \[
% 2l -m   \leq g_{X, H_n}(K_l  )
 %\]
 %holds. 
 %On the other hand, \eqref{1'} implies
 %\[
 %2l -2m   \leq g_{X, H_n}(K_l  ). 
 %%\]
 %Also \eqref{3'} implies 
 %\[
%l-m   \leq  g^{\operatorname{Top} }_{X, H_n}(K_l ).  
% \]
%\end{ex}

%\begin{ex}\label{ex-torus}
%et $k$ be a positive integer. 
% Let $K_n$ be the torus knot $\#_n T(3,7)$ for even $n \in \Z_{>0}$. Then \cref{torus knot calculation} implies the inequality 
% \[
%  -\frac{9 }{16}  (-4n) + \frac{1}{2}n  = \frac{11n }{4}   \leq \mathrm{sn}(K_n)
% \]
% holds. 
%\end{ex}

%\begin{ex}\label{ex-torus1}
%Let $k$ be a positive integer. 
% Let $K_n$ be the torus knot $\#_n T(3,5)^*$ for even $n \in \Z_{>0}$. Then \cref{torus knot calculation} implies the inequality 
% \[
%  -\frac{9 }{16}  (-8n) + \frac{1}{2}n  = 5n   \leq s_n(K_n)
% \]
% holds. 
%\end{ex}

\section{Non-extendable and non-smoothable actions}
\label{section Applications to non-extendable actions}

We apply \cref{main theo} to obtain obstructions to an extension of involutions on 3-manifolds to spin 4-manifolds.
This also gives a series of examples of non-smoothable actions on 4-manifolds with boundary.
First, we give the proof of \cref{theo intro nonextendable general}.

\begin{proof}[Proof of \cref{theo intro nonextendable general}]
By \cref{theo:Seifert cal most general},
we have 
\[
\kappa(Y, \iota) = -\bar{\mu}(Y)/2  \quad \text{and}\quad \kappa(-Y, \iota)=\bar{\mu}(Y)/2.
\]
Let $W$ be a smooth oriented compact spin 4-manifold with $b_1(W)=0$ bounded by $Y$.
If $\iota$ on $Y$ extends to $W$ as a smooth involution preserving the orientation and the spin structure, it follows from \cref{main theo} that
\begin{align*}
-\frac{\sigma(W)}{16} 
\leq b^+(W)-b^+_{\iota}(W) - \bar{\mu}(Y)/2.
\end{align*}
The statement (i) of the \lcnamecref{theo intro nonextendable general} follows from this.

Now suppose that $\iota$ extends to $W$ as a homologically trivial involution.
Then we have
\[
-\frac{\sigma(W)}{16} 
\leq - \bar{\mu}(Y)/2 \quad \text{and}\quad
-\frac{\sigma(-W)}{16} 
\leq \bar{\mu}(Y)/2.
\]
These imply $\sigma(W)=8\bar{\mu}(Y)$.
To prove the statement (ii) of the \lcnamecref{theo intro nonextendable general}, it remains to show only that $\iota$ extends to $W$ as a diffeomorphism.
Note that the $\Z_2$ action on $Y$ via $\iota$ is given as a part of the standard $S^1$ action on the Seifert 3-manifold $Y$.
Namely, this $\Z_2$ action factors through the inclusion $\Z_2 \inc S^1$ and the standard $S^1$ action on $Y$.
Therefore $\iota$ is smoothly isotopic to the identity map on $Y$.
Hence $\iota$ extends to every 4-manifold bounding $Y$ as a homologically trivial diffeomorphism.
\end{proof}

\begin{rem}
One can deduce constraints on locally linear topological involutions by the $G$-signature theorem.
It is summarized in \cref{cor: G signature connstraint on involution}.
Note that this topological constraint involves the the self-intersection number and genus of the surface $S$ obtained as the fixed point, also involves the signature of the knot $K = S \cap Y$.
But the constraint on smooth involutions obtained in \cref{theo intro nonextendable general} is free from these data of fixed-point sets.
\end{rem}

\begin{rem}
\label{rem: general theo: nonext 2313}
As mentioned in \cref{rem: general involusion not minus}, for all odd involutions on $\Sigma(a_1, \ldots, a_n)$ which are isotopic to the identity, the kappa invariants are just the same as that of the above $\iota$.
Therefore the statement of \cref{theo intro nonextendable general} holds for all odd involutions on $\Sigma(a_1, \ldots, a_n)$ which are isotopic to the identity.
\end{rem}

\Cref{theo intro nonextendable general} can be generalized to connected sums of Seifert homology spheres:

\begin{theo}
\label{theo: nonext connecedt sum Seifert cal most general}
For $N \geq 0$ and $i \in \{1, \ldots, N\}$, let $(Y_i, \iota_i)$ be the following triple:
$Y_i=\pm \Sigma(a_{1}, \ldots, a_{n})$ for some coprime natural numbers $a_{1}, \ldots, a_{n}$ with $a_1$ even, and $\iota_i : Y_i \to Y_i$ is given by $\iota_i(z_1,z_2, \ldots,z_n)=(-z_1,z_2,\ldots,z_n)$.
Let $o(\iota_i)$ be an orientation of the fixed-point set of $\iota_i$.
Form the equivariant connected sum
\[
(Y,\frakt,\iota) = \#_{i=1}^{N} (Y_i,\frakt_i,\iota_i,o(\iota_i))
\]
along fixed points, and let $W$ be a compact smooth oriented spin 4-manifold bounded by $Y$ with $b_1(W)=0$.
Then we have the following:
\begin{itemize}
\item [(i)] The involution $\iota$ cannot extend to $W$ as a smooth involution so that
\begin{align*}
-\frac{\sigma(W)}{16} 
> b^+(W)-b^+_{\iota}(W) - \frac{1}{2}\sum_{i=1}^{N}\bar{\mu}(Y_i).
\end{align*}
\item[(ii)] Suppose that $\sigma(W) \neq 8\sum_{i=1}^{N}\bar{\mu}(Y_i)$.
Then $\iota$ cannot extend to $W$ as a homologically trivial smooth involution, while $\iota$ can extend to $W$ as a homologically trivial diffeomorphism.
\end{itemize}
\end{theo}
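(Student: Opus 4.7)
The plan is to reduce to the single-boundary relative $10/8$-inequality (\cref{most general main theo one boundary}) by computing $\kappa(Y,\frakt,\iota)$ via the connected sum formula and then mimicking the proof of \cref{theo intro nonextendable general} verbatim. First, each triple $(Y_i,\frakt_i,\iota_i)$ is SWF-spherical with $\kappa(Y_i,\frakt_i,\iota_i) = -\bar{\mu}(Y_i)/2$ by \cref{theo:Seifert cal most general}, so such triples are locally DSWF-spherical in the sense of \cref{defi: DSWF-spherical}. Iterating \cref{cor: connected sum formula kappa} (or invoking \cref{theo:kappa connecedt sum Seifert cal most general} directly) therefore gives
\[
\kappa(Y,\frakt,\iota) = -\frac{1}{2}\sum_{i=1}^N\bar{\mu}(Y_i),\qquad \kappa(-Y,\frakt,\iota) = \frac{1}{2}\sum_{i=1}^N\bar{\mu}(Y_i).
\]

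For (i), suppose $\iota$ extends to a smooth orientation- and spin-structure-preserving involution on $W$. The fixed-point set on $Y$ is nonempty (it contains $\{z_1=0\}\cap Y_i$ from each summand) and of codimension $2$, so the extended involution is automatically of odd type by \cite[Proposition~8.46]{AB68}. Applying \cref{most general main theo one boundary} to $(W,\fraks,\iota)$ then yields exactly
\[
-\frac{\sigma(W)}{16}\leq b^+(W)-b^+_\iota(W)-\frac{1}{2}\sum_{i=1}^N\bar{\mu}(Y_i),
\]
which is the contrapositive of (i). For (ii), if moreover $\iota$ is homologically trivial on $W$, then $b^+_\iota(W)=b^+(W)$, giving $-\sigma(W)/16 \leq -\frac{1}{2}\sum\bar{\mu}(Y_i)$; applying the same inequality to $-W$ (bounded by $-Y$ with the same involution) yields the reverse inequality, and together these force $\sigma(W)=8\sum\bar{\mu}(Y_i)$.

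It remains to show that $\iota$ always extends to $W$ as a homologically trivial diffeomorphism, which amounts to showing each $\iota_i$ is smoothly isotopic to the identity on $Y_i$. The hypothesis that exactly one of the pairwise coprime $a_1,\ldots,a_n$ is even (namely $a_1$) ensures that under the standard weighted $S^1$-action $u\cdot(z_1,\ldots,z_n)=(u^{A/a_1}z_1,\ldots,u^{A/a_n}z_n)$ on $\pm\Sigma(a_1,\ldots,a_n)\subset\C^n$ (where $A=\prod_k a_k$), the element $u=-1$ acts as $(z_1,\ldots,z_n)\mapsto(-z_1,z_2,\ldots,z_n)$: indeed $A/a_1=\prod_{j\neq 1}a_j$ is odd, while $A/a_j$ is even for $j\geq 2$ since $a_1\mid A/a_j$. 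The path $\{e^{i\pi t}\}_{t\in[0,1]}$ thus gives a smooth isotopy from $\id_{Y_i}$ to $\iota_i$ through the $S^1$-orbit of isometries, which restricts to an isotopy fixing a chosen connected-sum base point on the $1$-dimensional fixed-point set. Assembling these yields an isotopy from $\id_Y$ to $\iota$ on the equivariant connected sum, which extends to $W$ across a collar neighborhood of $\partial W$ as a homologically trivial diffeomorphism.

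The only delicate step is the connected sum formula input: one must verify that \cref{cor: connected sum formula kappa} applies with arbitrary choices of base points and orientations $o(\iota_i)$ of the fixed-point sets. This is not a real obstacle because the fixed-point set of each $\iota_i$ (the link cut out by $z_1=0$) is connected, so by \cref{rem: connected sum dependence on base point} the $\Z/2$-equivariant isotopy class of $(Y,\iota)$ is unambiguous; local DSWF-sphericality is preserved under both orientation reversal and connected sum by \cref{locally DSWF-spherical conn sum}, so the resulting $\kappa$-calculation is independent of these choices.
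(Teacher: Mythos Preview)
Your overall strategy is exactly the paper's: compute $\kappa(Y,\frakt,\iota)=-\tfrac12\sum_i\bar\mu(Y_i)$ via \cref{theo:kappa connecedt sum Seifert cal most general} and then rerun the proof of \cref{theo intro nonextendable general}. The paper's own argument is literally those two sentences, so on the main points you are aligned.

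There is one genuine slip in your write-up. You assert that the $S^1$-isotopy $t\mapsto e^{i\pi t}\cdot(-)$ on each $Y_i$ ``restricts to an isotopy fixing a chosen connected-sum base point on the $1$-dimensional fixed-point set.'' This is false: the Seifert $S^1$-action on $\Sigma(a_1,\ldots,a_n)$ is fixed-point free, so for $0<t<1$ the action sends $(0,z_2,\ldots,z_n)$ to $(0,e^{i\pi tA/a_2}z_2,\ldots,e^{i\pi tA/a_n}z_n)$ and moves every point of $\{z_1=0\}$. Consequently the per-summand isotopies cannot simply be glued across the connecting spheres as you describe, and your ``assembling'' step does not go through as written.

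To be fair, the paper leaves the same gap: it just says ``the remaining proof is the same'' without checking that the single-summand argument --- which uses a global $S^1$-action that does not exist on the connected sum --- still yields an isotopy of $\iota$ to $\id_Y$. The conclusion is correct, but it needs a different justification: for instance, one can isotope one summand at a time while dragging the attached piece along the moving ball (using that orientation-preserving ball embeddings into a connected $3$-manifold form a connected space), or appeal to the known structure of mapping class groups of connected sums of irreducible $3$-manifolds. Either way, this deserves a sentence of honest argument rather than the incorrect fixed-base-point claim.
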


\begin{proof}
By \cref{theo:kappa connecedt sum Seifert cal most general}, we have $\kappa(Y,\frakt,\iota)
= - \frac{1}{2}\sum_{i=1}^{N}\bar{\mu}(Y_i)$.
Then the remaining proof is the same as the proof of \cref{theo intro nonextendable general}.
\end{proof}

We note a consequence of \cref{theo intro nonextendable general} for (relatively) small 4-manifolds.
A preliminary result is:

\begin{prop}
\label{prop: KT constraint}
For $k>0$,
let $W$ be a compact spin smooth 4-manifold bounded by one of $\Sigma(2,3,12k+1)$, $-\Sigma(2,3,12k+1)$, and $-\Sigma(2,3,12k-1)$ with the intersection form isomorphic to that of $K3$.
Let $f : W \to W$ be an orientation-preserving diffeomorphism whose restriction to the boundary is isotopic to the identity.
Suppose that $f$ lifts to a spin automorphism.
(If $W$ is simply-connected, this is the case for all $f$.)
Then $f$ preserves the orientation of $H^+(W)$.
\end{prop}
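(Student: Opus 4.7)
\textbf{Proof proposal for \cref{prop: KT constraint}.}

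The plan is to argue by contradiction, using the 10/8-type inequality for involutions of \cref{most general main theo one boundary} together with the $\bar\mu = 0$ calculation of \cref{theo:Seifert cal most general} for the three Brieskorn spheres in the hypothesis. Throughout, set $Y = \partial W$ and let $\iota_Y$ denote the standard odd involution on $Y$ coming from $\iota_Y(z_1,z_2,z_3)=(-z_1,z_2,z_3)$; recall from the proof of \cref{theo intro nonextendable general} that $\iota_Y$ is smoothly isotopic to $\id_Y$ through the standard $S^1$-subaction, and $\bar\mu(Y) = 0$ in all three cases, so \cref{theo:Seifert cal most general} gives $\kappa(Y,\frakt,\iota_Y)=0$.

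First, I would normalize $f$. The given isotopy $f|_{\partial W} \simeq \id_Y$, together with the spin-lift hypothesis, lets us modify $f$ by a smooth isotopy of $W$ supported in a collar of $\partial W$ so that $f$ is the identity (including on the spin structure) on an open collar of $\partial W$. This does not affect $f_\ast$ on $H^2(W;\R)$, so $f_\ast$ still reverses the orientation of $H^+(W)$.

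Second, I would promote $f$ to an honest smooth involution by a twisted-doubling construction. Form the closed spin $4$-manifold $X = W \cup_Y (-W)$; gluing the collar-trivial $f$ on the first copy with $f^{-1}$ on the second yields a smooth spin diffeomorphism $F : X \to X$ with $F^2 = \id$ on a neighborhood of the splitting hypersurface. To obtain a genuine involution in the interior, I would use the standard trick of averaging $F$ against its geometric inverse inside a tubular neighborhood, so that $F$ is replaced by a smooth involution $\iota_F$ on $X$ agreeing with the constructed $F$ outside that neighborhood; this produces a spin involution of odd type with non-empty codimension-$2$ fixed-point set (coming from the collar along $Y$ after the canonical smoothing). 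The key homological computation is that $\iota_F$ exchanges the two summands of $H^2(X;\R) = H^2(W) \oplus H^2(-W)$ via $f_\ast$, so that $H^+(X) = H^+(W) \oplus H^-(W)$ (of dimensions $3$ and $19$) decomposes under $\iota_F$ with $b^+_{\iota_F}(X) = \dim\{(v, f_\ast v) : v \in H^+(W)\text{ with } f_\ast v \in H^-(W)\} + \cdots$; the sign $\det(f_\ast|_{H^+(W)}) = -1$ enters precisely as an obstruction, forcing $b^+(X) - b^+_{\iota_F}(X) \ge b^+(W) + 1 = 4$ in the orientation-reversing case but only $b^+(W) = 3$ in the orientation-preserving case.

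Third, I would cap off $-W$ in $X$ to return to a one-boundary situation (or equivalently apply \cref{most general main theo one boundary} directly on a piece of $X$ using $\iota_F$-equivariant surgery along $Y$). Then the $10/8$-type inequality
\[
-\frac{\sigma(W)}{16} \;\leq\; b^+(W) - b^+_{\iota_F}(W) + \kappa(Y,\frakt,\iota_Y)
\]
combined with $\sigma(W) = -16$, $b^+(W) = 3$, $\kappa(Y,\frakt,\iota_Y) = 0$, and the computed value $b^+_{\iota_F}(W) \leq 2$ produced by the orientation-reversal of $f_\ast$ on $H^+(W)$, gives $1 \leq 3 - b^+_{\iota_F}(W) = 1 + \varepsilon$ where the discrepancy $\varepsilon$ tracks the $-1$ determinant on $H^+(W)$; the strict inequality that must result contradicts \cref{theo intro nonextendable general} applied to $\iota_F$ on $W$ after the cap-off.

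\emph{The main obstacle} is the twisted-doubling step: producing, from a general spin diffeomorphism $f$ (with $f^2 \neq \id$ in general), a genuine smooth involution on a related spin $4$-manifold that retains enough of the $H^+$-reversal of $f_\ast$ to contradict the numerics above. The fallback, if the geometric smoothing of $F$ into $\iota_F$ fails, is to pass to the $G$-equivariant stable homotopy framework of \cref{subsection Doubling}: work with the action of $f$ on $DSWF_G(Y,\frakt,\iota_Y)$, use that it is $\id$ up to $G$-homotopy (because $f|_{\partial W} \simeq \id$ via spin isotopy), and extract the same numerical contradiction from $\kappa(Y,\frakt,\iota_Y) = 0$ and the degree-$(-1)$ self-map on the $H^+(W)$-suspension factor of the relative Bauer–Furuta map.
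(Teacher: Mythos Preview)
Your proposal has a genuine gap at exactly the point you flag as the ``main obstacle'': there is no general procedure that converts an arbitrary orientation-preserving spin diffeomorphism $f$ with $f^2 \neq \id$ into a smooth involution on $W$ (or on a double of $W$) while retaining the action of $f_\ast$ on $H^+(W)$. The ``averaging $F$ against its geometric inverse'' step is not a standard trick and does not produce an involution; composing $F$ with $F^{-1}$ gives the identity, not a $\Z_2$-symmetrization of $F$. Likewise, the fallback you sketch does not make sense: the theory of \cref{most general main theo one boundary} and \cref{theo intro nonextendable general} requires an honest smooth odd involution on the $4$-manifold, and the relative Bauer--Furuta map for a single (non-involutive) $f$ carries no such $G$-structure. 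The statement concerns a diffeomorphism, not an involution, so the $\kappa(Y,\frakt,\iota_Y)$ machinery of this paper is simply not applicable.

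The paper's proof takes a completely different route and does not use the $\kappa$-invariant at all. Instead, it forms the mapping torus $E_W \to S^1$ of $(W,f')$ (after isotoping $f$ to be the identity on $\partial W$), observes that reversal of orientation on $H^+(W)$ is equivalent to $w_1(H^+(E_W)) \neq 0$, and then invokes a \emph{families} $10/8$-type inequality from \cite{KT20a} together with Manolescu's $\alpha$-invariant. The relevant values $\alpha(\Sigma(2,3,12k+1)) = \alpha(-\Sigma(2,3,12k+1)) = \alpha(-\Sigma(2,3,12k-1)) = 0$ then yield $-\sigma(W)/8 \leq 0$, contradicting $\sigma(W) = -16$. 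The external input \cite{KT20a} is essential here: it handles diffeomorphisms of arbitrary order via one-parameter families rather than via $\Z_2$-symmetry, and that is precisely the idea your proposal is missing.
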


\begin{proof}
Set $Y=\Sigma(2,3,13)$.
Let $f'$ be a diffeomorphism of $W$ which is isotopic to $f$ and is the identity on $Y$.
Since $f$ lifts to a spin automorphism, so does $f'$.
Let $\tilde{f}'$ be a lift of $f'$ on the spin structure.
Then, as the mapping torus of $W$ by $(f', \tilde{f}')$,
we obtain a fiber bundle $E_W \to S^1$ over a circle with fiber $W$ equipped with a fiberwise spin structure.
Associated to this fiber bundle,
we have a real vector bundle $H^+(E_W) \to S^1$, whose fiber is a positive-definite subspace of the real second cohomology group of the fiber. 
If $f$ reverses the orientation of $H^+(W)$, this implies that $w_1(H^+(E_W)) \neq 0$.

Recall the comuputation of Manolescu's $\alpha$ invariant of $Y$
(see \cite[Proposition~3.8, Page 172]{Ma16}):
\begin{align*}
&\alpha(\Sigma(2,3,12k+1))=0,\\
&\alpha(-\Sigma(2,3,12k+1)) = -\gamma(\alpha(\Sigma(2,3,12k+1)))=0,\\ 
&\alpha(-\Sigma(2,3,12k-1)) = -\gamma(\alpha(\Sigma(2,3,12k-1)))=0.
\end{align*}
Thus in any case we have $\alpha(Y)=0$.
Then it follows from \cite[Theorem~1.2]{KT20a} that
\[
-\sigma(W)/8 \leq \alpha(Y)=0,
\]
but since $\sigma(W)=-16$, this is a contradiction.
\end{proof}

Bryan proved in \cite[Theorem 1.8]{Bry98} that, for a spin odd involution $\iota$ on a spin rational cohomology $K3$ surface $X$, we have $b^+_{\iota}(X) = 1$.
If $f$ in \cref{prop: KT constraint} is an involution $\iota$, then \cref{prop: KT constraint} implies that $b^+_\iota(W)=1$ or $b^+_\iota(W)=3$.
But if $\iota$ is of odd type, there is an additional constraint similar to the result by Bryan: 

\begin{cor}
\label{eq:Bryan-like constraint}
let $W$ be a compact spin smooth 4-manifold bounded by one of $\Sigma(2,3,12k+1)$, $-\Sigma(2,3,12k+1)$, and $-\Sigma(2,3,12k-1)$ with the intersection form isomorphic to that of $K3$.
Let $\iota : W \to W$ be a smooth spin involution of odd type whose restriction to the boundary is isotopic to the identity.
Then we have $b^+_\iota(W)=1$.
\end{cor}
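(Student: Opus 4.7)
The plan is to combine the linear constraint from Proposition~\ref{prop: KT constraint} with the relative $10/8$-inequality of Theorem~\ref{most general main theo one boundary}, applied to the pair $(W, \iota)$.

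First I would invoke Proposition~\ref{prop: KT constraint} to get that $\iota$ preserves the orientation of $H^+(W) \cong \mathbb{R}^3$. This uses the hypotheses that $\iota|_{\partial W}$ is isotopic to the identity and that $\iota$ lifts to a spin automorphism (which is immediate from $\iota$ being a spin involution of odd type). Since an orientation-preserving linear involution of $\mathbb{R}^3$ has $(+1)$-eigenspace of dimension $1$ or $3$, this yields $b^+_\iota(W) \in \{1, 3\}$.

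Next I would compute the boundary kappa invariant. The restriction $\iota|_Y$ is a smooth spin involution on $Y$ isotopic to the identity; in particular it is of odd type. By Theorem~\ref{theo:Seifert cal most general} together with Remark~\ref{rem: general theo: nonext 2313} (which states that the kappa computation applies to every odd involution on a Seifert homology sphere isotopic to the identity), we get
\[
\kappa(Y, \frakt, \iota|_Y) = -\bar{\mu}(Y)/2 \quad \text{or} \quad \kappa(-Y, \frakt, \iota|_Y) = \bar{\mu}(Y)/2,
\]
depending on orientation. For each of $Y = \Sigma(2,3,12k+1)$, $-\Sigma(2,3,12k+1)$, and $-\Sigma(2,3,12k-1)$ we have $\bar{\mu}(Y) = 0$, so in all three cases $\kappa(Y, \frakt, \iota|_Y) = 0$.

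Finally I would apply the relative $10/8$-inequality from Theorem~\ref{most general main theo one boundary} with $\sigma(W) = -16$ and $b^+(W) = 3$ (from the $K3$ intersection form):
\[
1 = -\frac{\sigma(W)}{16} \leq b^+(W) - b^+_\iota(W) + \kappa(Y,\frakt,\iota|_Y) = 3 - b^+_\iota(W).
\]
Thus $b^+_\iota(W) \leq 2$, which rules out the value $3$ from the first step and leaves $b^+_\iota(W) = 1$. The only delicate point is ensuring that the hypotheses of Theorem~\ref{most general main theo one boundary} are met for the boundary involution $\iota|_Y$ (preservation of orientation and spin structure, oddness), all of which are inherited from the corresponding properties of $\iota$ on $W$; verifying that $\kappa(Y,\frakt,\iota|_Y)$ indeed equals the value computed in Theorem~\ref{theo:Seifert cal most general} for an involution only assumed isotopic to the identity is the main subtlety, but it is precisely what Remark~\ref{rem: general theo: nonext 2313} supplies.
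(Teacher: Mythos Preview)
Your proof is correct and follows essentially the same line as the paper's. The paper combines Proposition~\ref{prop: KT constraint} with Theorem~\ref{theo intro nonextendable general} (extended to arbitrary odd involutions isotopic to the identity via Remark~\ref{rem: general theo: nonext 2313}); you simply unpack the latter citation into its ingredients---the one-boundary $10/8$-inequality of Theorem~\ref{most general main theo one boundary} together with the $\kappa$-computation of Theorem~\ref{theo:Seifert cal most general}---which is exactly how Theorem~\ref{theo intro nonextendable general} is proved.
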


\begin{proof}
By \cref{prop: KT constraint}, we have $b^+_\iota(W)=1$ or $b^+_\iota(W)=3$.
But the possibility that $b^+_\iota(W)=3$ is excluded by \cref{theo intro nonextendable general}.
\end{proof}

\begin{rem}\label{a new action on s2s2}
Before proving \cref{theo: non-smoothable actions main thm 6nminus1}, let us explain the action $\iota_t$ on $\#_2 S^2\times S^2$. First, we consider the complex conjugation $\iota_\C$ on $D^4$. By pasting two copies of $(D^4, \iota_\C) $ and $(-D^4, \iota_\C) $, we obtain a $\Z_2$-action $\iota_{S^4}$ on $S^4$ whose fixed-point set $S$ is a trivial 2-knot in $S^4$. Next, we fix an element $x_0$ in $S^4 \setminus S$.  
Then by considering an equivariant connected sum of $(S^4, \iota_{S^4} )$ and $(\#_2S^2 \times S^2, \iota_{\#_2S^2 \times S^2} :=\text{trivial double cover})$ along $x_0$ and $\iota_{S^4}(x_0)$ gives an involution 
\[
\iota_t : \#_2S^2 \times S^2 \to \#_2S^2 \times S^2. 
\]
Note that the quotient orbifold of $\iota_t$ is $S^2 \times S^2$ and the fixed-point set is a trivial 2-knot in $S^2 \times S^2$. 
\end{rem}

Now we give the proof of \cref{theo: intro non-smoothable actions main thm 6nminus1}, which yields examples of non-smoothable involutions.
In fact, we can prove a more general result as follows:

\begin{theo}
\label{theo: non-smoothable actions main thm 6nminus1}
Let $W$ be one of:
\begin{itemize}
\item[(i)]
$W = \natural _m M(2,3,6n-1)\#_{2nm+m} S^2 \times S^2$ for $n\geq 2, m \geq 1$, with boundary $\#_m \Sigma(2,3,6n-1)$.

    \item[(ii)]    
$W = \natural_m M(2,3,6n+1)\#_{2nm+2m} S^2 \times S^2$ for $n\geq 1, m \geq 1$, with boundary $\#_m \Sigma(2,3,6n+1)$.    
\end{itemize}
Then there exists a locally linear topological involution $\iota_W : W \to W$ with non-empty fixed-point set that satisfies the following properties:
\begin{itemize}
\item[(I)] The involution $\iota_W$ is not smooth with respect to every smooth structure on $W$.

\item[(II)] The restriction of $\iota_W$ to the boundary $\del{W}$ is $\#_m \iota$, where $\iota : \Sigma(2,3,6n\pm1) \to \Sigma(2,3,6n\pm1)$ is the involution $\iota(z_1, z_2, z_3) = (-z_1, z_2, z_3)$.
In particular, $\iota_W|_{\del W}$ extends as a diffeomorphism of $W$ for every smooth structure on $W$.

\item[(III)] For any $N>1$, the equivariant connected sum 
\[
\iota_W \#_N \iota_r : W \#_N S^2 \times S^2 \to W \#_N S^2 \times S^2
\]
along fixed points is also a non-smoothable involution with respect to every smooth structure on $W \#_N S^2 \times S^2$.

\item[(IV)] The quotient orbifold $W/\iota_W$ is indefinite.
More precisely, $b^+(W/\iota_W) = 4nm$, $b^-(W/\iota_W) = 4nm$ for the case (i), and $b^+(W/\iota_W) = b^-(W/\iota_W) = 4nm+m$ for the case (ii). 

%\item[(V)] There exists $N>1$, a smooth structure on $W \#_{2N} S^2 \times S^2$ and a smooth involution 
%\[
%\iota' : W \#_{2N} S^2 \times S^2 \to W \#_{2N} S^2 \times S^2
%\]
%such that $\iota'|_{\partial W \#_{2N} S^2 \times S^2} = \iota|_{\partial W \#_{2N} S^2 \times S^2}$ and $W \#_{2N} S^2 \times S^2/ \iota' = W \#_{2N} S^2 \times S^2/ \iota $ as smooth manifolds. 
\end{itemize}
\end{theo}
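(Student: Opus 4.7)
The strategy is to build $\iota_W$ by combining a smooth involution on the Milnor-fibre portion with a purely topological involution on the $S^2 \times S^2$-summands, and then to derive a contradiction from \cref{main theo} under the assumption that $\iota_W$ is smooth with respect to some smooth structure. A crucial point is that $\sigma(W)$, $b^+(W)$ and $b^+_{\iota_W}(W)$ depend only on the homotopy-theoretic data $(W, \iota_W)$, not on any smooth structure, while $\kappa(\partial W, \iota|_{\partial W})$ is pinned down by the canonical smooth structure on $\#_m \Sigma(2,3,6n\pm 1)$; so a single application of the 10/8-type inequality will simultaneously rule out smoothness of $\iota_W$ with respect to every smooth structure.

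For the construction I take the complex-conjugation involution on each Milnor fibre $M(2,3,6n\pm 1) \subset \mathbb{C}^3$, which is smooth, has a codimension-2 real-algebraic fixed-point curve, and restricts to $\iota$ on the Brieskorn-sphere boundary; forming the equivariant boundary connected sum $\natural_m M(2,3,6n\pm 1)$ along fixed points gives a smooth involution extending $\#_m\iota$. The $S^2 \times S^2$-summands are added via the topological involution $\iota_t$ on $\#_2 S^2 \times S^2$ of \cref{a new action on s2s2}, whose orbifold quotient is $S^2 \times S^2$; when the parity of the total number of $S^2 \times S^2$-summands forces it, a single summand $(S^2 \times S^2, \iota_r)$ is used in place of a half-$\iota_t$. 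Each $(\#_2 S^2\times S^2, \iota_t)$-block contributes $2$ to $b^+(W)$ and $1$ to $b^+_{\iota_W}(W)$ (by \cref{b+iotarem} applied to the quotient map), while each $\iota_r$-block contributes $1$ to both. The number of blocks of each type is chosen so that the orbifold Betti numbers of $W/\iota_W$ realize the values claimed in (IV); this is the content of (IV), read off directly from the summand count and the Hirzebruch formulas recalled in \cref{b+iotarem}.

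To prove (I), suppose $\iota_W$ were smooth with respect to some smooth structure on $W$. Excise a small $\iota_W$-invariant open $4$-ball around a fixed point, obtaining a smooth spin cobordism from $(S^3, \text{complex conjugation})$ to $(\#_m \Sigma(2,3,6n\pm 1), \#_m \iota)$; the $S^3$-end contributes $\kappa = 0$ by \cref{ex: S3 case}, and by \cref{theo:kappa connecedt sum Seifert cal most general} combined with the connected-sum formula \cref{connected sum} the other end contributes $\kappa = -\tfrac{m}{2}\bar\mu(\Sigma(2,3,6n\pm 1))$. Then \cref{main theo} forces
\begin{equation*}
-\sigma(W)/16 \leq b^+(W) - b^+_{\iota_W}(W) - \tfrac{m}{2}\bar\mu(\Sigma(2,3,6n\pm 1)).
\end{equation*}
Substituting the known values of $\sigma(W) = m\,\sigma(M(2,3,6n\pm 1))$ together with $b^+(W)$ and the explicit $b^+_{\iota_W}(W)$ recorded in (IV), the right-hand side is strictly smaller than the left, contradicting smoothness and establishing (I). Property (II) is immediate because $\iota$ sits inside the standard $S^1$-action on $\Sigma(2,3,6n\pm 1)$ and is therefore smoothly isotopic to the identity (\cref{rem: general involusion not minus}), so extends as a diffeomorphism over any smooth structure on $W$. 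For (III), observe that $\iota_r$ acts trivially on $H^2(S^2 \times S^2)$, so each equivariant connected sum with $(S^2\times S^2, \iota_r)$ raises both $b^+(W)$ and $b^+_{\iota_W}(W)$ by $1$ while leaving $\sigma(W)$ and $\kappa(\partial W)$ untouched; the gap $b^+(W) - b^+_{\iota_W}(W)$ in the 10/8 inequality is preserved, and the same obstruction persists after stabilisation.

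The main obstacle is numerical bookkeeping: one must verify both that the construction produces precisely the orbifold Betti numbers asserted in (IV) and that after substitution into the 10/8-type inequality one obtains a strict violation. The parity issue in case (i), where $m(2n+1)$ can be odd and forces the use of one $\iota_r$-summand in place of a $\iota_t$-block, is the delicate point; we must verify that this adjustment preserves both (IV) and the (10/8)-violation, and that $n \geq 2$ in case (i) (respectively $n \geq 1$ in case (ii)) is exactly the regime where the contradiction has a positive integer margin.
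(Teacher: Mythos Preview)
Your construction has a fatal circularity: every ingredient is a smooth involution on a smooth manifold, so the resulting $\iota_W$ is smooth for the standard smooth structure on $W$, immediately contradicting (I). Complex conjugation on the Milnor fibre is smooth; the involution $\iota_t$ of \cref{a new action on s2s2} is explicitly built from smooth pieces (complex conjugation on $D^4$ glued to a smooth free double cover), hence smooth; $\iota_r$ is smooth; and equivariant connected sums along fixed points preserve smoothness. Since \cref{main theo} is a theorem about smooth involutions, your $\iota_W$ must \emph{satisfy} the inequality, not violate it. Concretely, for \emph{any} smooth odd involution on $W$ restricting to $\#_m\iota$ on the boundary, \cref{main theo} forces $b^+_{\iota_W}(W)\le 4nm - nm/2 + \kappa(\partial W) < 4nm$ in the range $n\ge 2$, so the block count you propose in order to realise (IV) is infeasible with smooth pieces---this is the real obstacle, not the parity bookkeeping you flag. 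There is also a secondary error: complex conjugation on $M(2,3,6n\pm1)$ restricts on the boundary to complex conjugation on $\Sigma$, not to $\iota(z_1,z_2,z_3)=(-z_1,z_2,z_3)$; these lie in different components of the canonical $O(2)$-action and are not isotopic.

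The paper's construction is genuinely topological and entirely different. One uses the bound $\mathrm{sn}^{\mathrm{Top}}(\#_m T(3,6n\pm 1)) \le g_4^{\mathrm{Top}}(\#_m T(3,6n\pm 1))$ to produce a null-homologous locally flat (in general non-smooth) disk $S$ in a punctured $\#_k S^2\times S^2$ bounded by $\#_m T(3,6n\pm 1)$; the topological double branched cover $\Sigma(S)$ is then a simply-connected topological spin $4$-manifold with boundary $\#_m\Sigma(2,3,6n\pm 1)$. A direct computation of its intersection form together with Freedman's classification for homology-sphere boundary (\cref{theo: Freedman boundary}) identifies $\Sigma(S)$ with $W$ up to homeomorphism, and the covering involution, transported through this homeomorphism, is the desired $\iota_W$. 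It is locally linear but typically non-smooth precisely because $S$ is only locally flat. Property (IV) is immediate since $W/\iota_W\cong \#_k S^2\times S^2$, and by construction $b^+_{\iota_W}(W)=b^+(W)$, so the inequality of \cref{main theo} is violated exactly when $-\sigma(W)/16 > \kappa(\partial W,\#_m\iota)$, which holds in the stated range of $n$.
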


\begin{proof}
Mainly we describe the case (i): the proof for the case (ii) is similar.
First, let us describe how to construct $\iota_W$.
By \eqref{eq: sn gtop} and \eqref{g4top6npm1},
we have that $\mathrm{sn}^{\mathrm{Top}}(\#_m T(3,6n-1)) \leq 4nm$.
From this, there exists a locally flat topological proper embedding of an oriented compact genus-0 surface $S$ with boundary into $\#_{4nm}S^2 \times S^2$ so that $\del S=\#_m T(3,6n-1)$.
The double branched cover $\Sigma(S)$ is topological spin 4-manifold boundeded by $\#_{4nm}\Sigma(2,3,6n-1)$.
The calculation of Betti numbers and signatures of double branched covering given in \cref{branched spin} hold also for locally flat topological embeddings, and we have $b^+(\Sigma(S))=4nm, \sigma(\Sigma(S))=-8nm$.
Noting that $S$ is a surface with genus $0$, one can check that $\Sigma(S)$ is simply-connected by the van-Kampen theorem.
On the other hand, the intersection form of $M(2, 3,6n-1)$ is well-known, which is $n(-E_8) \oplus (2n-1)H$, where $H$ is the intersection form of $S^2 \times S^2$. 
Therefore the intersection form of $\Sigma(S)$ is isomorphic to that of $W$.
It follows from this and \cref{theo: Freedman boundary} that $\Sigma(S)$ is homeomorphic to $W$.
Let $\iota_W$ be the involution on $W$ which is induced from the covering invokution on $W$ via a homeomorhism between $\Sigma(S)$ and $W$.
We claim that this involution $\iota_W$ is the desired involution. 

First, it is clear that $\iota_W|_{\del W}$ is $\#_{nm}\iota$ by construction.
Also, since the quotient $W/\iota_W$ is homeomorphic to 
$\#_{4nm} S^2 \times S^2$, it follows that $b^+(W/\iota_W) = b^-(W/\iota_W) = 4nm$.

Next, we show that $\iota_W$ is non-smoothable for every smooth structure on $W$.
We have that $b^+(W)-b^+_{\iota_W}(W) = 4nm-4nm=0, -\sign(W)/16=nm/2$.
Also, it follows from \cref{cor: kappa for torus knot 6npm1} that  $\kappa(\del W, \#_{nm}\iota) = \kappa(\#_{nm} T(3,6n-1)) = 0$ for $n$ even, and $\kappa(\del W, \#_{nm}\iota)=m$ for $n$ odd.
Thus, for $n>1$, it follows from that $\iota_W$ cannot be a smooth involution by \cref{main theo} for every smooth structure.
Also, since $\iota_r$ is homologically trivial, we have $b^+(S^2\times S^2) - b^+_{\iota_r}(S^2 \times S^2)=0$.
Thus $\iota_W \#_N \iota_r$ is also non-smoothable for $N>0$ as well.

The proof for the case (ii) is quite similar to that for the case (i).
First, by the upper bound on $\mathrm{sn}^{\mathrm{Top}}$ from \eqref{eq: sn gtop} and \eqref{g4top6npm1}, we can find a genus-0 surface $S$ in $\#_{4nm+m}S^2 \times S^2$ with $\del S=\#_m T(3,6n+1)$.
Recalling that the intersection form of $M(2,3,6n+1)$ is given by $n(-E_8)\oplus 2n H$, it is easy to see that the double brached covering $\Sigma(S)$ is homeomorphic to $W$.
The remaining argument is the same with the case (i).
\end{proof}

Here we summarize a fact used in the proof of \cref{theo: non-smoothable actions main thm 6nminus1} on the topological classification of topological 4-manifold with boundary.
%We refer the readers to \cite[Subsection~8.2]{FNOP19} for some properties of the Kirby--Siebenmann invariant including the additivity formula. 

\begin{theo}[\cite{B86,B93}]
\label{theo: Freedman boundary}
Let $Y$ be an integral homology $3$-sphere.
Then the homeomorphism class of a simply-connected compact topological 4-manifolds with boundary $Y$ with even intersection form is determined by its intersection forms, up to isomorphism over $\Z$. 
\end{theo}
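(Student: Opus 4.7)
The plan is to reduce \cref{theo: Freedman boundary} to Freedman's classification of closed simply-connected topological 4-manifolds by capping off the boundary. Let $W_{0}$ and $W_{1}$ be two simply-connected compact topological 4-manifolds with boundary $Y$ and even intersection forms which are isomorphic over $\Z$. Since $Y$ is an integral homology 3-sphere, Freedman's theorem yields a (unique) contractible compact topological 4-manifold $C_{Y}$ bounded by $Y$. Form the closed topological 4-manifolds
\[
\hat{W}_{i} := W_{i} \cup_{Y} (-C_{Y}) \quad (i=0,1).
\]
Each $\hat{W}_{i}$ is simply-connected by van Kampen, and its intersection form is isomorphic to that of $W_{i}$ (since $C_{Y}$ is acyclic), hence even. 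The Kirby--Siebenmann invariant of a closed simply-connected spin topological 4-manifold is determined by $\sigma/8 \bmod 2$, so $\hat{W}_{0}$ and $\hat{W}_{1}$ have the same Kirby--Siebenmann invariant. By Freedman's classification, there is therefore an orientation-preserving homeomorphism $F : \hat{W}_{0} \to \hat{W}_{1}$.

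The main obstacle is then to arrange that $F$ maps the embedded copy of $C_{Y}$ in $\hat{W}_{0}$ onto the embedded copy of $C_{Y}$ in $\hat{W}_{1}$; once this is achieved, restricting $F$ to the complement gives the desired homeomorphism $W_{0} \to W_{1}$. To carry this out, I would invoke uniqueness of topological embeddings of the contractible 4-manifold $C_{Y}$ into a simply-connected closed topological 4-manifold up to ambient homeomorphism. This is a consequence of Freedman's disk embedding theorem applied to the complement: the complement $\hat{W}_{i} \setminus \mathrm{int}(C_{Y}) = W_{i}$ is simply-connected with boundary $Y$, and any two embeddings of $C_{Y}$ producing homeomorphic complements are ambiently equivalent, by a relative version of Freedman's uniqueness argument (this is essentially Boyer's homeomorphism extension theorem in \cite{B86,B93}, which asserts that an isomorphism of intersection forms of such $W_{i}$ is realized by a homeomorphism, provided the boundary identification lifts appropriately; in the integral homology sphere case, the $\mu$-invariant obstruction is automatically controlled). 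Composing $F$ with such an ambient homeomorphism of $\hat{W}_{1}$ produces a homeomorphism respecting the boundary decomposition.

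The hard part is the last step: descending $F$ to a homeomorphism of pairs $(\hat W_{i}, C_{Y})$. The simplification in our setting is twofold. First, because the intersection forms are even, no Arf-invariant obstruction appears in Boyer's list of invariants (the Kirby--Siebenmann invariant is already pinned down by $\sigma \bmod 16$ via Rokhlin). Second, because $Y$ is an integral (not merely rational) homology sphere, the boundary identification $\partial W_{0} \cong \partial W_{1}$ induces the identity on homology, so the isomorphism of intersection forms extended across the boundary is unobstructed. These two points are exactly the hypotheses under which Boyer's classification collapses to ``intersection form alone,'' which is the content we need.
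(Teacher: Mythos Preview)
The paper does not supply a proof of this theorem at all: it is stated as a citation of Boyer's classification \cite{B86,B93} and used as a black box in the proof of \cref{theo: non-smoothable actions main thm 6nminus1}. So there is no ``paper's proof'' to compare against; the paper simply imports the result from the literature.

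As a standalone sketch your outline is reasonable in spirit, but it is essentially circular at the key step. Capping off with Freedman's contractible $C_Y$ and applying Freedman's closed classification is fine and standard. The genuine content, however, is exactly the step you flag as hard: showing that the two embeddings of $C_Y$ into the homeomorphic closed manifold are ambiently equivalent (equivalently, that the homeomorphism of the closed manifolds can be isotoped to carry one copy of $C_Y$ to the other). You justify this by invoking ``Boyer's homeomorphism extension theorem,'' but that \emph{is} the theorem you are trying to prove. The actual work in \cite{B86,B93} is a careful analysis of when a given isometry of intersection forms is realized by a homeomorphism rel boundary, and for integral homology sphere boundary with even form the obstructions vanish --- but this is a nontrivial argument using surgery-theoretic and Freedman-type techniques, not a formal consequence of the closed classification. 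Your sketch correctly identifies the architecture and the reasons the obstructions vanish (even form kills the Kirby--Siebenmann ambiguity; integral homology sphere boundary trivializes the linking-form data), but it does not supply an independent proof of the embedding-uniqueness step.
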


\begin{comment}
Recall that, for each $k>0$, the Brieskorn sphere $\Sigma(2,3,12k-5)$ bounds a smooth spin compact simply-connected $4$-manifold $W_0$ with intersection form
\[
-E_8 \oplus    \left(
    \begin{array}{ccc}
      0 & 1   \\
      1 & 0 
    \end{array}
  \right).
 \]
(See, for example, \cite[Section~5.3]{Ma14}.)
\end{comment}

\section{Appendix} \label{Comparison}
In this section, we review several known genus bounds related to our result.

 \subsection{Manolescu's relative 10/8-inequality}
First,  we compare our main result and 10/8-inequality without involutions.  
 A similar discussion for closed 4-manifolds can be founded in \cite{Ha13}. We consider Manolescu's relative 10/8-inequality \cite{Ma14}. 
 The following theorem is just a corollary of usual relative 10/8-inequality combined with double branched covers. 
 
 In \cite{Ma14}, Manolescu introduced a rational valued spin rational homology cobordism invariant \[
 (Y, \fraks ) \mapsto \kappa (Y, \frakt ) \in \frac{1}{8}\Z .  
 \]
 This enables us to define a knot concordance invariant by 
 \[
 K \mapsto \kappa^*(K):=  \kappa (\Sigma (K) , \frakt ) \in \frac{1}{8}\Z, 
 \]
 where $\frakt$ is the unique spin structure on the double branched cover $\Sigma (K)$. The following is an application of the relative 10/8-inequality proven in \cite{Ma14} and \cref{branched spin}. 
 \begin{theo}[\cite{Ma14}]\label{manolescu1}
The invariant $\kappa^*  (K) $ is a knot concordance invariant satisfying the following property: 
For any compact 4-manifold $X$ bounded by $S^3$ with $H_1(X;\Z)=0$ and any oriented compact surface $S$ bounded by $K$ such that $PD(w_2(X)) = [S]/2 \operatorname{mod} 2$, we have 
    \begin{align}\label{ineq}
    -\frac{1}{4}\sigma (X) + \frac{5}{16 }[S]^2 - \frac{5}{8}\sigma (K)  - \kappa^* (K)   \leq 2 b^+(X) + g(S) 
        \end{align}
    holds, where $[S]/2$ denotes the element in $H_2(S; \Z)$ such that $2 ([S]/2) = [S]$.
\end{theo}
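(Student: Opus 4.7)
The plan is to reduce the statement to Manolescu's original relative 10/8-inequality \cite{Ma14} applied to the double branched cover $\Sigma(S) \to X$, and then translate the resulting inequality on $\Sigma(S)$ back to an inequality involving the data of $(X, S, K)$ via the topological formulas already collected in \cref{branched spin}.

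First I would note that the hypothesis $PD(w_2(X)) = [S]/2 \bmod 2$ is precisely the criterion of \cref{branched spin} guaranteeing that $\Sigma(S)$ admits a spin structure $\frakt^{\#}$, unique up to isomorphism since $H^1(\Sigma(S);\Z_2)=0$. By \cref{branched spin} one then has
\[
\sigma(\Sigma(S)) = 2\sigma(X) - \tfrac{1}{2}[S]^2 + \sigma(K), \qquad b^+(\Sigma(S)) = 2b^+(X) + g(S) - \tfrac{1}{4}[S]^2 + \tfrac{1}{2}\sigma(K),
\]
together with $b_1(\Sigma(S)) = 0$. The 4-manifold $\Sigma(S)$ is a smooth compact spin 4-manifold with boundary $(\Sigma(K),\frakt)$.

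Next, I would apply Manolescu's relative 10/8-inequality from \cite{Ma14}, which in this setting reads
\[
-\frac{\sigma(\Sigma(S))}{8} \leq b^+(\Sigma(S)) + \kappa(\Sigma(K),\frakt) = b^+(\Sigma(S)) + \kappa^{*}(K).
\]
Substituting the two formulas above gives
\[
-\frac{\sigma(X)}{4} + \frac{1}{16}[S]^2 - \frac{1}{8}\sigma(K) \leq 2b^+(X) + g(S) - \frac{1}{4}[S]^2 + \frac{1}{2}\sigma(K) + \kappa^{*}(K),
\]
and rearranging terms yields precisely the claimed inequality \eqref{ineq}.

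For concordance invariance, I would argue that if $K_0$ and $K_1$ are smoothly concordant via an annulus $A \subset [0,1]\times S^3$, then the double branched cover $\Sigma(A) \to [0,1]\times S^3$ is a smooth spin integral homology cobordism from $(\Sigma(K_0),\frakt_0)$ to $(\Sigma(K_1),\frakt_1)$ (the spin structure being unique by \cref{branched spin} again, applied to this cobordism setting as in \cref{homological cal of W}). Since $\kappa$ is a spin rational homology cobordism invariant by \cite{Ma14}, it follows that $\kappa^{*}(K_0) = \kappa^{*}(K_1)$. The only real care needed in the whole argument is the bookkeeping of signs and coefficients in the substitution step, which I do not expect to pose any conceptual obstacle.
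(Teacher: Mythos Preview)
Your proof is correct and follows essentially the same approach as the paper: apply Manolescu's relative 10/8-inequality to the double branched cover $\Sigma(S)$, then substitute the formulas from \cref{branched spin} for $\sigma(\Sigma(S))$ and $b^+(\Sigma(S))$ and rearrange. The only minor quibble is that $\Sigma(A)$ is in general a $\Z_2$-homology cobordism rather than an integral one, but since Manolescu's $\kappa$ is invariant under spin rational homology cobordism this does not affect the argument.
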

\begin{rem}
When $K$ is the unknot and $S$ is an embedded null-homologous disk, \eqref{ineq} implies 
\[
 -\frac{\sigma(X)}{8}   \leq b^+(X), 
 \]
 which recovers Furuta's original 10/8-inequality (\cite{Fu01}) except for adding 1 or 2 on the left hand side as in the case of \cref{main knot}. 
\end{rem}
\begin{proof}[Proof of \cref{manolescu1}] 
Consider the double branched cover $\Sigma(S)$ and apply \cite[Theorem 1.1 and Remark 4.6]{Ma14} to $\Sigma(S)$. Then, we have the following inequality 
\begin{align}\label{10/8 usual}
-\frac{\sigma (\Sigma(S)) }{8} \leq b^+(\Sigma(S)) + \kappa^* (K). 
\end{align}
 Moreover, using \cref{branched spin}, we obtain the equalities
\begin{align*}
&\sigma (\Sigma (S ))= 2 \sigma (X) - \frac{1}{2} [S]^2 + \sigma (K) \  \text{ and }\\ 
& b^+ (\Sigma (S )) = 2b^+(X) + g(S) -\frac{1}{4} [S]^2  +  \frac{1}{2} \sigma (K). \\
 \end{align*}
 Combining these two equations with \eqref{10/8 usual}, we complete the proof. 
\end{proof}
 Also, for spin rational homology 3-spheres with positive scalar curvature metrics, 
\[
\kappa (Y, \frkt)  = \delta (Y, \frkt )
\]
holds (\cite[Subsection 5.1]{Ma14}). In \cite[Theorem 1.2]{MO07}, it is proved that for any connected sum of two bridge knots, 
\[
\delta( \Sigma(K) , \frkt ) =\frac{1}{2}d (\Sigma(K) , \frkt ) =  \delta (K)  = - \frac{1}{8} \sigma (K)
\]
holds, where  $d (\Sigma(K) , \frkt )$  is the Heegaard Floer correction term. 
Since 
\[
\kappa ( \Sigma(K) , \frkt ) = \delta ( \Sigma(K) ,\frkt ), 
\]
for a connected sum of two bridge knots, we have
\[
 \kappa^* (K) = \kappa( \Sigma(K) , \frkt )= -  \frac{1}{8} \sigma (K).
 \]
 As a corollary of \cref{manolescu1}, we see 
\begin{cor} \label{manolescu} Let $K$ be a connected sum of two birdge knots. 
For any compact 4-manifold $X$ bounded by $S^3$ with $H_1(X; \Z)=0$ and any oriented compact surface $S$ bounded by $K$ such that $[S]$ is divisible by $2$ and $PD(w_2(X)) = [S]/2 \operatorname{mod} 2$, we have 
    \[
-\frac{1}{4}  \sigma (X) +  \frac{5}{16} [S]^2   -  \frac{1}{2} \sigma (K)  \leq   2b^+(X) + g(S) . 
    \]
\end{cor}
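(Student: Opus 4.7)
The plan is to derive the corollary as a direct specialization of \cref{manolescu1}, reducing the task to the explicit computation of $\kappa^{*}(K)$ for a connected sum of two-bridge knots. Once the identity
\[
\kappa^{*}(K) = -\tfrac{1}{8}\sigma(K)
\]
is established, substituting this into \eqref{ineq} turns the coefficient of $\sigma(K)$ on the left-hand side from $-\tfrac{5}{8}$ into
$-\tfrac{5}{8} + \tfrac{1}{8} = -\tfrac{1}{2}$,
which gives exactly the asserted inequality. The reduction is therefore purely algebraic once the value of $\kappa^{*}$ is known.

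To compute $\kappa^{*}(K)$, I would proceed in three steps. First, for a single two-bridge knot $K(p,q)$ the branched double cover $\Sigma(K) = L(p,q)$ carries a positive scalar curvature metric coming from its realization as a quotient of the round $S^{3}$. By the PSC identification $\kappa(Y,\mathfrak{t}) = \delta(Y,\mathfrak{t})$ recalled before the corollary (as proved in \cite[Subsection~5.1]{Ma14}), this reduces $\kappa^{*}(K(p,q))$ to $\delta(L(p,q),\mathfrak{t})$. Second, invoking $\delta = d/2$ together with \cite[Theorem~1.2]{MO07} gives
\[
\kappa^{*}(K(p,q)) = \tfrac{1}{2}d(L(p,q),\mathfrak{t}) = -\tfrac{1}{8}\sigma(K(p,q)).
\]
Third, to upgrade from a single two-bridge knot to an arbitrary connected sum $K = K(p_{1},q_{1})\#\cdots\#K(p_{n},q_{n})$, I would use that the branched double cover is the connected sum of the corresponding lens spaces and that each summand admits an $\iota$-invariant PSC metric, hence so does the connected sum in a suitable collar model. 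The $d$-invariant is additive under connected sum, and so is the signature of knots, which together with step two yields $\kappa^{*}(K) = -\tfrac{1}{8}\sigma(K)$ for the full connected sum.

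With $\kappa^{*}(K) = -\tfrac{1}{8}\sigma(K)$ in hand, I would simply substitute into the conclusion of \cref{manolescu1}:
\[
-\tfrac{1}{4}\sigma(X) + \tfrac{5}{16}[S]^{2} - \tfrac{5}{8}\sigma(K) + \tfrac{1}{8}\sigma(K) \leq 2b^{+}(X) + g(S),
\]
which collapses to the desired inequality. The main obstacle is the third step: ensuring that the PSC argument, and hence the identity $\kappa = \delta$, genuinely extends to connected sums of lens spaces, and that one can freely combine this with the additivity of $d$ (respectively of $\sigma$) to get $\kappa^{*}$-additivity on the subclass of connected sums of two-bridge knots. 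All other ingredients are either direct quotations of earlier results or straightforward arithmetic.
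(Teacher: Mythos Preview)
Your proposal is correct and follows essentially the same route as the paper: reduce to Theorem~\ref{manolescu1} and compute $\kappa^{*}(K)=-\tfrac{1}{8}\sigma(K)$ via $\kappa=\delta$ on PSC spaces together with \cite{MO07}. The only minor difference is that the paper invokes \cite[Theorem~1.2]{MO07} directly for connected sums of two-bridge knots (so no separate additivity argument is needed), whereas you build up from the single-summand case; both reach the same conclusion.
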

 
 \begin{rem}
 Note that for a connected sum $K$ of two bridge knots, 
 we have 
 \[
 \kappa (K) = \frac{1}{2} \kappa^* (K).
 \]
 It is natural to ask whether there exists a knot $K$ such that $\kappa (K) \neq  \frac{1}{2} \kappa^* (K)$ or not. For examples, we consider $K_n := T(3,12n-5)$ for positive integer $n$.
Then, we saw $\kappa(K_n) = -\frac{1}{2}$. On the other hand, it is proven in \cite[Theorem 1.2]{Ma14} that 
\[
\kappa^* (K_n)= \kappa ( \Sigma(2,3,12n-5))  = 1. 
\]
 \end{rem}

\subsection{Topological obstructions}  
We review several topological obstructions to sliceness of knots related to our results. 
For more details, see \cite[Section 3]{manolescu}. 

The following is a relative version of the Rochlin's result \cite{Ro71}. 
 \begin{theo}[\cite{Ki89,Ya96, K20}]\label{Arf nonspin}
Let $X$ be a smooth, closed, connected, oriented 4-manifold. If $ S \subset X^\circ$ is a
properly embedded, locally flat characteristic surface with boundary a knot $K$, then 
\[
\frac{\sigma (X) -[S]^2  } {8 } = \operatorname{Arf}(K) +  \operatorname{Arf}(X, S ). 
\]
 \end{theo}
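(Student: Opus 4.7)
The plan is to reduce to the classical closed-surface version of Rokhlin's theorem (as sharpened by Freedman--Kirby and Guillou--Marin) by capping off the knot boundary $K$ with a Seifert surface pushed into a small $4$-ball. More precisely, let $B \subset X$ be a smoothly embedded $4$-ball disjoint from $S$ (except on its boundary $S^3$, where $K$ sits), and choose a Seifert surface $F \subset S^3$ for $K$. Pushing the interior of $F$ into $B$ yields a properly embedded, connected, oriented surface $F' \subset B$ with $\partial F' = K$; gluing gives a closed, locally flat characteristic surface $\widehat{S} := S \cup_K (-F')$ in $X$.

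First, I would compute how $[\widehat{S}]^2$ differs from $[S]^2$: since $F'$ lies in a ball, $[F'] = 0$ in $H_2(B, \partial B)$ once we fix a framing, and the self-intersection contribution from pushing $F$ in is controlled by the Seifert form of $K$. Standard Seifert-form arithmetic gives $[\widehat{S}]^2 = [S]^2 - 2e(F)$, where $e(F)$ is the relative Euler number of the normal bundle of the pushed-in Seifert surface, and this can be absorbed into a framing correction. Next, I would apply the Freedman--Kirby/Guillou--Marin theorem to the closed characteristic surface $\widehat{S}$:
\[
\frac{\sigma(X) - [\widehat{S}]^{\,2}}{8} \;\equiv\; \operatorname{Arf}(X, \widehat{S}) \pmod{2}.
\]
The problem then reduces to identifying the difference $\operatorname{Arf}(X, \widehat{S}) - \operatorname{Arf}(X, S)$ (where the right-hand term is defined via the relative Brown--Kervaire quadratic enhancement on $H_1(S; \Z/2)$ associated to the characteristic data) with $\operatorname{Arf}(K)$, and similarly bookkeeping the self-intersection discrepancy.

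The main obstacle will be the Arf-invariant bookkeeping in the last step. Concretely, one has to check that the Brown--Kervaire quadratic form $q_{\widehat{S}}$ on $H_1(\widehat{S};\Z/2)$ splits, with respect to the Mayer--Vietoris decomposition along $K$, into the quadratic form $q_S$ on $H_1(S;\Z/2)$ and the Seifert-form enhancement on $H_1(F;\Z/2)$ whose Arf invariant is the classical $\operatorname{Arf}(K)$. This compatibility requires that the characteristic structures (spin structures on complements, framings of $K$) used to define each enhancement agree on the gluing region; once this is arranged, additivity of the Arf invariant over the orthogonal splitting yields $\operatorname{Arf}(X, \widehat{S}) = \operatorname{Arf}(X, S) + \operatorname{Arf}(K)$, and combining with the signature computation gives the stated formula. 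I would also check the integer lift (not just mod $2$), which in this relative setting follows from the standard refinement of Rokhlin's theorem to an equality rather than a congruence, using that both sides change by multiples of $2$ under framing modifications in a controlled and matching way.
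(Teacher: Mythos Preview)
The paper does not give a proof of this statement. Theorem~\ref{Arf nonspin} appears in the appendix (Section~\ref{Comparison}), where the authors merely \emph{review} known topological obstructions; the theorem is attributed to \cite{Ki89,Ya96, K20} and stated without proof. So there is nothing in the paper to compare your proposal against.

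That said, your overall strategy---cap off $K$ with a pushed-in Seifert surface inside the removed ball, apply the closed Freedman--Kirby/Guillou--Marin formula to the resulting closed characteristic surface $\widehat{S}$, and track how the Brown--Kervaire form splits along $K$---is the standard one and is essentially what the cited references do. Two points in your sketch deserve correction. First, the self-intersection bookkeeping is simpler than you suggest: since $F'$ lies in a ball, the closed surface $\widehat{S}$ represents the same class as $S$ in $H_2(X)$, so $[\widehat{S}]^2 = [S]^2$ on the nose; there is no ``$-2e(F)$'' term or framing correction to absorb. Second, your final paragraph about an ``integer lift'' is misguided: both sides of the displayed formula are $\Z/2$-valued (the left side is an integer because $S$ is characteristic, and the equality is meant modulo~$2$), so there is no refinement to an integer identity to check. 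With those two simplifications your outline matches the standard argument.
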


When $X$ is spin and $[\Sigma] =0$, \cref{Arf nonspin} implies the following: 
\begin{theo}[\cite{R65}]\label{arf inv}
If a knot $K$ is topologically H-slice in a spin smooth 4-manifold, then $\operatorname{Arf}(K) = 0$.
\end{theo}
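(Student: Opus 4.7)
The plan is to reduce the statement directly to \cref{Arf nonspin}. Let $X$ be the smooth closed spin $4$-manifold in which $K$ is topologically H-slice, and let $D \subset X^{\circ} = X \setminus \operatorname{int} D^{4}$ be a properly embedded locally flat disk with $\partial D = K$ and $[D] = 0 \in H_{2}(X^{\circ}, \partial; \Z)$. The first step is to observe that $D$ is a characteristic surface: since $X$ is spin, $w_{2}(X) = 0$, so $\mathrm{PD}([D]) \equiv w_{2}(X) \pmod{2}$ is automatic from $[D] = 0$. Moreover $[D]^{2} = 0$.

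Next, I would apply \cref{Arf nonspin} to the pair $(X, D)$. This yields the identity
\[
\frac{\sigma(X) - [D]^{2}}{8} = \operatorname{Arf}(K) + \operatorname{Arf}(X, D) \quad \text{in } \Z/2,
\]
which in our situation simplifies to $\sigma(X)/8 \equiv \operatorname{Arf}(K) + \operatorname{Arf}(X, D) \pmod{2}$. Rochlin's theorem for closed spin $4$-manifolds gives $\sigma(X) \equiv 0 \pmod{16}$, so the left-hand side vanishes in $\Z/2$. Therefore the identity reduces to
\[
\operatorname{Arf}(K) = \operatorname{Arf}(X, D) \quad \text{in } \Z/2.
\]

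The remaining step, which I expect to be the only non-formal point, is to show that the relative Arf-type contribution $\operatorname{Arf}(X, D)$ vanishes. Since $D$ is a disk, $H_{1}(D; \Z/2) = 0$, so the quadratic refinement on $H_{1}(D; \Z/2)$ entering into the definition of $\operatorname{Arf}(X, D)$ is defined on the trivial group and is automatically zero. Combining this with the previous display gives $\operatorname{Arf}(K) = 0$, as desired. The main obstacle is purely bookkeeping: one must verify that the conventions for $\operatorname{Arf}(X, S)$ used in \cref{Arf nonspin} indeed force $\operatorname{Arf}(X, D) = 0$ whenever $S = D$ is a disk, but this is immediate from the standard definition of $\operatorname{Arf}(X, S)$ via a quadratic refinement on $H_{1}(S; \Z/2)$.
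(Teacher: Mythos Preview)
Your proof is correct and follows exactly the approach the paper indicates: the paper states that \cref{arf inv} follows from \cref{Arf nonspin} in the case where $X$ is spin and $[S]=0$, and you have simply spelled out the details of that implication (using Rochlin's theorem to kill $\sigma(X)/8 \bmod 2$ and the triviality of $H_1(D;\Z/2)$ to kill $\operatorname{Arf}(X,D)$).
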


Also, we review a genus bound coming from the Tristram-Levine signature. 
Put 
\[
 S^1_{!} := \Set { t \in S^1 | \forall f \in \Z[u, u^{-1}] , f(1)=\pm 1, f(t) \neq 0 }.
 \]
 It is known that $S^1_{!}$ is a dense subset in $S^1$.
 For a knot $K\subset S^3$ and a value $t \in S^1_w$, the Tristram-Levine signature $\sigma_K(t)$ is defined as the signature of the following invertible matrix
 \[
 (1-t)M + (1-\overline{t})M^t,\] 
 where $M$ is a Seifert matrix for $K$. The value $\sigma_K(-1)$ is the usual signature. 

\begin{theo}[\cite{Vi75, Gil81, CN20}]\label{000}
 Let $X$ be a topological closed oriented 4-manifold with $H_1(X;\Z) = 0$. Let $S  \subset  X^\circ$ be a locally flat, properly embedded surface of genus $g$, with boundary a knot $K \subset S^3$. If the homology class $[S ] \in  H_ 2(X) $ is divisible by $2$, then 

 \[
 | \sigma (K) + \sigma (X) + [S]^2 | \leq b_2(X) + 2g (S)
 \]
 holds. Moreover, if the homology class $[S] \in  H_ 2(X) $ is divisible by a prime power $m= p^k$, then 
 \[
 | \sigma_K (e^{2\pi r i m } ) + \sigma (X) - \frac{2r (m-r)}{m^2} [S]^2 | \leq b_2(X) + 2g 
 \]
 holds for $r \in \{1, \cdots, m-1\}$.  
 In particular, if $K$ is topologically H-slice, then 
 \[
 \max_{r \in \{1, \cdots, m-1\}} | \sigma_K (e^{2\pi r i m } ) + \sigma (X)  | \leq b_2(X) + 2g(S) 
 \]
 for any prime power $m=p^k$.
 \end{theo}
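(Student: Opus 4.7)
The plan is to employ the classical cyclic branched covering technique of Rokhlin, Viro, and Gilmer. First, with $K = \partial S$ a knot in the boundary $3$-sphere of $X^\circ = X \setminus \operatorname{int} D^4$, push a Seifert surface $F$ for $K$ into the removed ball $D^4$ and set $\hat{S} := S \cup F$, a closed locally flat surface in the closed $4$-manifold $X$. Its homology class $[\hat{S}] = [S] \in H_2(X)$ remains divisible by $m = p^k$, and this, together with $H_1(X;\Z) = 0$, guarantees a well-defined $m$-fold cyclic cover $\pi : \tilde{X} \to X$ branched along $\hat{S}$, equipped with a $\Z/m$ deck transformation whose fixed locus is $\hat{S}$. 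The prime-power hypothesis on $m$ is what ensures existence and uniqueness of this branched cover in the locally flat category.

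Second, I would decompose $H_2(\tilde{X};\C) = \bigoplus_{r=0}^{m-1} H^{(r)}$ into eigenspaces of the deck transformation with respective eigenvalues $\omega^r = e^{2\pi i r/m}$. Each $H^{(r)}$ carries a non-degenerate Hermitian intersection form of signature $\sigma_r(\tilde{X})$, and $\sum_r \sigma_r = \sigma(\tilde{X})$. A Mayer--Vietoris argument on the decomposition $X = \nu(\hat{S}) \cup (X \setminus \nu(\hat{S}))$ combined with the transfer map yields $\dim_\C H^{(0)} = b_2(X)$ and $\dim_\C H^{(r)} = b_2(X) + 2g(S)$ for $r \neq 0$, the extra $2g$ arising from the Seifert-surface cap $F$ which lifts non-trivially only in the non-fixed eigenspaces. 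The Atiyah--Singer $G$-signature theorem applied to the cyclic action then produces the Casson--Gordon signature-defect formula
\[
\sigma_r(\tilde{X}) \;=\; \sigma(X) \;+\; \sigma_K(\omega^r) \;-\; \frac{2r(m-r)}{m^2}[S]^2, \qquad 1 \leq r \leq m-1,
\]
in which the Tristram--Levine signature emerges as the local contribution of the branch locus, computed through the Seifert form of $K$.

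Third, the elementary bound $|\sigma_r(\tilde{X})| \leq \dim_\C H^{(r)}$ for a Hermitian form immediately gives the second displayed inequality of the theorem. The first inequality (involving the ordinary signature $\sigma(K)$) follows from the parallel application of $|\sigma(\Sigma(\hat{S}))| \leq b_2(\Sigma(\hat{S}))$ to the double branched cover, together with the signature and Betti-number formulae for branched covers recorded in \cref{branched spin}. The H-slice consequence is then obtained by specializing to a disk so that $g(S) = 0$ and $[S] = 0$. The main technical obstacle is step two: pinning down the signature-defect formula by identifying the normal-bundle contribution in the $G$-signature theorem with the Tristram--Levine jump through the Seifert form. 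Gilmer's original calculation organizes this bookkeeping via a Wall-type Seifert-matrix argument, and the locally flat (rather than smooth) setting is handled by Freedman--Quinn smoothing away from the branch locus, which preserves intersection forms and signatures.
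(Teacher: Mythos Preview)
The paper does not prove \cref{000}; it is quoted in the appendix as a known result with citations to Viro, Gilmer, and Conway--Nagel, so there is no in-paper argument to compare against. Your overall strategy---cyclic branched covers, eigenspace decomposition, $G$-signature theorem---is indeed the classical route of those references.

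However, your step two conflates the closed and the relative branched covers, and this produces two compensating errors that do not actually cancel. For the closed cover $\tilde{X} \to X$ branched along the \emph{closed} surface $\hat{S} = S \cup F$, the $G$-signature theorem gives $\sigma_r(\tilde{X}) = \sigma(X) - \tfrac{2r(m-r)}{m^2}[\hat{S}]^2$ with \emph{no} $\sigma_K$ term, and the eigenspace dimension is $b_2(X) + 2g(\hat{S}) = b_2(X) + 2g(S) + 2g(F)$, not $b_2(X) + 2g(S)$. The Tristram--Levine signature is not a ``local contribution of the branch locus'' in the closed $G$-signature formula: it is exactly the eigenspace signature $\sigma_r(\Sigma(D^4,F))$ of the Seifert-surface piece. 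The correct argument splits $\tilde{X} = \Sigma(X^\circ, S) \cup_{\Sigma_m(K)} \Sigma(D^4, F)$, uses Novikov additivity to obtain $\sigma_r(\Sigma(X^\circ,S)) = \sigma_r(\tilde X) - \sigma_r(\Sigma(D^4,F))$, computes $\dim H^{(r)}(\Sigma(X^\circ,S)) = b_2(X) + 2g(S)$ via Mayer--Vietoris (this is where $\Sigma_m(K)$ being a rational homology sphere for prime-power $m$ is used), and then applies $|\sigma_r| \le \dim H^{(r)}$ to the \emph{relative} cover $\Sigma(X^\circ, S)$, not to $\tilde{X}$. Your derivation of the first displayed inequality has the analogous problem: applying $|\sigma| \le b_2$ to the whole double cover yields a bound with $2b_2(X)$, not $b_2(X)$; one must again pass to the anti-invariant eigenspace.
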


\begin{cor}
\label{cor: G signature connstraint on involution}
Let $W$ be an oriented topological compact 4-manifold with boundary.
Suppose that a locally linear involution $\iota$ on $W$ is given and that the fixed-point set of $\iota$ is of codimension-2.
Let $S$ be the fixed-point set of $\iota$ and set $K = S \cap \partial W$.
Assume that $S$ is connected.
Then we have 
\[
\frac{1}{2}\left|\sigma(W)+[S]^2+\sigma(K)\right|
\leq \frac{b_2(W)}{2} + g(S).
\]
\end{cor}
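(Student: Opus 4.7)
The plan is to derive the corollary from the $G$-signature theorem for locally linear involutions on compact 4-manifolds with boundary, applied to the involution $\iota$ on $W$, together with a rank computation for the action on $H_2$. The idea is to decompose the intersection form according to the $(\pm1)$-eigenspaces of $\iota_*$, identify the equivariant signature with a combination of $[S]^2$ and $\sigma(K)$, and then bound $|\sigma(H^-_\iota)|$ by $\dim H^-_\iota$, which matches the right-hand side exactly.

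First I would decompose $H_2(W;\R) = H^+_\iota \oplus H^-_\iota$ into the $(\pm1)$-eigenspaces of $\iota_*$. Since $\iota$ is an isometry of the intersection form, the decomposition is orthogonal and $\sigma(W) = \sigma(H^+_\iota) + \sigma(H^-_\iota)$. Next, by viewing $\pi \colon W \to W/\iota$ as a branched double cover with branch surface $S$ (and branch knot $K$ on the boundary) and comparing with the computations of \cref{branched spin}, I would identify the equivariant signature as $\sigma(\iota,W) := \sigma(H^+_\iota) - \sigma(H^-_\iota) = -[S]^2 - \sigma(K)$. Solving the linear system gives
\[
2\sigma(H^-_\iota) = \sigma(W) + [S]^2 + \sigma(K).
\]

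For the rank bound I would compute $\dim H^-_\iota$ via the branched double cover interpretation: $H^+_\iota$ is canonically identified with the pullback of $H_2(W/\iota;\R)$, so $\dim H^-_\iota = b_2(W) - b_2(W/\iota)$, and the Betti number relations of \cref{branched spin} (extended to the topological category, exactly as in the proof of \cref{theo: non-smoothable actions main thm 6nminus1}, where the same formulas are used for locally flat surfaces) yield $\dim H^-_\iota = \tfrac{1}{2}b_2(W) + g(S)$. Combining with the elementary estimate $|\sigma(H^-_\iota)| \leq \dim H^-_\iota$ gives the claimed inequality.

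The main obstacle is the second step: pinning down both the sign and the precise boundary defect in the $G$-signature theorem when the restriction $\iota|_{\partial W}$ has a nonempty codimension-2 fixed set $K$. The cleanest route I foresee is to avoid invoking the boundary $G$-signature theorem directly and instead bootstrap from the closed case by passing to the branched double cover picture: since the ingredients on the right-hand side of our inequality are exactly the quantities that appear in the branched cover formulas of \cref{branched spin}, one can match coefficients and deduce the required equivariant signature identity. A secondary subtlety is that the rank formula $\dim H^-_\iota = \tfrac{1}{2}b_2(W) + g(S)$ is derived in \cref{branched spin} under the hypothesis $H_1 = 0$; in the corollary no such assumption is made, so one must either verify the formula more generally via a Smith-type long exact sequence or check that only an inequality in the needed direction is required.
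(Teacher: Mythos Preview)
Your approach is essentially the paper's approach. The paper's one-line proof ``this follows from \cref{000} and the computation of $b^+$, $b^-$ of the branched cover in \cref{branched spin}'' is exactly the eigenspace argument you spell out: \cref{000} is \emph{proved} by passing to the branched double cover, decomposing $H_2$ into $(\pm1)$-eigenspaces of the deck involution, and bounding $|\sigma(H^-_\iota)|$ by $\dim H^-_\iota$; the paper is simply invoking that argument in reverse, with $W$ already given as the cover of $W/\iota$, and then using the formulas of \cref{branched spin} to rewrite everything in terms of $W$ rather than the quotient. Your direct write-up via the $G$-signature decomposition is the unpackaged form of the same computation.

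Two comments on the obstacles you flag. First, the sign/boundary-defect issue: you are right that sorting out the sign of the fixed-surface contribution in the $G$-signature theorem with boundary is delicate, and your proposed resolution---read off the identity by matching against the branched-cover formulas in \cref{branched spin} rather than citing the $G$-signature theorem from scratch---is precisely what the paper does. Second, the $H_1=0$ hypothesis: this is a genuine point. Both \cref{000} and \cref{branched spin} are stated under $H_1=0$ (and with $\partial X = S^3$), whereas the corollary imposes no such assumption. The paper's proof is terse enough that it does not explicitly address this, and your observation that one either needs a Smith-theory argument to extend the rank formula or an added hypothesis is well taken; at the level of rigor of the paper's own proof, however, you are matching it.
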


\begin{proof}
This follows from \cref{000} and the computation of $b^+, b^-$ of the branched cover in \cref{branched spin}.
(Note that, in the proof of \cref{branched spin}, the smoothness and spinness of the manifold and involution were not used to compute these quantities.)
\end{proof}

\section{Kappa invariant for prime knots with 8- or 9-crossings}
\label{Kappa invariant for knots with 8 and 9-crossings}

We provide several computations of the kappa invariant for prime knots with $8$- or $9$-crossings. Since we have already calculated kappa invariant for two bridge knots in \cref{two bridge kappa12}, we only focus on prime knots whose bridge indexes are grater than $2$.
We use the notations in the Rolfsen's table.  Our main tool is \cref{more computation}. (See \cite{KnotAtlas}. Again note that our sign convention of the knot signature is opposite to the Knot Atlas. )
\\

\begin{center}
\begingroup
\renewcommand{\arraystretch}{1.2}
\begin{tabular}{ |c|c|c||c|c|c||c|c|c| } 
\hline 
Knots  & $\kappa$ & $\sigma$ & Knots  & $\kappa$ & $\sigma$  & Knots  & $\kappa$ & $\sigma$ \\
\hline \hline
$8_{5}$ & $\frac{1}{4} \text{ or } \frac{5}{4}$ &  $-4$ & $9_{28}$ & $-\frac{1}{8}$ & $2$  & $9_{41}$& $0$  & $0$ \\
\hline 
$8_{10}$& $\frac{1}{8}$ & $-2 $& $9_{29}$& $-\frac{1}{8}$ \text{ or } $\frac{-9}{8}$ & $2$ & $9_{42}$& $\frac{1}{8}$ & $-2$ \\
\hline 
$8_{16}$ & $\frac{1}{8}$ \text{ or } $-\frac{7}{8}$&$-2 $ & $9_{30}$& $0$ & $0$& $9_{43}$& $\frac{1}{4}$ or $\frac{5}{4}$ & $-4$\\
\hline 
$8_{17}$ & $0$ & $0$ & $9_{32}$& $\frac{1}{8}$ & $-2$ & $9_{44}$& $0$ &$0$ \\
\hline 
$8_{18}$ &  0 & 0 & $9_{33}$& $0$ & $0 $ & $9_{45}$& $-\frac{1}{8}$ &  $2$ \\
\hline 
$8_{19}$ & $\frac{3}{8}$ or $\frac{11}{8}$ & $-6$ & $9_{34}$& $0$ &$0$ & $9_{46}$& $0$ & $ 0$\\ 
\hline 
$8_{20}$ &$0 $ & $0$& $9_{35}$ & $-\frac{1}{8} \text{ or } \frac{7}{8}$ & $2$ & $9_{47}$& $\frac{1}{8}$ &  $-2$\\
\hline 
$8_{21}$ & $-\frac{1}{8}$ & $2$ & $9_{36}$ & $\frac{1}{4} \text{ or } \frac{5}{4}$ & $-4$ & $9_{48}$& $\frac{1}{8}$&$-2$ \\ 
\hline 
$9_{16}$ & $\frac{3}{8}$ & $-6$ & $9_{37}$& 0 & 0& $9_{49}$& $\frac{1}{4}$ & $-4$ \\ 
\hline 
$9_{22}$ & $\frac{1}{8}$ \text{ or } $-\frac{7}{8}$ & $-2$ & $9_{38}$& $-\frac{1}{4}$ & $4$ & &  &\\
\hline 
$9_{24}$ &$0$ & $0$& $9_{39}$& $\frac{1}{8}$ & $-2$& &  & \\ 
\hline 
$9_{25}$ & $-\frac{1}{8}$ & $2$ & $9_{40}$& $-\frac{1}{8}$ \text{ or } $\frac{7}{8}$ & $2$ & &  &\\ 
\hline
\end{tabular}
\\
%\caption{Kappa invariant for knots with 8 or 9-crossings}
\endgroup
\end{center}

We omit the constructions of positive crossing changes to use \cref{more computation}. 
For example, there are the following positive crossing changes: $6_3 \text{ or }6_3^*\xrightarrow{p.c.c.} 8_{10}$, $8_{10} \xrightarrow{p.c.c.} T(2,5)$  ($\sigma (8_{10})=-2$, $\sigma (6_3)=0$, $\sigma (T(2,5))= -4$). Thus, from \cref{more computation}, we have $\kappa (8_{10} ) = \frac{1}{8}$.
  Also, in the computation of $\kappa(9_{49})$, we used several non-trivial facts related to the hyperbolic 3-manifold $\Sigma (9_{49})$ in \cref{hyperbolic example}. Similarly, for several knots with hyperbolic branched covers, the kappa invariants are described in terms of the Fr\o yshov invariant $\delta (\Sigma(K))$  in \cref{several hype ex}. See also Table \ref{table: hyperbolic1}.

\bibliographystyle{plain}
\bibliography{tex}

\end{document}